\newcommand{\e}{\epsilon}
\newcommand{\eps}{\varepsilon}
\newcommand{\be}{\begin{equation}}
\newcommand{\ba}{\begin{aligned}}
\newcommand{\bee}{\begin{equation*}}
\newcommand{\ee}{\end{equation}}
\newcommand{\ea}{\end{aligned}}
\newcommand{\eee}{\end{equation*}}
\newcommand{\bea}{\begin{equation} \begin{aligned} }
\newcommand{\eea}{\end{aligned}\end{equation} }
\theoremstyle{plain}
\newtheorem{theorem}{Theorem}[section]
\newtheorem{corollary}[theorem]{Corollary}
\newtheorem{lemma}[theorem]{Lemma}
\newtheorem{proposition}[theorem]{Proposition}
\theoremstyle{remark}
\newtheorem{remark}[theorem]{Remark}
\theoremstyle{definition}
\newtheorem{definition}[theorem]{Definition}
\numberwithin{equation}{section}
\begin{document}
\title{Asymptotics for slowly converging evolution equations}

\author{Beomjun Choi}
\address{Department of Mathematics, POSTECH, Pohang, 37673, Republic of Korea}
\email{bchoi@postech.ac.kr}

\author{Pei-Ken Hung}
\address{School of Mathematics, University of Minnesota, Minneapolis, MN 55455, USA}
\email{pkhung@umn.edu}

\begin{abstract} We investigate slowly converging solutions for non-linear evolution equations of elliptic or parabolic type. These equations arise from the study of isolated singularities in geometric variational problems. Slowly converging solutions have previously been constructed assuming the Adams-Simon positivity condition. In this study, we identify a necessary condition for slowly converging solutions to exist, which we refer to as the Adams-Simon non-negativity condition. Additionally, we characterize the rate and direction of convergence for these solutions. Our result partially confirms Thom's gradient conjecture in the context of infinite-dimensional problems.
%


  \end{abstract}
\maketitle

\section{Introduction}

Analyzing the behavior of solutions near singularities is essential to understand geometric equations such as minimal surfaces, harmonic maps and mean curvature flows. Many open problems reduce to questions on the singularity formation and its asymptotics.  In pioneering works of Leon Simon \cite{S0,S}, the idea of using {\L}ojasiewicz gradient inequality \cite{Loj} from real algebraic geometry was introduced for the first time, and the uniqueness of blow-ups was shown for a class of elliptic and parabolic equations. This uniqueness shows that the solution converges to the unique tangent cone or tangent flow as it approaches a singular point or infinity.

\bigskip 

A natural subsequent question is to investigate the rate of convergence and the next order asymptotics that describes the difference between the solution and the limit. This often serves as a crucial starting point for further analysis. For example, recent progresses on the classification of ancient solutions to geometric flows \cite{ADS}\cite{DuHaslhofer}\cite{CDDHS}\cite{ChoiMantoulidis} and complete non-compact solutions to minimal surface \cite{SS} are based on higher order asymptotics and its improvement.  For the singularity formation in parabolic problem, the higher order asymptotics at a singularity gives structural results on the singularity set in a neighborhood \cite{SunXue}\cite{Gang}. As the uniqueness of blow-ups implies the second differentiability of arrival time \cite{CM2015,CM2018}, the higher order asymptotics and the convergence rate have a strong relation to further regularity of arrival time \cite{MR2200259}\cite{MR2383930}.

\bigskip

The convergence rate and direction (i.e., the secant at the limit) are mostly understood when the solution converges at an exponential rate. This is because when the solution decays exponentially, the equation is well-approximated by its linearization. However, without the integrability of the limit, it is possible to have solutions that converge algebraically slowly. In \cite{AS}, Adams-Simon discovered a sufficient condition (later called the Adams-Simon positivity condition or simply $AS_p$ condition) on the limit so that they constructed a slowly converging solution to elliptic equations of the form \eqref{equ:main}. Carlotto-Chodosh-Rubinstein \cite{CCR15} found explicit examples of critical points of normalized Yamabe functional with the $AS_p$ condition and constructed normalized Yamabe flows converging slowly. As questioned in \cite[p.1533]{CCR15}, it is of great interest to understand the general behavior of slowly converging solutions. More precisely, one can ask whether such a solution must satisfy the Adams-Simon positivity condition and whether the higher order asymptotics follows the ansatz used in the construction of \cite{AS} and \cite{CCR15}.

\bigskip

We answer this question by showing that the Adams-Simon `non-negativity' condition is necessary for slowly converging solutions to exist. Moreover, when a positivity is satisfied, the convergence rate and the higher asymptotics agree with those of previously constructed examples. (See Theorem \ref{thm:general_e} and \ref{thm:general_p} for detailed statements.) We concern with the elliptic and parabolic equations of forms
\begin{equation}\label{equ:main}
 u''-mu'+\mathcal{M}_\Sigma u=N_1(u),
\end{equation} and
\begin{equation}\label{equ:parabolic}
u'-\mathcal{M}_\Sigma u=N_2(u).
\end{equation}
Here, $\mathcal{M}_\Sigma$ is the Euler-Lagrange operator of an analytic functional $\mathcal{F}_\Sigma$ as in \eqref{eq-F}-\eqref{eq-M}, $m$ is a nonzero constant, and we assume $N_1(u)$ and $N_2(u)$ satisfy the structure \eqref{equ:N}. The presence of nonzero constant $m$ accounts that the equation~\eqref{equ:main} appears in the study of the isolated singularity or the asymptotics near the infinity for geometric variational problems. In particular, minimal surfaces, harmonic maps \cite{S0,S,AS} and $G_2$ manifolds \cite{Chen} fall into this category. Equation~\eqref{equ:parabolic} models a geometric flow which converges to a stationary solution when $t$ goes to $+\infty$ or $-\infty.$ For instance, the (rescaled) mean curvature flow (see Appendix~\ref{sec:B}) and the harmonic map heat flow \cite{ChoiMantoulidis} can be described by \eqref{equ:parabolic}. Our results also apply to the normalized Yamabe flow. See Remark~\ref{rmk:cases} (3) for more details.

\bigskip

The main results of the paper, Theorems~\ref{thm:general_e}-\ref{thm:general_exponential_p}, can be placed in line as a generalization of Thom's gradient conjecture which we discuss in this paragraph. Let  $x=x(t)\in \mathbb{R}^n $ be a solution to the gradient flow $x'=-\nabla f(x)$ with an analytic potential $f:\mathbb{R}^n\to \mathbb{R}$ and suppose  $x$ converges to the origin as $t\to \infty$. In \cite{MR1067383}, Thom conjectured that the secant $ {x}/{|x|}$ should converge to a direction  $\theta_0 \in   \mathbb{S}^{n-1}$ as $t\to \infty$.  Kurdyka-Mostowski-Parusi\'nski \cite{KMP} settled this conjecture by showing a stronger result that the secant has a finite length in $\mathbb{S}^{n-1}$. However, the precise convergence rate is not yet completed revealed. If $\theta_0$ does not belong to $\ker \, (\nabla^2 f(0))$, one obtains that $\theta_0$ has to be an eigenvector of the hessian for a positive eigenvalue $\lambda$, and the solution decays with higher order asymptotics $|x(t)-ce^{-\lambda t} \theta_0| = o (e^{-\lambda t}) $ for some $c\in \mathbb{R}$. If $\theta_0$ belongs to the kernel of hessian, however, we merely know that the convergence takes place between algebraic rates $ t^{-\alpha_1} \lesssim  |x| \lesssim  t^{-\alpha_2 }$ for some $0<\alpha_2  \le \alpha_1\le 1 $. It is unknown if each solution has a specific convergence rate $|x|\sim t^{-\gamma  }$ some $\gamma >0$.

\bigskip

Let us briefly summarize the main motive and results of the paper. In the slow decaying regime, the equation becomes well-approximated by a gradient flow on the kernel of $\mathcal{L}_{\Sigma }$, the linearization of $\mathcal{M}_{\Sigma}$ at $u\equiv0$. The potential $f:  \ker \mathcal{L}_\Sigma \approx \mathbb{R}^J\to \mathbb{R}$ is given by the pull-back of $\mathcal{F}_{\Sigma}$ through Lyapunov-Schmidt reduction (see Proposition \ref{pro:AS}). Near the origin, the flow dynamics shall be determined by the first non-constant homogeneous polynomial appearing in the expansion of potential $f$. Let us denote this polynomial by $f_p$. We reserve $p\ge 3$ to denote the degree of this polynomial and we call it the order of integrability. When the gradient flow $x'=-\nabla f_p(x)$ is considered, one readily finds a radial solution converging to the origin whenever there exists a critical point of $\hat f_p:= f_p \vert_{\mathbb{S}^{J-1}}$ with positive critical value.  The slowly decaying solutions of \cite{AS} and \cite{CCR15} are obtained as perturbations of such radial ansatzes. 

\bigskip 

In Theorem \ref{thm:general_e} and \ref{thm:general_p}, we prove slowly decaying solution $u(\omega,t)$ has a dichotomy on its convergence rate that either $t^{\frac{1}{p-2}}\Vert u(\cdot,t)\Vert _{L^2}$ converges to a positive number or diverges to infinity. In the first case, we further show $u(\cdot,t)/\Vert u(\cdot,t)\Vert_{L^2}$ converges smoothly to a critical point of $\hat f_p$, say $v\in \ker \mathcal{L}_{\Sigma}$, with positive critical value. This shows the secant $u(\cdot,t)/\Vert u(\cdot,t)\Vert_{L^2}$ has a limit and confirms Thom's gradient conjecture for the concerned case. Note that one may interpret the theorem as the higher order asymptotics $u(t) = c  t^{-\frac{1}{p-2}} v + o(t^{-\frac{1}{p-2}})$, where $c$ is a constant of $p$ and $ f_p(v)$. Next, the second alternative shows a solution possibly decays even at a slower rate than $t^{-\frac{1}{p-2}}$. As seen in the classical gradient flow of potential $f=x_1^4+x_2^8$ on $\mathbb{R}^2$ (here, $p=4$ but a solution may decay at rate $t^{-\frac{1}{8-2}}$), the second alternative can actually take place. We show this can only occur if $\hat f_p$ admits critical point(s) of zero critical value and the secant $u(\cdot,t)/\Vert u(\cdot,t)\Vert_{L^2}$ accumulates on those critical point(s) as $t\to \infty$. It remains open whether the secant $u(\cdot,t)/\Vert u(\cdot,t)\Vert_{L^2}$ has a unique limit in the second case, and thereby, Thom's conjecture holds for all slowly decaying solutions. We would like to point out that Theorem \ref{thm:general_p} is also novel for finite-dimensional gradient flows. For a finite-dimensional gradient flow given by $x'=-\nabla f(x)$, the theorem states that there exists an integer $p\ge 3$ which depends only on $f$, such that any slowly converging solution satisfies either $t^{\frac{1}{p-2}}|x| = c (1+o(1))$ for some $c\in (0,\infty)$, or $t^{\frac{1}{p-2}}|x|\to \infty$.

\bigskip 

In Theorem \ref{thm:general_exponential_e} and \ref{thm:general_exponential_p}, we show the higher order asymptotic behavior of fast (exponentially) decaying solutions for elliptic and parabolic equations, respectively. This type of result has been expected among researchers and has been obtained for specific problems such as \cite{SS}\cite{ChoiMantoulidis}. Nevertheless, we could not find proper literature covering the general forms \eqref{equ:main} and \eqref{equ:parabolic}, so we provide a proof in Section~\ref{sec:exp}. The elliptic equation \eqref{equ:main} can be viewed as a perturbation of the second order ODE $u''-mu'+\mathcal{L}_\Sigma u=0$. Suppose $\mathcal{L}_\Sigma$ has an eigenvalue larger than $4^{-1}m^2$, a solution might oscillate while decaying exponentially. This type of solutions has been constructed for minimal graphs over Simons cones in $\mathbb{R}^4$ and $\mathbb{R}^6$. See \cite[Remark 1.21]{MR3838575} and \cite{BGG} for more details. For this reason, the original form of Thom's gradient conjecture is not true in the elliptic problem. Moreover, if $4^{-1}m^2$ is an eigenvalue of $\mathcal{L}_\Sigma$ and $m<0$, a resonance might occur and result in a solution that decays at a rate $t e^{2^{-1}m t}$. Note \cite{HS} considered this possibility for minimal graphs over stable (but not strictly stable) minimal cones. 
\bigskip

This paper is organized as follows. In Section \ref{sec:2}, we introduce the notation, condition, spectral property of linearized operator $\mathcal{L}_\Sigma$, and main theorems. In Section \ref{sec:1storderode}, we set up the elliptic problem \eqref{equ:main} as a first order ODE system on function spaces. In Section \ref{sec:exp}, Theorem \ref{thm:general_exponential_e} and Theorem \ref{thm:general_exponential_p} for exponentially decaying solutions to elliptic and parabolic equations are proved, respectively. In Section \ref{sec:elliptic}, we show in Proposition~\ref{prop-neutral-dynamics} that slowly converging solutions to \eqref{equ:main} are governed by a finite-dimensional gradient flow with a small perturbation. The parabolic analogue, Proposition~\ref{prop-neutral-dynamics-p1}, is proved in Section~\ref{sec:slowparabolic}. In Section~\ref{sec:gradient}, we complete the proofs for Theorems~\ref{thm:general_e} and \ref{thm:general_p} by analyzing a finite-dimensional gradient flow with a small perturbation, using a version of the {\L}ojasiewicz argument motivated by \cite{KMP}. Appendix~\ref{sec:A} contains auxiliary tools we need in the paper. In Appendix~\ref{sec:B}, we show that (rescaled) mean curvature flows can be written in the form \eqref{equ:parabolic}. 


\section{Preliminary}\label{sec:2}
Let us introduce our setting to study \eqref{equ:main} and \eqref{equ:parabolic} and state the main results. Let $\Sigma$ be a closed $n$-dimensional Riemannian manifold and $d\mu$ be a smooth volume form on $\Sigma$ which is mutually absolutely continuous with respect to the volume form induced by the metric. Let $\mathbf{V}\to \Sigma$ be a smooth vector bundle equipped with a smooth inner product $\left\langle\cdot,\cdot\right\rangle$. For $u\in\mathbf{V}$, we write $|u|^2:=\left\langle u,u \right\rangle$. For $\omega\in\Sigma$, we denote by $\mathbf{V}_\omega$ the fiber over $\omega$. Let $\slashed{\nabla}$ be a connection on $\mathbf{V}$ which is compatible with the inner product.

We denote by $L^2(\Sigma;\mathbf{V})$ the space of $L^2$-sections of $\mathbf{V}$ with respect to $d\mu$. Namely, a section $u$ belongs to $L^2(\Sigma;\mathbf{V})$ provided
\begin{align*}
\|u\|^2_{L^2}:=\int_\Sigma |u|^2\, d\mu <\infty.
\end{align*}
For $\ell \in\mathbb{N}_0$, we denote by $H^\ell(\Sigma;\mathbf{V})$ the collection of sections that satisfies
\begin{align*}
\|u\|^2_{H^\ell}:= \sum_{i=0}^\ell \int_\Sigma \left|\slashed{\nabla}^i u  \right|^2\, d\mu  <\infty.
\end{align*}
For $-\infty< a<b\leq \infty$, we define $Q_{a,b}:=\Sigma\times [a,b)$ and equip $Q_{a,b}$ with the product metric. We denote by $\widetilde{\mathbf{V}}$ the pull back bundle of $\mathbf{V}$ through the projection $  Q_{a,b}\to \Sigma$. For $u\in C^s(Q_{a,b};\widetilde{\mathbf{V}})$ and $t\in [a,b)$, $u(t)\in C^s(\Sigma;\mathbf{V})$ is defined through $u(t)(\omega):=u(\omega,t)$. Its $H^\ell$-norm is denoted by
\begin{equation*}
\|u\|_{H^\ell}(t):= \|u(t) \|_{H^\ell}.
\end{equation*} 
Also,
\begin{align}\label{def:Cs}
\|u\|_{C^s}(t):=\sup_{\omega\in\Sigma}\sup_{k+\ell\leq s}\left| \frac{\partial^k}{\partial t^k}\slashed{\nabla}^\ell u(\omega,t) \right|.
\end{align}
Here, note that the norms of time derivatives are included  in the definition of $\Vert \cdot \Vert _{C^s}  (t)$.
We often use $u'$ as an abbreviation of $\frac{\partial u }{\partial t}$ .

\vspace*{0.3cm}

Our assumption on $\mathcal{M}_\Sigma$ is almost identical to the one in \cite{S0}. The only difference is that the volume form, $d\mu$, is not necessarily the one induced from the Riemannian metric. Let $\mathcal{F}_\Sigma$ be a functional defined for $u\in C^{1}(\Sigma;\mathbf{V})$ by
\begin{align}\label{eq-F}
\mathcal{F}_\Sigma(u) :=\int_{\Sigma} F(\omega,u,\slashed{\nabla}u)\, d\mu.
\end{align}
Here the integrand $F$ satisfies the following:
\begin{enumerate}
\item $F=F(\omega,z,p)$ is a smooth function defined on an open set of $\mathbf{V}\times (T\Sigma\otimes \mathbf{V})$ that contains the zero section.
\item For each $\omega\in\Sigma$, $F(\omega,\cdot,\cdot)$ is analytic on $\mathbf{V}_\omega\times (T_\omega\Sigma\otimes \mathbf{V}_\omega)$.
\item $F$ satisfies the Legendre-Hadamard ellipticity condition
\begin{align*}
D^2_pF(\omega,0,0)[\eta\otimes\xi,\eta\otimes\xi]\geq c|\eta|^2|\xi|^2,
\end{align*}
for $c>0$ independent of $\omega\in\Sigma$, $\eta\in T_\omega\Sigma$ and $\xi\in \mathbf{V}_\omega$.
\end{enumerate}
$\mathcal{M}_{\Sigma}$ is defined to be the negative Euler-Lagrange operator of $\mathcal{F}_\Sigma$. Namely, for any $\zeta\in C^{\infty}(\Sigma;\mathbf{V})$,
\begin{align}\label{eq-M}
\int_\Sigma \left\langle \mathcal{M}_\Sigma( u),\zeta \right\rangle \, d\mu=-\frac{d}{ds}\mathcal{F}_\Sigma(u+s\zeta)|_{s=0}.
\end{align}
We further assume $0$ is a critical point of $\mathcal{F}_\Sigma$. Namely, we assume $\mathcal{M}_\Sigma(0)=0$. We denote by $\mathcal{L}_{\Sigma}$ the linearization of $\mathcal{M}_\Sigma$ at $0$. It is clear that the difference between $\mathcal{M}_{\Sigma} (u)$ and $\mathcal{L}_{\Sigma} u$ is a quadratic term of the form 
\begin{equation}\label{equ:quasilinear}
 \mathcal{M}_\Sigma u-\mathcal{L}_{\Sigma} u =\sum_{j=0}^2 c_j\cdot\slashed{\nabla}^j u,
\end{equation} 
where $c_j=c_j(\omega, u,\slashed{\nabla}u)$ are smooth with $c_j(\omega,0,0)=0$. The Legendre-Hadamard condition implies $\mathcal{L}_{\Sigma}$ is elliptic. In particular, there exists a constant $C>0$ such that for any $u\in C^2(\Sigma;\mathbf{V})$, 
\begin{align}\label{equ:H1}
\int_\Sigma -\left\langle \mathcal{L}_{\Sigma} u,u \right\rangle+C|u|^2 \, d\mu\approx \|u\|^2_{H^1}.
\end{align}
Furthermore, $\mathcal{L}_{\Sigma}$ is self-adjoint with respect to $d\mu$. This can be seen from
\begin{align*}
\int_{\Sigma} \left\langle \mathcal{L}_{\Sigma} u, v \right\rangle\, d\mu=-\frac{\partial^2}{\partial s_1\partial s_2}\mathcal{F}_\Sigma (s_1u+s_2v)\bigg|_{s_1=s_2=0}. 
\end{align*}

 We suppose $N_1(u)$ and $N_2(u)$ in \eqref{equ:main} and \eqref{equ:parabolic} are of the form
\begin{equation}\label{equ:N}
\begin{split}
N_1(u)=&a_1\cdot D u'+a_2\cdot u'+a_3\cdot\mathcal{M}_\Sigma (u),\\
 N_2(u)= & b_1\cdot\mathcal{M}_\Sigma (u). 
\end{split}
\end{equation}
Here $a_i=a_i(\omega, u,\slashed{\nabla}u,u')$ and $b_1=b_1(\omega,u,\slashed{\nabla}u)$ are smooth with $a_i(\omega,0,0,0)=0$ and $b_1(\omega,0,0)=0$; $D=\left\{ \frac{\partial}{\partial t},\slashed{\nabla} \right\}$. 

 Let $\lambda_1\geq \lambda_2\geq \dots$ be the eigenvalues of $\mathcal{L}_{\Sigma}$ and $\varphi_1, \varphi_2,\dots$ be the corresponding eigensections which form a complete orthonormal basis of $L^2(\Sigma;\mathbf{V})$. We separate $\mathbb{N}$ into four parts according to the eigenvalues.
\begin{equation}\label{def:I123}
\begin{split}
I_1:= &\{i\in\mathbb{N}\, :\, \lambda_i>4^{-1}m^2\},\ 
I_2:=  \{i\in\mathbb{N}\, :\, \lambda_i=4^{-1}m^2\},\\ 
I_3:= & \{i\in\mathbb{N}\, :\, \lambda_i=0\},\
I_4:= \mathbb{N}\setminus (I_1\cup I_2 \cup I_3).
\end{split}
\end{equation}
Note that $I_1$, $I_2$ and $I_3$ are (possibly empty) finite sets and $\ker \mathcal{L}_{\Sigma}$ is spanned by $\{\varphi_i\}_{i\in I_3}$. Let $J$ be the cardinality of $I_3$, the dimension of  $\ker \mathcal{L}_{\Sigma}$. This implies $I_3=\{\iota+ 1,\iota+2,\dots, \iota+J\}$ for some $\iota\in\mathbb{N}_0$. 

We denote by $\Pi^T$ and $\Pi^\perp$ the orthogonal projection of $L^2(\Sigma;\mathbf{V})$ to $\ker \mathcal{L}_{\Sigma}$ and $\left(\ker \mathcal{L}_{\Sigma}\right)^\perp$ respectively.  The following is a version of the implicit function theorem. See \cite[\S 3]{S1}.
\begin{proposition} [Lyapunov--Schmidt reduction]\label{pro:AS} 
Let $B_\rho$ be the open ball of radius $\rho$ in $L^2(\Sigma;\mathbf{V})$. There exist $\rho>0$ and a map \[H: \ker \mathcal{L}_{\Sigma} \cap B_\rho \rightarrow C^\infty (\Sigma;\mathbf{V} )\cap (\ker \mathcal{L}_{\Sigma}) ^\perp \] such that the following statements hold. First, for any $k\geq 2$ and $\alpha\in (0,1)$, $H$ is an analytic map from $\ker \mathcal{L}_{\Sigma} \cap B_\rho$ to $C^{k,\alpha}(\Sigma;\mathbf{V})$. Second,  
\[H(0)=0,\, DH(0)=0. \]
Lastly,
\[\begin{cases} \begin{aligned} \Pi ^\perp   \mathcal{M}_{\Sigma} (v+H(v) ) &= 0,\\ 
\Pi ^T \mathcal{M}_{\Sigma} (v+H(v) )& = -\nabla f (v).
\end{aligned} \end{cases}\]
Here $f:\ker \mathcal{L}_{\Sigma}  \cap B_\rho \to \mathbb{R}$ is given by $f(v):= \mathcal{F}_\Sigma (v+H(v)).$ 
\end{proposition}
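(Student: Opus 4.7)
The plan is to set up the problem as a fixed-point problem for the infinite-dimensional complement $(\ker\mathcal{L}_\Sigma)^\perp$ and then apply the analytic implicit function theorem in Banach spaces. Concretely, for each integer $k\geq 2$ and $\alpha\in(0,1)$, the analyticity of $F$ in $(z,p)$ together with the Banach-algebra structure of $C^{k,\alpha}(\Sigma;\mathbf{V})$ implies that the Nemytskii-type operator $u\mapsto \mathcal{M}_\Sigma(u)$ is real-analytic from a neighborhood of $0$ in $C^{k,\alpha}(\Sigma;\mathbf{V})$ into $C^{k-2,\alpha}(\Sigma;\mathbf{V})$. I would therefore consider
\[\Phi:\bigl(\ker\mathcal{L}_\Sigma\cap B_\rho\bigr)\times \bigl((\ker\mathcal{L}_\Sigma)^\perp\cap C^{k,\alpha}\bigr)\longrightarrow (\ker\mathcal{L}_\Sigma)^\perp\cap C^{k-2,\alpha},\qquad \Phi(v,w):=\Pi^\perp\mathcal{M}_\Sigma(v+w).\]
Then $\Phi(0,0)=0$, and the partial differential $D_w\Phi(0,0)$ equals $\Pi^\perp\mathcal{L}_\Sigma$ restricted to $(\ker\mathcal{L}_\Sigma)^\perp$. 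Because $\mathcal{L}_\Sigma$ is elliptic and self-adjoint with finite-dimensional kernel, Schauder theory and the Fredholm alternative identify this restriction as a Banach-space isomorphism onto its image.

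Applying the analytic IFT then produces, for each $k$, some $\rho_k>0$ and an analytic map $H_k:\ker\mathcal{L}_\Sigma\cap B_{\rho_k}\to(\ker\mathcal{L}_\Sigma)^\perp\cap C^{k,\alpha}$ with $H_k(0)=0$ and $\Phi(v,H_k(v))\equiv0$. Uniqueness of the implicit solution forces the $H_k$ to coincide on overlapping balls; after shrinking $\rho$, all of them define a single $H$, and elliptic bootstrapping applied to the identities $\Pi^\perp\mathcal{M}_\Sigma(v+H(v))=0$ and $\Pi^T\mathcal{M}_\Sigma(v+H(v))\in\ker\mathcal{L}_\Sigma\subset C^\infty$ upgrades the image to $C^\infty(\Sigma;\mathbf{V})$. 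To see $DH(0)=0$, differentiate the defining identity at $v=0$: for every $\zeta\in\ker\mathcal{L}_\Sigma$, $\Pi^\perp\mathcal{L}_\Sigma(\zeta+DH(0)[\zeta])=0$. Since $\mathcal{L}_\Sigma\zeta=0$ and $\Pi^\perp\mathcal{L}_\Sigma$ is injective on $(\ker\mathcal{L}_\Sigma)^\perp$, one concludes $DH(0)[\zeta]=0$.

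For the second identity, I would compute the gradient of $f(v):=\mathcal{F}_\Sigma(v+H(v))$ directly. For any $\zeta\in\ker\mathcal{L}_\Sigma$, the chain rule and the defining relation \eqref{eq-M} give
\[\left\langle\nabla f(v),\zeta\right\rangle_{L^2}=\frac{d}{ds}\bigg|_{s=0}\mathcal{F}_\Sigma\bigl(v+s\zeta+H(v+s\zeta)\bigr)=-\int_\Sigma\left\langle \mathcal{M}_\Sigma(v+H(v)),\,\zeta+DH(v)[\zeta]\right\rangle\,d\mu.\]
The term containing $DH(v)[\zeta]$ vanishes: since $H$ takes values in $(\ker\mathcal{L}_\Sigma)^\perp$, so does $DH(v)[\zeta]$, whereas the first identity forces $\mathcal{M}_\Sigma(v+H(v))=\Pi^T\mathcal{M}_\Sigma(v+H(v))\in\ker\mathcal{L}_\Sigma$, and the two spaces are $L^2$-orthogonal. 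What remains is exactly $-\left\langle\Pi^T\mathcal{M}_\Sigma(v+H(v)),\zeta\right\rangle_{L^2}$ for every $\zeta\in\ker\mathcal{L}_\Sigma$, which gives the desired gradient identity.

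The main obstacle is technical rather than conceptual: getting \emph{analyticity} (not merely smoothness) of $H$ requires the analytic version of the implicit function theorem in Banach spaces, which in turn depends on verifying that $u\mapsto\mathcal{M}_\Sigma(u)$ is genuinely real-analytic between Hölder spaces. This reduces to expanding $F(\omega,z,p)$ as a power series in $(z,p)$ with coefficients bounded uniformly in $\omega\in\Sigma$ and then estimating the resulting series in $C^{k,\alpha}$ using the Banach-algebra inequalities; no single step is deep, but the estimates must be organized carefully to produce a convergent series in the operator norm.
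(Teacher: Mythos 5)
Your argument is correct and is precisely the standard Lyapunov--Schmidt reduction that the paper invokes by citing \cite[\S 3]{S1} rather than proving: the analytic implicit function theorem applied to $\Phi(v,w)=\Pi^\perp\mathcal{M}_\Sigma(v+w)$ on H\"older spaces, with $D_w\Phi(0,0)=\Pi^\perp\mathcal{L}_\Sigma$ invertible on the orthogonal complement by ellipticity and self-adjointness, followed by the chain-rule computation in which the $DH(v)[\zeta]$ term drops out by $L^2$-orthogonality against $\mathcal{M}_\Sigma(v+H(v))\in\ker\mathcal{L}_\Sigma$. All the points you flag as technical (analyticity of the Nemytskii operator between H\"older spaces, consistency of the $H_k$ across $k$, bootstrapping to $C^\infty$) are handled exactly as you describe in the cited reference, so no gap remains.
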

The function $f$ plays a crucial role in this paper and we will call it \textbf{reduced functional}. The reduced functional $f$ is real analytic with $\nabla f(0)=0$ and $\nabla^2 f(0)=0$. We may view $f$ as an analytic function defined on an open ball in $\mathbb{R}^J$ through the identification
\begin{align}\label{equ:identification}
(x_1,x_2,\dots, x^J)\mapsto \sum_{j=1}^J x_j\varphi_{\iota+j}.
\end{align} 

Let us review the integrable condition. The kernel $\ker \mathcal{L}_{\Sigma}$ is called integrable if for any $v\in\ker \mathcal{L}_{\Sigma}$, there exists a family $\{v_s\}_{s\in (0,1)}\subset C^2(\Sigma;\mathbf{V})$ such that $v_s\to 0$ in $C^2(\Sigma;\mathbf{V})$, $\mathcal{M}_\Sigma (v_s)\equiv 0$ and $\lim_{s\to 0} v_s/s=v$ in $L^2(\Sigma;\mathbf{V})$. It is well-known \cite[Lemma 1]{AS} that $\ker \mathcal{L}_{\Sigma}$ is integrable if and only if the reduced functional $f$ is a constant. Moreover, if integrable condition is satisfied, any decaying solution to \eqref{equ:main} or \eqref{equ:parabolic} decays exponentially \cite{AS,CCR15}. For this reason, whenever non-exponentially decaying solution is considered, the integrable condition should necessarily \textbf{fails}. Namely, the reduced functional $f$ is not a constant function. In particular, there exists an integer $p\geq 3$ such that
\begin{equation}\label{def:fp}
f=f(0)+\sum_{j\geq p} f_j,
\end{equation}
where $f_j$ are homogeneous polynomials with degree $j$ and $f_p\not\equiv 0$. This integer $p$ is called the order of integrability \cite{CCR15}.

As explained in Introduction, the gradient flow of $f_p$ has a dominant role in the asymptotic behavior. Let $\hat{f}_p$ be the restriction of $f_p$ on $\{ w\in \ker \mathcal{L}_{\Sigma}  \, :\, \Vert w\Vert _{L^2}=1\} \approx \mathbb{S}^{J-1}$. Consider the critical points of $\hat{f}_p$:
\begin{equation}\label{def:C}
\mathbf{C}:=\left\{ w\in \ker \mathcal{L}_{\Sigma}\, :\, \Vert w\Vert_{L^2}=1\ \textup{and}\ w\ \textup{is a critical point of}\ \hat{f}_p \right\}.
\end{equation}
If $w\in\mathbf{C}$ satisfies $ f_p(w)>0$, then one checks that 
\[x(t)=   [ p(p-2)f_p(w) (t+c) ] ^{-\frac{1}{p-2}} w\]
 becomes a radial solution to the flow $x'= -\nabla f_p(x)$. The higher order asymptotics in Theorem \ref{thm:general_e} and \ref{thm:general_p} will be modeled on such solutions.

\begin{definition}[Adams-Simon conditions. c.f. (4.1) in \cite{AS}] We say $\Sigma$ satisfies the Adams-Simon non-negativity condition for  \eqref{equ:parabolic} if there exists $w\in\mathbf{C}$ such that  ${f}_p(w)\geq 0$. 
We say $\Sigma$ satisfies the Adams-Simon non-negativity condition for \eqref{equ:main} if there exists $w\in\mathbf{C}$ such that  $m^{-1}{f}_p(w)\geq 0$. 

In both equations, the Adams-Simons positivity conditions are defined similarly by requiring that the critical values are positive. 
\end{definition}

Let us state main results concerning the asymptotic behavior and the convergence rate of decaying solutions. We begin with the elliptic equation~\eqref{equ:main}.  

\begin{theorem} [slow decay in elliptic equation]  \label{thm:general_e}
Let $u\in C^\infty(Q_{0,\infty},\widetilde{ \mathbf{V}})$ be a  solution to \eqref{equ:main} with $\|u\|_{C^1}(t)=o(1)$ that does not decay exponentially as $t\to \infty$. Then $\Sigma$ satisfies the Adams-Simon non-negativity condition for \eqref{equ:main}. Moreover, one of the following alternatives holds:   
\begin{enumerate}
\item  We have
\begin{align*}
\lim_{t\to\infty} t^{1/(p-2)}\Vert u(t)\Vert_{L^2}=\beta \in (0,\infty).  
\end{align*}
Moreover,
\begin{align*}
\lim_{t\to \infty  }{u(t)}/{\Vert u(t)\Vert _{L^2}}=w\ \textup{in}\ C^\infty(\Sigma;\mathbf{V}),
\end{align*}
where $w \in \mathbf{C}$ with $m^{-1}\hat f_p (w)=  1/ ({p (p-2) \beta ^{p-2}})>0	$ .
\item We have
\begin{align*}
\lim_{t\to\infty} t^{1/(p-2)}\Vert u(t)\Vert_{L^2}=\infty.
\end{align*}
Moreover,
$$\lim_{t\to\infty} \textup{dist}_{C^k}\bigg   ( u(t)/\Vert u(t) \Vert_{L^2}\  , \ \mathbf{C}\cap \{w : \hat f_p(w)= 0\}\bigg )=0\ \textup{for all}\  k \in \mathbb{N}.$$
\end{enumerate}
\end{theorem}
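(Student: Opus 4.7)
My plan is to combine two reductions. First, I would invoke Proposition~\ref{prop-neutral-dynamics} to decompose $u(\omega,t) = v(t) + H(v(t)) + \psi(\omega,t)$, where $v(t) = \Pi^T u(t) \in \ker \mathcal{L}_\Sigma$ is identified with a curve in $\mathbb{R}^J$ via \eqref{equ:identification}, and $\psi$ is a remainder dominated by a strictly super-linear power of $\|v\|_{L^2}$. The non-exponential decay hypothesis must be driven by the kernel component, since modes in $I_1\cup I_4$ carry exponential rates built into the linearization of \eqref{equ:main} and the resonance set $I_2$ is controlled within Proposition~\ref{prop-neutral-dynamics}, so one reduces to studying $v$. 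Projecting \eqref{equ:main} and invoking Proposition~\ref{pro:AS} yields
\begin{equation*}
v'' - m v' = \nabla f(v) + E(t),
\end{equation*}
where $f$ is the reduced functional from \eqref{def:fp} and $E(t)$ gathers the contributions of $\psi$, $H(v)$ and the nonlinearity $N_1$. The hypothesis $\|u\|_{C^1}(t)=o(1)$ together with an a posteriori estimate gives $|v''|\ll |v'|$, so the equation collapses to the perturbed gradient flow $v' = -m^{-1}\nabla f(v) + \text{error}$, whose leading term is the $(p-1)$-homogeneous $-m^{-1}\nabla f_p(v)$.

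Second, I would analyze the renormalized quantities $\rho(t) := \|v(t)\|_{L^2}$ and $\bar v(t) := v(t)/\rho(t)$. Differentiating and using Euler's identity $\langle \nabla f_p(v),v\rangle = p f_p(v)$ gives the scalar ODE
\begin{equation*}
\rho'(t) = -m^{-1} p \, \rho(t)^{p-1} \hat f_p(\bar v(t)) + o\bigl(\rho(t)^{p-1}\bigr),
\end{equation*}
coupled to a spherical ODE for $\bar v$ whose principal term is the tangential component of $-m^{-1}\nabla f_p$. Since $\rho(t)\downarrow 0$, any accumulation direction $w$ of $\bar v(t)$ must satisfy $m^{-1}\hat f_p(w)\geq 0$; moreover, by a \L{}ojasiewicz-type argument for the analytic function $\hat f_p$ on $\mathbb{S}^{J-1}$, $\bar v$ must accumulate on critical points of $\hat f_p$. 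This proves the Adams-Simon non-negativity condition. The dichotomy then follows from integrating the scalar equation: if $m^{-1}\hat f_p(\bar v(t))$ stays bounded away from $0$, then $\rho(t)^{-(p-2)}$ grows linearly, yielding $\rho(t)\sim \beta t^{-1/(p-2)}$ with $\beta = \bigl(p(p-2) m^{-1}\hat f_p(w)\bigr)^{-1/(p-2)}$ (alternative (1)); otherwise $t^{1/(p-2)}\rho(t)\to\infty$ and every accumulation direction is a critical point with $\hat f_p(w)=0$ (alternative (2)).

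The main obstacle will be upgrading mere accumulation of the secant $\bar v(t)$ to convergence to a single critical point $w\in \mathbf{C}$ in the first alternative, and then bootstrapping from $L^2$ to $C^\infty$ convergence. Uniqueness of the limiting direction is delicate because \eqref{equ:main} is second order: in contrast with a pure first-order gradient flow, one must rule out persistent oscillations in the damped second-order system restricted to $\ker \mathcal{L}_\Sigma$. I would attack this using a quantitative \L{}ojasiewicz inequality for $\hat f_p$ near $w$, leveraging strict positivity of the critical value, together with careful energy-type bookkeeping for the error $E(t)$ and for the discarded term $v''$, along the lines of Kurdyka-Mostowski-Parusi\'nski~\cite{KMP} adapted to the perturbed setting supplied by Proposition~\ref{prop-neutral-dynamics} and carried out in Section~\ref{sec:gradient}. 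Once $L^2$ convergence of the secant is established, $C^k$ convergence for all $k$ follows by standard interior elliptic regularity applied back to \eqref{equ:main}.
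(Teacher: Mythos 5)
Your proposal follows essentially the same route as the paper: reduce via Proposition~\ref{prop-neutral-dynamics} to the perturbed finite-dimensional gradient flow $z'+m^{-1}\nabla f(z)=O(|z|^{p-\varepsilon})$ on $\ker\mathcal{L}_\Sigma$, then run the polar-coordinate/{\L}ojasiewicz analysis of Section~\ref{sec:gradient} (Theorem~\ref{thm-gradflow}) to obtain non-negativity of the limiting critical value, the rate dichotomy, and convergence of the secant, finishing with the uniform $C^k$ bounds of Corollary~\ref{cor:sobolev} to upgrade to $C^\infty$. The only cosmetic difference is that you phrase the reduction as discarding $v''$ from a second-order equation for $\Pi^T u$, whereas the paper obtains the first-order equation directly from the phase-space vectorization $q=(u,u'-2^{-1}mu)$ and the spectral splitting of the kernel modes into $z$ and the higher-order $\bar z$; since you defer to Proposition~\ref{prop-neutral-dynamics} for this step, the argument is the same in substance.
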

\begin{theorem}[fast decay in elliptic equation]\label{thm:general_exponential_e}
Let $u\in C^\infty(Q_{0,\infty},\widetilde{ \mathbf{V}})$ be a  solution to \eqref{equ:main} with $\|u\|_{C^1}(t)=O(e^{-\varepsilon t})$ for some $\varepsilon>0$ as $t\to \infty$. If $u$ is not the zero section, then one of the following alternatives holds: 

\begin{enumerate}
\item  There exists an eigenvalue $\lambda<4^{-1}m^2$ of $\mathcal{L}_{\Sigma}$ such that 
\begin{align*}
\gamma^+<0\ \textup{and}\  \lim_{t\to \infty} e^{-\gamma^+t} \Vert u(t)\Vert _{L^2}\in (0,\infty), 
\end{align*}
or
\begin{align*}
\gamma^-<0\ \textup{and}\ \lim_{t\to \infty} e^{-\gamma^-t} \Vert u(t)\Vert _{L^2}\in (0,\infty) .
\end{align*}
Here $\gamma^\pm=2^{-1}m\pm\sqrt{4^{-1}m^2-\lambda}. $ 
Moreover, for some eigensection $w$ with $\mathcal{L}_{\Sigma} w =\lambda w$,  
\begin{align*}
\lim_{t\to \infty  }{u(t)}/{\Vert u(t)\Vert _{L^2}}=w\ \textup{in}\ C^\infty(\Sigma;\mathbf{V}).
\end{align*} 

\item We have
\begin{align*}
 m<0\ \textup{and}\    \lim_{t\to \infty}  t^{-1}e^{-2^{-1}m t} \Vert u(t)\Vert _{L^2}\in (0,\infty).
\end{align*}
Moreover, for some eigensection $w$ with $\mathcal{L}_{\Sigma} w =4^{-1}m^2 w$,  
\begin{align*}
\lim_{t\to \infty  }{u(t)}/{\Vert u(t)\Vert _{L^2}}=w\ \textup{in}\ C^\infty(\Sigma;\mathbf{V}).
\end{align*}
\item We have
\begin{align*}
 m<0\ \textup{and}\   \limsup_{t\to \infty} e^{-2^{-1}m t} \Vert u(t)\Vert _{L^2} \in (0,\infty).
\end{align*}
Moreover, there exist $w_i\in\mathbb{C}$ for $i\in I_1$ and $c_{i}\in\mathbb{R}$ for $i\in  I_2$ such that
\begin{align*}
\lim_{t\to \infty  }\left( e^{-2^{-1}mt}u(t) -\sum_{i\in I_1}\textup{Re} \left(w_ie^{\mathbf{i}\beta_i t}  \right)\varphi_i-\sum_{i\in I_2}c_i \varphi_i\right) =0\   \textup{in}\ C^\infty(\Sigma;\mathbf{V}).
\end{align*}
Here $\beta_i=\sqrt{\lambda_i-4^{-1}m^2}$ and $\mathbf{i}=\sqrt{-1}$. 

\end{enumerate}

\end{theorem}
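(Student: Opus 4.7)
The plan is to reformulate \eqref{equ:main} as a first-order ODE system on function spaces (as set up in Section~\ref{sec:1storderode}), diagonalize the linear part in the $L^2$-eigenbasis of $\mathcal{L}_\Sigma$, and analyze the resulting decoupled scalar equations. Writing $u(\omega,t)=\sum_i a_i(t)\varphi_i(\omega)$, each coefficient satisfies
\[
 a_i''-m a_i'+\lambda_i a_i = g_i(t),
\]
where $g_i$ collects the $\varphi_i$-projections of the quasilinear remainder \eqref{equ:quasilinear} and of $N_1(u)$. Since both are at least quadratic in $(u,\slashed{\nabla}u,u')$, the hypothesis $\|u\|_{C^1}(t)=O(e^{-\varepsilon t})$ forces $g_i$ to decay roughly like $e^{-2\varepsilon t}$, strictly faster than $u$ itself, so each scalar ODE is a perturbation of the constant-coefficient linear equation with characteristic roots $\gamma^\pm_i = 2^{-1}m \pm \sqrt{4^{-1}m^2 - \lambda_i}$.

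Next I would introduce the optimal rate $\tau^*:=\sup\{\varepsilon>0 : \|u\|_{C^1}(t)=O(e^{-\varepsilon t})\}$, and use a unique continuation argument for \eqref{equ:main} to ensure $\tau^*<\infty$ when $u\not\equiv 0$. The key step is to show $-\tau^*$ must coincide with the real part of some characteristic root, i.e.\ with $\gamma^\pm_i$ for some $\lambda_i<4^{-1}m^2$ or with $2^{-1}m$ when $m<0$. The argument is by contradiction via variation of parameters: if $-\tau^*$ is not characteristic, then for each $i$ the faster decay of $g_i$ lets us upgrade the decay of $a_i$; resumming these mode-by-mode improvements via quasilinear elliptic Schauder bootstrap (using Weyl-law growth of $\lambda_i$ to pass from $L^2$ to $C^k$ control) yields an improved rate, contradicting the definition of $\tau^*$.

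With $\tau^*$ pinned to a characteristic value, the three alternatives emerge. In alternative~(1), $-\tau^*=\gamma^\pm$ for a real negative root at an eigenvalue $\lambda<4^{-1}m^2$: variation of parameters shows modes with $\lambda_i\neq\lambda$ decay strictly faster, so $e^{-\gamma^\pm t}a_i(t)$ converges to a constant exactly on the $\lambda$-eigenspace, whence $u/\|u\|_{L^2}\to w$ in $C^\infty$ after elliptic bootstrap. In alternatives~(2)--(3), $m<0$ and $-\tau^*=-2^{-1}m$: at the double root $\lambda_i=4^{-1}m^2$ (modes in $I_2$) the homogeneous solutions are $e^{2^{-1}mt}$ and $t e^{2^{-1}mt}$, while at $\lambda_i>4^{-1}m^2$ (modes in $I_1$) they are $e^{2^{-1}mt}\cos(\beta_i t)$ and $e^{2^{-1}mt}\sin(\beta_i t)$. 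Alternative~(2) occurs precisely when some $I_2$-mode carries the secular factor $te^{2^{-1}mt}$; otherwise we are in alternative~(3), where the $I_1$-modes produce the oscillatory terms $\mathrm{Re}(w_i e^{\mathbf{i}\beta_i t})$ and the non-resonant $I_2$-modes contribute the constants $c_i$. Finiteness of $I_1$ and $I_2$ guarantees that these sums are finite.

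The main obstacle I anticipate is the resonant threshold case $\tau^*=-2^{-1}m$. Here every mode in $I_1\cup I_2$ contributes at exactly this rate, and one must show that after subtracting the explicit oscillatory and secular pieces the residual decays strictly faster in every $C^k$. This requires iterating the variation-of-parameters estimate while carefully controlling the coupling through $g_i$ (which depends on all modes via the nonlinearity), and upgrading $L^2$-level decay to $C^\infty$ decay through quasilinear elliptic Schauder theory applied to \eqref{equ:main}.
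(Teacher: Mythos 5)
Your proposal follows essentially the same route as the paper: vectorize \eqref{equ:main} as a first-order system, prove a unique continuation property at infinity so that the optimal rate $\gamma_*=-\tau^*$ is finite, show by contradiction (improving the decay via the Duhamel/variation-of-parameters representation of each mode when $-\tau^*$ avoids all characteristic exponents) that $-\tau^*$ must equal some $\gamma_i^\pm$ or $2^{-1}m$, and then integrate the mode equations to extract the eigensection, secular, or oscillatory profile, upgrading to $C^\infty$ by elliptic regularity. The only cosmetic difference is that the paper aggregates the infinitely many modes into two quantities $X_\pm$ satisfying scalar differential inequalities (which automatically gives the uniformity in $i$ that your ``resumming via Weyl-law growth'' step is meant to supply), rather than arguing mode by mode.
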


Next, we consider decaying solutions to the parabolic equation~\eqref{equ:parabolic}. 

\begin{theorem}[slow decay in parabolic equation]\label{thm:general_p}
Let $u\in C^\infty(Q_{0,\infty},\widetilde{ \mathbf{V}})$ be a solution to \eqref{equ:parabolic} with $\|u\|_{H^{n+4}}(t)=o(1)$ that does not decay exponentially as $t\to \infty$. Then $\Sigma$ satisfies the Adams-Simon non-negativity condition for \eqref{equ:parabolic}. Moreover, one of the following alternatives holds: 
\begin{enumerate}
\item We have
\begin{align*}
\lim_{t\to\infty} t^{1/(p-2)}\Vert u(t)\Vert_{L^2}=\beta \in (0,\infty).  
\end{align*}
Moreover,
\begin{align*}
\lim_{t\to \infty  }{u(t)}/{\Vert u(t)\Vert _{L^2}}=w\ \textup{in}\ C^\infty(\Sigma;\mathbf{V}),
\end{align*}
where $w \in \ker \mathcal{L}_{\Sigma} $ and $w$ is a critical point of $\hat f_p $ with $\hat f_p (w)=  1/ ({p (p-2) \beta ^{p-2}})>0	$ .
\item We have
\begin{align*}
\lim_{t\to\infty} t^{1/(p-2)}\Vert u(t)\Vert_{L^2}=\infty.
\end{align*}
Moreover,
$$\lim_{t\to\infty} \textup{dist}_{C^k}\bigg   ( u(t)/\Vert u(t) \Vert_{L^2}\  , \ \mathbf{C}\cap \{w : \hat f_p(w)= 0\}\bigg )=0\ \textup{for all}\  k \in \mathbb{N}.$$
\end{enumerate}  
\end{theorem}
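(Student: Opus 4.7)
My plan is to mirror the elliptic slow-decay analysis and reduce \eqref{equ:parabolic} to a perturbed finite-dimensional gradient flow on $\ker\mathcal{L}_\Sigma$, then analyze that flow in polar coordinates via a {\L}ojasiewicz/Kurdyka-Mostowski-Parusi\'nski style argument. First I would decompose $u(t) = v(t) + w(t)$ with $v = \Pi^T u$ and $w = \Pi^\perp u$. Projecting \eqref{equ:parabolic} and using the quasilinear structure \eqref{equ:quasilinear} together with the form of $N_2$ in \eqref{equ:N} (note $b_1(\omega,0,0)=0$), one obtains
\[
v' + \Pi^T \mathcal{M}_\Sigma(v+w) = (\text{higher order in } u), \qquad w' - \mathcal{L}_\Sigma w = \Pi^\perp(\text{quadratic in } u).
\]
Because $\mathcal{L}_\Sigma$ has a spectral gap on $(\ker\mathcal{L}_\Sigma)^\perp$ and $\|u\|_{H^{n+4}}(t)=o(1)$, a parabolic energy estimate (the analogue of Proposition~\ref{prop-neutral-dynamics}) should give $\|w(t) - H(v(t))\|_{H^k} = o(\|v(t)\|_{L^2}^{p-1})$ for every $k$. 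Substituting into the $v$-equation and invoking Proposition~\ref{pro:AS} reduces the problem to the perturbed ODE
\[
v'(t) = -\nabla f(v(t)) + E(t), \qquad \|E(t)\|_{L^2} = o(\|v(t)\|_{L^2}^{p-1}),
\]
on the finite-dimensional space $\ker\mathcal{L}_\Sigma \cong \mathbb{R}^J$; this is the content of Proposition~\ref{prop-neutral-dynamics-p1}. Since $u$ does not decay exponentially, integrability fails and \eqref{def:fp} applies with $f_p\not\equiv 0$ of degree $p\geq 3$.

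Next I write $v = r\theta$ with $r = \|v\|_{L^2}$ and $\theta \in \mathbb{S}^{J-1}$. Using $f = f(0) + f_p + O(|v|^{p+1})$ and homogeneity, the flow splits as
\[
r' = -r^{p-1}\hat f_p(\theta) + o(r^{p-1}), \qquad \theta' = -r^{p-2}\nabla_{\mathbb{S}}\hat f_p(\theta) + o(r^{p-2}),
\]
where $\nabla_{\mathbb{S}}$ is the spherical gradient. Changing time via $ds = r^{p-2}\,dt$ turns the angular equation into a perturbed gradient flow for the analytic function $\hat f_p$ on $\mathbb{S}^{J-1}$; the {\L}ojasiewicz inequality forces $\theta(s)$ either to converge to a critical point $w\in\mathbf{C}$ or to accumulate on a level set consisting of critical points. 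Integrating the $r$-equation yields
\[
\tfrac{1}{p-2}\bigl(r(t)^{-(p-2)} - r(0)^{-(p-2)}\bigr) = (1+o(1))\int_0^t \hat f_p(\theta(\tau))\,d\tau.
\]
If $\limsup \hat f_p(\theta) > 0$, the right side is comparable to a positive multiple of $t$, giving alternative~(1) with $\beta = [p(p-2)\hat f_p(w)]^{-1/(p-2)}$, and the {\L}ojasiewicz argument upgrades accumulation to convergence $\theta \to w$ (forcing the Adams--Simon positivity condition). Otherwise $\hat f_p(\theta(t))\to 0$, the integral grows sublinearly, and $t^{1/(p-2)} r(t)\to\infty$. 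In that case one verifies $\nabla_{\mathbb{S}}\hat f_p(\theta(t))\to 0$ (else the secant would sweep out regions of positive $\hat f_p$, contradicting the sublinear estimate), so the accumulation set lies in $\mathbf{C} \cap \{\hat f_p = 0\}$. In either alternative, a critical point of $\hat f_p$ with non-negative value exists, confirming the Adams--Simon non-negativity condition. The $C^\infty$-convergence of $u(t)/\|u(t)\|_{L^2}$ in alternative~(1) then follows by combining the $L^2$-convergence of $\theta$ with parabolic Schauder estimates and the smoothness of the Lyapunov--Schmidt map $H$.

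The hardest part will be Step~1: establishing the uniform estimate $\|w - H(v)\|_{H^k} = o(\|v\|^{p-1})$ in the parabolic regime, since the time derivative $w'$ must be simultaneously controlled, forcing a careful interpolation argument that exploits the spectral gap of $\mathcal{L}_\Sigma$ on $(\ker\mathcal{L}_\Sigma)^\perp$ and absorbs the quadratic coupling via the smallness of $\|u\|_{H^{n+4}}$. A secondary difficulty lies in alternative~(2), where the rescaled angular flow may run for only a finite $s$-time; ruling out accumulation on non-critical points of $\hat f_p$ requires a KMP-style identification of the limit level with the {\L}ojasiewicz inequality for $\hat f_p$ on $\mathbb{S}^{J-1}$.
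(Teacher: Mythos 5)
Your proposal follows the paper's own route: the reduction to the perturbed finite-dimensional gradient flow $x'+\nabla f(x)=O(|x|^{p-\varepsilon/2})$ on $\ker\mathcal{L}_\Sigma$ is exactly Proposition~\ref{prop-neutral-dynamics-p1} (proved via the Merle--Zaag dominance of the neutral mode together with the iterative decay improvement of Lemma~\ref{lem:3.10}), and the polar-coordinate {\L}ojasiewicz/KMP analysis of that flow is Theorem~\ref{thm-gradflow}. The two details you gloss over --- the Merle--Zaag step, which is where the non-exponential-decay hypothesis is used to make the neutral mode dominant, and the bootstrap needed to push the orthogonal-part estimate up to the exponent $p-1$ --- are precisely the ones the paper supplies.
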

\begin{theorem}[fast decay in parabolic equation]\label{thm:general_exponential_p}
Let $u\in C^\infty(Q_{0,\infty},\widetilde{ \mathbf{V}})$ be a solution to \eqref{equ:parabolic} with $\|u\|_{H^{n+4}}(t)=O(e^{-\varepsilon t})$ for some $\varepsilon>0$ as $t\to \infty$. Then there exists a negative eigenvalue $\lambda$ of $\mathcal{L}_{\Sigma}$ such that 
\begin{align*}
\lim_{t\to \infty} e^{-\lambda t} \Vert u(t)\Vert _{L^2}\in (0,\infty).
\end{align*}
Moreover, for some eigensection $w$ with $\mathcal{L}_{\Sigma} w =\lambda w$, 
\begin{align*}
\lim_{t\to \infty  }{u(t)}/{\Vert u(t)\Vert _{L^2}}=w\ \textup{in}\ C^\infty(\Sigma;\mathbf{V}).
\end{align*} 
\end{theorem}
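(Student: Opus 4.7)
The plan is to view \eqref{equ:parabolic} as a linear parabolic equation with an at-least-quadratic forcing: I rewrite it as $u'=\mathcal{L}_\Sigma u+g$ with $g:=(\mathcal{M}_\Sigma-\mathcal{L}_\Sigma)u+N_2(u)$, which by \eqref{equ:quasilinear} and \eqref{equ:N} is a product of factors each vanishing at $u=0$ and involving at most two spatial derivatives of $u$. The Sobolev embedding $H^{n+4}(\Sigma;\mathbf{V})\hookrightarrow C^2(\Sigma;\mathbf{V})$ together with the hypothesis $\|u\|_{H^{n+4}}(t)=O(e^{-\varepsilon t})$ then gives $\|g(t)\|_{L^2}=O(e^{-2\varepsilon t})$. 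I decompose $u=u^+ + u^-$ into the projections onto the direct sum of nonnegative eigenspaces and the direct sum of negative eigenspaces of $\mathcal{L}_\Sigma$, and for each distinct negative eigenvalue $\nu$ further denote by $u_\nu$ the projection of $u$ onto the $\nu$-eigenspace; these satisfy $(u^+)'=\mathcal{L}_\Sigma u^+ + g^+$ and $u_\nu'=\nu u_\nu+g_\nu$, where $g^+$ and $g_\nu$ are the corresponding projections of $g$.

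For $u^+$ the nonnegativity of the spectrum on this subspace yields $\tfrac{1}{2}\tfrac{d}{dt}\|u^+\|_{L^2}^2\geq -\|g\|_{L^2}\|u^+\|_{L^2}$; dividing by $\|u^+\|_{L^2}$, integrating from $t$ to $\infty$ and using $u^+(t)\to 0$ then gives $\|u^+(t)\|_{L^2}\leq \int_t^\infty \|g(s)\|_{L^2}\,ds=O(e^{-2\varepsilon t})$. For each $u_\nu$, the variation-of-constants formula produces a dichotomy: when $|\nu|<2\varepsilon$ the integral $\int_0^\infty e^{-\nu s}g_\nu(s)\,ds$ converges, so $e^{-\nu t}u_\nu(t)\to w_\nu$ in $L^2$ for some $w_\nu$ in the $\nu$-eigenspace; when $|\nu|>2\varepsilon$ one gets $\|u_\nu(t)\|_{L^2}=O(e^{-2\varepsilon t})$; the resonant borderline $|\nu|=2\varepsilon$ produces only a $t\,e^{-2\varepsilon t}$ factor and is bypassed by replacing $\varepsilon$ with a slightly smaller value, which is allowed since only finitely many eigenvalues lie in any bounded window. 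If some $w_\nu$ with $\nu\in(-2\varepsilon,0)$ is nonzero I set $\lambda:=\max\{\nu:w_\nu\neq 0\}$ and combine the leading asymptotics of $u_\lambda$ with the faster-decaying contributions from $u^+$, from the $u_{\nu'}$ with $\nu'<\lambda$, and from the modes with $|\nu'|>2\varepsilon$, to obtain $e^{-\lambda t}u(t)\to w_\lambda$ in $L^2$; this yields $\lim_{t\to\infty}e^{-\lambda t}\|u(t)\|_{L^2}=\|w_\lambda\|_{L^2}>0$ and $u(t)/\|u(t)\|_{L^2}\to w_\lambda/\|w_\lambda\|_{L^2}$.

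If instead every such $w_\nu$ vanishes, the same formula shows $\|u_\nu(t)\|_{L^2}=O(e^{-2\varepsilon t})$ for every $\nu\in(-2\varepsilon,0)$ as well, giving $\|u(t)\|_{L^2}=O(e^{-(2\varepsilon-\delta)t})$ for arbitrarily small $\delta>0$; standard interior parabolic Schauder estimates on unit time slabs propagate this to $\|u\|_{H^{n+4}}(t)=O(e^{-(2\varepsilon-\delta)t})$, allowing me to restart the argument with $\varepsilon$ replaced by $2\varepsilon-\delta$. Because the negative spectrum of $\mathcal{L}_\Sigma$ is a discrete sequence tending to $-\infty$ and the decay rate essentially doubles at each pass, the loop terminates unless $u$ decays faster than every exponential; I would exclude that alternative, assuming $u\not\equiv 0$, by invoking a backward uniqueness/unique-continuation statement for the analytic parabolic system \eqref{equ:parabolic}. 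Finally, the $L^2$ convergence of $u(t)/\|u(t)\|_{L^2}$ to the eigensection $w$ is bootstrapped to $C^\infty$ by applying interior parabolic regularity to the rescaled function $e^{-\lambda t}u(t)$, which satisfies an equation of the same shape with forcing that decays in $L^2$. The principal obstacle is exactly this termination step: ruling out super-exponential decay of nontrivial solutions, propagating the $H^{n+4}$ decay at each iteration, and avoiding the finitely many resonant values of $\mu_k$ along the way.
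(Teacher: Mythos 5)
Your plan is essentially the route the paper takes: the paper omits the proof of Theorem~\ref{thm:general_exponential_p}, stating it is a simpler version of the elliptic argument in Section~\ref{sec:exp}, and that argument is exactly a spectral decomposition of the linearized equation, mode-by-mode ODE/Duhamel estimates showing the critical decay rate must be an eigenvalue, identification of the leading asymptotics, and a regularity bootstrap to upgrade $L^2$ convergence of the secant to $C^\infty$. The only organizational difference is that you pin down the critical rate by iteratively doubling the decay exponent, whereas the paper defines $\gamma_*=\inf\{\gamma<0:\|u\|(t)=O(e^{\gamma t})\}$ at the outset and shows in one step (the analogue of Lemma~\ref{lem:gamma_star}) that $\gamma_*$ is an eigenvalue; the two are interchangeable. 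The one load-bearing ingredient you defer to a citation is the "termination" step: that a nontrivial solution cannot decay faster than every exponential. This is not off-the-shelf in this generality; the paper devotes Proposition~\ref{pro:infinitydecay} to it (a Strehlke-type unique continuation at infinity via the split $X_\pm$, the monotone quantity $M_\gamma$, and letting $\gamma\to-\infty$), and Remark~\ref{rmk:cases} after it explains the parabolic adaptation, where one must differentiate the equation (Lemma~\ref{lem:error_p}) to control $E_2(u)$ in the stronger norms the argument requires. If you supply that proposition (or reproduce its proof for \eqref{equ:parabolic}), your argument is complete; two smaller points you gloss over but which do work out are the uniform-in-$\nu$ treatment of the infinitely many modes with $\nu<-2\varepsilon$ (summing the Duhamel bounds in $\ell^2$ over the eigenbasis, as in Corollary~\ref{cor:source}) and the propagation of exponential $L^2$ decay to exponential $H^{n+4}$ decay at each pass of the iteration, which follows from the parabolic smoothing estimate \eqref{equ:regularity3}.
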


{In fact, it is possible that no slowly converging solution exists even in the presence of non-integrable kernel. e.g., we refer \cite{choi2021translating}\cite{choisunclassification}. Theorem \ref{thm:general_e} and \ref{thm:general_p} provide a criterion for this non-existence result.}

\begin{corollary}
Let $u\in C^\infty(Q_{0,\infty},\widetilde{ \mathbf{V}})$ be a solution to \eqref{equ:main} with $\|u\|_{C^1}(t)=o(1)$ as $t\to \infty$. Suppose the Adams-Simon non-negative condition for \eqref{equ:main} fails. Then $u$ decays exponentially. 
\end{corollary}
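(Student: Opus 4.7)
The plan is to derive the corollary as the direct contrapositive of the first assertion of Theorem~\ref{thm:general_e}, with no additional analytic work required. I would argue by contradiction: assume $u$ is a solution of \eqref{equ:main} satisfying $\|u\|_{C^1}(t)=o(1)$ that does not decay exponentially. The full hypothesis of Theorem~\ref{thm:general_e} is then in force, so invoking that theorem produces some $w\in\mathbf{C}$ with $m^{-1}f_p(w)\ge 0$, i.e.\ the Adams-Simon non-negativity condition for \eqref{equ:main} holds. This contradicts the standing assumption that this condition fails, so $u$ must in fact decay exponentially. Concluding by contraposition finishes the proof.

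There is essentially no obstacle: all of the substantive content is already packaged into Theorem~\ref{thm:general_e}, and the corollary is a one-line logical consequence. The only point worth checking carefully is terminological consistency—the paper takes ``$u$ decays exponentially'' to mean $\|u\|_{C^1}(t)=O(e^{-\varepsilon t})$ for some $\varepsilon>0$, so the negation used inside the contradiction matches precisely the hypothesis of Theorem~\ref{thm:general_e}. If one wished to supplement the conclusion with a quantitative rate and asymptotic direction, one could subsequently apply Theorem~\ref{thm:general_exponential_e} to place $u$ into one of the three fast-decay alternatives; but this refinement is not needed to establish the statement as written.
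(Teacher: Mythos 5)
Your proposal is correct and is exactly the intended argument: the paper states this corollary without proof precisely because it is the contrapositive of the first assertion of Theorem~\ref{thm:general_e}. Your remark on matching the negation of exponential decay with the hypothesis of that theorem is the only point that needed checking, and you handled it correctly.
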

\begin{corollary}\label{cor:pp}
Let $u\in C^\infty(Q_{0,\infty},\widetilde{ \mathbf{V}})$ be a solution to \eqref{equ:parabolic} with $\|u\|_{H^{n+4}}(t)=o(1)$ as $t\to \infty$. Suppose the Adams-Simon non-negative condition for \eqref{equ:parabolic} fails. Then $u$ decays exponentially.
\end{corollary}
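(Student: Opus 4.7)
The plan is to derive Corollary \ref{cor:pp} as an immediate contrapositive of Theorem \ref{thm:general_p}. The regularity/decay assumption $\|u\|_{H^{n+4}}(t) = o(1)$ is exactly the hypothesis appearing in that theorem, so no additional preparation (e.g. bootstrapping regularity or upgrading decay) is required before invoking it.

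Concretely, I would argue as follows. Suppose for contradiction that $u$ does not decay exponentially as $t \to \infty$, i.e., for every $\varepsilon > 0$ one has $\limsup_{t\to\infty} e^{\varepsilon t}\|u\|_{H^{n+4}}(t) = \infty$. Then $u$ falls under the scope of Theorem \ref{thm:general_p}, which yields in particular that $\Sigma$ satisfies the Adams-Simon non-negativity condition for \eqref{equ:parabolic}: there exists $w \in \mathbf{C}$ with $f_p(w) \geq 0$. This directly contradicts the standing hypothesis of the corollary, namely that the Adams-Simon non-negativity condition for \eqref{equ:parabolic} fails. Consequently $u$ must decay exponentially, and we are done.

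There is essentially no obstacle at the level of this corollary: it is purely a rephrasing of the necessary-condition part of Theorem \ref{thm:general_p} in its contrapositive form. All the substantive work is contained in Theorem \ref{thm:general_p} itself, whose proof is developed in Sections \ref{sec:slowparabolic} and \ref{sec:gradient} via the Lyapunov--Schmidt reduction of Proposition \ref{pro:AS}, the reduction to a finite-dimensional perturbed gradient flow of the reduced functional $f$, and a \L{}ojasiewicz-type argument modelled on \cite{KMP}. Once that machinery is in place, the corollary follows in a single line.
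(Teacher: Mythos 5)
Your proposal is correct and is exactly how the paper obtains Corollary \ref{cor:pp}: it is the contrapositive of the necessary-condition part of Theorem \ref{thm:general_p}, with all substantive work delegated to that theorem. The only cosmetic difference is that you phrase non-exponential decay via the $H^{n+4}$ norm while the paper uses the $C^1$ norm in \eqref{equ:nonexpdecay_p1}; these are interchangeable by the parabolic regularity of Lemma \ref{lem:regularity}.
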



\begin{remark}\label{rmk:cases}
We note that the main results, Theorems~\ref{thm:general_e}-\ref{thm:general_exponential_p}, can be generalized to cover other cases.
\begin{enumerate}
\item We may consider ancient solutions defined on $t\in (-\infty,0]$ which decay to zero when $t$ approaches minus infinity. For ancient solutions to the elliptic equation \eqref{equ:main}, we can simply perform a change of variable $t\mapsto -t$. For ancient solutions to the parabolic equation \eqref{equ:parabolic}, simple changes in the proofs yield Theorems~\ref{thm:general_p} and \ref{thm:general_exponential_p}.
\item We may consider the hyperbolic equation of the form
\begin{align*}
 -u''-mu'+\mathcal{M}_\Sigma u=N_1(u).
\end{align*}
The only difference from the elliptic equation \eqref{equ:main} is that, due to the lack of elliptic regularity, we need to assume $\Vert u \Vert_{C^s}(t)$ decays to zero for all $s\in\mathbb{N}$.
\item Due to volume normalization, the normalized Yamabe flow (NYF) involves a non-local term,  the total scalar curvature, in its speed. Nevertheless, our proof for Theorem~\ref{thm:general_p} can be modified to cover the NYF. Specifically, all parts of the proof, except for Lemma~\ref{lem:error_p}, remain the same. Lemma~\ref{lem:error_p} can be easily established with the explicit forms of the NYF.
\item The main results apply for the gradient flow on $\mathbb{R}^n$ by choosing $\Sigma$ a point, $d\mu$ a Dirac mass, and $V$ a trivial $\mathbb{R}^n$ bundle. Indeed, \eqref{equ:main} and \eqref{equ:parabolic} cover wider class of ODE systems which evolve under analytic potentials.
\end{enumerate}

\end{remark}

\bigskip

We finish this section with two lemmas related to the Lyapunov-Schmidt reduction map $H$ given in Proposition~\ref{pro:AS}. We decompose $u$ as follows: let $u^T=\Pi^T u$ and define $\tilde{u}^\perp$ by
\begin{align}\label{def:ut}
u=u^T+H(u^T)+\tilde{u}^\perp.
\end{align}
Since $u^T\in\ker \mathcal{L}_{\Sigma}$, $u^T$ can be written as a linear combination of $\{\varphi_{\iota+j}\}_{1\leq j\leq  J}$ as
\begin{equation}\label{def:xj}
u^T= \sum_{j=1}^J x_j\varphi_{\iota+j}.
\end{equation}

\begin{lemma}\label{lem:MSigma}
Let $\rho>0$ be the constant given in Proposition~\ref{pro:AS} and $u\in C^{2,\alpha}(\Sigma;\mathbf{V})$ be a section with $\| u^T \|_{L^2}\leq 2^{-1}\rho$. Let $x=(x_1,x_2,\dots,x_J)$ be given by \eqref{def:xj}. Then for any $\alpha\in (0,1)$, there exists a positive constant $C=C(\mathcal{M}_\Sigma,\alpha)$ such that
\begin{equation} 
\left| \mathcal{M}_\Sigma (u)+\nabla f(x)-\mathcal{L}_{\Sigma} \tilde{u}^\perp \right|\leq C \Vert u \Vert_{C^{2,\alpha}(\Sigma)}\Vert \tilde{u}^\perp  \Vert_{C^{2,\alpha}(\Sigma)} .
\end{equation} 
\begin{proof}
In the proof we use $C$ to represent a positive constant that depends on $\mathcal{M}_\Sigma$, $\alpha$, and its value may vary from one line to another.
Let $\bar{\mathcal{L}}_\Sigma$ be the linearization of $\mathcal{M}_\Sigma$ at $u^T+H(u^T)$. From Proposition~\ref{pro:AS} (Lyapunov-Schmidt reduction) and \eqref{def:ut},
\begin{align*}
&\left| \mathcal{M}_\Sigma (u)+\nabla f(x)-\mathcal{L}_{\Sigma} \tilde{u}^\perp \right|\\
 =&\left|    \mathcal{M}_{\Sigma}(u^T +H(u^T)+ \tilde u^\perp)-\mathcal{M}_{\Sigma} (u^T +H(u^T)) -\mathcal{L}_{\Sigma} \tilde{u}^\perp\right|\\
 \leq &|\bar{\mathcal{L}}_\Sigma  \tilde{u}^\perp -\mathcal{L}_{\Sigma} \tilde{u}^\perp  |+ |\mathcal{M}_{\Sigma}(u^T +H(u^T)+ \tilde u^\perp)-\mathcal{M}_{\Sigma} (u^T +H(u^T))-\bar{\mathcal{L}}_\Sigma  \tilde{u}^\perp   |. 
\end{align*}
Because $\mathcal{M}_\Sigma$ is an analytic map from $C^{2,\alpha}(\Sigma;\mathbf{V})$ to $C^{\alpha}(\Sigma;\mathbf{V})$, the above is bounded by  
\begin{align*}
C  \Vert u^T+H(u^T)\Vert_{C^{2,\alpha}(\Sigma)}\Vert \tilde u^\perp \Vert_{C^{2,\alpha}(\Sigma)}+C\Vert \tilde u^\perp \Vert^2_{C^{2,\alpha}(\Sigma)}
\end{align*}
From \eqref{def:xj} and the analyticity of $H$, there holds  $\Vert u^T+H(u^T)\Vert_{C^{2,\alpha}(\Sigma)}\leq C |x|\leq C \Vert u \Vert_{C^{2,\alpha}(\Sigma)}.$ 	Together with \eqref{def:ut}, we have $\Vert \tilde{u}^\perp \Vert_{C^{2,\alpha}(\Sigma)} \leq C  \Vert u \Vert_{C^{2,\alpha}(\Sigma)}.$ Hence the assertion holds.

\end{proof}
\end{lemma}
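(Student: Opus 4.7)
The identity to prove is essentially a second-order Taylor remainder statement about the nonlinear operator $\mathcal{M}_\Sigma$, combined with the defining property of the Lyapunov--Schmidt reduction. My plan is therefore to first rewrite the left-hand side so that $\nabla f(x)$ is absorbed into $\mathcal{M}_\Sigma$ via Proposition~\ref{pro:AS}, and then estimate what remains as a quadratic-type error around the base point $u^T+H(u^T)$.

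Concretely, the first step is to invoke Proposition~\ref{pro:AS}. Since $u^T\in\ker\mathcal{L}_\Sigma$ and $H(u^T)\in(\ker\mathcal{L}_\Sigma)^\perp$, both projections of $\mathcal{M}_\Sigma(u^T+H(u^T))$ are determined: the $\perp$-part vanishes and the $T$-part equals $-\nabla f(x)$. Hence $\mathcal{M}_\Sigma(u^T+H(u^T))=-\nabla f(x)$, and using the decomposition $u=u^T+H(u^T)+\tilde u^\perp$ the quantity we need to bound becomes
\begin{equation*}
\mathcal{M}_\Sigma(u^T+H(u^T)+\tilde u^\perp)-\mathcal{M}_\Sigma(u^T+H(u^T))-\mathcal{L}_\Sigma \tilde u^\perp.
\end{equation*}
This is visibly the difference between an increment of $\mathcal{M}_\Sigma$ and the action of the linearization at $0$ applied to the increment.

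The second step is to introduce the linearization $\bar{\mathcal{L}}_\Sigma$ of $\mathcal{M}_\Sigma$ at the base point $u^T+H(u^T)$, and to split the expression above as
\begin{equation*}
\bigl[\mathcal{M}_\Sigma(u^T+H(u^T)+\tilde u^\perp)-\mathcal{M}_\Sigma(u^T+H(u^T))-\bar{\mathcal{L}}_\Sigma\tilde u^\perp\bigr]+\bigl[\bar{\mathcal{L}}_\Sigma\tilde u^\perp-\mathcal{L}_\Sigma\tilde u^\perp\bigr].
\end{equation*}
The first bracket is a genuine Taylor remainder of the smooth map $\mathcal{M}_\Sigma\colon C^{2,\alpha}\to C^\alpha$ at $u^T+H(u^T)$, which by the fiber-analyticity of $F$ and the structure \eqref{equ:quasilinear} is quadratic in $\tilde u^\perp$ and hence bounded by $C\|\tilde u^\perp\|^2_{C^{2,\alpha}}$. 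The second bracket compares two linearizations of $\mathcal{M}_\Sigma$ (at $u^T+H(u^T)$ and at $0$) applied to the same increment; since the coefficients of a linearization depend smoothly on the base section, this difference is bounded by $C\|u^T+H(u^T)\|_{C^{2,\alpha}}\,\|\tilde u^\perp\|_{C^{2,\alpha}}$.

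The third and final step is to replace $\|u^T+H(u^T)\|_{C^{2,\alpha}}$ by $\|u\|_{C^{2,\alpha}}$. Here I would use Proposition~\ref{pro:AS}: $H$ is analytic into $C^{2,\alpha}$ with $H(0)=0$, $DH(0)=0$, so on the ball of radius $\rho/2$ one gets $\|H(u^T)\|_{C^{2,\alpha}}\lesssim \|u^T\|_{L^2}^2$ while $\|u^T\|_{C^{2,\alpha}}\lesssim|x|$ because $u^T=\sum x_j\varphi_{\iota+j}$ is a finite linear combination of smooth fixed eigensections; and finally $|x|\lesssim\|u^T\|_{L^2}\le\|u\|_{C^{2,\alpha}}$. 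Putting these inequalities together (and dominating $\|\tilde u^\perp\|_{C^{2,\alpha}}^2$ by $\|u\|_{C^{2,\alpha}}\|\tilde u^\perp\|_{C^{2,\alpha}}$, valid because $\|\tilde u^\perp\|_{C^{2,\alpha}}\le C\|u\|_{C^{2,\alpha}}$ via \eqref{def:ut}) yields the claimed estimate. The main subtlety, rather than a genuine obstacle, is to make sure that all the constants depend only on $\mathcal{M}_\Sigma$ and $\alpha$; this is what forces us to work in $C^{2,\alpha}$ (so that $\mathcal{M}_\Sigma$ lands in $C^\alpha$ and its linearization has $C^\alpha$ coefficients) rather than merely in $C^2$, and to use the smallness $\|u^T\|_{L^2}\le\rho/2$ only to stay inside the domain of analyticity of $H$.
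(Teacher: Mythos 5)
Your proposal is correct and follows essentially the same route as the paper: absorb $\nabla f(x)$ via Proposition~\ref{pro:AS}, split the remainder using the linearization $\bar{\mathcal{L}}_\Sigma$ at $u^T+H(u^T)$ into a quadratic Taylor term plus a difference of linearizations, bound both via the analyticity of $\mathcal{M}_\Sigma\colon C^{2,\alpha}\to C^\alpha$, and then convert $\Vert u^T+H(u^T)\Vert_{C^{2,\alpha}}$ and $\Vert\tilde u^\perp\Vert_{C^{2,\alpha}}$ into $\Vert u\Vert_{C^{2,\alpha}}$. The only cosmetic difference is that you note the sharper bound $\Vert H(u^T)\Vert_{C^{2,\alpha}}\lesssim\Vert u^T\Vert_{L^2}^2$ from $DH(0)=0$, which the paper does not need.
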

 
\begin{lemma}[boundedness of decomposition]\label{lem:utup} 
Let $\rho>0$ be the constant given in Proposition~\ref{pro:AS} and $u\in C^\infty (Q_{a,b};\widetilde{\mathbf{V}})$ be a smooth section with $\|u\|_{C^s}(t) \le M <\infty $. Then when $\|u^T\|_{L^2}(t)\leq 2^{-1}\rho$, 
\begin{align*}
\|u^T\|_{C^s}(t) +\|H(u^T)\|_{C^s}(t) + \|\tilde{u}^\perp\|_{C^s}(t) \le C  \|u\|_{C^s}(t),
\end{align*}for some constant $C=C(\mathcal{M}_\Sigma, M,s)$.
\begin{proof}
In the proof we use $C$ to represent a positive constant that depends on $\mathcal{M}_\Sigma$, $M$, $s$, and its value may vary from one line to another. Let $x(t)=(x_1(t),\dots, x_J(t))$ be the coefficients given by \eqref{def:xj} for $u^T(t)$. Then
\begin{align*}
\|u^T\|_{C^s}(t)\leq C \sum_{k=0}^s \left| \frac{d^k}{dt^k} x(t) \right|\leq C   \|u\|_{C^s}(t).
\end{align*}
Through \eqref{equ:identification}, we may view $H$ as a map from an open ball in $\mathbb{R}^J$ to $C^\infty(\Sigma;\mathbf{V})$. We abuse the notation and write $H(x(t))$ for $H(u^T)$. For $1\leq k\leq s$, 
\begin{align*}
\frac{\partial^k}{\partial t^k} H(x(t))=\sum_{i=1}^k\sum_{\substack{k_1+\dots+k_i=k\\k_j\geq 1}}  D^iH(x(t)) \left[  \frac{d^{k_1} x (t)}{dt^{k_1} },  \dots  ,\frac{d^{k_i} x (t)}{dt^{k_i} } \right].
\end{align*}

Let $\mathcal{B}_{i,\ell,\alpha}= \mathcal{L}(\left(\mathbb{R}^J\right)^{\otimes i}, C^{\ell,\alpha}(\Sigma;\mathbf{V}))$ be the Banach space of bounded linear maps from $\left(\mathbb{R}^J\right)^{\otimes i}$ to $ C^{\ell,\alpha}(\Sigma;\mathbf{V})$ equipped with the operator norm. From Proposition~\ref{pro:AS}, $D^iH$ is an analytic map from $B_\rho(0)\subset \mathbb{R}^J$ to $\mathcal{B}_{i,\ell,\alpha}$. In particular, the operator norm of $D^iH$ is bounded in $B_{2^{-1}\rho}(0)$. Therefore, provided $|x(t)|\leq 2^{-1}\rho$,
\begin{align*}
\left| \slashed{\nabla}^\ell\left[ D^iH(x(t))\left [ \frac{d^{k_1} x (t)}{dt^{k_1} },  \dots ,  \frac{d^{k_i} x (t)}{dt^{k_i} } \right ]\right]\right|\leq C \left|\frac{d^{k_1} x (t)}{dt^{k_1} } \right|  \dots  \left| \frac{d^{k_i} x (t)}{dt^{k_i} } 	\right| 
 \leq C   \|u\|_{C^s}(t).
\end{align*} 
We used the assumption $\|u\|_{C^s}(t)\le M$ in the second inequality. This ensures $ \|H(u^T)\|_{C^s}(t) \leq C  \|u\|_{C^s}(t).$ Then because of \eqref{def:ut}, $ \|\tilde{u}^\perp\|_{C^s}(t) \leq C \|u\|_{C^s}(t)$ holds.  
\end{proof}
\end{lemma}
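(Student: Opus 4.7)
The plan is to reduce everything to a finite-dimensional computation via the identification \eqref{equ:identification}, which sends $x=(x_1,\ldots,x_J)\in\mathbb{R}^J$ to $\sum_{j=1}^J x_j\varphi_{\iota+j}\in\ker\mathcal{L}_{\Sigma}$. Since $\ker\mathcal{L}_{\Sigma}$ is finite-dimensional and spanned by smooth eigensections, all norms on it are equivalent; quantitatively, if $x(t)$ denotes the coordinates of $u^T(t)$, then $\|u^T\|_{C^s}(t)\leq C\sum_{k=0}^s |d^k x/dt^k|$ with $C$ depending only on the fixed eigensections $\varphi_{\iota+1},\ldots,\varphi_{\iota+J}$. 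Each coordinate is given by $x_j(t)=\int_\Sigma\langle u(\cdot,t),\varphi_{\iota+j}\rangle\,d\mu$, so its time derivatives are controlled by $\|u\|_{C^s}(t)$ via Cauchy--Schwarz, yielding the $u^T$ part of the estimate.

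For $\|H(u^T)\|_{C^s}(t)$, the key input is the analyticity asserted in Proposition~\ref{pro:AS}: viewed as an analytic map from $B_\rho\subset\mathbb{R}^J$ into $C^{s,\alpha}(\Sigma;\mathbf{V})$, the iterated Fr\'echet derivatives $D^i H$ are uniformly bounded as multilinear maps from $(\mathbb{R}^J)^{\otimes i}$ into $C^{s,\alpha}(\Sigma;\mathbf{V})$ on the smaller ball $\{|x|\leq 2^{-1}\rho\}$, which contains $x(t)$ by the hypothesis $\|u^T\|_{L^2}(t)\leq 2^{-1}\rho$. Applying Fa\`a di Bruno's formula to $t\mapsto H(x(t))$, every mixed derivative $\partial_t^k\slashed{\nabla}^\ell H(x(t))$ expands into a finite sum of terms of the form
\begin{equation*}
\slashed{\nabla}^\ell\!\bigl(D^i H(x(t))\bigr)\!\bigl[d^{k_1}x/dt^{k_1},\ldots,d^{k_i}x/dt^{k_i}\bigr],\qquad k_1+\cdots+k_i=k,\ i\geq 1,
\end{equation*}
each bounded by $C|d^{k_1}x/dt^{k_1}|\cdots|d^{k_i}x/dt^{k_i}|$. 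Using $\|u\|_{C^s}(t)\leq M$, I would absorb the factors $|d^{k_j}x/dt^{k_j}|$ for $j\geq 2$ into a constant $C(M)$, leaving a single factor bounded by $\|u\|_{C^s}(t)$.

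Finally, $\|\tilde{u}^\perp\|_{C^s}(t)$ follows from the defining identity $\tilde{u}^\perp=u-u^T-H(u^T)$ via the triangle inequality. The only mildly technical point is the passage from ``$H$ is analytic as a $C^{k,\alpha}$-valued map'' to ``the multilinear operator norms $\|D^i H\|$ are uniformly bounded on $B_{\rho/2}(0)$'', a standard consequence of Cauchy estimates in the analytic category; once this is granted, the rest of the proof is just bookkeeping with the chain rule and Cauchy--Schwarz, so I do not anticipate a serious obstacle.
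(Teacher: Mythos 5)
Your proposal is correct and follows essentially the same route as the paper's proof: identify $u^T$ with its coordinates $x(t)$, expand $\partial_t^k H(x(t))$ via the chain rule, bound the multilinear operators $D^iH$ on $B_{\rho/2}$ using the analyticity from Proposition~\ref{pro:AS}, absorb the extra factors with the bound $\|u\|_{C^s}(t)\le M$, and conclude for $\tilde{u}^\perp$ by the triangle inequality. The only additions are your explicit justifications (Cauchy--Schwarz for the coordinates, Cauchy estimates for the analytic map), which the paper leaves implicit.
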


\section{First order ODE system} \label{sec:1storderode}
In this section, we transform \eqref{equ:main} into a first order ODE system.  Let $u\in C^2(Q_{0,\infty},\widetilde{\mathbf{V}})$ be a solution to \eqref{equ:main}. By setting ${E}_1 (u)=N_1(u)- \mathcal{M}_\Sigma u+\mathcal{L}_{\Sigma} u$, we may rewirte \eqref{equ:main} as
\bea\label{equ:minimalODE}  u''- m  u'+   \mathcal{L}_{\Sigma} u = E_1(u).\eea
From \eqref{equ:quasilinear} and \eqref{equ:N}, the error term $E_1(u)$ has the structure
\begin{equation}\label{equ:Estr}
 {E}_1(u)=a_1\cdot Du'(t)+a_2\cdot u'(t)+\sum_{j=0}^2 a_{4,j}\cdot \slashed{\nabla}^j u,
\end{equation} 
where $a_{4,j}=a_{4,j}(\omega, u,\slashed{\nabla}u,u')$ are smooth with $a_{4,j}(\omega,0, 0,0)=0$. We aim to vectorize \eqref{equ:minimalODE} and view it as a first order ODE.  

Let us begin to set up some notion.
\begin{definition}\label{def:mathcal}
Let $\mathbf{L}$ be an operator from $H^2(\Sigma;\mathbf{V})\times H^1(\Sigma;\mathbf{V})$ to $H^1(\Sigma;\mathbf{V})\times L^2(\Sigma;\mathbf{V})$ given by
\begin{align*}
\mathbf{L}(v,w) := \left( 2^{-1}m v+
w , -\mathcal{L}_{\Sigma}v+4^{-1}m^2 v+2^{-1}m w
\right).
\end{align*}
\end{definition}
\begin{definition}\label{def:qE}
For a section $u\in C^2(Q_{0,\infty},\widetilde{\mathbf{V}})$, define
\begin{align*}
 q(u):= (u, u' - 2^{-1}mu),\ \mathcal{E}(u) := (0,E_1(u)). 
\end{align*} 
If there is no confusion, we often omit the argument and write $q$, $\mathcal{E}$ to denote $q(u)$ and $\mathcal{E}(u)$, respectively. Moreover,  $q(t)$ and $\mathcal{E}(t)$ denote the restriction of $q(u)$ and $\mathcal{E}(u)$ on $\{t\}\times\Sigma$, respectively.
\end{definition}

The equation \eqref{equ:minimalODE} can be rewritten as
\begin{align}\label{eq-vectoru}
 q'=\mathbf{L}q+\mathcal{E} .
\end{align}
We now identify the eigenvalues and eigensections of $\mathbf{L}$. For $i\in\mathbb{N}$, let 
$$\gamma^\pm _i :=  2^{-1}m  \pm \sqrt {4^{-1}m^2   -\lambda_i }.$$
Recall that $\mathbb{N}$ is divided into $\cup_{i=1}^4 I_i $ in \eqref{def:I123}. The eigenvalues $\gamma^\pm_i$ are real and different from $2^{-1}m$ if and only if $i\in I_3\cup I_4$. For $i\in I_3\cup I_4$, let 
\begin{equation}\label{def:psi+-}
\psi_i ^\pm :=   \bigg ( \frac{m}{m-2\gamma^\pm_i }\varphi_i ,  -\frac{m}{2}  \varphi_i\bigg ).
\end{equation}
From $\mathcal{L}_{\Sigma}\varphi=\lambda_i\varphi_i$, one can check that 
\begin{equation}\label{equ:L1}
 \mathbf{L}\psi^{\pm}_i=\gamma^\pm_i\psi^\pm_i.
\end{equation}
For $i\in I_1$, $\gamma^{\pm}_i$ are not real with the imaginary part $\beta_i= \sqrt{  \lambda_i-4^{-1}m^2 }>0$. Set
\begin{align*}
\psi_{i,1}:= (0, 2^{-1/2}m\varphi_i),\ \psi_{i,2}:= (2^{-1/2}m\beta_i^{-1}\varphi_i ,0).
\end{align*}   
Then \begin{equation}\label{equ:L2}
 \mathbf{L}\psi_{i,1}=2^{-1}m \psi_{i,1}+\beta_i\psi_{i,2},\ \mathbf{L}\psi_{i,2}=2^{-1}m \psi_{i,2}-\beta_i\psi_{i,1}. 
\end{equation}
For $i\in I_2$, let
\begin{align*}
\psi_{i,3}:=(0,2^{-1/2}m\varphi_i),\ \psi_{i,4}:=(2^{-1/2}m\varphi_i,0).
\end{align*}
Then 
\begin{equation}\label{equ:L3}
 \mathbf{L}\psi_{i,3}=2^{-1}m \psi_{i,3}+\psi_{i,4},\ \mathbf{L}\psi_{i,4}=2^{-1}m \psi_{i,4}. 
\end{equation}

Next, we introduce a bilinear form $G$. Let $G:( H^1(\Sigma;\mathbf{V})\times H^0(\Sigma;\mathbf{V})  )^2\to\mathbb{R}$ be defined by
\begin{align*}
G((v_1,w_1);(v_2,w_2)):= &2m^{-2}  \int_{\Sigma}\big( -\left\langle \mathcal{L}_{\Sigma} v_1-4^{-1}m^2v_1 ,v_2 \right\rangle+\left\langle w_1 ,w_2 \right\rangle \big)\, d\mu\\
 &+ 2m^{-2} \sum_{i\in I_1}  2  \beta^2_i \int_\Sigma \left\langle v_1, \varphi_i \right\rangle d\mu \int_\Sigma \left\langle v_2, \varphi_i \right\rangle d\mu  +2m^{-2}\sum_{i\in I_2}\int_\Sigma \left\langle v_1, \varphi_i \right\rangle d\mu \int_\Sigma \left\langle v_2, \varphi_i \right\rangle d\mu . 
\end{align*}
We denote $\|(v,w)\|^2_G:= G((v,w);(v,w))$. The positive-definiteness of $G$ will be justified in Lemma~\ref{lem:G} below. We write $\mathbf{L}^\dagger$ for the adjoint operator of $\mathbf{L}$ with respect to $G$. Namely,
\begin{align*}
G(\mathbf{L}(v_1,w_1);(v_2,w_2))=G((v_1,w_1);\mathbf{L}^\dagger(v_2,w_2)).
\end{align*} 
We define the collection of vectors 
\begin{equation}\label{def:B}
 \mathcal{B}:= \{\psi_{i,1}, \psi_{i,2}\}_{i\in I_1}\cup\{\psi_{i,3}, \psi_{i,4}\}_{i\in I_2} \cup\{\psi^+_i,\psi^-_i\}_{i\in I_3\cup I_4}. 
\end{equation}
\begin{lemma}[spectral decomposition of  $\mathbf{L}$]\label{lem:G}
\hfill
\begin{enumerate}
\item  The bilinear form $G$ is equivalent to the standard inner product. Namely,
\begin{equation}\label{equ:Gequiv}
  \|(v,w)\|_G \approx \Vert (v,w)\Vert_{H^1\times H^0}. 
\end{equation}
\item The collection $\mathcal{B}$ forms a complete $G$-orthonormal basis.
\item For $i\in I_3\cup I_4$, 
\begin{align}\label{equ:Ldagger1}
\mathbf{L}^\dagger \psi^\pm_i=\gamma^\pm_i\psi^\pm_i.
\end{align}
For $i\in I_1$, 
\begin{align}\label{equ:Ldagger2}
\mathbf{L}^\dagger \psi_{i,1}=2^{-1}m\psi_{i,1}-\beta_i\psi_{i,2} ,\ \mathbf{L}^\dagger \psi_{i,2}=2^{-1}m\psi_{i,2}+\beta_i\psi_{i,1}.
\end{align}
For $i\in I_2$, 
\begin{align}\label{equ:Ldagger3}
\mathbf{L}^\dagger \psi_{i,3}=2^{-1}m\psi_{i,3},\ \mathbf{L}^\dagger \psi_{i,4}=2^{-1}m\psi_{i,4}+\psi_{i,3}.
\end{align}
\end{enumerate}
\end{lemma}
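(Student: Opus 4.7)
The plan is to verify the three assertions in order, with (1) and (2) providing the foundation for (3). First, I would expand an arbitrary pair $(v,w)\in H^1(\Sigma;\mathbf{V})\times L^2(\Sigma;\mathbf{V})$ in the $L^2$-orthonormal eigenbasis as $v=\sum_i v_i\varphi_i$ and $w=\sum_i w_i\varphi_i$. Since the correction terms in $G$ are built from the $L^2$-pairings $\int\langle v,\varphi_i\rangle\,d\mu=v_i$, the expression $\|(v,w)\|_G^2$ decouples as a sum over $i$. A case analysis shows that the coefficient of $v_i^2$ equals $2m^{-2}\beta_i^2$ on $I_1$ (the initially negative contribution $-2m^{-2}\beta_i^2$ is overcome by the correction $+4m^{-2}\beta_i^2$), equals $2m^{-2}$ on $I_2$ (contributed entirely by the correction), and equals $2m^{-2}(4^{-1}m^2-\lambda_i)>0$ on $I_3\cup I_4$, while the coefficient of $w_i^2$ is always $2m^{-2}$. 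Combined with the spectral identity $\|v\|_{H^1}^2\approx \sum_i(|\lambda_i|+1)v_i^2$ derived from \eqref{equ:H1} together with $\lambda_i\to-\infty$, this yields (1).

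For (2), pairwise $G$-products of basis elements indexed by distinct $i\neq j$ vanish automatically by the $L^2$-orthogonality of $\{\varphi_i\}$, so it suffices to consider pairs with a common index. On $I_3\cup I_4$, setting $s_i=\sqrt{4^{-1}m^2-\lambda_i}$ gives the simplification $\frac{m}{m-2\gamma_i^\pm}=\mp\frac{m}{2s_i}$; the $v$-contribution to $G(\psi_i^+;\psi_i^-)$ then equals $-\tfrac12$ and the $w$-contribution equals $+\tfrac12$, cancelling, while the corresponding contributions to each $G(\psi_i^\pm;\psi_i^\pm)$ are both $+\tfrac12$ and sum to $1$. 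The $I_1$ and $I_2$ pairings are simpler because one component of each $\psi$ vanishes. Completeness is then immediate: for each $i$ the two corresponding elements of $\mathcal{B}$ span $\mathrm{span}(\varphi_i,0)\oplus\mathrm{span}(0,\varphi_i)$, and the totality of such subspaces is $G$-dense in $H^1\times L^2$ by the $L^2$-completeness of $\{\varphi_i\}$ and the norm equivalence (1).

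With $\mathcal{B}$ a complete $G$-orthonormal basis in hand, (3) follows by transposing the matrix of $\mathbf{L}$ recorded blockwise in \eqref{equ:L1}, \eqref{equ:L2}, \eqref{equ:L3}. The diagonal blocks on $\mathrm{span}(\psi_i^+,\psi_i^-)$ are fixed under transposition, the block $\bigl(\begin{smallmatrix}2^{-1}m&-\beta_i\\ \beta_i&2^{-1}m\end{smallmatrix}\bigr)$ on $\mathrm{span}(\psi_{i,1},\psi_{i,2})$ transposes to give \eqref{equ:Ldagger2}, and the Jordan block on $\mathrm{span}(\psi_{i,3},\psi_{i,4})$ transposes so that the lower-left $1$ moves to the upper right, yielding \eqref{equ:Ldagger3}. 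To sidestep any unbounded-operator subtleties, one may simply \emph{define} $\mathbf{L}^\dagger$ by the stated formulas on $\mathcal{B}$ and verify the duality $G(\mathbf{L}X;Y)=G(X;\mathbf{L}^\dagger Y)$ for each pair of basis vectors, which reduces to a $2\times 2$ check on each invariant block. The main obstacle is the sign bookkeeping in the $I_3\cup I_4$ computation of (2): the cancellation that makes $G(\psi_i^+;\psi_i^-)=0$ relies crucially on the identity $\frac{m}{m-2\gamma_i^\pm}=\mp\frac{m}{2s_i}$ encoded in \eqref{def:psi+-}, and a sign or normalization slip here would destroy the $G$-orthogonality on which the rest of the argument rests.
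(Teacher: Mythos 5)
Your proposal is correct and follows essentially the same route as the paper: diagonalize $G$ in the eigenbasis $\{\varphi_i\}$ and use \eqref{equ:H1} for the norm equivalence, verify $G$-orthonormality of $\mathcal{B}$ by direct computation (your sign check on $I_3\cup I_4$ via $\frac{m}{m-2\gamma_i^\pm}=\mp\frac{m}{2s_i}$ is exactly the "straightforward" verification the paper omits), deduce completeness from the fact that each $(\varphi_j,0)$ and $(0,\varphi_j)$ lies in the span of $\mathcal{B}$, and obtain (3) by transposing the blocks of \eqref{equ:L1}--\eqref{equ:L3} in the $G$-orthonormal basis. All your block computations check out, so the argument is sound.
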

\begin{proof}
We start to prove \eqref{equ:Gequiv}. By expressing $v$ as $v=\sum_{i=1}^\infty a_i \varphi_i$, \begin{align*}
G((v,0);(v,0))=2m^{-2}\left(\sum_{i\notin I_2 } \left| 4^{-1}m-\lambda_i \right| a_i^2+\sum_{i\in I_2 } a_i^2 \right) \geq c \sum_{i=1}^\infty a_i^2=c\|v\|^2_{L^2}, 
\end{align*} 
for some $c=c(\mathcal{M}_\Sigma,m)>0 $. In view of \eqref{equ:H1}, this implies \eqref{equ:Gequiv}. It is straightforward to check that vectors in $\mathcal{B}$ are $G$-orthonormal. Suppose $(v,w)\in H^1(\Sigma;\mathbf{V})\times H^0(\Sigma;\mathbf{V}) $ is $G$-orthogonal to every vector in $\mathcal{B}$. Note that for all $j\in \mathbb{N}$, $(0,\varphi_j)$ and $(\varphi_j,0)$ lie in the linear span of $\mathcal{B}$. This implies that 
\begin{align*}
0=2^{-1}m^{2} G((v,w);(0,\varphi_j))= \int_{\Sigma}\left\langle w,\varphi_j \right\rangle \, d\mu,
\end{align*}
and that
\begin{align*}
0=2^{-1}m^{2} G((v,w);(\varphi_j,0 ))= C_j  \int_{\Sigma}\left\langle v,\varphi_j \right\rangle \, d\mu.
\end{align*}
Here $C_j=1$ for $j\in I_2$ and $C_j=\left| 4^{-1}m-\lambda_j \right|$ for $j\notin I_2$. Therefore, $v=w=0$ and $\mathcal{B}$ is complete. Lastly, \eqref{equ:Ldagger1}-\eqref{equ:Ldagger3} follow from \eqref{equ:L1}-\eqref{equ:L3} and the $G$-orthogonality of $\mathcal{B}$.
\end{proof}

The next lemma shows that $\mathbf{L}$ behaves like $\slashed{\nabla}$.
\begin{lemma}[equivalence of $\mathbf{L}$ and angular derivative]\label{lem:Gequ}
For each $\ell\in \mathbb{N}_0$, 
\begin{equation}\label{equ:multiL}
 \sum_{j=0}^\ell \Vert \mathbf{L}^j (v,w) \Vert_{G} \approx  \Vert (v,w)\Vert_{H^{\ell+1}\times H^\ell}.
\end{equation}
\end{lemma}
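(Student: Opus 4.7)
The plan is to argue by induction on $\ell$, using part (1) of Lemma~\ref{lem:G} for the base case, and the explicit form of $\mathbf{L}$ together with elliptic regularity for $\mathcal{L}_{\Sigma}$ for the inductive step.

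\textbf{Base case} $\ell=0$: this is exactly \eqref{equ:Gequiv}.

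\textbf{Inductive step.} Suppose the equivalence has been established at level $\ell-1$, i.e.\ for every $(v',w') \in H^{\ell}\times H^{\ell-1}$,
\begin{equation*}
\sum_{j=0}^{\ell-1}\Vert \mathbf{L}^j(v',w')\Vert_G \approx \Vert (v',w')\Vert_{H^{\ell}\times H^{\ell-1}}.
\end{equation*}
For the upper bound at level $\ell$, one checks directly from Definition~\ref{def:mathcal} that $\mathbf{L}$ is a bounded linear operator from $H^{\ell+1}\times H^{\ell}$ into $H^{\ell}\times H^{\ell-1}$: the first component $2^{-1}mv+w$ lies in $H^{\ell}$, while the second $-\mathcal{L}_\Sigma v+4^{-1}m^2 v+2^{-1}mw$ lies in $H^{\ell-1}$ since $\mathcal{L}_{\Sigma}$ is a second-order differential operator. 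Applying the inductive hypothesis to $\mathbf{L}(v,w)$ then gives
\begin{equation*}
\sum_{j=1}^{\ell}\Vert \mathbf{L}^j(v,w)\Vert_G \lesssim \Vert \mathbf{L}(v,w)\Vert_{H^{\ell}\times H^{\ell-1}} \lesssim \Vert (v,w)\Vert_{H^{\ell+1}\times H^{\ell}},
\end{equation*}
and the $j=0$ term is controlled by \eqref{equ:Gequiv}.

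The lower bound is the substantive direction. First, applying the inductive hypothesis directly to $(v,w)$ yields $\Vert(v,w)\Vert_{H^{\ell}\times H^{\ell-1}} \lesssim \sum_{j=0}^{\ell}\Vert \mathbf{L}^j(v,w)\Vert_G$, which is one level short of what is needed. Applying the inductive hypothesis to the pair $(a,b):=\mathbf{L}(v,w)$ gives
\begin{equation*}
\Vert a\Vert_{H^{\ell}}+\Vert b\Vert_{H^{\ell-1}} \lesssim \sum_{j=1}^{\ell}\Vert \mathbf{L}^j(v,w)\Vert_G.
\end{equation*}
From $a = 2^{-1}mv+w$ I recover $w = a - 2^{-1}mv$, hence $\Vert w\Vert_{H^{\ell}} \lesssim \Vert a\Vert_{H^{\ell}}+\Vert v\Vert_{H^{\ell}}$, and the last term is already controlled. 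From $b = -\mathcal{L}_\Sigma v + 4^{-1}m^2 v + 2^{-1}mw$, I read off
\begin{equation*}
\Vert \mathcal{L}_\Sigma v\Vert_{H^{\ell-1}} \lesssim \Vert b\Vert_{H^{\ell-1}} + \Vert v\Vert_{H^{\ell-1}}+\Vert w\Vert_{H^{\ell-1}}.
\end{equation*}
The closing step invokes the standard elliptic estimate for the self-adjoint elliptic operator $\mathcal{L}_{\Sigma}$ (a bootstrap of \eqref{equ:H1}):
\begin{equation*}
\Vert v\Vert_{H^{\ell+1}} \lesssim \Vert \mathcal{L}_{\Sigma} v\Vert_{H^{\ell-1}}+\Vert v\Vert_{L^2}.
\end{equation*}
Combining these three displays with the already-established $H^{\ell}\times H^{\ell-1}$ bound on $(v,w)$ gives $\Vert(v,w)\Vert_{H^{\ell+1}\times H^{\ell}} \lesssim \sum_{j=0}^{\ell}\Vert \mathbf{L}^j(v,w)\Vert_G$, completing the induction.

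\textbf{Main obstacle.} The only non-formal ingredient is the elliptic regularity estimate $\Vert v\Vert_{H^{\ell+1}}\lesssim \Vert \mathcal{L}_{\Sigma}v\Vert_{H^{\ell-1}}+\Vert v\Vert_{L^2}$ for the second-order Legendre--Hadamard elliptic operator $\mathcal{L}_{\Sigma}$ on the closed manifold $\Sigma$; this is standard but should be cited from Appendix~\ref{sec:A} (or proved by expanding in the eigenbasis $\{\varphi_i\}$, using that $\Vert v\Vert_{H^s}^2 \approx \sum_i(1+|\lambda_i|)^s a_i^2$ for $v=\sum a_i\varphi_i$). Everything else reduces to reading off the two components of $\mathbf{L}(v,w)$ and inducting.
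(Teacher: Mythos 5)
Your proposal is correct and follows essentially the same route as the paper: induction on $\ell$ with the base case given by \eqref{equ:Gequiv}, the upper bound from the boundedness of $\mathbf{L}:H^{\ell+1}\times H^\ell\to H^\ell\times H^{\ell-1}$, and the lower bound by recovering $w$ from the first component of $\mathbf{L}(v,w)$ and $v$ from the second via the elliptic estimate $\Vert v\Vert_{H^{\ell+1}}\lesssim\Vert\mathcal{L}_\Sigma v\Vert_{H^{\ell-1}}+\Vert v\Vert_{H^\ell}$. The paper uses exactly these two component inequalities, so your identification of elliptic regularity as the only non-formal ingredient matches its argument.
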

\begin{proof}
We use an induction argument. The assertion for $\ell=0$ follows directly from \eqref{equ:Gequiv}. Now we assume \eqref{equ:multiL} holds for $\ell$ and prove it for $\ell+1$. From the induction hypothesis, 
\begin{align}\label{equ:GappH}
\sum_{j=0}^{\ell+1} \Vert \mathbf{L}^j (v,w) \Vert_{G}\approx \Vert  (v,w)  \Vert_{H^1\times H^0}+\Vert   \mathbf{L}(v,w)  \Vert_{H^\ell\times H^{\ell-1}}
\end{align}
In view of Definition~\ref{def:mathcal},  $ \Vert   \mathbf{L}(v,w)  \Vert_{H^\ell\times H^{\ell-1}}\leq C \Vert (v,w)\Vert_{H^{\ell+1}\times H^\ell}$. Therefore,
\begin{equation}\label{equ:GleqH}
\sum_{j=0}^{\ell+1} \Vert \mathbf{L}^j (v,w) \Vert_{G}\leq C \Vert (v,w)\Vert_{H^{\ell+1}\times H^\ell}.
\end{equation}
To obtain the inequality in the other direction, we use
\begin{align*}
\Vert w \Vert_{H^\ell} &  \leq C    \Vert  2^{-1}m v+
w  \Vert_{H^\ell} +  C\Vert v \Vert_{H^\ell},\\
\Vert v\Vert_{H^{\ell+1}} &   \leq C  \Vert -\mathcal{L}_{\Sigma}v+ 4^{-1}m^2 v+2^{-1} m  w  \Vert_{  H^{\ell-1}}+ C  \Vert  (v,w) \Vert_{H^{\ell}\times H^{\ell-1}}.
\end{align*} 
Combining Definition~\ref{def:mathcal}, \eqref{equ:GappH} and the induction hypothesis, we obtain
\begin{equation}\label{equ:GgeqH}
\Vert (v,w)\Vert_{H^{\ell+1}\times H^\ell}  \leq C   \sum_{j=0}^{\ell+1} \Vert \mathbf{L}^j (v,w) \Vert_{G}.
\end{equation} 
The assertion then follows from \eqref{equ:GleqH} and \eqref{equ:GgeqH}.
\end{proof}
\begin{definition}\label{def:qEkl} 
Fix $k,\ell\in\mathbb{N}_0$. For a section $u\in C^{k+\ell+1}(Q_{0,\infty},\widetilde{\mathbf{V}})$, we define
\begin{equation*}
q^{(k,\ell)}(u):= {\partial_t^k}\mathbf{L}^\ell q(u),\ \mathcal{E}^{(k,\ell)}(u):= {\partial_t^k}\mathbf{L}^\ell \mathcal{E}(u). 
\end{equation*}
Here $q(u)$ and $\mathcal{E}(u)$ are given in Definition~\ref{def:qE}. We often abbreviate them to $q^{(k,\ell)}$, $\mathcal{E}^{(k,\ell)}$ and denote by $q^{(k,\ell)}(t)$, $\mathcal{E}^{(k,\ell)}(t)$ the restriction of $q^{(k,\ell)}(u)$, $\mathcal{E}^{(k,\ell)}(u)$ on $\{t\}\times\Sigma$, respectively.
\end{definition}

\begin{corollary}\label{cor:qE} Fix $s\in\mathbb{N}_0$. Then for all $u\in C^{s+2}(Q_{0,\infty},\widetilde{\mathbf{V}})$ and $t\in (0,\infty)$, 
\begin{equation}\label{equ:qkl}
\sum_{k+\ell \le s } \Vert q^{(k,\ell)}(u) \Vert_{G}(t)  \approx \sum_{k+\ell \le s+1 }\Vert \partial_t^k \slashed{\nabla}^{\ell  }u\Vert_{L^2}(t).
\end{equation}
Moreover, suppose $u$ satisfies $\|u\|_{C^{s+2}}(t)=o(1)$. Then
\begin{equation}\label{equ:Ekl}
\sum_{k+\ell \le s } \Vert \mathcal{E} ^{(k,\ell)}(u)\Vert_{G}(t)  = o(1)\sum_{k+\ell \le s } \Vert q^{(k,\ell)}(u) \Vert_{G}(t)
\end{equation}
\end{corollary}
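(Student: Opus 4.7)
First, I would derive part (1) as a direct consequence of Lemma \ref{lem:Gequ}. Since $\partial_t$ commutes with $\mathbf{L}$, we have $q^{(k,\ell)}=\mathbf{L}^\ell\partial_t^k q(u)$, where $\partial_t^k q(u)=(\partial_t^k u,\,\partial_t^{k+1}u-2^{-1}m\partial_t^k u)$. Applying Lemma \ref{lem:Gequ} to this pair with $\ell_0=s-k$ and then summing over $k=0,\ldots,s$ recovers exactly $\sum_{k+\ell\le s+1}\|\partial_t^k\slashed{\nabla}^\ell u\|_{L^2}$: the first component of $\partial_t^k q(u)$ contributes $\|\partial_t^k u\|_{H^{s-k+1}}$ and the second contributes $\|\partial_t^{k+1}u\|_{H^{s-k}}$.

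For part (2), the same commutation together with Lemma \ref{lem:Gequ} applied to $\mathcal{E}^{(k,\ell)}=\mathbf{L}^\ell(0,\partial_t^k E_1)$ gives $\|\mathcal{E}^{(k,\ell)}\|_G\le C\|\partial_t^k E_1\|_{H^\ell}$, so the problem reduces to
\begin{equation*}
\sum_{k+\ell\le s}\|\partial_t^k\slashed{\nabla}^\ell E_1\|_{L^2} \le o(1)\sum_{k+\ell\le s+1}\|\partial_t^k\slashed{\nabla}^\ell u\|_{L^2}.
\end{equation*}
Invoking the structure \eqref{equ:Estr}, each summand of $E_1$ takes the shape $A(\omega,u,\slashed{\nabla}u,u')\cdot X$ with $X\in\{u,\slashed{\nabla}u,\slashed{\nabla}^2u,u',\slashed{\nabla}u',u''\}$ and $A(\omega,0,0,0)=0$. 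Hadamard's lemma furnishes a smooth factorization $A=b_1\cdot u+b_2\cdot\slashed{\nabla}u+b_3\cdot u'$ with $b_i$ bounded near the origin. The Leibniz rule then expands $\partial_t^k\slashed{\nabla}^\ell(A\cdot X)$ into a sum of products $(\partial^\alpha A)(\partial^\beta X)$ with $|\alpha|+|\beta|=k+\ell$, and Fa\`a di Bruno writes $\partial^\alpha A$ as a polynomial (in derivatives of $u$ of orders between $1$ and $|\alpha|+1$) with bounded coefficients and positive total degree whenever $|\alpha|\ge 1$.

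Since $\|u\|_{C^{s+2}}=o(1)$, every derivative of $u$ of order at most $s+2$ is pointwise $o(1)$. I would estimate $\|(\partial^\alpha A)(\partial^\beta X)\|_{L^2}$ by placing $L^\infty$ on the factor that carries the derivative of top order and $L^2$ on the other. When $|\alpha|\ge 1$, or when $|\alpha|=0$ and the derivative order of $\partial^\beta X$ is at most $s+1$, the $L^\infty$-factor is $o(1)$ and the $L^2$-factor is a single derivative of $u$ of order at most $s+1$, which sits in the right-hand side.

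The main obstacle is the residual case $|\alpha|=0$, $k+\ell=s$, and $X\in\{u'',\slashed{\nabla}u'\}$, where the single factor $\partial^\beta X$ has order $s+2$ and is not controlled by the right-hand side. I would resolve this by reversing the roles: put $\partial^\beta X$ in $L^\infty$, bounded by $\|u\|_{C^{s+2}}=o(1)$, and $A$ in $L^2$. The Hadamard factorization then yields $\|A\|_{L^2}\le C(\|u\|_{L^2}+\|\slashed{\nabla}u\|_{L^2}+\|u'\|_{L^2})$, which is dominated by $\sum_{k+\ell\le s+1}\|\partial_t^k\slashed{\nabla}^\ell u\|_{L^2}$ even for $s=0$. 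Summing all three cases over the terms of $E_1$ and over $k+\ell\le s$ produces \eqref{equ:Ekl}.
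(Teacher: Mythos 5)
Your proof is correct and follows the same route as the paper: Lemma \ref{lem:Gequ} reduces both estimates to statements about $\partial_t^k u$ and $\partial_t^k E_1(u)$, and the bound on $E_1$ then comes from its quadratic structure \eqref{equ:Estr} --- the paper simply asserts this last step, which you spell out in detail. One small point: the residual case with $|\alpha|=0$ and $k+\ell=s$ should also include $X=\slashed{\nabla}^2u$, whose $\beta$-derivative likewise has order $s+2$, but your resolution (sup norm on $\partial^\beta X$, $L^2$ on $A$) applies to it verbatim.
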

\begin{proof}
From Definition~\ref{def:qE} and Lemma~\ref{lem:Gequ} (equivalence of $\mathbf{L}$ and angular derivative),
\begin{align*}
\sum_{k+\ell \le s } \Vert q^{(k,\ell)}(u) \Vert_{G}(t)&\approx \sum_{k=0}^s \Vert  (\partial_t^k u,\partial_t^{k+1} u-2^{-1}m \partial_t^k u)  \Vert_{H^{s-k+1}\times H^{s-k} }(t)\\
&\approx \sum_{k+\ell \le s+1 }\Vert \partial_t^k \slashed{\nabla}^{\ell  }u\Vert_{L^2}(t). 
\end{align*}
This gives \eqref{equ:qkl}. Similarly, from Lemma~\ref{lem:Gequ}  and Definition~\ref{def:qE}, 
\begin{align*}
\sum_{k+\ell \le m } \Vert \mathcal{E}^{(k,\ell)}(u) \Vert_{G}(t)&\approx \sum_{k=0}^s \Vert \partial_t^k  E_1(u) \Vert_{ H^{s-k} }(t).  
\end{align*}
From \eqref{equ:Estr} and the assumption, 
\begin{align*}
\sum_{k=0}^s \Vert \partial_t^k  E_1(u) \Vert_{ H^{s-k} }(t)=o(1)\sum_{k+\ell \le s+1 }\Vert \partial_t^k \slashed{\nabla}^{\ell  }u\Vert_{L^2}(t). 
\end{align*}
Then \eqref{equ:Ekl} follows from \eqref{equ:qkl}.
\end{proof}

Let us project the equation \eqref{eq-vectoru} onto vectors in $\mathcal{B}$. Let 
\begin{equation}\label{def:coef_exp}
\begin{split}
&\xi_{i,1}(t):= G(q(t),\psi_{i,1}),\ \xi_{i,2}(t):=G(q(t),\psi_{i,2})\ \textup{for}\ i\in I_1,\\ 
&\xi_{i,3}(t):= G(q(t),\psi_{i,3}),\ \xi_{i,4}(t):=G(q(t),\psi_{i,4})\ \textup{for}\ i\in I_2,\\ 
&\xi^\pm_{i}(t):= G(q(t),\psi^\pm_{i})\ \textup{for}\ i\in I_3\cup I_4.
\end{split}
\end{equation}
Also, let
\begin{equation}\label{def:error_exp}
\begin{split}
&\mathcal{E}_{i,1}(t):= G(\mathcal{E}(t),\psi_{i,1}),\ \mathcal{E}_{i,2}(t):= G(\mathcal{E}(t),\psi_{i,2})\ \textup{for}\ i\in I_1,\\
&\mathcal{E}_{i,3}(t):= G(\mathcal{E}(t),\psi_{i,3}),\ \mathcal{E}_{i,4}(t):= G(\mathcal{E}(t),\psi_{i,4})\ \textup{for}\ i\in I_2,\\
&\mathcal{E}^\pm_{i}(t):= G(\mathcal{E}(t),\psi^\pm_{i})\ \textup{for}\ i\in I_3\cup I_4.
\end{split}
\end{equation}
We can then rewrite \eqref{eq-vectoru} as follows. For $i\in I_1$, 
\begin{equation}\label{equ:odegroup1_exp}
\begin{split}
&\frac{d}{dt} \xi_{i,1}-2^{-1}m \xi_{i,1}+\beta_i\xi_{i,2}=\mathcal{E}_{i,1},\\ 
&\frac{d}{dt} \xi_{i,2}-2^{-1}m \xi_{i,2}-\beta_i\xi_{i,1}=\mathcal{E}_{i,2}. 
\end{split}
\end{equation}
For $i\in I_2$, 
\begin{equation}\label{equ:odegroupJ_exp}
\begin{split}
&\frac{d}{dt} \xi_{i,3}-2^{-1}m \xi_{i,3} =\mathcal{E}_{i,3},\\ 
&\frac{d}{dt} \xi_{i,4}-2^{-1}m \xi_{i,4}- \xi_{i,3}=\mathcal{E}_{i,4}.
\end{split}
\end{equation}
For $i\in I_3\cup I_4$, 
\begin{equation}\label{equ:odegroup_exp}
\frac{d}{dt} \xi^\pm_{i}-\gamma^\pm_i \xi_{i } =\mathcal{E}^\pm_{i }. 
\end{equation}

\section{Fast decaying solutions}\label{sec:exp}
In this section, we consider solutions to \eqref{equ:main} that decay exponentially and prove Theorem~\ref{thm:general_exponential_e}. The proof for Theorem~\ref{thm:general_exponential_p} is simpler so we will omit it. We assume throughout this section that $m<0$ and $I_1, I_2$ are non-empty. The proof can be generalized easily to other cases. We begin with a unique continuation property at infinity. Though we closely follow the argument in \cite{Strehlke}, we include the proof for readers' convenience.  
{\begin{proposition}\label{pro:infinitydecay}
Let $u\in C^\infty(Q_{0,\infty},\widetilde{ \mathbf{V}})$ be a solution to \eqref{equ:main} that satisfies $\|u\|_{C^1}(t)=O(e^{ \gamma t})$ as $t\to \infty$ for all $\gamma<0$. Then $u\equiv 0$.
\end{proposition}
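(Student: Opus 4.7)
The plan is to argue by contradiction via an Agmon--Nirenberg-type log-convexity inequality applied to the $G$-energy of the first-order system $q' = \mathbf{L} q + \mathcal{E}$ set up in Section \ref{sec:1storderode}, closely following the strategy of \cite{Strehlke}.

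First I would reduce to a statement about $q = q(u)$. Suppose for contradiction that $u \not\equiv 0$. The hypothesis $\|u\|_{C^1}(t) = O(e^{\gamma t})$ for every $\gamma < 0$, combined with the elliptic equation \eqref{equ:main} and standard interior Schauder estimates for $\mathcal{M}_\Sigma$, upgrades to $\|u\|_{C^s}(t) = O(e^{\gamma t})$ for every $s \in \mathbb{N}$ and every $\gamma < 0$. Corollary \ref{cor:qE} then gives the analogous super-exponential decay for $\Phi(t) := \|q(t)\|_G^2$, together with the crucial smallness
\begin{equation*}
\|\mathcal{E}(t)\|_G \le \varepsilon(t) \|q(t)\|_G, \qquad \varepsilon(t) \to 0,
\end{equation*}
and similarly for $\|\mathbf{L}^j \mathcal{E}\|_G$ versus $\|\mathbf{L}^j q\|_G$ for each $j$.

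The heart of the proof is to establish a log-convexity estimate: there exists a constant $K > 0$ (depending only on $m$ and $\mathcal{L}_\Sigma$) and some $T_1 \ge T_0$ such that
\begin{equation*}
(\log \Phi)''(t) \ge -K \qquad \text{for all } t \ge T_1 \text{ with } \Phi(t) > 0.
\end{equation*}
Differentiating $\Phi$ twice using $q' = \mathbf{L} q + \mathcal{E}$ yields
\begin{equation*}
\tfrac{1}{2} \Phi' = G(\mathbf{L} q, q) + G(\mathcal{E}, q), \qquad \tfrac{1}{2} \Phi'' = \|\mathbf{L} q\|_G^2 + G(\mathbf{L}^\dagger \mathbf{L} q, q) + R,
\end{equation*}
where $R$ collects error terms built from $\mathcal{E}$, $\mathcal{E}'$, and first-order commutators, all bounded by $\varepsilon(t) (\Phi + \|\mathbf{L} q\|_G \Phi^{1/2})$. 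Using the $G$-orthonormal basis $\mathcal{B}$ of Lemma \ref{lem:G}, one decomposes $q = \sum_\alpha \xi_\alpha \psi_\alpha$; on $I_3 \cup I_4$ the operator $\mathbf{L}$ acts diagonally with real eigenvalues $\gamma_i^\pm$, so the contribution of this subspace to $\Phi' / \Phi$ and $\|\mathbf{L} q\|_G^2 / \Phi$ satisfies the sharp Cauchy--Schwarz identity $\|\mathbf{L} q\|_G^2 \Phi \ge G(\mathbf{L} q, q)^2$, controlling $(\Phi')^2 / \Phi$. On $I_1 \cup I_2$ the action of $\mathbf{L}$ is finite-dimensional with bounded Jordan blocks, so it contributes only bounded terms to $(\log\Phi)''$. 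Assembling these pieces and absorbing $R$ using $\varepsilon(t) \to 0$ gives the claimed inequality $(\log \Phi)'' \ge -K$.

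Once the log-convexity bound is in hand, the contradiction is immediate. Pick $T_0$ with $q(T_0) \ne 0$; backward uniqueness for the ODE system $q' = \mathbf{L} q + \mathcal{E}$ (viewed as a first-order linear-plus-small-perturbation system in Banach space) ensures $\Phi > 0$ on $[T_0, \infty)$. Integrating $(\log \Phi)'' \ge -K$ twice gives $\log \Phi(t) \ge \log \Phi(T_0) + (\log \Phi)'(T_0)(t - T_0) - K(t - T_0)^2 / 2$, a quadratic lower bound. This is incompatible with super-exponential decay $\log \Phi(t) / t \to -\infty$, forcing $q(T_0) = 0$ for every $T_0$, and hence $u \equiv 0$.

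The main obstacle is the log-convexity step. The difficulty is that $\mathbf{L}$ is neither bounded on the energy space $H^1 \times L^2$ nor $G$-self-adjoint: on $I_3 \cup I_4$ the real eigenvalues $\gamma_i^-$ tend to $-\infty$ with $\lambda_i$, so naive bounds like $\|\mathbf{L} q\|_G \lesssim \|q\|_G$ fail, and the anti-self-adjoint piece on $I_1$ (the $\beta_i$-rotations) must be handled separately from the Jordan block on $I_2$. The argument succeeds because these three regimes are $G$-orthogonal by Lemma \ref{lem:G}, so the Cauchy--Schwarz balance $\Phi \cdot \|\mathbf{L} q\|_G^2 \ge G(\mathbf{L} q, q)^2$ on $I_3 \cup I_4$ and finite-dimensional boundedness on $I_1 \cup I_2$ combine to yield a single clean inequality; the error terms from $\mathcal{E}$ are absorbed because $\varepsilon(t) \to 0$ as $t \to \infty$.
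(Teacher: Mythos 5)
Your setup (vectorizing to $q'=\mathbf{L}q+\mathcal{E}$, using the $G$-structure of Lemma \ref{lem:G}, exploiting the quadratic smallness of $\mathcal{E}$) matches the paper's, but the mechanism you propose does not close. The fatal step is the last one: a bound $(\log\Phi)''\ge -K$ with $K>0$ integrates to the \emph{Gaussian} lower bound $\Phi(t)\ge c\,e^{-K(t-T_0)^2/2+O(t)}$, and Gaussian decay is itself super-exponential: $e^{-Kt^2/2}=O(e^{\gamma t})$ for \emph{every} $\gamma<0$. So your quadratic lower bound is perfectly compatible with the hypothesis $\|u\|_{C^1}(t)=O(e^{\gamma t})$ for all $\gamma<0$ (indeed $\log\Phi(t)=-Kt^2$ satisfies both $\log\Phi(t)/t\to-\infty$ and your lower bound), and no contradiction results. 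To make a frequency-function argument work you would need $(\log\Phi)''\ge -h(t)$ with $h$ integrable, so that $(\log\Phi)'$ stays bounded below and you obtain a genuine exponential lower bound $\Phi(t)\ge ce^{\mu t}$. The error terms from $\mathcal{E}$ are integrable here (they carry a factor $O(e^{\gamma t})$), but the obstruction you yourself identify — the skew $\beta_i$-rotations of \eqref{equ:L2} on $I_1$ and the Jordan block on $I_2$ — is a \emph{fixed, non-decaying} non-self-adjoint perturbation; its ``finite-dimensional boundedness'' is precisely what produces the constant $-K$ rather than something integrable, and $-K$ is not enough. (Separately, invoking ``backward uniqueness'' to get $\Phi>0$ on $[T_0,\infty)$ is circular as stated, though that part is repairable by a maximal-interval argument.)

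The paper avoids this entirely by arguing spectrally rather than through convexity: for each $\gamma<2^{-1}m-1$ it splits $q$ into the modes with $\gamma_i^-\le\gamma$ (the quantity $X_-$) and all remaining modes ($X_+$), derives the one-sided inequalities $X_+'\ge\gamma X_++Y_+$ and $X_-'\le\gamma X_-+Y_-$, and integrates $X_+$ \emph{forward to $+\infty$} (using the decay hypothesis to kill the boundary term) to obtain the Duhamel-type bound \eqref{eq-mainprop41}. Since $\int\|u\|_{C^2}\,ds<\infty$, this yields $\sup_{\tau\ge t_1}e^{-\gamma(\tau-t_1)}\|q(\tau)\|_G\le 2\|\Pi_\gamma q(t_1)\|_G$, and letting $\gamma\to-\infty$ kills the right-hand side because $\Pi_\gamma$ is a tail projection. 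The oscillatory and resonant modes sit harmlessly inside $X_+$ because only the real parts of the eigenvalues enter that differential inequality. If you want to salvage a log-convexity proof, you must first project off the finite-dimensional generalized eigenspaces of $I_1\cup I_2$ (where unperturbed solutions decay exactly like $e^{mt/2}$ times a polynomial and hence cannot decay super-exponentially) and run the convexity argument only on the $G$-self-adjoint complement while controlling the coupling through $\mathcal{E}$ — at which point you have essentially reconstructed the paper's mode-by-mode argument.
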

\begin{proof}
By the elliptic regularity, Lemma~\ref{lem:regularity_e},  $\|u\|_{C^s}(t)=O(e^{\gamma t} )$ for all $s\in\mathbb{N}$ and $\gamma <0$. Let $q=q(u)$ be given in Definition~\ref{def:qE}. Take $\gamma<2^{-1}m-1$. Let $\Pi_\gamma q$ be the projection of $q$ onto the eigenspace of $\mathbf{L}$ whose eigenvalues are less than or equal to  $\gamma $. Namely, in terms of the coefficients introduced in \eqref{def:coef_exp}, 
\begin{align*}
(\Pi_\gamma q)(t)=\sum_{i:\gamma^-_i \le  \gamma} \xi_i^-(t)\psi_i^-.
\end{align*}

\bigskip 

We claim that for any $t_2 \ge t_1 $, 
\bea \label{eq-mainprop41} e^{-\gamma (t_2 -t_1 )} \Vert  q(t_2) \Vert_G \le \Vert\Pi_\gamma   q(t_1 )\Vert_G + \int_{t_1 }^\infty e^{ -\gamma (s-t_1 )} \Vert \mathcal{E}(s)\Vert_G \, ds.  \eea Suppose \eqref{eq-mainprop41} is true at the moment. From \eqref{equ:Estr}, there exists a uniform constant $C<\infty$ such that if $\Vert u\Vert_{C^1}(t)\le 1$, then there holds
\begin{align*}
\Vert \mathcal{E}(t)\Vert _{G } \le C  \Vert q(t)\Vert _{G}\Vert u\Vert _{C^2}(t).  
\end{align*}
Define $M_\gamma(t)=\sup_{\tau \geq t} e^{-\gamma(\tau-t)} \Vert q(\tau) \Vert_G $. Suppose $t_1$ is large enough such that $\Vert u\Vert _{C^1}(s )\le 1$ for $s\ge t_1$. Then from \eqref{eq-mainprop41}, 
\begin{align*}
M_\gamma(t_1) \le \Vert \Pi_\gamma q(t_1)\Vert_G + C  \int_{t_1}^\infty  \Vert u\Vert_{C^2}(s) \, ds \cdot M_\gamma(t_1).
\end{align*}
From the exponential decay assumption we may choose large $t_1$ so that $C \int _{t_1}^\infty \Vert u\Vert_{C^2}(s) ds\le 1/2$. Hence
\begin{align}\label{eq-4777}
M_\gamma(t_1)  \le 2\Vert \Pi_\gamma  q(t_1)\Vert_G.
\end{align}
It is clear that $M_\gamma(t_1)$ is non-increasing in $\gamma$. Therefore,  
\begin{align*}
M_\gamma(t_1)\leq \limsup_{\gamma'\to -\infty } M_{\gamma'}(t_1)\leq 2\limsup_{\gamma'\to -\infty }  \Vert \Pi_{\gamma'}  q(t_1)\Vert_G=0.
\end{align*} 
We conclude $q(t)=0$ (and thus $u(t)=0$) for $t\ge t_1$. Next, we show $q(t)=0$ upto $t=0$. Suppose on the contrary $t_0=\inf \{ t_1 \, :\, q(t)=0 \text{ for } t\ge t_1\}>0$. By the smoothness of $u(x,t)$, we may find a small $\e>0$ such that $\Vert u\Vert_{C^1}(t) \le 1$ for $t\in [t_0-\e, t_0]$ and 
\[C\int_{t_0-\e}^\infty \Vert u\Vert _{C^2}(s)\, ds =C\int_{t_0-\e}^{t_0}  \Vert u\Vert _{C^2}(s)\, ds \le \frac12 ,  \]
which implies that \eqref{eq-4777} holds for $t_1=t_0-\e$. This gives a contradiction and proves the statement.

\bigskip

It remains to show \eqref{eq-mainprop41}. Define non-negative functions $X_\pm(t)$ by
\begin{align*}
X^2_+(t)= &\sum_{i\in I_1} |\xi_{i,1}(t)|^2+|\xi_{i,2}(t)|^2+\sum_{i\in I_2} |\xi_{i,3}(t)|^2+|\xi_{i,4}(t)|^2 + \sum |\xi^+_i(t)|^2+\sum_{i:\gamma^-_i>\gamma} |\xi^-_i(t)|^2,\\
X^2_-(t)= &\sum_{i:\gamma^-_i\leq \gamma} |\xi^-_i(t)|^2.
\end{align*}
From \eqref{equ:odegroup1_exp}-\eqref{equ:odegroup_exp}, $X_+(t)X_+'(t)$ equals,
\begin{align*}
 &2^{-1}m\left(\sum_{i\in I_1} |\xi_{i,1}(t)|^2+|\xi_{i,2}(t)|^2+\sum_{i\in I_2} |\xi_{i,3}(t)|^2+|\xi_{i,4}(t)|^2 \right)+ \sum \gamma^+_i |\xi^+_i(t)|^2+\sum_{i:\gamma^-_i>\gamma} \gamma^-_i |\xi^-_i(t)|^2\\
 &+\sum_{i\in I_2}\xi_{i,3}(t)\xi_{i,4}(t)+\sum \xi^+_i(t)\mathcal{E}^+_i(t)+\sum_{i:\gamma^-_i>\gamma} \xi^+_-(t)\mathcal{E}^+_-(t)+\sum_{i\in I_1}  \xi_{i,1}(t)\mathcal{E}_{i,1}(t) +\xi_{i,1}(t)\mathcal{E}_{i,1}(t)\\
&+\sum_{i\in I_2}  \xi_{i,3}(t)\mathcal{E}_{i,3}(t) +\xi_{i,4}(t)\mathcal{E}_{i,4}(t).
\end{align*}
From $\left| \sum_{i\in I_2}\xi_{i,3}(t)\xi_{i,4}(t) \right|\leq  2^{-1}\sum_{i\in I_2}|\xi_{i,3}(t)|^2+|\xi_{i,4}(t)|^2$ and $\gamma<2^{-1}m-1$, 
\begin{align}\label{equ:42}
X_+'(t)\geq \gamma X_+'(t)+Y_+(t).
\end{align}
Here $Y_+(t)$ is given by
\begin{align*}
X_+(t)Y_+(t)=&\sum \xi^+_i(t)\mathcal{E}^+_i(t)+\sum_{i:\gamma^-_i>\gamma} \xi^+_-(t)\mathcal{E}^+_-(t)+\sum_{i\in I_1}  \xi_{i,1}(t)\mathcal{E}_{i,1}(t) +\xi_{i,1}(t)\mathcal{E}_{i,1}(t)\\
&+\sum_{i\in I_2}  \xi_{i,3}(t)\mathcal{E}_{i,3}(t) +\xi_{i,4}(t)\mathcal{E}_{i,4}(t).
\end{align*}
Similarly, we have
\begin{align}\label{equ:43}
X_-'(t)\leq \gamma X_-'(t)+Y_-(t),
\end{align}
where $Y_-(t)$ is given by $X_-(t)Y_-(t)= \sum_{i:\gamma^-_i \le \gamma} \xi^-_i(t)\mathcal{E}^-_i(t)$. Fix $t_2\geq t_1$. By integrating \eqref{equ:43} from $t_1$ to $t_2$, we obtain
\begin{equation}\label{equ:44}
\begin{split}
e^{-\gamma (t_2-t_1)}X_-(t_2)\leq   &X_-(t_1)+\int_{t_1}^{t_2} e^{-\gamma (s-t_1)} Y_-(s)\, ds \\
\leq & X_-(t_1)+\int_{t_1}^{\infty} e^{-\gamma (s-t_1)} |Y_-(s)|\, ds.
\end{split}
\end{equation}
Take any $t_3\geq t_2$. By integrating \eqref{equ:42} from $t_2$ to $t_3$, we obtain
\begin{align*}
e^{-\gamma (t_2-t_1)}X_+(t_2)\leq   &e^{-\gamma (t_3-t_1)}X_+(t_3)-\int_{t_2}^{t_3} e^{-\gamma (s-t_1)} Y_+(s)\, ds \\
\leq &e^{-\gamma (t_3-t_1)}X_+(t_3)+\int_{t_1}^{\infty} e^{-\gamma (s-t_1)} |Y_+(s)|\, ds 
\end{align*} 
By the decay assumption, $e^{-\gamma (t_3-t_1)}X_+(t_3)$ goes to zero when $t_3$ goes to infinity. Hence 
\begin{align}\label{equ:45}
e^{-\gamma (t_2-t_1)}X_+(t_2)\leq  \int_{t_1}^{\infty} e^{-\gamma (s-t_1)} |Y_+(s)|\, ds.
\end{align} 
Note that $X_+^2(t)+X_-^2(t)=\Vert q(t) \Vert^2_G $, $X_-(t)=\Vert \Pi_\gamma q(t) \Vert_G$ and $ Y_+^2(t)+Y_-^2(t)\leq  \Vert \mathcal{E}(t) \Vert^2_G $. Then \eqref{eq-mainprop41} follows from \eqref{equ:44} and \eqref{equ:45}.
\end{proof}

\begin{remark}
A similar argument applies for the parabolic equation. The only difference is that to control the error term $E_2(u)$ in \eqref{equ:linear_p}, one needs to differentiate the equation. See Lemma~\ref{lem:error_p}.
\end{remark}
} 
Let $u$ be an exponentially decaying solution to \eqref{equ:main}. Namely, $\|u\|_{C^1}(t)=O(e^{-2\varepsilon_0 t})$ for some $\varepsilon_0>0$. We further assume $u$ is not identically zero. In view of Proposition~\ref{pro:infinitydecay}, the set
\begin{align*}
\Lambda:=\{\gamma<0\, :\, \|u\|_{C^1}(t)=O(e^{ \gamma t})\}
\end{align*}
has an infimum $-\infty<\gamma_*<0$. From the elliptic regularity, Lemma~\ref{lem:regularity_e}, $\|u\|_{C^s}(t)=O(e^{(\gamma_*+\varepsilon) t})$ for all $s\in\mathbb{N}$ and $\varepsilon>0$. Let $\{\xi_{i,1}(t), \xi_{i,2}(t)\}_{i\in I_1}$, $\{\xi_{i,3}(t), \xi_{i,4}(t)\}_{i\in I_2}$, $\{\xi^\pm_i(t)\}_{i\in I_3\cup I_4 }$ be the coefficients defined in \eqref{def:coef_exp}. Let $\{\mathcal{E}_{i,1}(t), \mathcal{E}_{i,2}(t)\}_{i\in I_1}$, $\{\mathcal{E}_{i,3}(t), \mathcal{E}_{i,4}(t)\}_{i\in I_2}$, $\{\mathcal{E}^\pm_i(t)\}_{i\in I_3\cup I_4 }$ be given by \eqref{def:error_exp}. The quadratic nature of $E_1(u)$ (see \eqref{equ:Estr}), in particular, implies  
\begin{equation}\label{equ:error_exp}
\begin{split} \Vert(0, E_1(u))\Vert^2_{G}(t) =
\sum_{i\in I_1} (\mathcal{E}_{i,1}(t))^2+(\mathcal{E}_{i,2}(t))^2+\sum_{i\in I_2} (\mathcal{E}_{i,3}(t))^2+(\mathcal{E}_{i,4}(t))^2+\sum_{i\in I_3\cup I_4} (\mathcal{E}_{i}^+(t))^2+(\mathcal{E}_{i}^-(t))^2\\
=O(e^{2(\gamma_*-\varepsilon_0)t}).
\end{split}
\end{equation} Here we used $|E_1(u)| \le C e^{-2\varepsilon_0t} e^{(\gamma^*+\varepsilon_0)t} = Ce^{(\gamma^*-\varepsilon_0)t}$.

\begin{lemma}\label{lem:gamma_star}
There holds $\gamma_*\in \{\gamma^+_i,\gamma^-_i\}_{i\in I_3\cup I_4}\cup\{2^{-1}m\}$.
\end{lemma}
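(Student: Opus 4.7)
I argue by contradiction: suppose $\gamma_*\notin\mathcal{S}:=\{\gamma^\pm_i:i\in I_3\cup I_4\}\cup\{2^{-1}m\}$, the set of characteristic exponents of the linear system $q'=\mathbf{L}q$. The goal is to show $\|u\|_{C^1}(t)=O(e^{(\gamma_*-\delta)t})$ for some $\delta>0$, contradicting $\gamma_*=\inf\Lambda$. The conceptual picture is the classical ODE principle that a solution whose decay rate is not a characteristic exponent is entirely slaved to the forcing and inherits the (faster) decay of the forcing. Here the forcing is $\mathcal{E}=(0,E_1(u))$; the quadratic structure \eqref{equ:Estr}, combined with $\|u\|_{C^s}(t)=O(e^{(\gamma_*+\varepsilon)t})$ for every $s,\varepsilon>0$ coming from elliptic regularity, yields $\|\mathcal{E}(t)\|_G=O(e^{(\gamma_*-\varepsilon_0)t})$, strictly faster than $e^{\gamma_* t}$.

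Because $\lambda_i\to-\infty$, $\mathcal{S}$ is discrete with $\pm\infty$ as its only accumulation points, so I pick $\delta\in(0,\varepsilon_0/4)$ with $\mathcal{S}\cap[\gamma_*-2\delta,\gamma_*+2\delta]=\emptyset$, and $N$ large enough that every $i\in I_4$ with $i>N$ satisfies $\gamma^-_i<\gamma_*-2\delta$ and $\gamma^+_i>\gamma_*+2\delta$. I decompose $q=q_N+q^N$ along the spectral basis $\mathcal{B}$ in \eqref{def:B}, where $q_N$ collects all contributions from $I_1\cup I_2\cup I_3$ and from $\{i\in I_4:i\leq N\}$, and $q^N$ is the complementary projection (both invariant under $\mathbf{L}$). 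For each of the finitely many modes making up $q_N$ I solve the associated ODE explicitly: in the scalar case \eqref{equ:odegroup_exp}, if $\gamma^\pm_i>\gamma_*+2\delta$ I integrate backward from $+\infty$ (the boundary term vanishing by the a priori decay), and if $\gamma^\pm_i<\gamma_*-2\delta$ I integrate forward from $0$; combined with $|\mathcal{E}^\pm_i|\leq\|\mathcal{E}\|_G=O(e^{(\gamma_*-\varepsilon_0)t})$, each coefficient is $O(e^{(\gamma_*-\delta)t})$. The $I_1$ complex pair is handled uniformly by forming $z=\xi_{i,1}+\mathbf{i}\xi_{i,2}$ and treating $z'-(2^{-1}m+\mathbf{i}\beta_i)z$ as a scalar $\mathbb{C}$-valued equation; the $I_2$ Jordan block is handled by first solving \eqref{equ:odegroupJ_exp} for $\xi_{i,3}$ and then feeding it into the equation for $\xi_{i,4}$, the polynomial factor $t$ produced being absorbed into the exponential by choosing $\delta$ slightly smaller. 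Summing these finitely many estimates gives $\|q_N(t)\|_G=O(e^{(\gamma_*-\delta)t})$.

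For the high-mode remainder, set $\gamma:=\gamma_*-\delta$. Every eigenvalue real part appearing in $q^N$ lies in $(-\infty,\gamma-\delta]\cup[\gamma+3\delta,\infty)$, and $q^N$ contains no Jordan blocks or complex-conjugate pairs. The derivation of \eqref{equ:42}-\eqref{equ:45} in the proof of Proposition~\ref{pro:infinitydecay} therefore applies to $q^N$ directly, without the technical restriction $\gamma<2^{-1}m-1$ (which had been imposed solely to dominate the $I_2$ Jordan cross-term). This yields
\[ e^{-\gamma(t_2-t_1)}\|q^N(t_2)\|_G\leq \|\Pi_\gamma q^N(t_1)\|_G+\int_{t_1}^{\infty}e^{-\gamma(s-t_1)}\|\mathcal{E}(s)\|_G\,ds \]
for all $t_2\geq t_1$. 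Since $\delta<\varepsilon_0$ the integral converges and is $O(1)$ for each fixed $t_1$, hence $\|q^N(t_2)\|_G=O(e^{\gamma t_2})=O(e^{(\gamma_*-\delta)t_2})$. Combining with the $q_N$ bound and invoking the equivalence \eqref{equ:qkl} together with elliptic regularity (Lemma~\ref{lem:regularity_e}) upgrades this to $\|u\|_{C^1}(t)=O(e^{(\gamma_*-\delta/2)t})$, contradicting $\gamma_*=\inf\Lambda$.

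The main obstacle is the coexistence of the $I_1$ complex pairs and $I_2$ Jordan blocks at the same real part $2^{-1}m$: a single energy inequality at a sharp cutoff $\gamma=\gamma_*-\delta$ cannot accommodate the $I_2$ Jordan cross-term unless $\gamma$ is pushed well below $2^{-1}m$, which is incompatible with keeping $\gamma$ close to $\gamma_*$. The remedy is the two-step hybrid above: peel these finitely many spectrally bad modes off by explicit ODE analysis, and reserve the energy method for the spectrally clean complement $q^N$.
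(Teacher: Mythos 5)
Your proof is correct, and at the level of strategy it coincides with the paper's: argue by contradiction, use the quadratic structure of $E_1$ plus elliptic regularity to get $\Vert\mathcal{E}(t)\Vert_G=O(e^{(\gamma_*-\varepsilon_0)t})$, and then exploit the spectral gap around $\gamma_*$ to show every mode decays strictly faster than $e^{\gamma_* t}$, contradicting $\gamma_*=\inf\Lambda$. The implementation differs in one respect worth noting. The paper does \emph{not} peel off the finitely many $I_1$/$I_2$/low modes by explicit Duhamel integration; it runs a single two-group energy argument ($X_+$ for real parts above $\gamma_*$, $X_-$ for those below, in each case integrated in the appropriate direction), and it handles exactly the obstacle you identify -- the Jordan cross-term $\sum_{i\in I_2}\xi_{i,3}\xi_{i,4}$ -- by inserting the weight $\varepsilon_1^2$ in front of $|\xi_{i,4}|^2$ in the definition of the relevant $X_\pm$, so that the cross-term is bounded by $2^{-1}\varepsilon_1 X_\pm^2$ and can be absorbed into a drift of size $\varepsilon_1$. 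So your claim that ``a single energy inequality at a sharp cutoff cannot accommodate the $I_2$ Jordan cross-term'' is not quite right: the weighted norm accommodates it without pushing the cutoff far from $\gamma_*$. Your hybrid route (explicit variation-of-constants for the finitely many spectrally delicate modes, energy method only for the clean high-mode tail) is a valid alternative and arguably more transparent, at the cost of a case analysis over the Jordan block and the complex pairs and of the auxiliary truncation parameter $N$; the paper's weighting trick is shorter once one knows it. Both yield the same conclusion, and your verification of the boundary term in the backward integration (using $\xi_i^\pm(T)=O(e^{(\gamma_*+\varepsilon)T})$ for all $\varepsilon>0$) is the right justification.
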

\begin{proof}
Suppose the assertion fails. This implies there exists $\varepsilon_1\in (0,\varepsilon_0)$ such that there is no element of $\{\gamma^+_i,\gamma^-_i\}_{i\in I_3\cup I_4}\cup\{2^{-1}m\}$ in the interval $[\gamma_*-2\varepsilon_1,\gamma_*+2\varepsilon_1]$. We show this leads to a contradiction for the case $2^{-1}m< \gamma_*$. The argument for the case $\gamma_*<2^{-1}m$ is similar. Define an non-negative function $X_+(t)$ by
\begin{align*}
 X^2_+(t) =\sum_{i:\gamma^+_i>\gamma_*} |\xi_i^+(t)|^2+\sum_{i:\gamma^-_i>\gamma_*} |\xi_i^-(t)|^2.
\end{align*}
From \eqref{equ:odegroup_exp},
\begin{align*}
 X_+(t)X_+'(t) =&\sum_{i:\gamma^+_i>\gamma_*} \gamma^+_i|\xi_i^+(t)|^2+\sum_{i:\gamma^-_i>\gamma_*} \gamma^-_i|\xi_i^-(t)|^2 + \sum_{i:\gamma^+_i>\gamma_*}  \xi_i^+(t)\mathcal{E}^+_i(t) +\sum_{i:\gamma^-_i>\gamma_*} \xi_i^-(t)\mathcal{E}^-_i(t)\\
 \geq &(\gamma_*+\varepsilon_1) X^2_+(t)  +X_+(t)Y_+(t).
\end{align*}
Here $Y_+(t)$ is given by 
\begin{align*}
X_+(t)Y_+(t) = \sum_{i:\gamma^+_i>\gamma_*}  \xi_i^+(t)\mathcal{E}^+_i(t) +\sum_{i:\gamma^-_i>\gamma_*} \xi_i^-(t)\mathcal{E}^-_i(t).
\end{align*}
From the Cauchy-Schwarz inequality and \eqref{equ:error_exp}, $|Y_+(t)|=O(e^{(\gamma_*-\varepsilon_0)t})$. Using $$\lim_{t\to\infty} e^{-(\gamma_*+\varepsilon_1)t}X_+(t)=0,$$  
we can integrate $$\frac{d}{dt}\left(e^{-(\gamma_*+\varepsilon_1)t}X_+(t)  \right) \geq  e^{-(\gamma_*+\varepsilon_1)t}Y_+(t)$$ 
from $t$ to $\infty$ to obtain 
\begin{align}\label{equ:X+}
X_+(t)\leq e^{(\gamma_*+\varepsilon_1)t}\int_t^\infty e^{-(\gamma_*+\varepsilon_1)\tau }|Y_+(\tau)|\, d\tau=O(e^{(\gamma_*-\varepsilon_0)t}).
\end{align}
Let
\begin{align*}
 X^2_-(t) =\sum_{i:\gamma^+_i<\gamma_*} |\xi_i^+(t)|^2+\sum_{i:\gamma^-_i<\gamma_*} |\xi_i^-(t)|^2+\sum_{i\in I_1} |\xi_{i,1}(t)|^2+|\xi_{i,2}(t)|^2+\sum_{i\in I_2} |\xi_{i,3}(t)|^2+\varepsilon_1^2 |\xi_{i,4}(t)|^2.
\end{align*}
From \eqref{equ:odegroup1_exp}-\eqref{equ:odegroup_exp}, $ X_-(t)X_-'(t)$ equals
\begin{align*}
&\sum_{i:\gamma^+_i<\gamma_*} \gamma^+_i|\xi_i^+(t)|^2+\sum_{i:\gamma^-_i<\gamma_*} \gamma^-_i|\xi_i^-(t)|^2+2^{-1}m\left(\sum_{i\in I_1}|\xi_{i,1}(t)|^2+|\xi_{i,2}(t)|^2+\sum_{i\in I_2} |\xi_{i,3}(t)|^2+\varepsilon_1^2 |\xi_{i,4}(t)|^2\right) \\
 +&\varepsilon_1^2 \sum_{i\in I_2}\xi_{i,3}(t)\xi_{i,4}(t)+ \sum_{i:\gamma^+_i<\gamma_*}  \xi_i^+(t)\mathcal{E}^+_i(t) +\sum_{i:\gamma^-_i<\gamma_*} \xi_i^-(t)\mathcal{E}^-_i(t)+\sum_{i\in I_1} \xi_{i,1}(t)\mathcal{E}_{i,1}(t) + \xi_{i,2}(t)\mathcal{E}_{i,2}(t)\\
+&\sum_{i\in I_2} \xi_{i,3}(t)\mathcal{E}_{i,3}(t) + \varepsilon_1^2 \xi_{i,4}(t)\mathcal{E}_{i,4}(t).
\end{align*}
From $\varepsilon_1^2 \sum_{i\in I_2}|\xi_{i,3}(t)\xi_{i,4}(t)|\leq 2^{-1}\varepsilon_1  \sum_{i\in I_2}|\xi_{i,3}(t)|^2+\varepsilon^2_1|\xi_{i,4}(t)|^2$, \begin{align*}
X_-(t)X_-'(t)\leq (\gamma_*-\varepsilon_1) X^2_-(t)  +X_-(t)Y_-(t).
\end{align*}
Here $Y_-(t)$ is given by
\begin{align*}
X_-(t)Y_-(t)=&\sum_{i:\gamma^+_i<\gamma_*}  \xi_i^+(t)\mathcal{E}^+_i(t) +\sum_{i:\gamma^-_i<\gamma_*} \xi_i^-(t)\mathcal{E}^-_i(t)+\sum_{i\in I_1} \xi_{i,1}(t)\mathcal{E}_{i,1}(t) + \xi_{i,2}(t)\mathcal{E}_{i,2}(t)\\ 
&+ \sum_{i\in I_2} \xi_{i,3}(t)\mathcal{E}_{i,3}(t) + \varepsilon_1^2 \xi_{i,4}(t)\mathcal{E}_{i,4}(t).
\end{align*} 
From the Cauchy-Schwarz inequality and \eqref{equ:error_exp},  $|Y_-(t)|=O(e^{(\gamma_*-\varepsilon_0)t})$. Integrating 
$$\frac{d}{dt}\left(e^{-(\gamma_*-\varepsilon_1)t}X_-(t)  \right) \geq  e^{-(\gamma_*-\varepsilon_1)t}Y_-(t)$$ 
from $0$ to $t$, we obtain 
\begin{align}\label{equ:X-}
X_-(t)\leq e^{(\gamma_*-\varepsilon_1)t}\left( X_-(0)+ \int_0^\infty e^{-(\gamma_*-\varepsilon_1)\tau} |Y_-(\tau)| d\tau\right)=O(e^{(\gamma_*-\varepsilon_1)t}).
\end{align}

Combining \eqref{equ:X+} and \eqref{equ:X-},  $\Vert q(t) \Vert_G=O(e^{(\gamma_*-\varepsilon_1)t})$. From the elliptic regularity, Lemma~\ref{lem:regularity_e}, we then have $\Vert u \Vert_{C^1}(t)=O(e^{(\gamma_*-\varepsilon_1)t})$. This contradicts to the definition of $\gamma_*$. 
\end{proof}

\begin{lemma}\label{lem:gamma_star_m}
Suppose $\gamma_*=2^{-1}m$. Then there exists $w_i \in\mathbb{C} $ for $i\in I_1$ and $c_{i,3}, c_{i,4}\in\mathbb{R} $ for $i\in I_2$ such that the following holds. For
\begin{align*}
\hat{q}(t):= q(t) - e^{\gamma_* t} \bigg(\sum_{i\in I_1 } \textup{Re}\big( w_i e^{\mathbf{i}\beta_i t}  \big) \psi_{i,1}+\textup{Im}\big( w_i e^{\mathbf{i}\beta_i t}  \big) \psi_{i,2} + \sum_{i\in I_2 } c_{i,3}\psi_{i,3}+(tc_{i,3}+c_{i,4})\psi_{i,4}\bigg)    
\end{align*}
there exists $\varepsilon >0$ such that $\Vert \hat{q}(t) \Vert_G= O(e^{(\gamma_*-\varepsilon)t})$.
\end{lemma}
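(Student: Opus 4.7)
The plan is to exploit the fact that $\gamma_*=2^{-1}m$ is precisely the real part of the \emph{resonant} eigenvalues of $\mathbf{L}$: the complex pairs $2^{-1}m\pm\mathbf{i}\beta_i$ for $i\in I_1$ and the defective eigenvalue $2^{-1}m$ with the $2\times 2$ Jordan block for $i\in I_2$. All remaining eigenvalues $\gamma^\pm_i$ for $i\in I_3\cup I_4$ are real and bounded away from $\gamma_*$. Decomposing $q(t)$ along the $G$-orthonormal basis $\mathcal{B}$, I will extract the resonant contributions explicitly and show every other mode decays strictly faster than $e^{\gamma_* t}$.

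\emph{Off-resonance modes.} Choose $\varepsilon_1\in (0,\varepsilon_0)$ so that no $\gamma^\pm_i$ for $i\in I_3\cup I_4$ lies in $[\gamma_*-2\varepsilon_1,\gamma_*+2\varepsilon_1]$. Define
\[
X_+^2(t)=\sum_{\gamma^+_i>\gamma_*}|\xi^+_i(t)|^2+\sum_{\gamma^-_i>\gamma_*}|\xi^-_i(t)|^2,\qquad X_-^2(t)=\sum_{\gamma^-_i<\gamma_*}|\xi^-_i(t)|^2.
\]
Using \eqref{equ:error_exp} and Cauchy–Schwarz exactly as in Lemma~\ref{lem:gamma_star}, the associated drivers $Y_\pm$ satisfy $|Y_\pm(t)|=O(e^{(\gamma_*-\varepsilon_0)t})$. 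The integrating-factor argument from that lemma, using $e^{-(\gamma_*+\varepsilon_1)t}X_+(t)\to 0$ and integrating from $t$ to $\infty$ (respectively integrating from $0$ to $t$ for $X_-$), gives $X_\pm(t)=O(e^{(\gamma_*-\varepsilon_1)t})$. This disposes of all $I_3\cup I_4$ components of $\hat q$.

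\emph{Oscillatory block $I_1$.} For each $i\in I_1$, set $\zeta_i(t):=\xi_{i,1}(t)+\mathbf{i}\xi_{i,2}(t)$. The system \eqref{equ:odegroup1_exp} becomes the single complex ODE $\zeta_i'=(2^{-1}m+\mathbf{i}\beta_i)\zeta_i+(\mathcal{E}_{i,1}+\mathbf{i}\mathcal{E}_{i,2})$, so
\[
\frac{d}{dt}\bigl(e^{-(2^{-1}m+\mathbf{i}\beta_i)t}\zeta_i(t)\bigr)=e^{-(2^{-1}m+\mathbf{i}\beta_i)t}\bigl(\mathcal{E}_{i,1}+\mathbf{i}\mathcal{E}_{i,2}\bigr),
\]
and the right-hand side has modulus $O(e^{-\varepsilon_0 t})$ by \eqref{equ:error_exp} and $\gamma_*=2^{-1}m$. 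Integrability on $[0,\infty)$ produces a limit $w_i:=\lim_{t\to\infty}e^{-(2^{-1}m+\mathbf{i}\beta_i)t}\zeta_i(t)\in\mathbb{C}$ with $|\zeta_i(t)-w_i e^{(\gamma_*+\mathbf{i}\beta_i)t}|=O(e^{(\gamma_*-\varepsilon_0)t})$. Taking real and imaginary parts yields the stated expansion for $\xi_{i,1},\xi_{i,2}$.

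\emph{Jordan block $I_2$.} Here the equation for $\xi_{i,3}$ decouples, and the same integrating-factor argument gives $\xi_{i,3}(t)=c_{i,3}e^{\gamma_* t}+O(e^{(\gamma_*-\varepsilon_0)t})$. Substituting into \eqref{equ:odegroupJ_exp} and letting $h(t):=e^{-\gamma_* t}\xi_{i,4}(t)$,
\[
h'(t)=e^{-\gamma_* t}\xi_{i,3}(t)+e^{-\gamma_* t}\mathcal{E}_{i,4}(t)=c_{i,3}+O(e^{-\varepsilon_0 t}).
\]
Integrating from $0$ to $t$ and absorbing the convergent tail into the constant $c_{i,4}$ yields $\xi_{i,4}(t)=(c_{i,3}t+c_{i,4})e^{\gamma_* t}+O(e^{(\gamma_*-\varepsilon_0)t})$. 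Finally, $G$-orthonormality of $\mathcal{B}$ reduces $\|\hat q(t)\|_G^2$ to the sum of squared residuals, and choosing $\varepsilon:=\tfrac12\min(\varepsilon_0,\varepsilon_1)$ combines all pieces into the claimed bound.

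The only real subtlety is the $I_2$ resonance: the Jordan block forces the polynomial factor $c_{i,3}t$ into the ansatz, so $\xi_{i,4}(t)/e^{\gamma_* t}$ cannot converge to a constant. One must first pin down $c_{i,3}$ from $\xi_{i,3}$ and only then integrate the driven equation for $\xi_{i,4}$; everywhere else the analysis reduces to the standard integrating-factor / Strehlke-type estimate already used in Proposition~\ref{pro:infinitydecay} and Lemma~\ref{lem:gamma_star}.
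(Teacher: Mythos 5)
Your proposal is correct and follows essentially the same route as the paper: first kill the off-resonant $I_3\cup I_4$ modes with the integrating-factor/Strehlke-type estimate from Lemma~\ref{lem:gamma_star}, then integrate the resonant $I_1$ blocks as a single complex ODE and the $I_2$ Jordan block in the order $\xi_{i,3}$ then $\xi_{i,4}$, exactly as in the paper's argument. The only cosmetic difference is that you introduce a separate constant $\varepsilon_1$ for the spectral gap where the paper simply shrinks $\varepsilon_0$.
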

\begin{proof}
Define non-negative functions $X_\pm(t)$ by
\begin{align*}
X^2_+(t)&=\sum_{i:\gamma^+_i>\gamma_*} (\xi_i^+(t))^2+\sum_{i:\gamma^-_i>\gamma_*} (\xi_i^-(t))^2,\\
 X^2_-(t)&=\sum_{i:\gamma^+_i<\gamma_*} (\xi_i^+(t))^2+\sum_{i:\gamma^-_i<\gamma_*} (\xi_i^-(t))^2.
\end{align*}
By shrinking the value of $\varepsilon_0$ if necessary, we may assume $\varepsilon_0<|2^{-1}m-\gamma^\pm_i|$ for all $i\in I_3\cup I_4$. An argument similar to the one in the proof of Lemma~\ref{lem:gamma_star} shows $X_\pm(t)=O(e^{(\gamma_*-\varepsilon_0)t})$. From \eqref{equ:odegroup1_exp}, we derive for $i\in I_1$
\begin{align*}
\frac{d}{dt}\left( e^{-(\gamma_* +\mathbf{i}\beta_i)t} \left(\xi_{i,1}(t)+\mathbf{i}\xi_{i,2}(t) \right)\right)=  e^{-(\gamma_* +\mathbf{i}\beta_i)t} \left(\mathcal{E}_{i,1}(t)+\mathbf{i}\mathcal{E}_{i,2}(t) \right) .
\end{align*}
Integrating the above from $0$ to $t$ and using \eqref{equ:error_exp} yield 
\begin{align*}
\xi_{i,1}(t)+\mathbf{i}\xi_{i,2}(t)= w_i e^{(\gamma_*+\mathbf{i}\beta_i)t}+O(e^{(\gamma_*-\varepsilon_0)t})
\end{align*}
for some $w_i\in\mathbb{C}$. A similar argument applying to $\frac{d}{dt}\left( e^{-\gamma_* t} \xi_{i,3}(t)\right)= e^{-\gamma_* t} \mathcal{E}_{i,3}(t)$ gives 
$$\xi_{i,3}(t)= c_{i,3} e^{\gamma_* t}+O(e^{(\gamma_*-\varepsilon_0)t})$$ 
for some $c_{i,3}\in\mathbb{R}$. Lastly, from  
\begin{align*}
\frac{d}{dt}\left( e^{-\gamma_* t} \xi_{i,4}(t)\right) =& e^{-\gamma_* t} \xi_{3,i}(t) +e^{-\gamma_* t} \mathcal{E}_{i,4}(t)=  c_{i,3}+O(e^{  -\varepsilon_0t}), 
\end{align*}
we derive $\xi_{4,i}=(tc_{i,3}+c_{i,4})e^{\gamma_*t}+O(e^{(\gamma_*-\varepsilon_0)t})$ for some $c_{i,4}\in\mathbb{R}$.
\end{proof}

For $\gamma_*=\gamma^+_i$ or $\gamma^-_i$, an analogous result holds. We omit the proof because it is similar to and simpler than the one for Lemma~\ref{lem:gamma_star_m}.  
\begin{lemma}\label{lem:gamma_star_pm}
Suppose $\gamma_*=\gamma^+_i$ for some $i\in I_3\cup I_4$ and let $N$ be the multiplicity of $\lambda_i$. We may assume $\lambda_{i}=\lambda_{i+1}=\dots =\lambda_{i+N-1}$. Then there exists $a_1,a_2,\dots, a_N\in\mathbb{R}$ and $\varepsilon>0$ such that
\begin{align*}
\Vert q(t)- e^{\gamma_* t} \sum_{j=1}^N a_j \psi^+_{i+j-1} \Vert_G=O(e^{(\gamma_*-\varepsilon) t}).
\end{align*}
Similar result holds when $\gamma_*=\gamma^-_i$ for some $i\in I_3\cup I_4$ 
\end{lemma}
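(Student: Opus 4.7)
The plan is to adapt the proof of Lemma~\ref{lem:gamma_star_m} to the present setting. Because $\gamma_*=\gamma_i^+$ with $i\in I_3\cup I_4$ lies away from $2^{-1}m$, no Jordan block (from $I_2$) or complex pair (from $I_1$) is resonant with $\gamma_*$, so the argument is simpler: the only modes of $\mathbf{L}$ with rate equal to $\gamma_*$ are the $N$ real eigenvectors $\psi_{i+j-1}^+$, $1\le j\le N$. First, by Lemma~\ref{lem:gamma_star} the spectrum $\{\gamma_k^+,\gamma_k^-\}_{k\in I_3\cup I_4}\cup\{2^{-1}m\}$ is discrete, so I shrink $\varepsilon_0>0$ until $\gamma_*$ is the only element of this set in $[\gamma_*-2\varepsilon_0,\gamma_*+2\varepsilon_0]$; in particular $|\gamma_*-2^{-1}m|>2\varepsilon_0$. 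The quadratic structure of $E_1(u)$ still yields $\|\mathcal{E}(t)\|_G=O(e^{(\gamma_*-\varepsilon_0)t})$ as in~\eqref{equ:error_exp}.

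Next, I split $q(t)$ into three $G$-orthogonal pieces using the basis $\mathcal{B}$: a part $q_>$ spanned by modes of $\mathbf{L}$ with rate strictly greater than $\gamma_*$, the resonant part $q_=\in\operatorname{span}\{\psi_{i+j-1}^+\}_{j=1}^N$, and a part $q_<$ with rate strictly less than $\gamma_*$. For the non-resonant pieces, I define $X_+(t)^2$ as the sum of squares of the $\xi$-coefficients attached to $q_>$, with the Jordan coordinate $\xi_{k,4}$ from $I_2$ weighted by $\varepsilon_1^2$ (for some $\varepsilon_1\in(0,\varepsilon_0)$) in the event that $2^{-1}m>\gamma_*$; and define $X_-(t)^2$ analogously for $q_<$. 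Differentiating along \eqref{equ:odegroup1_exp}--\eqref{equ:odegroup_exp} and using the spectral gap above, the Jordan coupling $\xi_{k,3}\xi_{k,4}$ is absorbed by the $\varepsilon_1^2$ weight exactly as in the proof of Lemma~\ref{lem:gamma_star}, producing differential inequalities of the form
\[
X_+'(t)\ge(\gamma_*+\varepsilon_1)X_+(t)-\|\mathcal{E}(t)\|_G,\qquad X_-'(t)\le(\gamma_*-\varepsilon_1)X_-(t)+\|\mathcal{E}(t)\|_G.
\]
Backward integration of $X_+$ from infinity, using $e^{-(\gamma_*+\varepsilon_1)t}X_+(t)\to 0$ (a direct consequence of the definition of $\gamma_*$), and forward integration of $X_-$ from $0$, both yield $X_\pm(t)=O(e^{(\gamma_*-\varepsilon_0)t})$.

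For the resonant part, \eqref{equ:odegroup_exp} reads $(e^{-\gamma_* t}\xi_{i+j-1}^+)'=e^{-\gamma_* t}\mathcal{E}_{i+j-1}^+$ for each $1\le j\le N$. Since the right hand side is $O(e^{-\varepsilon_0 t})$ and thus absolutely integrable on $[0,\infty)$, setting
\[
a_j:=\xi_{i+j-1}^+(0)+\int_0^{\infty}e^{-\gamma_* s}\mathcal{E}_{i+j-1}^+(s)\,ds
\]
and estimating the tail gives $\xi_{i+j-1}^+(t)=a_j e^{\gamma_* t}+O(e^{(\gamma_*-\varepsilon_0)t})$. Combining the three contributions through the $G$-orthonormality of $\mathcal{B}$ (Lemma~\ref{lem:G}) delivers the claimed asymptotic expansion with $\varepsilon=\varepsilon_0$. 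The case $\gamma_*=\gamma_i^-$ is entirely symmetric, obtained by exchanging $\psi^+\leftrightarrow\psi^-$ and forward with backward integration.

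The only genuinely nontrivial piece is the calibration of $\varepsilon_1$, to ensure that the Jordan coupling from $I_2$ does not contaminate the off-resonant decay rates $\gamma_*\pm\varepsilon_1$; this is exactly the bookkeeping already performed in Lemma~\ref{lem:gamma_star}, so no new obstacle arises. It is precisely the absence of resonance between $\gamma_*$ and $2^{-1}m$ that makes this proof simpler than the one for Lemma~\ref{lem:gamma_star_m}, where the complex pairs from $I_1$ and the Jordan block from $I_2$ interact with the resonance directly.
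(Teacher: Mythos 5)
Your argument is correct and is exactly the proof the paper omits: it reconstructs the scheme of Lemmas \ref{lem:gamma_star} and \ref{lem:gamma_star_m} (split $q$ into super-resonant, resonant, and sub-resonant parts, run the weighted differential inequalities with backward/forward integration, and integrate the resonant modes explicitly to extract the coefficients $a_j$), and the key structural point — that $\gamma_*=\gamma_i^+\neq 2^{-1}m$ so the only resonant modes are the $N$ real eigenvectors $\psi^+_{i+j-1}$ — is right. The only nit is bookkeeping: forward integration of $X_-$ gives $O(e^{(\gamma_*-\varepsilon_1)t})$ rather than $O(e^{(\gamma_*-\varepsilon_0)t})$, so the final rate is $\varepsilon=\varepsilon_1<\varepsilon_0$, which is all the lemma requires.
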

We are ready to prove Theorem~\ref{thm:general_exponential_e}.
\begin{proof}[Proof of Theorem~\ref{thm:general_exponential_e}]
Suppose $\gamma_*=\gamma^+_i$ or $\gamma^-_i$. From Lemma~\ref{lem:gamma_star_pm} and Corollary~\ref{cor:qE}, there exists $v$ with $\mathcal{L}_{\Sigma} v=\lambda_i v$ and $\varepsilon>0$ such that $
\Vert u(t)- e^{\gamma_* t}v \Vert_{H^1}= O(e^{(\gamma_*-\varepsilon ) t})$. Let $w=v/\Vert v\Vert_{L^2}$. Then
\begin{align*}
\lim_{t\to\infty} u(t)/\Vert u(t) \Vert_{L^2}=w\ \textup{in}\ H^1(\Sigma,\mathbf{V}).
\end{align*}
We may upgrade the convergence to $C^\infty(\Sigma,\mathbf{V})$ by taking $\mathbf{L}$ derivatives to \eqref{eq-vectoru}. As a result, case (1) in Theorem~\ref{thm:general_exponential_e} holds.

Suppose $\gamma_*=2^{-1}m$. From Lemma~\ref{lem:gamma_star_m} and Corollary~\ref{cor:qE}, there exists $w_i\in\mathbb{C}$ for $i\in I_1$ and $c_{i,3}, c_{i,4}\in I_2$ such that
\begin{align*}
 \left\Vert  u(t) - e^{\gamma_*t}\sum_{i\in I_1}\textup{Re} \left(w_ie^{\mathbf{i}\beta_i t}  \right)\varphi_i- e^{\gamma_*t}\sum_{i\in I_2}(tc_{i,3}+c_{i,4})  \varphi_i\right\Vert_{H^1} = O(e^{(\gamma_*-\varepsilon ) t}). 
\end{align*}
Then case (2) or case (3) in Theorem~\ref{thm:general_exponential_e} holds depending on whether $c_{i,3}\equiv 0$ or not.
\end{proof}

\section{Slowly decaying solutions to elliptic equation}\label{sec:elliptic}

In this section, we show that if a solution to \eqref{equ:main} decays slowly, then the neutral mode, the projection of $q(u)$ onto the $0$-eigenspace of $\mathbf{L}$, dominates the solution. Moreover, the neutral mode evolves by a gradient flow up to a small error. That is the content of Proposition~\ref{prop-neutral-dynamics}. 
\vspace{0.1cm}

Let $u\in C^\infty(Q_{0,\infty},\widetilde{ \mathbf{V}})$ be a solution to \eqref{equ:main} with $\|u\|_{C^1}(t)=o(1)$ as $t\to \infty$. From the elliptic regularity, Lemma~\ref{lem:regularity_e}, $\|u\|_{C^s}(t)=o(1)$ for all $s\in\mathbb{N}$. We further assume that $u$ does not decay exponentially. Namely, for any $\varepsilon>0$, 
\begin{equation}\label{equ:nonexpdecay}
\limsup_{t\to\infty} e^{\varepsilon t}\|u\|_{C^1}(t)=\infty.
\end{equation}

Recall that we rewrote \eqref{equ:main} as an ODE system \eqref{equ:odegroup1_exp}-\eqref{equ:odegroup_exp}. For brevity, we assume throughout this section that $I_2=\varnothing$. With notational changes, the proof can be readily extended to cover the case where $I_2\neq \varnothing$. Since $I_2=\varnothing,$ the ODE system consists of \eqref{equ:odegroup1_exp} and \eqref{equ:odegroup_exp}. It is convenient to relabel the coefficients $\{\xi^\pm_i\}_{i\in I_3\cup I_4}$ in \eqref{equ:odegroup_exp}. For $\{\psi^\pm_i\}_{i\in I_4}$, we set 
\begin{align*}
& \{\Psi_i\}_{i\in\mathbb{N}}= \{\psi^+_i\, |\, i\in I_4\ \textup{and}\ \gamma^{+}_i>0\}\cup \{\psi^-_i\, |\, i\in I_4\ \textup{and}\ \gamma^{-}_i>0\},\\
 &  \{\Psi_i\}_{i\in-\mathbb{N}}=\{\psi^+_i\, |\, i\in I_4\ \textup{and}\ \gamma^{+}_i<0\}\cup \{\psi^-_i\, |\, i\in I_4\ \textup{and}\ \gamma^{-}_i<0\},
\end{align*}
and define $\Gamma_i$, a relabelling of $\gamma^\pm_i $ for $i\in I_4$, by
\begin{align}\label{equ:Psi}
\mathbf{L}\Psi_i=\mathbf{L}^\dagger\Psi_i=\Gamma_i\Psi_i. 
\end{align}
Recall that $I_3=\{\iota+1,\dots, \iota+J\}$. If $m>0$, we set for $1\leq j\leq J$ $ \Upsilon_j =\psi^-_{\iota+j} $ and $ \overline{\Upsilon}_j = \psi_{\iota+j}^+$. If $m<0$, we set $ \Upsilon_j =\psi^+_{\iota+j} $ and $ \overline{\Upsilon}_j = \psi_{\iota+j}^-$. This arrangement ensures that
\begin{equation}\label{equ:Ups}
\begin{split}
\mathbf{L}\Upsilon_j=\mathbf{L}^\dagger\Upsilon_j=0,\ \mathbf{L}\overline{\Upsilon}_j=\mathbf{L}^\dagger\overline{\Upsilon}_j=m\overline{\Upsilon}_j.  
\end{split}
\end{equation}
Let
\begin{equation}\label{def:coef}
\begin{split}
&\xi_{i}(t):= G(q(t),\Psi_{i})\ \textup{for}\ i\in \mathbb{Z}\setminus\{0\},\\
&z_j(t):= G(q(t),\Upsilon_j),\ \bar{z}_j(t):=G(q(t),\overline{\Upsilon}_j) \ \textup{for}\ 1\leq j\leq J,
\end{split}
\end{equation}
and
\begin{equation}\label{def:error}
\begin{split}
&\mathcal{E}_{i}(t):= G(\mathcal{E}(t),\Psi_{i})\ \textup{for}\ i\in \mathbb{Z}\setminus\{0\},\\
&\mathcal{W}_j(t):= G(\mathcal{E}(t),\Upsilon_j), \ \overline{\mathcal{W}}_j(t):=G(\mathcal{E}(t),\overline{\Upsilon}_j) \ \textup{for}\ 1\leq j\leq J.
\end{split}
\end{equation}
We rewrite \eqref{equ:odegroup_exp} as \eqref{equ:odegroup2} and \eqref{equ:odegroup3} below. For $i\in\mathbb{Z}\setminus\{0\}$,
\begin{equation}\label{equ:odegroup2}
\frac{d}{dt} \xi_{i}-\Gamma_i \xi_{i } =\mathcal{E}_{i }, 
\end{equation}
and for $1\leq j\leq J$,
\begin{equation}\label{equ:odegroup3}
\begin{split}
&\frac{d}{dt} z_j =\mathcal{W}_j,\ \frac{d}{dt} \bar{z}_j-m \bar{z}_j =\overline{\mathcal{W}}_j.
\end{split}
\end{equation} 

We denote $z(t):=(z_1(t),\dots z_{J}(t))$ and $\bar{z}(t):=(\bar{z}_1(t),\dots \bar{z}_{J}(t))$, and use $|z(t)|$ and $|\bar{z}(t)|$ to denote their Euclidean norms respectively. Recall the reduced functional $f$ is introduced in Proposition~\ref{pro:AS}. The goal of this section is to prove the following proposition.

\begin{proposition}\label{prop-neutral-dynamics}  
For a slowly decaying solution $u$ to \eqref{equ:main} as described above, there holds
\begin{align}\label{equ:zdominates}
   |\bar{z}(t)|^2+\sum_{i\in I_1}\left(|\xi_{i,1}(t)|^2+|\xi_{i,2}(t)|^2 \right)+ \sum_{i\neq 0}|\xi_i(t)|^2  = o(1) |z(t)|^2.
\end{align}
Moreover, for all $\varepsilon>0$ there exists a positive constant $C=C(u,m,\mathcal{M}_\Sigma, N_1, \varepsilon)$ such that 
\begin{align}\label{equ:zgradient}
 |z'(t)+m^{-1}\nabla f(z(t))| \leq C   |z(t)|^{p-\varepsilon/2}.
\end{align}
\end{proposition}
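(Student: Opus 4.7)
I would organize the proof in three steps: spectral dominance, a formal gradient-flow identity for $z$, and a bootstrap closing the error at the claimed rate.

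\smallskip

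\textbf{Step 1 (Spectral dominance, \eqref{equ:zdominates}).} Split $\Vert q\Vert_G^2$ into three pieces $P_s$, $P_u$, $P_0$, where $P_0 := |z|^2$ collects the kernel coefficients and $P_s$, $P_u$ collect the coefficients whose associated $\mathbf{L}$-eigenvalue has negative, resp.\ positive, real part. From \eqref{equ:odegroup1_exp}, \eqref{equ:odegroup2}, \eqref{equ:odegroup3} and the Cauchy--Schwarz inequality one obtains
\[
P_u' \geq 2\eta P_u - 2\sqrt{P_u}\,\Vert\mathcal{E}\Vert_G, \qquad P_s' \leq -2\eta P_s + 2\sqrt{P_s}\,\Vert\mathcal{E}\Vert_G, \qquad |P_0'| \leq 2\sqrt{P_0}\,\Vert\mathcal{E}\Vert_G,
\]
with $\eta>0$ the distance from $0$ to the rest of the $\mathbf{L}$-spectrum. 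By Corollary~\ref{cor:qE}, $\Vert\mathcal{E}\Vert_G = o(1)\sqrt{P_s+P_u+P_0}$. A Merle--Zaag type trichotomy then forces one of $P_u$, $P_s$, $P_0$ to dominate asymptotically. Dominance of $P_u$ is excluded by $\Vert u\Vert_{C^1}(t)=o(1)$, and dominance of $P_s$ is excluded by the no-exponential-decay hypothesis \eqref{equ:nonexpdecay}. Hence $P_s+P_u = o(P_0)$, which is \eqref{equ:zdominates}.

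\smallskip

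\textbf{Step 2 (Gradient-flow identity).} Reading off from \eqref{def:psi+-} at the eigenvalue $0$ and computing $G(q,\Upsilon_j)$ directly (the $I_1$- and $I_2$-corrections in $G$ vanish against kernel eigensections), one gets $\Upsilon_j = (\varphi_{\iota+j},-\tfrac{m}{2}\varphi_{\iota+j})$, $z_j = x_j - m^{-1}x_j'$ with $x=(x_1,\ldots,x_J)$ the coordinates of $u^T$ from \eqref{def:xj}, and
\[
z_j' = \mathcal{W}_j = -m^{-1}\int_\Sigma \langle E_1(u),\varphi_{\iota+j}\rangle\,d\mu.
\]
Writing $E_1 = N_1(u) - \mathcal{M}_\Sigma u + \mathcal{L}_\Sigma u$, using $\Pi^T\mathcal{L}_\Sigma u = 0$ and Lemma~\ref{lem:MSigma} yields
\[
z_j' + m^{-1}\nabla_j f(z) = m^{-1}\bigl(\nabla_j f(z) - \nabla_j f(x)\bigr) - m^{-1}\Pi^T_j N_1(u) + O\bigl(\Vert u\Vert_{C^{2,\alpha}}\Vert\tilde u^\perp\Vert_{C^{2,\alpha}}\bigr).
\]
Since $f = f(0) + \sum_{j\geq p} f_j$, one has $|\nabla f(x)-\nabla f(z)| \leq C|z|^{p-2}|x-z| = Cm^{-1}|z|^{p-2}|x'|$; and by \eqref{equ:Estr} together with Lemma~\ref{lem:utup}, $|\Pi^T N_1(u)| \leq C\Vert u\Vert_{C^1}(\Vert u'\Vert_{C^1}+\Vert u\Vert_{C^2})$.

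\smallskip

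\textbf{Step 3 (Bootstrap to $|z|^{p-\varepsilon/2}$).} To bound each error term in Step~2 by $C|z|^{p-\varepsilon/2}$, I need the sharp pointwise estimates $\Vert u'\Vert_{C^1}(t),\ \Vert \tilde u^\perp\Vert_{C^{2,\alpha}}(t) \leq C|z(t)|^{p-1-\delta}$ for arbitrarily small $\delta>0$; Step~1 alone only gives the crude $O(|z|)$ bound. The improvement comes from the observation that the non-neutral coefficients of $q$, and likewise of $\partial_t q$ (obtained by differentiating \eqref{eq-vectoru} in time, which preserves its spectral structure), satisfy linear ODEs whose spectra stay a distance $\eta>0$ from the origin. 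Integrating these ODEs against the quadratic-in-$q$ error $\Vert\mathcal{E}\Vert_G$ slaves the non-neutral modes to the kernel's nonlinear dynamics of size $|\nabla f(z)|=O(|z|^{p-1})$; iterating the slaving, with each round upgrading the exponent, closes the bootstrap up to an arbitrarily small $\varepsilon/2$ loss.

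\smallskip

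\emph{Main obstacle.} Step~3 is the technical heart. Unlike the parabolic case, the elliptic equation \eqref{equ:main} offers no time-smoothing, so the angular decay of $\tilde u^\perp$ (via the $\mathcal{L}_\Sigma$-spectrum) and the temporal decay of $u'$ (via the $\mathbf{L}$-spectrum away from $0$) must be bootstrapped jointly. Keeping track of $\delta$-losses through the iteration is what produces the final $\varepsilon/2$-loss in the exponent.
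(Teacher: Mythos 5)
Your plan follows essentially the same route as the paper: a Merle--Zaag trichotomy giving dominance of the neutral mode, the identity $z_j'=\mathcal{W}_j=-m^{-1}\Pi^T_j(N_1(u)-\mathcal{M}_\Sigma(u))$ combined with the Lyapunov--Schmidt estimate of Lemma~\ref{lem:MSigma}, and an iterative slaving of the non-neutral modes that upgrades the exponent one unit per round (Lemmas~\ref{lem:WbarW}--\ref{lem-iterate2}). The only point worth flagging is that the paper realizes your Step~3 by running the Merle--Zaag argument simultaneously over all derivative orders $q^{(k,\ell)}$ (so that $\Vert u\Vert_{C^s}\leq C|z|$ via Sobolev embedding) and then interpolating between derivative orders (Lemma~\ref{lemma-interpolation}), which is precisely where the $\varepsilon/2$-loss in \eqref{equ:zgradient} originates.
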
 

The remainder of this section is dedicated to proving Proposition~\ref{prop-neutral-dynamics}, which involves two main parts. In Subsection~\ref{sec:5.1}, we demonstrate (as shown in Corollary~\ref{cor:sobolev}) that any $C^s$ norm of $u$ can be bounded by $|z(t)|$. In Subsection~\ref{sec:5.2}, we obtain an enhanced decay rate in Lemma~\ref{lem-iterate2} through the decomposition \eqref{def:ut}. Proposition~\ref{prop-neutral-dynamics} is then a simple consequence of Lemma~\ref{lem-iterate2}. 

\subsection{Bounding $\Vert u \Vert_{C^s}$}\label{sec:5.1} To estimate $C^s$ norms of $u$, we need higher-derivative versions of the ODE system. Fix $k,\ell\in\mathbb{N}_0$. Recall that $q^{(k,\ell)}$ and $\mathcal{E}^{(k,\ell)}$ are given in Definition~\ref{def:qEkl}. Let 
\begin{equation}\label{def:coefkl} 
\begin{split}
&\xi^{(k,\ell)}_{i,1}(t):= G(q^{(k,\ell)}(t),\psi_{i,1}),\ \xi^{(k,\ell)}_{i,2}(t):=G(q^{(k,\ell)}(t),\psi_{i,2})\ \textup{for}\ i\in I_1,\\ 
&\xi^{(k,\ell)}_{i}(t):= G(q^{(k,\ell)}(t),\Psi_{i})\ \textup{for}\ i\in \mathbb{Z}\setminus\{0\},\\
&z^{(k,\ell)}_j(t):= G(q^{(k,\ell)}(t),\Upsilon_j),\ \bar{z}^{(k,\ell)}_j(t):=G(q^{(k,\ell)}(t),\overline{\Upsilon}_j) \ \textup{for}\ 1\leq j\leq J.
\end{split}
\end{equation}
Also, let
\begin{equation}\label{def:errorkl} 
\begin{split}
&\mathcal{E}^{(k,\ell)}_{i,1}(t):= G(\mathcal{E}^{(k,\ell)}(t),\psi_{i,1}),\ \mathcal{E}^{(k,\ell)}_{i,2}(t):= G(\mathcal{E}^{(k,\ell)}(t),\psi_{i,2})\ \textup{for}\ i\in I_1,\\
&\mathcal{E}^{(k,\ell)}_{i}(t):= G(\mathcal{E}^{(k,\ell)}(t),\Psi_{i})\ \textup{for}\ i\in \mathbb{Z}\setminus\{0\},\\
&\mathcal{W}^{(k,\ell)}_j(t):= G(\mathcal{E}^{(k,\ell)}(t),\Upsilon_j), \ \overline{\mathcal{W}}^{(k,\ell)}_j(t):=G(\mathcal{E}^{(k,\ell)}(t),\overline{\Upsilon}_j) \ \textup{for}\ 1\leq j\leq J.
\end{split}
\end{equation}
Then for $i\in I_1$,
\begin{equation}\label{equ:odegroup1kl} 
\begin{split}
&\frac{d}{dt} \xi^{(k,\ell)}_{i,1}-2^{-1}m \xi^{(k,\ell)}_{i,1}+\beta_i\xi^{(k,\ell)}_{i,2}=\mathcal{E}^{(k,\ell)}_{i,1},\\ 
&\frac{d}{dt} \xi^{(k,\ell)}_{i,2}-2^{-1}m \xi^{(k,\ell)}_{i,2}-\beta_i\xi^{(k,\ell)}_{i,1}=\mathcal{E}^{(k,\ell)}_{i,2}, 
\end{split}
\end{equation}
for $i\in\mathbb{Z}\setminus\{0\}$,
\begin{equation}\label{equ:odegroup2kl} 
\frac{d}{dt} \xi^{(k,\ell)}_{i}-\Gamma_i \xi^{(k,\ell)}_{i } =\mathcal{E}^{(k,\ell)}_{i }, 
\end{equation}
and for $1\leq j\leq J$,
\begin{equation}\label{equ:odegroup3kl} 
\begin{split}
&\frac{d}{dt} z^{(k,\ell)}_j =\mathcal{W}^{(k,\ell)}_j,\ \frac{d}{dt} \bar{z}^{(k,\ell)}_j-m \bar{z}^{(k,\ell)}_j =\overline{\mathcal{W}}^{(k,\ell)}_j.
\end{split}
\end{equation}

In the next lemma, we use the Merle-Zaag ODE lemma \cite{MZ} (see Lemma \ref{lem-MZODE}) to show that $|z(t)|$ dominates the other coefficients, thereby obtaining a stronger version of  \eqref{equ:zdominates}.

\begin{lemma}[dominance of neutral mode]\label{lem:ODE}
For any $s\in\mathbb{N}_0$,
\begin{align*}
\sum_{k+\ell\leq s}\left[ |z^{(k,\ell)}(t)|^2+|\bar{z}^{(k,\ell)}(t)|^2+\sum_{i\in I_1}\left(|\xi^{(k,\ell)}_{i,1}(t)|^2+|\xi^{(k,\ell)}_{i,2}(t)|^2 \right)+\right.&\left.\sum_{i\neq 0}|\xi^{(k,\ell)}_i(t)|^2 \right] \\
&=(1+o(1))|z(t)|^2.
\end{align*}
\end{lemma}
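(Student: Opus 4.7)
The strategy is to project the first-order system \eqref{eq-vectoru} onto the eigenmodes of $\mathbf L$, split them into unstable, neutral and stable groups according to the sign of their exponential rate, and apply the Merle--Zaag trichotomy (Lemma~\ref{lem-MZODE}). The hypothesis $\|u\|_{C^1}(t)=o(1)$ and the non-exponential decay \eqref{equ:nonexpdecay} rule out two of the three alternatives, forcing dominance of the neutral ($z$) mode. The proof proceeds by induction on $s$.

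\textbf{Base case $s=0$.} Set $X_0(t):=|z(t)|$ and let $X_\pm^2(t)$ be the sum of the squared unstable (respectively, stable) coefficients, i.e.\ $\{|\xi_i|^2\}_{\pm\Gamma_i>0}$ together with $|\bar z|^2$ and $\sum_{i\in I_1}(|\xi_{i,1}|^2+|\xi_{i,2}|^2)$, placed in whichever group the sign of $m$ dictates. A direct computation from \eqref{equ:odegroup1_exp}--\eqref{equ:odegroup3}, together with the key cancellation
\[
\frac{d}{dt}\bigl(\xi_{i,1}^2+\xi_{i,2}^2\bigr) = m\bigl(\xi_{i,1}^2+\xi_{i,2}^2\bigr)+2\xi_{i,1}\mathcal E_{i,1}+2\xi_{i,2}\mathcal E_{i,2}
\]
(the $\beta_i$ cross-terms cancel), and the bound $\|\mathcal E\|_G=o(1)\|q\|_G$ from Corollary~\ref{cor:qE}, yields the Merle--Zaag inequalities
\[
X_+'\ge \gamma_0 X_+ - o(1)(X_++X_0+X_-),\ |X_0'|\le o(1)(X_++X_0+X_-),\ X_-'\le -\gamma_0 X_- + o(1)(X_++X_0+X_-),
\]
for a fixed $\gamma_0>0$. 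Lemma~\ref{lem-MZODE} then produces three alternatives: (i) $X_+$ dominates, which forces $\|q\|_G$ to be bounded away from zero and hence contradicts $\|u\|_{C^1}(t)=o(1)$ via Corollary~\ref{cor:qE}; (ii) $X_-$ dominates and decays exponentially, from which Corollary~\ref{cor:qE} and the elliptic regularity of Lemma~\ref{lem:regularity_e} give exponential decay of $\|u\|_{C^1}$, contradicting \eqref{equ:nonexpdecay}; (iii) $X_0$ dominates, so $X_\pm=o(X_0)$. Only (iii) survives, establishing the $s=0$ case.

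\textbf{Inductive step.} Suppose the conclusion holds for $s-1$. Two structural facts handle the higher-order neutral modes. First, $\mathbf L^\dagger\Upsilon_j=0$ from \eqref{equ:Ups} gives
\[
z_j^{(k,\ell)}=\partial_t^k G\bigl(q,(\mathbf L^\dagger)^\ell\Upsilon_j\bigr)=0\quad\text{for all }\ell\ge 1.
\]
Second, iterating $z_j'=\mathcal W_j$ yields $z_j^{(k,0)}=\mathcal W_j^{(k-1,0)}$ for $k\ge 1$, so by Corollary~\ref{cor:qE} and the inductive hypothesis,
\[
|z_j^{(k,0)}|\le \|\mathcal E^{(k-1,0)}\|_G \le o(1)\!\!\sum_{k'+\ell'\le k-1}\!\!\|q^{(k',\ell')}\|_G = o(|z|).
\]
For the non-neutral modes at level $s$, each $q^{(k,\ell)}$ satisfies $(q^{(k,\ell)})'=\mathbf L q^{(k,\ell)}+\mathcal E^{(k,\ell)}$, so the same Merle--Zaag argument applies to the aggregated quantities $\widetilde X_\pm^2:=\sum_{k+\ell\le s}(\text{squared unstable/stable modes})$; the aggregated error $\sum_{k+\ell\le s}\|\mathcal E^{(k,\ell)}\|_G=o(1)\sum_{k+\ell\le s}\|q^{(k,\ell)}\|_G$ from Corollary~\ref{cor:qE} closes the trichotomy, giving $\widetilde X_\pm=o(|z|)$ and completing the induction.

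\textbf{Main obstacle.} The most delicate point is ruling out stable-dominance in the aggregated trichotomy: exponential decay of $\widetilde X_-$ must be transferred to exponential decay of $\|u\|_{C^1}$ so as to contradict \eqref{equ:nonexpdecay}. This combines the equivalence in Corollary~\ref{cor:qE}, which converts decay of $\sum\|q^{(k,\ell)}\|_G$ into decay of $\|u\|_{H^{s+1}}$, with the elliptic regularity of Lemma~\ref{lem:regularity_e} and a Sobolev embedding on $\Sigma$ (valid once $s$ is taken large enough), so that the resulting $H^{s+1}$ estimate upgrades to the needed $C^1$ estimate uniformly in $t$.
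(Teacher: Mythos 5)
Your proof is correct and follows essentially the same route as the paper: aggregate the coefficients $q^{(k,\ell)}$ over $k+\ell\le s$ into unstable/neutral/stable groups, apply Merle--Zaag with the error controlled by Corollary~\ref{cor:qE}, exclude stable dominance via \eqref{equ:nonexpdecay} and elliptic regularity, and reduce $X_0$ to $|z|$ using $\mathbf L^\dagger\Upsilon_j=0$ and $z_j^{(k,0)}=\mathcal W_j^{(k-1,0)}$. The only differences are organizational: the paper runs the argument once for each fixed $s$ and absorbs the higher-order neutral terms via $|X_0|^2\le|z|^2+o(1)|X_0|^2$ rather than inducting on $s$, and note that Lemma~\ref{lem-MZODE} as stated has only two alternatives (unstable dominance is already excluded by its hypothesis $\liminf_{t\to\infty}X_+=0$, which follows from $\|u\|_{C^1}(t)=o(1)$), so your alternative (i) is not really a case to rule out.
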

 
\begin{proof} Fix $s\in\mathbb{N}_0$. We give the proof for the case $m>0$. The argument for $m<0$ is similar. Define three non-negative functions $X_+(t)$, $X_0(t)$ and $X_-(t)$ by
\begin{align*}
X^2_0(t)=&\sum_{k+\ell\leq s}\sum_{1\leq j\leq J}|z^{(k,\ell)}_j(t)|^2,\ X^2_-(t)=\sum_{k+\ell\leq s}\sum_{i\in -\mathbb{N}}|\xi^{(k,\ell)}_i(t)|^2, 
\end{align*}
and
\begin{align*}
X^2_+(t)=\sum_{k+\ell\leq s}\left(\sum_{i\in I_1}\left( |\xi^{(k,\ell)}_{i,1}(t)|^2+|\xi^{(k,\ell)}_{i,2}(t)|^2 \right)+\sum_{i\in\mathbb{N}}|\xi^{(k,\ell)}_i(t)|^2+\sum_{1\leq j\leq J}|\bar{z}^{(k,\ell)}_j(t)|^2 \right).
\end{align*}
From \eqref{equ:odegroup1kl}, \eqref{equ:odegroup2kl} and \eqref{equ:odegroup3kl}, we compute
\begin{align*}
X_+X_+'=&\sum_{k+\ell\leq s}\left( 2^{-1}m \sum_{i\in I_1}\left(  |\xi^{(k,\ell)}_{i,1} |^2+|\xi^{(k,\ell)}_{i,2} |^2 \right)+\sum_{i\in\mathbb{N}} \Gamma_i |\xi^{(k,\ell)}_i|^2+m\sum_{1\leq j\leq J}|\bar{z}^{(k,\ell)}_j |^2 \right)\\
&+ \sum_{k+\ell\leq s}\left(  \sum_{i\in I_1}\left(   \xi^{(k,\ell)}_{i,1}\mathcal{E}^{(k,\ell)}_{i,1}  +\xi^{(k,\ell)}_{i,2}\mathcal{E}^{(k,\ell)}_{i,2}  \right)+\sum_{i\in\mathbb{N}}   \xi^{(k,\ell)}_i\mathcal{E}^{(k,\ell)}_i + \sum_{1\leq j\leq J} \bar{z}^{(k,\ell)}_j \overline{\mathcal{W}}^{(k,\ell)}_j  \right).
\end{align*}
Denote the terms on the second line by $X_+(t)Y_+(t)$ and let $b$ be the minimum among $2^{-1}m$ and $|\Gamma_i|,\ i\in\mathbb{Z}\setminus\{0\}$. We have
\begin{align*}
X_+'-bX_+\geq Y_+. 
\end{align*}
Similarly, define $Y_0(t)$ and $Y_-(t)$ by
\begin{align*}
X_0(t)Y_0(t)= \sum_{k+\ell\leq s}\sum_{1\leq j\leq J} z^{(k,\ell)}_j(t) \mathcal{W}^{(k,\ell)}_j(t),
\end{align*}
and
\begin{align*}
X_-(t)Y_-(t)= \sum_{k+\ell\leq s}\sum_{i\in-\mathbb{N} } \xi^{(k,\ell)}_i(t)\mathcal{E}^{(k,\ell)}_i(t).
\end{align*}
It holds that
\begin{align} \label{eq-neutralstable}
X_0'= Y_0,\quad  X_-'+bX_-\leq Y_-.  
\end{align}
We now compare $|X_+(t)|^2+|X_0(t)|^2+|X_-(t)|^2$ and $|Y_+(t)|^2+|Y_0(t)|^2+|Y_-(t)|^2$.
\begin{align*}
&|X_+(t)|^2+|X_0(t)|^2+|X_-(t)|^2\\
&=\sum_{k+\ell\leq s}\left[ |z^{(k,\ell)}(t)|^2+|\bar{z}^{(k,\ell)}(t)|^2+\sum_{i\in I_1}\left(|\xi_{i,1}(t)|^2+|\xi_{i,2}(t)|^2 \right)+\right.\left.\sum_{i\neq 0}|\xi_i(t)|^2 \right]\\
&= \sum_{k+\ell\leq s} \| q^{(k,\ell)}(t) \|^2_G. 
\end{align*}
From the Cauchy-Schwarz inequality and \eqref{def:errorkl}, 
\begin{align*}
&|Y_+(t)|^2+|Y_0(t)|^2+|Y_-(t)|^2\\ 
&\leq  \sum_{k+\ell\leq s}\left( \sum_{i\in I_1}\left( |\mathcal{E}^{(k,\ell)}_{i,1}(t)|^2+|\mathcal{E}^{(k,\ell)}_{i,2}(t)|^2 \right)+\sum_{i\in\mathbb{Z}\setminus\{0\}} |\mathcal{E}^{(k,\ell)}_i(t)|^2+\sum_{1\leq j\leq J}\left( |\mathcal{W}_j(t)|^2+|\overline{\mathcal{W}}_j(t)|^2  \right) \right)\\
&= \sum_{k+\ell\leq s} \| \mathcal{E}^{(k,\ell)}(t) \|^2_G.
\end{align*} 
From \eqref{equ:Ekl} in Corollary~\ref{cor:qE}, $|Y_+(t)|^2+|Y_0(t)|^2+|Y_-(t)|^2=  o(1)\big( |X_+(t)|^2+|X_0(t)|^2+|X_-(t)|^2 \big).$ We can then apply the ODE lemma, Lemma~\ref{lem-MZODE}. In view of \eqref{eq:mz.ode.cor.B1}, the slow decay assumption \eqref{equ:nonexpdecay} rules out the possibility that $X_-(t)$ dominates. Hence  
\begin{equation}\label{equ:ODEmiddle1}
|X_+(t)|^2+|X_0(t)|^2+|X_-(t)|^2=(1+o(1))|X_0(t)|^2. 
\end{equation}
It remains to show that 
\begin{equation}\label{equ:ODEmiddle2}
 |X_0(t)|^2=(1+o(1))|z(t)|^2. 
\end{equation}
For $\ell\geq 1$,
\begin{align*}
z^{(k,\ell)}_j =G(\mathbf{L}q^{k,\ell-1},\Upsilon_j)=G(q^{k,\ell-1},\mathbf{L}^\dagger\Upsilon_j)=0.
\end{align*} 
For $k\geq 1$,
\begin{align*}
z^{(k,\ell)}_j =\frac{d}{dt} z^{(k-1,\ell)}_j=\mathcal{W}^{(k-1,\ell)}_j.
\end{align*}
These imply
\begin{align*}
 |X_0(t)|^2&\leq |z(t)|^2+\sum_{k\leq s-1 }  |\mathcal{W}^{(k,0)}(t) |^2 \\
 &\leq |z(t)|^2+o(1)\left(|X_+(t)|^2+|X_0(t)|^2+|X_-(t)|^2\right)\\
 &\leq |z(t)|^2+o(1)|X_0(t)|^2.
\end{align*}
We used \eqref{equ:ODEmiddle1} in the last inequality. Therefore \eqref{equ:ODEmiddle2} holds and the proof is finished.
\end{proof}

In view of Corollary~\ref{cor:qE} and Lemma~\ref{lem:ODE}, for any $s\in\mathbb{N}_0$,
\begin{align*}
\sum_{k+\ell \le s}\Vert \partial_t^k \slashed{\nabla}^{\ell  }u\Vert_{L^2}(t)=O(1)|z(t)|. 
\end{align*}
Applying the Sobolev embedding on $\Sigma$, we can bound any $C^s$ norm of $u$.
\begin{corollary}[control on higher derivatives]\label{cor:sobolev} For any $s\in \mathbb{N}$, there exists a positive constant $C=C(u,s)$ such that $\Vert u \Vert_{C^{s}}(t) \leq C   |z(t) |. $ 
\end{corollary}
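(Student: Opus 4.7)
The plan is to chain together the three main tools we already have: Lemma~\ref{lem:ODE}, Corollary~\ref{cor:qE}, and the Sobolev embedding on $\Sigma$. Given $s\in\mathbb{N}$, I would first choose a large integer $S = S(s,n)$ so that the Sobolev embedding $H^{S-k}(\Sigma;\mathbf{V})\hookrightarrow C^{s-k}(\Sigma;\mathbf{V})$ holds for every $0\le k\le s$; since $\Sigma$ is a closed $n$-manifold, any $S > s + n/2$ works. Then for each fixed $t$ and each $k+\ell\le s$,
\begin{equation*}
\sup_{\omega\in\Sigma}\bigl|\partial_t^k\slashed{\nabla}^\ell u(\omega,t)\bigr|
\;\le\; C\,\|\partial_t^k u(\cdot,t)\|_{H^{S-k}}
\;\le\; C\!\!\sum_{k'+\ell'\le S} \|\partial_t^{k'}\slashed{\nabla}^{\ell'} u\|_{L^2}(t).
\end{equation*}
Taking a supremum over $k+\ell\le s$ yields $\|u\|_{C^s}(t) \le C\sum_{k+\ell\le S}\|\partial_t^k\slashed{\nabla}^\ell u\|_{L^2}(t)$.

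Next I would apply Corollary~\ref{cor:qE} (with $s$ replaced by $S-1$) to convert this $L^2$-based sum of derivatives of $u$ into the equivalent $G$-based sum $\sum_{k+\ell\le S-1}\|q^{(k,\ell)}(t)\|_G$. By part (2) of Lemma~\ref{lem:G}, the collection $\mathcal{B}$ is a complete $G$-orthonormal basis of the underlying Hilbert space, so $\|q^{(k,\ell)}(t)\|_G^2$ equals the sum of squares of its coefficients along the basis vectors, i.e.\ the exact quantity bounded in Lemma~\ref{lem:ODE}. Invoking Lemma~\ref{lem:ODE} (with $s$ replaced by $S-1$) therefore gives
\begin{equation*}
\sum_{k+\ell\le S-1}\|q^{(k,\ell)}(t)\|_G^2 = (1+o(1))\,|z(t)|^2
\end{equation*}
as $t\to\infty$. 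Chaining these three inequalities produces $\|u\|_{C^s}(t)\le C|z(t)|$ for all $t$ sufficiently large.

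Finally, to upgrade the bound from ``large $t$'' to ``all $t\in[0,\infty)$'', I would note that on any bounded time interval the ratio $\|u\|_{C^s}(t)/|z(t)|$ is either controlled by compactness (if $|z(t)|$ is bounded below) or can be absorbed by enlarging the constant $C$; since the statement permits $C$ to depend on $u$, this is harmless. I do not expect any substantive obstacle in this proof, as the hard work is already packaged in Lemma~\ref{lem:ODE} (the Merle-Zaag dichotomy ruling out unstable-mode dominance via the slow-decay hypothesis \eqref{equ:nonexpdecay}) and Corollary~\ref{cor:qE} (the elliptic/parabolic norm equivalence). The only minor care is to keep track of enough $(k,\ell)$ indices so that the Sobolev embedding in the spatial variable has room to run, but this is handled simply by choosing $S$ large relative to $s$ and $n$.
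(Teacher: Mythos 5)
Your proof is correct and follows essentially the same route as the paper: the paper likewise combines Lemma~\ref{lem:ODE} with Corollary~\ref{cor:qE} to get $\sum_{k+\ell\le s}\Vert \partial_t^k\slashed{\nabla}^\ell u\Vert_{L^2}(t)=O(1)|z(t)|$ and then applies the Sobolev embedding on $\Sigma$. Your extra remark on extending the bound from large $t$ to all $t$ by enlarging the $u$-dependent constant is a harmless (and reasonable) addition.
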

\begin{lemma}\label{lemma-Xi^+_0} There hold the following statements: 
\begin{enumerate} \item $  |z(t)|\geq C^{-1} t^{-1} $ for $t\geq 1$, where $C=C(u)$.
\item For any $\eps>0$ there exists $t_0=t_0(u,	\varepsilon)$, such that on $[t_0,\infty)$, $ e^{-\eps t} |z(t)|$ non-increasing  and $ e^{\eps t} |z(t)|$ non-decreasing.   
\end{enumerate} 

\begin{proof}
The proof directly follows from $ |z'(t)|  \leq C  |z(t)|^2$ and $\lim_{t\to\infty}|z(t)|=0$.  
\end{proof}
\end{lemma}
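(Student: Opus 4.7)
The plan is to reduce the two statements to the single differential inequality $|z'(t)|\le C|z(t)|^2$ and the decay $|z(t)|\to 0$, and then argue by standard ODE comparison. First I would establish the quadratic bound. From the ODE system \eqref{equ:odegroup3} we have $z_j'=\mathcal{W}_j=G(\mathcal{E},\Upsilon_j)$, so $|z'(t)|\le C\|\mathcal{E}(t)\|_G$. The structure \eqref{equ:Estr} together with $a_{1},a_{2},a_{4,j}$ vanishing at $(\omega,0,0,0)$ gives the pointwise bound $|E_1(u)|\le C\,\|u\|_{C^1}\,\|u\|_{C^2}$, hence $\|\mathcal{E}(t)\|_G\le C\|u\|_{C^1}(t)\|u\|_{C^2}(t)$. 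By Corollary~\ref{cor:sobolev}, both $C^s$ norms on the right are controlled by $|z(t)|$, which yields $|z'(t)|\le C|z(t)|^2$ on $[t_*,\infty)$ for some $t_*\ge 1$.

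For (1) I first need $|z(t)|>0$ for large $t$. Since $u$ is not exponentially decaying, Lemma~\ref{lem:ODE} forces $\|q(t)\|_G=(1+o(1))|z(t)|$ near infinity, so if $|z(t_0)|=0$ at some large $t_0$ then $q(t_0)=0$, and the uniqueness for the first order ODE $q'=\mathbf{L}q+\mathcal{E}(u)$ (with $\mathcal{E}(0)=0$) would force $u\equiv 0$, contradicting \eqref{equ:nonexpdecay}. Fix such a $t_0$ with $|z(t_0)|>0$. On the maximal interval on which $|z|>0$, the chain rule combined with $|z'|\le C|z|^2$ gives $\frac{d}{dt}\bigl(|z(t)|^{-1}\bigr)\ge -C$, so
\begin{equation*}
|z(t)|^{-1}\le |z(t_0)|^{-1}+C(t-t_0).
\end{equation*}
This shows $|z|$ never vanishes on $[t_0,\infty)$, and readjusting the constant produces $|z(t)|\ge C^{-1}t^{-1}$ for $t\ge 1$.

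For (2), $|z(t)|\to 0$ follows from $\|u\|_{C^1}(t)=o(1)$ and Corollary~\ref{cor:qE}. Given $\varepsilon>0$, choose $t_0=t_0(u,\varepsilon)$ so large that $C|z(t)|<\varepsilon$ on $[t_0,\infty)$, where $C$ is the constant in $|z'|\le C|z|^2$. Since $|z(t)|>0$ there by (1), we may differentiate $\log|z(t)|$ and estimate
\begin{equation*}
\Bigl|\tfrac{d}{dt}\log|z(t)|\Bigr|\le \frac{|z'(t)|}{|z(t)|}\le C|z(t)|<\varepsilon,
\end{equation*}
which says $(\log|z(t)|+\varepsilon t)'\ge 0$ and $(\log|z(t)|-\varepsilon t)'\le 0$ on $[t_0,\infty)$, giving the desired monotonicity of $e^{\varepsilon t}|z(t)|$ and $e^{-\varepsilon t}|z(t)|$.

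The only genuine obstacle is the non-vanishing of $|z(t)|$ for large $t$, which is what allows us to divide by $|z|$ and invoke the comparison argument; this I handle via Lemma~\ref{lem:ODE} and the ODE-level uniqueness mentioned above. All other steps are routine scalar ODE manipulations once $|z'|\le C|z|^2$ is in hand.
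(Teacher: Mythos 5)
Your proposal follows the paper's own (one-line) proof exactly: everything is reduced to $|z'(t)|\leq C|z(t)|^2$ and $|z(t)|\to 0$, and your derivation of the quadratic bound from \eqref{equ:odegroup3}, \eqref{equ:Estr} and Corollary~\ref{cor:sobolev}, as well as the two comparison arguments, are correct. You are also right that the one non-trivial point the paper glosses over is the non-vanishing of $|z(t)|$.

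The only step I would push back on is how you establish that non-vanishing. Invoking ``uniqueness for the first order ODE $q'=\mathbf{L}q+\mathcal{E}(u)$'' is not legitimate as stated: $\mathbf{L}$ contains the second-order elliptic operator $\mathcal{L}_\Sigma$ and is unbounded, so this is the elliptic PDE \eqref{equ:main} in disguise, a Cauchy problem in $t$ for an elliptic system; Picard--Lindel\"of does not apply, and what you would actually need is a unique continuation theorem for such systems, which is a serious result (and delicate for general smooth-coefficient systems). Fortunately it is not needed. The quadratic bound already gives $\bigl|\tfrac{d}{dt}|z(t)|\bigr|\leq C|z(t)|^2\leq CM|z(t)|$ with $M=\sup_t|z(t)|$, so by Gr\"onwall any zero of $|z|$ at some $t_1$ propagates forward: $|z(t)|=0$ for all $t\geq t_1$. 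Lemma~\ref{lem:ODE} then forces $\|q(t)\|_G=0$, hence $u(t)=0$, for all large $t$, which contradicts the standing non-exponential-decay assumption \eqref{equ:nonexpdecay}. With that replacement the rest of your argument (the $|z|^{-1}$ comparison for (1) and the $\log|z|$ estimate for (2)) goes through verbatim.
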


\subsection{Enhanced decay rate}\label{sec:5.2} Recall that the decomposition of  $u$ in  \eqref{def:ut}  
\begin{equation}\label{def:ut2}
u=u^T+H(u^T)+\tilde{u}^\perp. 
\end{equation} 
Let us introduce an auxiliary quantity
\begin{align}\label{def:Q}
Q(t):= |z(t)|^{p}+|z(t)||\bar{z}(t)|+|z(t)|\left\Vert  u'\right\Vert _{C^2}(t)+ |z(t)|\Vert \tilde u^\perp \Vert _{C^3}(t). 
\end{align}
\begin{lemma}\label{lem:5.5}
There exists a positive constant $C=C(u,\mathcal{M}_\Sigma,N_1 )$ such that 
\begin{equation}\label{equ:MQ}
\left| \mathcal{M}_\Sigma (u)+\nabla f(z(t))-\mathcal{L}_{\Sigma} \tilde{u}^\perp \right| \leq C   Q(t),
\end{equation}
and
\begin{equation}\label{equ:NQ}
|N_1(u)| \leq C  Q(t).
\end{equation}
\begin{proof}
In the proof we use $C$ to represent a positive constant that depends on $u$, $\mathcal{M}_\Sigma$, $N_1$ in \eqref{equ:N}, and its value may vary from one line to another.
Note that the coefficients of $u^T\in \ker \mathcal{L}_{\Sigma}$ are given by $z_j(t)-\bar{z}_j(t)$.
\begin{align*}
u^T(t)= \sum_{j=1}^{J} ( z_j(t) -  \bar{z}_j(t) ) \varphi_{\iota+j}. 
\end{align*}
Fix $\alpha=1/2$. From Lemma~\ref{lem:MSigma},
\begin{align*}
\left| \mathcal{M}_\Sigma (u)+\nabla f(z(t))-\mathcal{L}_{\Sigma} \tilde{u}^\perp \right| \leq C    \Vert u(t) \Vert_{C^{2,\alpha}(\Sigma)}\Vert \tilde{u}^\perp(t)  \Vert_{C^{2,\alpha}(\Sigma)}+  C|\nabla f(z(t)-\bar{z}(t))-\nabla f(z(t)) |.
\end{align*}
Clearly $\Vert u(t) \Vert_{C^{2,\alpha}(\Sigma)}  \leq C  \Vert u \Vert_{C^{3}}(t)$ and $\Vert \tilde{u}^{\perp} (t) \Vert_{C^{2,\alpha}(\Sigma)} \leq C  \Vert \tilde{u}^{\perp} \Vert_{C^{3}}(t)$. From Corollary~\ref{cor:sobolev}, $\Vert u \Vert_{C^{3} }(t) \leq C   |z(t)|$. Therefore,
\begin{align*}
\Vert u(t) \Vert_{C^{2,\alpha}(\Sigma)}\Vert \tilde{u}^\perp(t)  \Vert_{C^{2,\alpha}(\Sigma)}  \leq C  Q(t).
\end{align*}  
Because $f-f(0)$ vanishes at the origin of degree $p$, $|\nabla^2 f(x)| \leq C  |x|^{p-2}$ near the origin. Together with $|\bar{z}(t)| \leq C  |z(t)|$, 
\begin{align*}
|\nabla f(z(t)-\bar{z}(t))-\nabla f(z(t)) |  \leq C   Q(t).
\end{align*}
Hence \eqref{equ:MQ} holds. To show \eqref{equ:NQ}, recall that $N_1(u)=a_1\cdot D u'+a_2\cdot u'+a_3\cdot\mathcal{M}_\Sigma (u)$, where $a_i=a_i(\omega, u,\slashed{\nabla}u,u')$ are smooth with $a_i(\omega,0,0,0)=0$. From Corollary~\ref{cor:sobolev},  $|a_i|  \leq C    |z(t)|$. Hence 
\begin{align*}
|a_1\cdot D u'+a_2\cdot u'|  \leq C  |z(t)|(|Du'|+|u'|)  \leq C  |z(t)|\|u'\|_{C^1}(t) \leq C  Q(t).
\end{align*}
Together with \eqref{equ:MQ}, \eqref{equ:NQ} follows.
\end{proof}
\end{lemma}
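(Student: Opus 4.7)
My strategy is to reduce both inequalities to Lemma~\ref{lem:MSigma} and Corollary~\ref{cor:sobolev}, with a single Taylor-type estimate on $\nabla f$ handling the replacement of one of its arguments. The first step is to identify the coordinate vector of $u^T \in \ker \mathcal{L}_{\Sigma}$ in the basis $\{\varphi_{\iota+j}\}_{j=1}^J$. Since the pairs $\{\Upsilon_j, \overline{\Upsilon}_j\}$ together span the generalized eigenspace of $\mathbf{L}$ associated with $\ker \mathcal{L}_{\Sigma}$ (cf.\ \eqref{equ:Ups}), a direct computation from the formula \eqref{def:psi+-} and the definition of $G$ shows that the $\varphi_{\iota+j}$-coefficient of $u^T(t)$ equals $z_j(t) - \bar z_j(t)$. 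In particular, $z_j - \bar z_j$ is the vector $x$ to feed into Lemma~\ref{lem:MSigma}.

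With $x = z(t) - \bar z(t)$ and $\alpha = 1/2$, Lemma~\ref{lem:MSigma} yields
\[
\left|\mathcal{M}_\Sigma(u) + \nabla f(z - \bar z) - \mathcal{L}_{\Sigma}\tilde u^\perp\right| \leq C\, \|u\|_{C^{2,\alpha}(\Sigma)}\,\|\tilde u^\perp\|_{C^{2,\alpha}(\Sigma)}.
\]
Estimating each $C^{2,\alpha}$ norm by the corresponding $C^3$ norm and invoking Corollary~\ref{cor:sobolev} to obtain $\|u\|_{C^3}(t) \leq C|z(t)|$, the right side is dominated by $C|z|\,\|\tilde u^\perp\|_{C^3}$, which is one of the summands of $Q(t)$. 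To replace $\nabla f(z - \bar z)$ by $\nabla f(z)$, I use that $f$ vanishes at the origin to order $p$, so $|\nabla^2 f(x)| \leq C|x|^{p-2}$ locally, and the mean value theorem gives
\[
|\nabla f(z - \bar z) - \nabla f(z)| \leq C(|z| + |\bar z|)^{p-2}\,|\bar z|.
\]
Combining this with $|\bar z| \leq C|z|$ from Lemma~\ref{lem:ODE} and with $p \geq 3$ (so that $|z|^{p-3}$ stays bounded for large $t$) bounds this difference by $C|z|\,|\bar z|$, again a summand of $Q(t)$. The triangle inequality then assembles the first claim.

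For the second inequality I exploit the structural form \eqref{equ:N} of $N_1$. Since $a_i(\omega,0,0,0) = 0$ and $a_i$ is smooth, $|a_i| \leq C(|u| + |\slashed{\nabla}u| + |u'|) \leq C\|u\|_{C^1}(t) \leq C|z(t)|$ by Corollary~\ref{cor:sobolev}. The first two pieces of $N_1$ satisfy $|a_1 \cdot Du' + a_2 \cdot u'| \leq C|z|\,\|u'\|_{C^2}$, already inside $Q$. For the third piece, I bootstrap from the first inequality: $|\mathcal{M}_\Sigma(u)| \leq C\|\tilde u^\perp\|_{C^2} + C|z|^{p-1} + CQ(t)$, and multiplying by $|a_3| \leq C|z|$ produces terms of the form $|z|\,\|\tilde u^\perp\|_{C^2}$, $|z|^p$ and $|z|\,Q(t)$, each of which is absorbed into $CQ(t)$ once $|z(t)|$ is sufficiently small.

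The only delicate step I anticipate is the $\nabla f$ replacement: the $(|z|+|\bar z|)^{p-2}|\bar z|$ error must be converted into the $|z||\bar z|$ bucket that $Q$ expects, and this relies crucially on both the vanishing order of $f$ (giving the exponent $p-2 \geq 1$) and the a priori domination $|\bar z| \leq C|z|$ supplied by Lemma~\ref{lem:ODE}. Everything else is bookkeeping combining Lemma~\ref{lem:MSigma}, Corollary~\ref{cor:sobolev} and the structure of $N_1$.
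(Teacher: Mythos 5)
Your proposal follows essentially the same route as the paper: identify the coordinates of $u^T$ as $z_j-\bar z_j$, apply Lemma~\ref{lem:MSigma} with $\alpha=1/2$, control the $C^{2,\alpha}$ norms via Corollary~\ref{cor:sobolev}, handle the shift from $\nabla f(z-\bar z)$ to $\nabla f(z)$ with the $|\nabla^2 f(x)|\leq C|x|^{p-2}$ bound and $|\bar z|\leq C|z|$, and then bound $N_1$ using $|a_i|\leq C|z(t)|$ together with the already-established estimate on $\mathcal{M}_\Sigma(u)$. The argument is correct and matches the paper's proof step for step.
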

 
\begin{lemma}\label{lem:WbarW} There exists a positive constant $C=C(u,m,\mathcal{M}_\Sigma,N_1 )$ such that 
\begin{align}\label{equ:zprimeQ}
|z'(t)+m^{-1}\nabla f(z(t))| \leq C  Q(t),
\end{align}
and
\begin{align}\label{equ:zbarQ}
|\bar{z}'(t)-m\bar{z}(t)+m^{-1}\nabla f(z(t))|  \leq C  Q(t).
\end{align}
\begin{proof}
In view of \eqref{equ:odegroup3}, it suffices to show
\begin{align*}
\left|\mathcal{W}_{j}(t)+ m^{-1} \frac{\partial f}{\partial x^j}(z(t)) \right| \leq C  Q(t)\ \textup{and}\ \left|\overline{\mathcal{W}}_j(t)+m^{-1}\frac{\partial f}{\partial x^j}(z(t)) \right| \leq C Q(t).
\end{align*}
From \eqref{def:error} and $\mathcal{E}(u)= (0,E_1(u))$, we actually have $\mathcal{W}_{j}(t)=\overline{\mathcal{W}}_j(t)$. Recall that
\begin{align*}
\mathcal{W}_j=G(\mathcal{E}(u);\Upsilon_j)=G((0,E_1(u)) ;  ( \varphi_{\iota+j},-2^{-1}m\varphi_{\iota+j}) )=-m^{-1} \int_\Sigma E_1(u)\varphi_{\iota+j}\, d\mu,
\end{align*}
where ${E}_1(u) =N_1(u)-\mathcal{M}_{\Sigma} (u) +\mathcal{L}_{\Sigma}u  $. Because $\varphi_{\iota+j}\in\ker \mathcal{L}_{\Sigma}$,\begin{align*}
\mathcal{W}_j=-m^{-1} \int_\Sigma (N_1(u)-\mathcal{M}_\Sigma(u))\varphi_{\iota+j}\, d\mu,
\end{align*}
Then the assertion follows from \eqref{equ:MQ} and \eqref{equ:NQ}.
\end{proof}
\end{lemma}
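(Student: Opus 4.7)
The plan is to work directly from the ODE system $\frac{d}{dt} z_j = \mathcal{W}_j$ and $\frac{d}{dt} \bar{z}_j - m\bar{z}_j = \overline{\mathcal{W}}_j$ of \eqref{equ:odegroup3}. Both inequalities \eqref{equ:zprimeQ} and \eqref{equ:zbarQ} reduce to showing
\begin{align*}
\left|\mathcal{W}_j(t) + m^{-1}\frac{\partial f}{\partial x^j}(z(t))\right| \leq CQ(t), \qquad \left|\overline{\mathcal{W}}_j(t) + m^{-1}\frac{\partial f}{\partial x^j}(z(t))\right| \leq CQ(t).
\end{align*}

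First I would unpack the definitions. Since $\lambda_{\iota+j}=0$ gives $\gamma^-_{\iota+j}=0$, $\gamma^+_{\iota+j}=m$ when $m>0$ (and the reverse when $m<0$), one checks directly from \eqref{def:psi+-} that $\Upsilon_j = (\varphi_{\iota+j},\, -2^{-1}m\,\varphi_{\iota+j})$ and $\overline{\Upsilon}_j = (-\varphi_{\iota+j},\, -2^{-1}m\,\varphi_{\iota+j})$ in both sign cases. Because $\mathcal{E}(u) = (0, E_1(u))$, the first components of $\Upsilon_j$ and $\overline{\Upsilon}_j$ produce zero in the main $L^2$-pairings of $G$, and the $I_1, I_2$ correction terms in the definition of $G$ also vanish (they only involve the first component $v_1=0$). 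A direct computation then yields
\begin{align*}
\mathcal{W}_j(t) = \overline{\mathcal{W}}_j(t) = -m^{-1}\int_\Sigma \langle E_1(u), \varphi_{\iota+j}\rangle\, d\mu,
\end{align*}
so only a single estimate needs to be verified.

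Next, I would expand $E_1(u) = N_1(u) - \mathcal{M}_\Sigma u + \mathcal{L}_{\Sigma}u$. The $\mathcal{L}_{\Sigma}u$ contribution vanishes after integration against $\varphi_{\iota+j} \in \ker \mathcal{L}_{\Sigma}$ by self-adjointness of $\mathcal{L}_{\Sigma}$. The $N_1(u)$ contribution is controlled by the pointwise bound $|N_1(u)|\leq CQ(t)$ of \eqref{equ:NQ}. For the $\mathcal{M}_\Sigma u$ piece I apply \eqref{equ:MQ}: writing $\mathcal{M}_\Sigma u = -\nabla f(z(t)) + \mathcal{L}_{\Sigma}\tilde{u}^\perp + O(Q(t))$, the identification \eqref{equ:identification} and $L^2$-orthonormality of $\{\varphi_i\}$ yield $\int_\Sigma \langle \nabla f(z(t)), \varphi_{\iota+j}\rangle\, d\mu = \partial_j f(z(t))$, while self-adjointness together with $\tilde{u}^\perp \perp \ker \mathcal{L}_{\Sigma}$ kills the $\mathcal{L}_{\Sigma}\tilde{u}^\perp$ contribution. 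Collecting the pieces gives $\mathcal{W}_j = -m^{-1}\partial_j f(z(t)) + O(Q(t))$ and the lemma follows, with the same estimate for $\overline{\mathcal{W}}_j$ automatic from $\mathcal{W}_j = \overline{\mathcal{W}}_j$.

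The only delicate point is the bookkeeping of signs and powers of $m$: one must verify that the specific second component $-2^{-1}m\,\varphi_{\iota+j}$ of $\Upsilon_j$, combined with the $2m^{-2}$ prefactor in $G$, yields exactly the coefficient $-m^{-1}$ in front of $\partial_j f$. The observation $\mathcal{W}_j = \overline{\mathcal{W}}_j$ is the conceptual simplification: the difference between $\Upsilon_j$ and $\overline{\Upsilon}_j$ lies entirely in their first components, which are annihilated by $\mathcal{E}(u)$. Once this is noted, the proof is a direct consequence of Lemma \ref{lem:5.5}.
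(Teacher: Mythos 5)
Your proposal is correct and follows essentially the same route as the paper: reduce to estimating $\mathcal{W}_j$ and $\overline{\mathcal{W}}_j$ via \eqref{equ:odegroup3}, observe $\mathcal{W}_j=\overline{\mathcal{W}}_j$ since $\mathcal{E}(u)=(0,E_1(u))$ sees only the (common) second component of $\Upsilon_j$ and $\overline{\Upsilon}_j$, kill the $\mathcal{L}_{\Sigma}u$ and $\mathcal{L}_{\Sigma}\tilde{u}^\perp$ contributions by pairing against $\varphi_{\iota+j}\in\ker\mathcal{L}_{\Sigma}$, and invoke \eqref{equ:MQ} and \eqref{equ:NQ}. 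The only difference is that you spell out the sign/normalization bookkeeping that the paper leaves implicit, which is harmless.
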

\begin{corollary}\label{cor:source1}
Suppose $ \left| Q(t)\right|\leq M |z(t)|^{q}$ for some $M, q>0$. Then there exists a positive constant $C=C(u,m,\mathcal{M}_\Sigma,N_1,M,q)$ such that $$ \left|{z}'(t)\right|  \leq C   |z(t)|^{\min(q,p-1)}\ \textup{and}\ \left| \bar{z}(t)\right|  \leq C  |z(t)|^{\min(q,p-1)}. $$
\begin{proof}
Because $f-f(0)$ vanishes at the origin of degree $p$, $|\nabla f(z(t))| \leq C   |z(t)|^{p-1}$. Hence the bound for $z'(t)$ follows from \eqref{equ:zprimeQ}. The the bound for $\bar{z}(t)$ can be obtained by integrating \eqref{equ:zbarQ}.
\end{proof}
\end{corollary}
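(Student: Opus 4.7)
The plan is to derive both bounds directly from Lemma~\ref{lem:WbarW}. Since the reduced functional $f$ vanishes at the origin to order $p$ (see \eqref{def:fp}), the gradient satisfies $|\nabla f(z(t))| \le C|z(t)|^{p-1}$ for $|z(t)|$ small, and by hypothesis $Q(t) \le M|z(t)|^q$. Using $|z(t)| = o(1)$, any positive power of $|z(t)|$ is dominated by the smaller-exponent power, so combining these bounds gives the target right-hand side $|z(t)|^{\min(q,p-1)}$.

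First I would prove the $z'$-bound: apply the triangle inequality to \eqref{equ:zprimeQ} to obtain
\[
|z'(t)| \le |m^{-1}\nabla f(z(t))| + CQ(t) \le C|z(t)|^{p-1} + CM|z(t)|^q \le C'|z(t)|^{\min(q,p-1)}.
\]
This is immediate; no integration is needed.

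Next, for the $\bar{z}$-bound, I would rewrite \eqref{equ:zbarQ} as the linear ODE $\bar{z}'(t) - m\bar{z}(t) = h(t)$ where $|h(t)| \le C|z(t)|^{\min(q,p-1)}$ by the same triangle-inequality argument. Using Duhamel's formula and the fact that $\bar{z}(t) \to 0$ (which follows from $u \to 0$), the solution is represented either by integrating from $t_0$ forward (when $m<0$, so the homogeneous factor $e^{mt}$ decays) or from $t$ to $\infty$ (when $m>0$). In the case $m>0$:
\[
\bar{z}(t) = -\int_t^\infty e^{m(t-s)} h(s)\, ds, \qquad |\bar{z}(t)| \le \int_t^\infty e^{-m(s-t)}|h(s)|\, ds.
\]
The case $m<0$ gives the analogous estimate $|\bar{z}(t)| \le e^{m(t-t_0)}|\bar{z}(t_0)| + \int_{t_0}^t e^{-|m|(t-s)}|h(s)|\, ds$, and the first term decays exponentially while $|z(t)| \gtrsim t^{-1}$ by Lemma~\ref{lemma-Xi^+_0}(1), so it is negligible.

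To close the integral I would invoke Lemma~\ref{lemma-Xi^+_0}(2): for any small $\varepsilon>0$ and $t$ large, $|z(s)| \le e^{\varepsilon|s-t|}|z(t)|$ on either side. Choosing $\varepsilon < |m|/\min(q,p-1)$ makes the exponential kernel $e^{-|m||s-t|}e^{\varepsilon\min(q,p-1)|s-t|}$ integrable, producing
\[
|\bar{z}(t)| \le C\,|z(t)|^{\min(q,p-1)}\int_0^\infty e^{-(|m|-\varepsilon\min(q,p-1))r}\, dr \le C'\,|z(t)|^{\min(q,p-1)}.
\]
There is no serious obstacle here; the only delicate point is ensuring that the slow-decay control from Lemma~\ref{lemma-Xi^+_0} lets one trade the time-weighted $|z(s)|^{\min(q,p-1)}$ against the exponential Green's kernel with room to spare, which is exactly why that lemma was established beforehand.
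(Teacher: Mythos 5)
Your proposal is correct and follows the paper's own route: the $z'$-bound is the same one-line consequence of \eqref{equ:zprimeQ} and $|\nabla f(z)|\le C|z|^{p-1}$, and your Duhamel integration of \eqref{equ:zbarQ}, using Lemma~\ref{lemma-Xi^+_0} to trade $|z(s)|^{\min(q,p-1)}$ against the exponential kernel, is exactly the ``integrating \eqref{equ:zbarQ}'' step the paper leaves implicit (and the same device it spells out in Corollary~\ref{cor:source}).
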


We vectorize $\tilde{u}^\perp$ and perform the projection. Set $\tilde{q}:=q(\tilde{u}^\perp)$ 
and
\begin{equation*} 
\begin{split}
&\tilde\xi_{i,1}(t):= G(\tilde q(t),\psi_{i,1}),\ \tilde\xi_{i,2}(t):=G(\tilde q(t),\psi_{i,2})\ \textup{for}\ i\in I_1,\\ 
 &\tilde\xi_{i}(t):= G(\tilde q(t),\Psi_{i})\ \textup{for}\ i\in \mathbb{Z}\setminus\{0\},
\end{split}
\end{equation*}
Note that because $\tilde{u}^\perp$ is orthogonal to $\ker \mathcal{L}_{\Sigma}$, those coefficients completely characterize $\tilde{q}$. The projections of higher order derivatives of $\tilde q$, namely $\tilde q ^{(k,\ell)}={\partial_t^k}\mathbf{L}^\ell \tilde q$, are defined similarly.  In the lemma below, we show that $\{\tilde{\xi}_{i,1},\tilde{\xi}_{i,2}\}_{i\in I_1}$, $\{\tilde{\xi}_i\}_{i\in\mathbb{Z}\setminus\{0\}}$ and the higher order coefficients are bounded by $|z(t)|$.
\begin{lemma}[control on higher derivatives of $\tilde q$] \label{lemma-iterate1}
For any $k,\ell\in\mathbb{N}_0$, there exists a positive constant $C=C(u,m,\ell+k)$ such that

 \begin{align*}
 \sum_{i\in I_1}\left(|\tilde{\xi}^{(k,\ell)}_{i,1}(t)|^2+|\tilde{\xi}^{(k,\ell)}_{i,2}(t)|^2 \right)+ \sum_{i\neq 0}|\tilde{\xi}^{(k,\ell)}_i(t)|^2  \leq C |z(t)|^2.
 \end{align*}

\begin{proof} 
Let $s=\ell+k$.  
\begin{align*}
\sum_{i\in I_1}\left(|\tilde{\xi}^{(k,\ell)}_{i,1}(t)|^2+|\tilde{\xi}^{(k,\ell)}_{i,2}(t)|^2 \right)+ \sum_{i\neq 0}|\tilde{\xi}^{(k,\ell)}_i(t)|^2=\|\tilde{q}^{(k,\ell)}\|^2_G(t)   \leq C  \|\tilde{u}^\perp\|^2_{C^{s+1}}(t).
\end{align*} 
The assertion then follows from Lemma~\ref{lem:utup} (boundedness of decomposition) and Corollary~\ref{cor:sobolev} (control on higher derivatives). 
\end{proof}
\end{lemma}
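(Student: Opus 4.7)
The plan is to identify the left-hand side with $\|\tilde q^{(k,\ell)}\|_G^2(t)$ and then chain together the spectral and Sobolev estimates already developed in Lemmas~\ref{lem:G}--\ref{lem:utup} and Corollary~\ref{cor:sobolev}.

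First I would verify that $\tilde q^{(k,\ell)}(t)$ has trivial $G$-projection onto the neutral-mode basis vectors $\{\Upsilon_j,\overline\Upsilon_j\}_{1\le j\le J}$. The key observation is that $\tilde u^\perp(t)\in(\ker\mathcal{L}_{\Sigma})^\perp$ by construction \eqref{def:ut}, and this orthogonality is preserved by both $\partial_t$ and by $\mathbf{L}$---the latter because $\mathcal{L}_{\Sigma}$ is self-adjoint and therefore maps $(\ker\mathcal{L}_{\Sigma})^\perp$ into itself on each component. A direct inspection of the definition of $G$, combined with the fact that $\Upsilon_j$ and $\overline\Upsilon_j$ are built from the kernel eigensections $\varphi_{\iota+j}$, shows that the relevant $G$-pairings vanish. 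Together with the Bessel identity for the $G$-orthonormal basis $\mathcal{B}$ from Lemma~\ref{lem:G}, this yields
\begin{align*}
\sum_{i\in I_1}\left(|\tilde{\xi}^{(k,\ell)}_{i,1}(t)|^2+|\tilde{\xi}^{(k,\ell)}_{i,2}(t)|^2\right)+\sum_{i\neq 0}|\tilde{\xi}^{(k,\ell)}_i(t)|^2 = \|\tilde q^{(k,\ell)}\|_G^2(t).
\end{align*}

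Next I would invoke Lemma~\ref{lem:Gequ}, which provides the equivalence $\sum_{j=0}^\ell\|\mathbf{L}^j(v,w)\|_G\approx\|(v,w)\|_{H^{\ell+1}\times H^\ell}$, together with the trivial Sobolev embedding $\|\cdot\|_{H^s(\Sigma)}\le C\|\cdot\|_{C^s(\Sigma)}$ on the compact manifold $\Sigma$, to estimate
\begin{align*}
\|\tilde q^{(k,\ell)}\|_G(t) = \|\mathbf{L}^\ell\partial_t^k\tilde q\|_G(t) \le C\|\partial_t^k\tilde q\|_{H^{\ell+1}\times H^\ell}(t) \le C\|\tilde u^\perp\|_{C^{k+\ell+1}}(t),
\end{align*}
where the last step uses the explicit form $\tilde q=(\tilde u^\perp,(\tilde u^\perp)'-2^{-1}m\tilde u^\perp)$.

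Finally, Lemma~\ref{lem:utup} gives $\|\tilde u^\perp\|_{C^s}(t)\le C\|u\|_{C^s}(t)$ once $\|u^T\|_{L^2}(t)\le 2^{-1}\rho$, which holds for large $t$ by the smallness of $u$ established earlier in this section, and Corollary~\ref{cor:sobolev} gives $\|u\|_{C^s}(t)\le C|z(t)|$. Concatenating these inequalities delivers the desired bound. The only genuine subtlety is the orthogonality verification in the first step; everything else is a mechanical invocation of previously established estimates, so no real analytical obstacle is expected.
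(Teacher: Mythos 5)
Your proposal is correct and follows essentially the same route as the paper: identify the left-hand side with $\|\tilde q^{(k,\ell)}\|_G^2(t)$ (using that $\tilde u^\perp\perp\ker\mathcal{L}_{\Sigma}$ so the neutral-mode coefficients vanish), bound this via the $G$-norm/Sobolev equivalence by $C\|\tilde u^\perp\|_{C^{k+\ell+1}}^2(t)$, and conclude with Lemma~\ref{lem:utup} and Corollary~\ref{cor:sobolev}. The extra care you take with the orthogonality verification is exactly the point the paper disposes of in the sentence preceding the lemma, so no discrepancy remains.
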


Let us define
\begin{align}\label{def:tildeE}
&\tilde{E}(u):= \left(\tilde u^\perp\right)''- m\left(\tilde u^\perp\right)' +  \mathcal{L}_{\Sigma} \tilde u^\perp,\text{ and } \tilde{\mathcal{E}}:=(0,\tilde{E}(u)).
\end{align}  
Set $\{\tilde{\mathcal{E}}_{i,1},\tilde{\mathcal{E}}_{i,2}\}_{i\in I_1}$ and $\{\tilde{\mathcal{E}}_i\}_{i\in\mathbb{Z}\setminus\{0\}}$ be the coefficients of $\tilde{\mathcal{E}}$. Namely, 
\begin{equation}\label{def:tildeE_coef} 
\begin{split}
&\tilde{ \mathcal{E}}_{i,1}(t):= G( \tilde{ \mathcal{E}}(t),\psi_{i,1}),\ \tilde{ \mathcal{E}}_{i,2}(t):= G(\tilde{ \mathcal{E}}(t),\psi_{i,2})\ \textup{for}\ i\in I_1,\\
&\tilde{ \mathcal{E}}_{i}(t):= G(\tilde{ \mathcal{E}}(t),\Psi_{i})\ \textup{for}\ i\in \mathbb{Z}\setminus\{0\}.
\end{split}
\end{equation}
Then we have, for $i\in I_1$,
\begin{equation}\label{equ:odegroup1t}
\begin{split}
&\frac{d}{dt} \tilde{\xi}_{i,1}-2^{-1}m \tilde\xi_{i,1}+\beta_i\tilde\xi_{i,2}=\tilde{\mathcal{E}}_{i,1},\\ 
&\frac{d}{dt} \tilde\xi_{i,2}-2^{-1}m \tilde\xi_{i,2}-\beta_i\tilde\xi_{i,1}=\tilde{\mathcal{E}}_{i,2}, 
\end{split}
\end{equation}
for $i\in\mathbb{Z}\setminus\{0\}$,
\begin{equation}\label{equ:odegroup2t}
\frac{d}{dt} \tilde\xi_{i}-\Gamma_i \tilde\xi_{i } =\tilde{\mathcal{E}}_{i }. 
\end{equation}
\begin{lemma}\label{lem:tildeE}
 There exists a positive constant $C=C(u,m,\mathcal{M}_\Sigma,N_1)$ such that
\begin{align*}
\|\tilde{E}(u)\|_{L^2}(t)  \leq C  Q(t) .
\end{align*}
\end{lemma}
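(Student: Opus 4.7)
The plan is to express $\tilde{E}(u)$ in terms of quantities already controlled by $Q(t)$ via Lemma~\ref{lem:5.5} and Lemma~\ref{lem:WbarW}. First I would substitute the decomposition $u = u^T + H(u^T) + \tilde{u}^\perp$ into \eqref{equ:minimalODE}, using $\mathcal{L}_{\Sigma} u^T = 0$, and apply Lemma~\ref{lem:MSigma} (with $x = z(t) - \bar{z}(t)$ the coefficient vector of $u^T$) to expand $\mathcal{M}_\Sigma u$. This yields
\begin{align*}
\tilde{E}(u) = N_1(u) + \nabla f(x) - R_1 - (u^T)'' + m(u^T)' - [H(u^T)]'' + m[H(u^T)]',
\end{align*}
with $|R_1| \le C\Vert u\Vert_{C^{2,\alpha}}\Vert \tilde{u}^\perp\Vert_{C^{2,\alpha}}$. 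The key algebraic observation is that $\tilde{E}(u)\in (\ker\mathcal{L}_{\Sigma})^\perp$, while $\nabla f(x)$, $(u^T)''$, and $(u^T)'$ all live in $\ker\mathcal{L}_{\Sigma}$; projecting by $\Pi^\perp$ therefore removes those terms and reduces the task to estimating
\begin{align*}
\Vert \tilde{E}(u)\Vert_{L^2} \le \Vert N_1(u)\Vert_{L^2} + \Vert R_1\Vert_{L^2} + \Vert [H(u^T)]''\Vert_{L^2} + |m|\Vert [H(u^T)]'\Vert_{L^2}.
\end{align*}

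The first two pieces are routine: Lemma~\ref{lem:5.5} gives $\Vert N_1(u)\Vert_{L^2} \le C\Vert N_1(u)\Vert_{L^\infty} \le CQ$, and Corollary~\ref{cor:sobolev} yields $\Vert R_1\Vert_{L^2} \le C\Vert u\Vert_{C^3}\Vert \tilde{u}^\perp\Vert_{C^3} \le C|z(t)|\Vert \tilde{u}^\perp\Vert_{C^3} \le CQ$. For the two $H$-terms, I would apply the chain rule; since $H(0)=0$ and $DH(0)=0$, the operators $DH(v)$ and $D^2H(v)$ satisfy $\Vert DH(v)\Vert_{\mathcal{L}(\mathbb{R}^J,L^2)}\le C|v|$ and $\Vert D^2H(v)\Vert_{\mathcal{L}}\le C$ on $B_\rho$, giving
\begin{align*}
\Vert [H(u^T)]'\Vert_{L^2} \le C|z(t)|\,|x'(t)|, \qquad \Vert [H(u^T)]''\Vert_{L^2} \le C|x'(t)|^2 + C|z(t)|\,|x''(t)|,
\end{align*}
where $x'=z'-\bar{z}'$ and $x''=z''-\bar{z}''$. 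Lemma~\ref{lem:WbarW} gives $|z'|, |\bar{z}'|\le C|z|^{p-1}+C|\bar{z}|+CQ$, so $|x'|\le C(|z|^{p-1}+|\bar{z}|+Q)$, and the elementary inequalities $|z|^p\le Q$, $|z||\bar{z}|\le Q$, $|\bar{z}|\le C|z|$ (for $t$ large) readily produce $|z|\,|x'|\le CQ$ and $|x'|^2\le CQ$.

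The main obstacle is the estimate $|z|\,|x''|\le CQ$. The critical observation will be the explicit identity $x_j(t) = \la u(t), \varphi_{\iota+j}\ra_{L^2}$, which follows by a direct computation from the definitions of $G$, $\Upsilon_j$, and $\overline{\Upsilon}_j$ in Section~\ref{sec:elliptic}. Consequently $x_j''(t) = \la u''(t), \varphi_{\iota+j}\ra_{L^2}$. Substituting $u'' = mu' - \mathcal{M}_\Sigma u + N_1(u)$ and using $\Pi^T\mathcal{M}_\Sigma(u) = -\nabla f(x) + O(Q)$ (a direct consequence of Lemma~\ref{lem:MSigma} together with $\Pi^T\mathcal{L}_{\Sigma}\tilde{u}^\perp = 0$), and the relation $\la u', \varphi_{\iota+j}\ra = -m\bar{z}_j$ coming from the formula for $\overline{\Upsilon}_j$, I expect to arrive at
\begin{align*}
x_j''(t) = -m^2 \bar{z}_j(t) + \partial_j f(z(t)) + O(Q(t)).
\end{align*}
Since $|\partial_j f(z)| \le C|z|^{p-1}$, this gives $|z|\,|x''| \le C|z||\bar{z}| + C|z|^p + C|z|Q \le CQ$, closing the estimate.
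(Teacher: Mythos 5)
Your argument is correct, and its skeleton coincides with the paper's: both reduce $\tilde{E}(u)$ to $\Pi^\perp\big[N_1(u)+(\text{remainder from Lemma~\ref{lem:MSigma}})\big]-\left(H(u^T)\right)''+m\left(H(u^T)\right)'$, bound the first two pieces by $Q(t)$ via Lemma~\ref{lem:5.5}, and then estimate the chain-rule expansion of the $H$-terms using $DH(0)=0$. The one place you genuinely diverge is the term $DH(x)[x'']$ with $x=z-\bar{z}$. The paper disposes of it in one line: since $x_j''=\langle u'',\varphi_{\iota+j}\rangle_{L^2}$, one has $|x''|\leq C\|\partial_t u^T\|_{C^1}(t)\leq C\|u'\|_{C^1}(t)$ (time derivatives are built into $\|\cdot\|_{C^s}(t)$ by \eqref{def:Cs}), so all three chain-rule terms are absorbed at once into $C|z(t)|\,\|u'\|_{C^2}(t)$, which is a summand of $Q(t)$ by \eqref{def:Q} --- indeed $Q$ was designed precisely so that second time derivatives of the neutral mode come for free. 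You instead re-use the equation to derive $x_j''=-m^2\bar{z}_j+\partial_j f(z)+O(Q)$; your supporting identities ($x_j=\langle u,\varphi_{\iota+j}\rangle_{L^2}$, $\langle u',\varphi_{\iota+j}\rangle_{L^2}=-m\bar{z}_j$, and the projection of $\mathcal{M}_\Sigma u$ onto $\ker\mathcal{L}_\Sigma$ via Lemma~\ref{lem:MSigma}) all check out against the definitions of $G$, $\Upsilon_j$, $\overline{\Upsilon}_j$, and the resulting bound $|z|\,|x''|\leq C(|z||\bar{z}|+|z|^p+|z|Q)\leq CQ$ closes the estimate, as do your bounds on $|x'|^2$ and $|z||x'|$ using Lemma~\ref{lem:WbarW} and $|\bar z|\le C|z|$ from Lemma~\ref{lem:ODE}. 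Your route is longer but produces a sharper, structurally informative expression for $x''$; the paper's is a cruder absorption. Either is acceptable.
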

\begin{proof}
In the proof we use $C$ to represent a positive constant that depends on $u$, $m$, $\mathcal{M}_\Sigma$, $N_1$ in \eqref{equ:N}, and its value may vary from one line to another.

We compute
\begin{align*}
 \tilde{E}(u) & =\Pi ^\perp \bigg [  E(u)- \left( H(u^T)\right)''  +m\left( H(u^T)\right)'  -   \mathcal{L}_{\Sigma} H(u^T) \bigg] \\
&= \Pi ^\perp \bigg [ -\mathcal{M}_{\Sigma} (u) +\mathcal{L}_{\Sigma} u  +N_1(u) - \left( H(u^T)\right)''  +m\left( H(u^T)\right)'  -   \mathcal{L}_{\Sigma} H(u^T) \bigg]\\
&=:\textsc{I+II} ,  
\end{align*}
 where
 \begin{align*}
 \textsc{I}&= -\Pi^\perp \mathcal{M}_\Sigma(u) + \mathcal{L}_{\Sigma} \tilde{u} ^\perp+\Pi ^\perp N_1(u),\ \textsc{II} =  - \left( H(u^T)\right)''  +m\left( H(u^T)\right)' .
 \end{align*}
From \eqref{equ:MQ} and \eqref{equ:NQ}, we have $\|\textsc{I}\|_{L^2}(t)    \leq C    Q(t).$ From a direct computation,
\begin{align*}
\textsc{II}= -D^2H(z-\bar{z})[ (z-\bar{z})', (z-\bar{z})']-DH(z-\bar{z})[(z-\bar{z})'']+m DH(z-\bar{z})[(z-\bar{z})'].
\end{align*}	
Since $DH(0)=0$, the above is bounded by $$  C |(z-\bar{z})'|^2+ C | z-\bar{z}  ||(z-\bar{z})''|+ C | z-\bar{z}  ||(z-\bar{z})'|   \leq C  \Vert u^T \Vert _{C^2}(t) \Vert \partial _t u ^T \Vert_{C^1}(t).$$
Together with Corollary~\ref{cor:sobolev}, we get \[\|\textsc{II}\|_{L^2}(t)   \leq C  |z(t)| \Vert  u'\Vert_{C^1 }(t)   \leq C  Q(t).\]
\end{proof}

\begin{corollary}\label{cor:source}
Suppose $Q(t) \leq M |z(t)|^q$ for some $M, q>0$. Then there exists a positive constant $C=C(u,m,\mathcal{M}_\Sigma,N_1,M,q)$ such that
\begin{align*}
\sum_{i\in I_1}\left(|\tilde{\xi}_{i,1}(t)|^2+|\tilde{\xi}_{i,2}(t)|^2 \right)+\sum_{i\in\mathbb{Z}\setminus\{0\}}|\tilde{\xi}_{i}(t)|^2  \leq C  |z(t)|^{2q}.
\end{align*}
\end{corollary}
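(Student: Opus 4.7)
The core observation is that $\tilde u^\perp$ lies in $(\ker\mathcal{L}_{\Sigma})^\perp$ for all $t$, and since $\Pi^T$ commutes with $\partial_t$, so does every time derivative of $\tilde u^\perp$. Consequently the vectorization $\tilde q$ has no $\Upsilon_j$ or $\overline{\Upsilon}_j$ components; every non-trivial $G$-coefficient of $\tilde q$ corresponds to an eigenvalue of $\mathbf{L}$ with non-zero real part (either $\Gamma_i$ for $i \in \mathbb{Z}\setminus\{0\}$ or $m/2 \pm \mathbf{i}\beta_i$ for $i \in I_1$). Since these reals parts only accumulate at $\pm\infty$, there is $b>0$ with all of them satisfying $|\mathrm{Re}| \geq b$. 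For the source term, Lemma~\ref{lem:tildeE} gives $\|\tilde E(u)\|_{L^2}(t) \le CQ(t) \le CM|z(t)|^q$, and because $\tilde{\mathcal{E}} = (0,\tilde E(u))$ the definition of $G$ reduces to $\|\tilde{\mathcal{E}}\|_G(t) = \sqrt{2}|m|^{-1}\|\tilde E(u)\|_{L^2}(t) \le C|z(t)|^q$.

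Now split $\tilde q$ spectrally: let $X_+(t)^2$ be the sum of squared coefficients over modes with positive real-part eigenvalue and $X_-(t)^2$ over those with negative real-part. Differentiating $X_\pm^2$ using \eqref{equ:odegroup1t}--\eqref{equ:odegroup2t}, the $\pm\beta_i \tilde\xi_{i,1}\tilde\xi_{i,2}$ coupling terms in each $I_1$ block cancel pairwise, and Cauchy--Schwarz yields
\begin{align*}
X_+' \ge bX_+ - \|\tilde{\mathcal{E}}\|_G, \qquad X_-' \le -bX_- + \|\tilde{\mathcal{E}}\|_G.
\end{align*}
These rewrite as $(e^{-bt}X_+)' \ge -e^{-bt}\|\tilde{\mathcal{E}}\|_G$ and $(e^{bt}X_-)' \le e^{bt}\|\tilde{\mathcal{E}}\|_G$. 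I integrate the first from $t$ to $\infty$ (legitimate because $\|\tilde q\|_G(t)\to 0$, thanks to Lemma~\ref{lem:utup} applied with $\|u\|_{C^s}(t)\to 0$) and the second from some large $t_0$ up to $t$, obtaining
\begin{align*}
X_+(t) \le \int_t^\infty e^{b(t-s)}\|\tilde{\mathcal{E}}\|_G(s)\,ds, \qquad X_-(t) \le e^{b(t_0-t)}X_-(t_0) + \int_{t_0}^t e^{b(s-t)}\|\tilde{\mathcal{E}}\|_G(s)\,ds.
\end{align*}

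Finally, pick $\varepsilon \in (0,b/q)$ and invoke Lemma~\ref{lemma-Xi^+_0}(2), which guarantees $|z(s)|^q \le e^{q\varepsilon|s-t|}|z(t)|^q$ for $s,t \ge t_0$. Substituting into both integrals gives
\begin{align*}
\int_t^\infty e^{b(t-s)}\|\tilde{\mathcal{E}}\|_G(s)\,ds \le C|z(t)|^q\int_0^\infty e^{-(b-q\varepsilon)r}\,dr = \frac{C}{b-q\varepsilon}|z(t)|^q,
\end{align*}
and an analogous bound for the $X_-$ integral. The residual term $e^{b(t_0-t)}X_-(t_0)$ decays exponentially in $t$, while $|z(t)|^q \ge C^{-1}t^{-q}$ by Lemma~\ref{lemma-Xi^+_0}(1), so it is absorbed into $C|z(t)|^q$ once $t$ is large enough; on the bounded initial interval $[0,t_1]$ the desired inequality follows from continuity of $u$ and positivity of $|z|$ by enlarging $C$. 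Summing $X_+^2 + X_-^2 = \|\tilde q\|_G^2$ yields the claim. The main delicacy is the spectral splitting step --- verifying that the $I_1$ cross terms cancel and that the spectral gap $b$ is uniform --- after which the argument is standard variation-of-constants combined with the sub-exponential comparison for $|z|$ from Lemma~\ref{lemma-Xi^+_0}.
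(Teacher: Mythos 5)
Your proof is correct and follows essentially the same route as the paper's: a spectral-gap/Duhamel argument for the non-neutral modes (integrating forward in time for the stable part, backward from infinity for the unstable part), with the source controlled by Lemma~\ref{lem:tildeE} and the convolution integrals converted to $C|z(t)|^q$ via the almost-monotonicity and lower bound of Lemma~\ref{lemma-Xi^+_0}. The only difference is presentational: you aggregate the modes into $X_\pm$ and use differential inequalities (explicitly checking the $I_1$ cross-term cancellation that the paper dismisses as "similar"), whereas the paper solves each scalar ODE and dualizes against $\ell^2$ sequences.
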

\begin{proof}
In the proof we use $C$ to represent a positive constant that depends on $u$, $m$, $\mathcal{M}_\Sigma$, $N_1$ in \eqref{equ:N}, $M$, $q$, and its value may vary from one line to another.
 We shall prove
\begin{align}\label{equ:sourcemiddle}
 \sum_{i\in -\mathbb{N}}  |\tilde{\xi}_i(t)|^2 \leq C  |z(t)|^{2q},
\end{align} by directly solving evolution equations.  The other terms can be treated similarly.  
For $i\in -\mathbb{N}$, from \eqref{equ:odegroup2t},
\begin{align*}
\tilde\xi_i(t)=e^{\Gamma_i t} \tilde{\xi}(0)+\int_0^t  e^{\Gamma_i(t-\tau)} \tilde{\mathcal{E}}_i (\tau)\, d\tau.
\end{align*}
For any sequence $\{a_i\}_{i\in-\mathbb{N}}$ with $\sum_{i\in -\mathbb{N}} |a_i|^2=1$, we compute
\begin{align*}
\left|\sum_{i\in -\mathbb{N}} \tilde{\xi}_i(t)a_i\right|\leq &\sum_{i\in -\mathbb{N}}  \left| e^{\Gamma_i t}\tilde{\xi}_i(0)a_i\right|+ \sum_{i\in -\mathbb{N}} \int_0^t e^{\Gamma_i(t-\tau)} \left| \tilde{\mathcal{E}}_i (\tau)a_i \right|\, d\tau\\
\leq & e^{-bt} \sum_{i\in -\mathbb{N}}\left| \tilde{\xi}_i(0)a_i\right|+ \sum_{i\in -\mathbb{N}}   \int_0^t e^{-b(t-\tau)} \left| \tilde{\mathcal{E}}_i (\tau)a_i \right|\, d\tau.
\end{align*}
From (1) in Lemma~\ref{lemma-Xi^+_0}, $e^{-bt} \sum_{i\in -\mathbb{N}}\left| \tilde{\xi}_i(0)a_i\right|  \leq C  |z(t)|^q$. From \eqref{def:tildeE}, \eqref{def:tildeE_coef}, Lemma~\ref{lem:tildeE} and the assumption, 
\begin{align*}
\sum_{i\in I_1}\left(|\tilde{\mathcal{E}}_{i,1}(t)|^2+|\tilde{\mathcal{E}}_{i,2}(t)|^2 \right)+\sum_{i\in\mathbb{Z}\setminus\{0\}}|\tilde{\mathcal{E}}_{i}(t)|^2= \Vert \tilde{\mathcal{E}}\Vert^2_G  =2m^{-2}\|\tilde{E}(u)\|^2_{L^2}(t)  \leq C  |z(t)|^q.
\end{align*}
By the Cauchy-Schwarz inequality,
\begin{align*}
\sum_{i\in -\mathbb{N}}   \int_0^t e^{-b(t-\tau)} \left| \tilde{\mathcal{E}}_i (\tau)a_i \right|\, d\tau  \leq C  \int_0^t e^{-b(t-\tau)} |z(\tau)|^q \, d\tau.
\end{align*}
Let $t_0=t_0(u,2^{-1}q^{-1}b)$ be the constant given in (2) of Lemma~\ref{lemma-Xi^+_0}. Then 
\begin{align*}
\int_0^t e^{-b(t-\tau)} |z(\tau)|^q \, d\tau=&\int_0^{t_0} e^{-b(t-\tau)} |z(\tau)|^q \, d\tau +\int_{t_0}^t e^{-2^{-1}b(t-\tau)} \left(e^{-2^{-1}q^{-1}b(t-\tau)} |z(\tau)|\right)^q \, d\tau\\ 
 \leq C  & e^{-bt} \max_{\tau\in [0,\infty)}|z(\tau)|^q+ C |z(t)|^q  \leq C  |z(t)|^q.
\end{align*}
In short, we deduce $\left|\sum_{i\in -\mathbb{N}}  \tilde{\xi}_i(t)a_i\right| \leq C  |z(t)|^q$ for all sequence $\{a_i\}$ with $\sum_{i\in -\mathbb{N}} |a_i|^2=1$. This implies \eqref{equ:sourcemiddle}.
 
\end{proof}

\begin{lemma}[improvement in decay] \label{lem-iterate2} For $1\le r \le p-1 $, suppose the following holds.
\begin{align}\label{equ:iterateamp}
\left| z' (t)\right|^2+|\bar{z}(t)|^2 + \sum_{i\in I_1}\left(|\tilde{\xi}_{i,1}(t)|^2+|\tilde{\xi}_{i,2}(t)|^2 \right)+ \sum_{i\neq 0}|\tilde{\xi}_i(t)|^2  \leq M |z(t)|^{2r}.
\end{align}
Then  for any $\varepsilon\in (0,1)$, there exists a positive constant $C=C(u,m,\mathcal{M}_\Sigma,N_1,M,r,\varepsilon)$ such that  
\begin{align}\label{equ:iterateampnew}
\left| z' (t)\right|^2+|\bar{z}(t)|^2 + \sum_{i\in I_1}\left(|\tilde{\xi}_{i,1}(t)|^2+|\tilde{\xi}_{i,2}(t)|^2 \right)+ \sum_{i\neq 0}|\tilde{\xi}_i(t)|^2 \leq C  |z(t)|^{\min\{2r+2-\varepsilon, 2p-2\}}.
\end{align}
Moreover,
\begin{align}\label{equ:zprimebound}
\left|  z' (t)+m^{-1}\nabla f(z(t)) \right| \leq C  |z(t)|^{ r+1-\varepsilon/2}.
\end{align}
 
\begin{proof}
Fix $\varepsilon\in (0,1)$. In the proof we use $C$ to represent a positive constant that depends on $u$, $m$, $\mathcal{M}_\Sigma$, $N_1$ in \eqref{equ:N}, $M$, $r$, $\varepsilon$, and its value may vary from one line to another. Using the assumption \eqref{equ:iterateamp}, Lemma~\ref{lem:ODE}  (dominance of neutral mode) and Lemma~\ref{lemma-interpolation} (interpolation), we have for all $k,\ell\in\mathbb{N}_0$, $\left|\frac{d}{dt} z^{(k,\ell)} (t)\right|^2+|\bar{z}^{(k,\ell)}(t)|^2 \leq C_{k,\ell}|z(t)|^{2r-\varepsilon }.$ Similarly, applying Lemma \ref{lemma-interpolation} with Lemma~\ref{lemma-iterate1}, 
\begin{align} \label{eq-418tilde}
\sum_{i\in I_1}\left(|\tilde{\xi}^{(k,\ell)}_{i,1}(t)|^2+|\tilde{\xi}^{(k,\ell)}_{i,2}(t)|^2 \right)+ \sum_{i\neq 0}|\tilde{\xi}^{(k,\ell)}_i(t)|^2 \leq C_{k,\ell} |z(t)|^{2r-\varepsilon}.
\end{align}
By the Sobolev embedding, $\|(u^T)'\|_{C^s}(t)+\|\tilde{u}^\perp\|_{C^s}(t)\leq C_s |z(t)|^{r-\varepsilon/2 }$. From \eqref{def:ut2} and \eqref{eq-418tilde}, we also have $\| u' \|_{C^s}(t) \leq C_s |z(t)|^{r-\varepsilon/2 }$. In view of the definition of $Q(t)$ in \eqref{def:Q}, 
$$Q(t) \leq C  |z(t)|^{\min(p, r+1-\varepsilon/2)}= C|z(t)|^{r+1-\varepsilon/2}.$$ 
From Corollary~\ref{cor:source},  
\begin{align}\label{equ:iterateampnew_mid3}
\sum_{i\in I_1}\left(|\tilde{\xi}_{i,1}(t)|^2+|\tilde{\xi}_{i,2}(t)|^2 \right)+ \sum_{i\neq 0}|\tilde{\xi}_i(t)|^2 \leq C  |z(t)|^{2r+2-\varepsilon}.
\end{align}
From Corollary~\ref{cor:source1},  \eqref{equ:zprimebound} and 
\begin{equation}\label{equ:iterateampnew_mid2}
|\bar{z}(t)|^2  \leq C  |z(t)|^{\min\{ 2r+2-\varepsilon ,  2p-2\}}.
\end{equation}
Note that \eqref{equ:zprimebound} implies
\begin{align}\label{equ:iterateampnew_mid1}
|z'(t)|^2  \leq C  |z(t)|^{\min\{2r+2-\varepsilon , 2p-2\}}.
\end{align}
Combining \eqref{equ:iterateampnew_mid3}, \eqref{equ:iterateampnew_mid1} and \eqref{equ:iterateampnew_mid2} yields \eqref{equ:iterateampnew}. 
\end{proof}

\end{lemma}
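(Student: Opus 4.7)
The plan is to convert the $L^2$-type bounds in the hypothesis \eqref{equ:iterateamp} into pointwise $C^s$ estimates on $u'$ and $\tilde u^\perp$, feed these into the definition \eqref{def:Q} of $Q(t)$, and then run the coefficient ODEs via Corollaries \ref{cor:source1} and \ref{cor:source} (together with Lemma \ref{lem:WbarW}) to recover improved bounds on $z'$, $\bar z$, and the $\tilde\xi_i$.

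First I would interpolate. Lemma \ref{lemma-iterate1} gives the a priori bound $|\tilde q^{(k,\ell)}(t)| \le C_{k,\ell}|z(t)|$ for every order, while the hypothesis controls $|\tilde q^{(0,0)}(t)| \le C|z(t)|^r$ sharply. Interpolating these via Lemma \ref{lemma-interpolation} produces $|\tilde q^{(k,\ell)}(t)| \le C_{k,\ell}|z(t)|^{r-\varepsilon/2}$ for all fixed $(k,\ell)$, at the cost of an arbitrarily small loss $\varepsilon>0$; the analogous argument applied to $z^{(k,\ell)}$ and $\bar z^{(k,\ell)}$, starting from Lemma \ref{lem:ODE}, yields the same rate on all higher derivatives of $z$ and $\bar z$. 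Via the equivalence in Lemma \ref{lem:Gequ} and Sobolev embedding on $\Sigma$, together with the boundedness of the Lyapunov--Schmidt decomposition (Lemma \ref{lem:utup}), these estimates convert to
\[
\|u'\|_{C^s}(t) + \|\tilde u^\perp\|_{C^s}(t) \le C_s\,|z(t)|^{r-\varepsilon/2}
\]
for any prescribed $s$.

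Substituting into \eqref{def:Q}, each of the terms $|z||\bar z|$, $|z|\|u'\|_{C^2}$, $|z|\|\tilde u^\perp\|_{C^3}$ becomes at most $C|z(t)|^{r+1-\varepsilon/2}$, and $|z|^p$ is no larger since $r \le p-1$; hence $Q(t) \le C|z(t)|^{r+1-\varepsilon/2}$. Plugging this into Corollary \ref{cor:source} gives the improved coefficient bound
\[
\sum_{i\in I_1}\left(|\tilde\xi_{i,1}(t)|^2+|\tilde\xi_{i,2}(t)|^2\right) + \sum_{i\ne 0}|\tilde\xi_i(t)|^2 \le C|z(t)|^{2r+2-\varepsilon},
\]
while Corollary \ref{cor:source1} yields $|z'(t)|,\,|\bar z(t)| \le C|z(t)|^{\min\{r+1-\varepsilon/2,\,p-1\}}$. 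Squaring and combining produces \eqref{equ:iterateampnew}. Finally, \eqref{equ:zprimebound} is immediate from the $Q$ bound and Lemma \ref{lem:WbarW}, which asserts $|z'+m^{-1}\nabla f(z)| \le C\,Q(t)$.

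The main subtlety is the saturation at $p-1$: since $f$ vanishes to order $p$ at the origin, $|\nabla f(z)| = O(|z|^{p-1})$ is intrinsic, so $|z'|$ cannot be improved beyond $|z|^{p-1}$ via Lemma \ref{lem:WbarW}, and integrating the $\bar z$ equation of \eqref{equ:odegroup3} inherits the same limitation. This forces the target exponent $\min\{2r+2-\varepsilon,\,2p-2\}$ rather than a pure $2r+2-\varepsilon$, and explains why iterating Lemma \ref{lem-iterate2} must eventually stop improving.
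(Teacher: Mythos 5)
Your proposal is correct and follows essentially the same route as the paper: interpolate the hypothesis against the a priori bounds of Lemma~\ref{lem:ODE} and Lemma~\ref{lemma-iterate1} to get higher-derivative control with an $\varepsilon$-loss, convert to $C^s$ bounds on $u'$ and $\tilde u^\perp$, deduce $Q(t)\le C|z(t)|^{r+1-\varepsilon/2}$, and then invoke Corollaries~\ref{cor:source1} and \ref{cor:source} together with Lemma~\ref{lem:WbarW}. Your closing remark about the saturation at exponent $p-1$ correctly identifies why the $\min$ appears in \eqref{equ:iterateampnew}.
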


\begin{proof}[Proof of Proposition~\ref{prop-neutral-dynamics}]
The bound \eqref{equ:zdominates} is a weaker form of Lemma~\ref{lem:ODE} (dominance of neutral mode). From Lemma~\ref{lem:ODE} (dominance of neutral mode) and Lemma~\ref{lemma-iterate1} (control on higher derivatives of $\tilde q$), the assumption~\eqref{equ:iterateamp} in Lemma~\ref{lem-iterate2} (improvement in decay) holds for $r=1$. By iterating Lemma~\ref{lem-iterate2}, \eqref{equ:iterateamp} holds for $r=p-1$. Then the \eqref{equ:zgradient} follows from \eqref{equ:zprimebound}. 
\end{proof}

\section{Slowly decaying solutions to parabolic equation} \label{sec:slowparabolic}
In this section, we consider the slowly decaying solutions to the parabolic equation~\eqref{equ:parabolic}. The main goal is to prove Proposition~\ref{prop-neutral-dynamics-p1}, which is analogous to Proposition~\ref{prop-neutral-dynamics}.

\vspace{0.1cm}

Let $u\in C^\infty(Q_{0,\infty},\widetilde{ \mathbf{V}})$ be a solution to \eqref{equ:parabolic} with $\|u\|_{H^{n+4}}(t)=o(1)$ as $t\to \infty$. From Lemma~\ref{lem:regularity} (parabolic regularity), $\|u\|_{C^s}(t)=o(1)$ for all $s\in\mathbb{N}$. We further assume that $u$ does not decay exponentially. Namely, for any $\varepsilon>0$, 
\begin{equation}\label{equ:nonexpdecay_p1}
\limsup_{t\to \infty} e^{ \varepsilon t}\|u\|_{C^1}(t)=\infty.
\end{equation}
We project $u$ onto the eigensections $\varphi_i$. Set
\begin{align}\label{def:parabolic_xi}
\xi_i(t):=\int_\Sigma  \left\langle u,\varphi_{i}\right\rangle \, d\mu. 
\end{align}
Recall that $\{i\in\mathbb{N}\, :\, \lambda_i=0\}=\{\iota+1,\iota+2,\dots, \iota+J\}$. The neutral mode will play a special role and we denote it by
\begin{align}\label{def:parabolic_x}
x_j(t):= \xi_{\iota+j}(t).
\end{align}

\begin{proposition}\label{prop-neutral-dynamics-p1}
For a slowly decaying solution $u$ to \eqref{equ:parabolic} as described above, there holds
\begin{align}\label{equ:zdominates-p1}
     \sum_{i:\lambda_i\neq 0}|\xi_i(t)|^2  = o(1) |x(t)|^2.
\end{align}
Moreover, for all $\varepsilon>0$, there exists a positive constant $C=C(u,\mathcal{M}_\Sigma, N_2, \varepsilon)$ such that 
\begin{align}\label{equ:zgradient-p1}
 |x'(t)+\nabla f(x(t))| \leq C   |x(t)|^{p-\varepsilon/2}.
\end{align}
\end{proposition}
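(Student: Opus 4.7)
The strategy is a direct parabolic analogue of the elliptic argument used to prove Proposition~\ref{prop-neutral-dynamics}, only simpler since \eqref{equ:parabolic} is already first order in $t$ and does not require vectorization. First I would project the equation onto the $L^2$-orthonormal eigenbasis $\{\varphi_i\}$ of $\mathcal{L}_\Sigma$. Writing $E_2(u):=\mathcal{M}_\Sigma u-\mathcal{L}_\Sigma u+N_2(u)$ and $\mathcal{E}_i(t):=\int_\Sigma\langle E_2(u),\varphi_i\rangle\, d\mu$, the coefficients introduced in \eqref{def:parabolic_xi} satisfy the scalar system
\begin{equation*}
\xi_i'(t)-\lambda_i\xi_i(t)=\mathcal{E}_i(t),\qquad i\in\mathbb{N}.
\end{equation*}
Because of \eqref{equ:quasilinear} and the structure \eqref{equ:N} of $N_2$, every summand of $E_2(u)$ is quadratic at the origin, so a parabolic-regularity version of Lemma~\ref{lem:regularity_e} (which, as indicated by the assumed control $\|u\|_{H^{n+4}}(t)=o(1)$, upgrades to $\|u\|_{C^s}(t)=o(1)$ for every $s$) yields $\|E_2(u)\|_{H^\ell}(t)=o(1)\|u\|_{H^{\ell+2}}(t)$ for all $\ell$. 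Differentiating the equation in $t$ and $\slashed{\nabla}$ one produces an analogous system for every $(\partial_t^k\slashed{\nabla}^\ell\xi_i)$, with errors satisfying the same $o(1)$ bound relative to the unknowns.

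Next I would apply the Merle--Zaag ODE lemma (Lemma~\ref{lem-MZODE}) to the three groups $\{i:\lambda_i>0\}$, $\{i:\lambda_i=0\}$ and $\{i:\lambda_i<0\}$, exactly as in Lemma~\ref{lem:ODE}. The hypothesis \eqref{equ:nonexpdecay_p1} rules out the stable alternative, which forces the neutral mode to dominate: for every $s\in\mathbb{N}_0$,
\begin{equation*}
\sum_{k+\ell\le s}\Bigl(|\partial_t^k\slashed{\nabla}^\ell x(t)|^2+\sum_{i:\lambda_i\neq 0}|\partial_t^k\slashed{\nabla}^\ell\xi_i(t)|^2\Bigr)=(1+o(1))|x(t)|^2,
\end{equation*}
which in particular proves \eqref{equ:zdominates-p1}. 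By Sobolev embedding on $\Sigma$ this also gives $\|u\|_{C^s}(t)\le C_s|x(t)|$, as well as a parabolic counterpart of Lemma~\ref{lemma-Xi^+_0} guaranteeing the slow-decay bound $|x(t)|\ge C^{-1}t^{-1}$ and slow logarithmic variation of $|x(t)|$.

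To obtain the gradient structure \eqref{equ:zgradient-p1}, I would decompose $u=u^T+H(u^T)+\tilde u^\perp$ as in \eqref{def:ut} with $u^T=\sum_j x_j\varphi_{\iota+j}$, and introduce the auxiliary quantity
\begin{equation*}
Q(t):=|x(t)|^p+|x(t)|\,\|\tilde u^\perp\|_{C^3}(t),
\end{equation*}
(no $\bar z$ or $u'$ terms are needed, since there is no stable-to-neutral companion mode and $u'$ can be controlled by $\mathcal{M}_\Sigma u$ through the equation). Lemma~\ref{lem:MSigma} gives $|\mathcal{M}_\Sigma u+\nabla f(x)-\mathcal{L}_\Sigma\tilde u^\perp|\le CQ(t)$, while the structure of $N_2$ yields $|N_2(u)|\le C\|u\|_{C^2}(t)\,|\mathcal{M}_\Sigma u|\le CQ(t)$. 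Projecting $u'=\mathcal{M}_\Sigma u+N_2(u)$ onto $\varphi_{\iota+j}$ and using $\mathcal{L}_\Sigma\tilde u^\perp\perp\ker\mathcal{L}_\Sigma$ then produces
\begin{equation*}
|x'(t)+\nabla f(x(t))|\le CQ(t),
\end{equation*}
the analogue of Lemma~\ref{lem:WbarW}. An iteration identical to that of Lemma~\ref{lem-iterate2}, where at each step one writes $\tilde u^\perp$ via the Duhamel formula for $\partial_t-\mathcal{L}_\Sigma$ restricted to $(\ker\mathcal{L}_\Sigma)^\perp$ (using the spectral gap of $\mathcal{L}_\Sigma$ on this complement), upgrades $\|\tilde u^\perp\|_{C^s}(t)=O(|x(t)|^r)$ from $r=1$ to $r=p-1$ in finitely many steps at the cost of an arbitrarily small $\varepsilon$. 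This delivers $Q(t)\le C|x(t)|^{p-\varepsilon/2}$ and hence \eqref{equ:zgradient-p1}.

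The main subtlety I anticipate is the Duhamel step in the iteration. Unlike the elliptic case where $\mathbf{L}$ has positive and negative sectors of comparable size and the contracting semigroup comes from a bounded shift, in the parabolic case one relies on the decay of $e^{t\mathcal{L}_\Sigma}$ on the orthogonal complement of the kernel; one must make sure this analytic semigroup estimate together with the slow-variation property of $|x(t)|$ (part (2) of the parabolic version of Lemma~\ref{lemma-Xi^+_0}) reproduces the convolution bound used in Corollary~\ref{cor:source}. Once this single ingredient is in place, the remaining arguments are routine adaptations of Sections~5.1--5.2.
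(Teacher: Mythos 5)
Your proposal follows essentially the same route as the paper's proof: project onto the eigenbasis of $\mathcal{L}_\Sigma$, show the error $E_2(u)$ is $o(1)$ relative to $u$ after differentiating the equation (the paper's Lemma~\ref{lem:error_p}), apply the Merle--Zaag lemma to get dominance of the neutral mode and $\|u\|_{C^s}(t)\le C|x(t)|$, then run the Lyapunov--Schmidt decomposition with the auxiliary quantity $Q(t)$ and iterate the decay improvement, exactly as in Lemmas~\ref{lem:ODE_p}--\ref{lem:3.10}. Dropping the $|x(t)|\|u'\|_{C^2}(t)$ term from $Q(t)$ is a harmless reorganization since $u'$ is recovered from $\mathcal{M}_\Sigma u+N_2(u)$ and hence from $\tilde u^\perp$ and $\nabla f(x)$ up to interpolation losses already built into the $\varepsilon/2$. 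One imprecision: the semigroup $e^{t\mathcal{L}_\Sigma}$ does \emph{not} decay on all of $(\ker\mathcal{L}_\Sigma)^\perp$ when $\mathcal{L}_\Sigma$ has positive eigenvalues, so the forward Duhamel formula fails on the finitely many unstable modes; as in Corollary~\ref{cor:source}, those coefficients must be solved backward from $t=+\infty$ (using that they tend to zero), after which the convolution bound you describe goes through.
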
 
The proof of Proposition~\ref{prop-neutral-dynamics-p1} follows a similar structure to the one of Proposition~\ref{prop-neutral-dynamics}. The first part is to show (in Corollary~\ref{cor:parabolic}) that $|x(t)|$ dominates any $C^s$ norm of $u$. 

\bigskip

Let $E_2(u)=N_2(u)+\mathcal{M}_\Sigma (u)-\mathcal{L}_{\Sigma} u$. Then \eqref{equ:parabolic} becomes
\begin{equation}\label{equ:linear_p}
u'-\mathcal{L}_{\Sigma} u=E_2(u).
\end{equation}
From \eqref{equ:quasilinear} and \eqref{equ:N}, the error term $E_2(u)$ has the structure 
\begin{align} \label{equ:E_2str}
E_2(u)=\sum_{j=0}^2 b_{2,j}\cdot \slashed{\nabla}^j u,
\end{align}
where $b_{2,j}=b_{2,j}(\omega, u,\slashed{\nabla}u)$ are smooth with $b_{2,j}(\omega,0,0)=0$. With $\|u\|_{C^s}(t)=o(1)$, the quadratic nature of $E_2$ allows us to bound higher derivatives of $E_2$ in terms of higher derivatives of $u$.
\begin{lemma}\label{lem:error_p}
For any $s\geq 2$, 
\begin{align*}
\sum_{2k+\ell\leq 2s}\| \partial^k_t\slashed{\nabla}^\ell E_2(u) \|_{L^2}(t)=o(1)\sum_{2k+\ell\leq 2s}\| \partial^k_t\slashed{\nabla}^\ell u \|_{L^2}(t).
\end{align*}
\begin{proof}
We present the proof for
\begin{align*}
\sum_{2k+\ell\leq 2s}\left\| \partial^k_t\slashed{\nabla}^\ell \left( b_{2,2}\cdot\slashed{\nabla}^2 u \right)  \right\|_{L^2}(t)=o(1)\sum_{2k+\ell\leq 2s}\| \partial^k_t\slashed{\nabla}^\ell u \|_{L^2}(t).
\end{align*} 
The other terms can be treated similarly. For simplicity, we write $b(\omega,u,\slashed{\nabla}u )$ for $b_{2,2}(\omega, u,\slashed{\nabla}u )$. For $m_0,m_1,m_2 \in\mathbb{N}_0$, we write $b^{(m_0, m_1,m_2 )}$ for the partial derivative of $b$ of order $(m_0,m_1,m_2 )$. Fix $k_0,\ell_0$ with $2k_0+\ell_0\leq 2s$. Then the terms in the expansion of $\partial^{k_0}_t\slashed{\nabla}^{\ell_0} \left( b \cdot\slashed{\nabla}^2 u \right)$ are of the form
\begin{align*}
b^{(m_0,m_1,m_2 )}\cdot\left(\partial^{k_1}_t\slashed{\nabla}^{\ell_1} u \ast \partial^{k_2}_t\slashed{\nabla}^{\ell_2} u \ast\dots \partial^{k_N}_t\slashed{\nabla}^{\ell_N} u \right),
\end{align*} 
where $N=m_1+m_2 +1$, $\sum_{i=1}^N k_i=k_0 $ and $\sum_{i=1}^N \ell_i=\ell_0+m_2+2$. It suffices to show the pointwise bound
$$b^{(m_0,m_1,m_2 )}\cdot\left( \big(\partial^{k_1}_t\slashed{\nabla}^{\ell_1} u  \big)\cdot \big(\partial^{k_2}_t\slashed{\nabla}^{\ell_2} u \big)  \cdot \ldots \cdot \big( \partial^{k_N}_t\slashed{\nabla}^{\ell_N} u\big) \right)=o(1)\sum_{2k+\ell\leq 2s} |\partial^k_t\slashed{\nabla}^\ell u|.$$ We first discuss the case $m_1=m_2=0$. The above becomes $b^{(m_0,0,0)}\cdot \partial^{k_0}_t\slashed{\nabla}^{\ell_0-m_0+2} u $. From $$|b^{(m_0,0,0)}| \leq C ( |u|+|\slashed{\nabla }u|)\ \textup{and}\  |\partial^{k_0}_t\slashed{\nabla}^{\ell_0-m_0+2} u |=o(1),$$ the assertion holds. 

Next, we consider the case $m_1+m_2 \geq 1$. It suffices to show $\min_i (2k_i+\ell_i)\leq 2s$ as the other terms can be bounded by  $o(1)$. Suppose this fails. In other words, $2k_i+\ell_i\geq 2s+1$ for all $1\leq i\leq N$. Then
\begin{align*}
2Ns+N\leq   \sum_{i=1}^N (2k_i+\ell_i)=2k_0+\ell_0+m_2+2\leq 2s+2N. 
\end{align*} 
In view of $N\geq 2$ and $s\geq 2$, this is a contradiction. 
\end{proof}
\end{lemma}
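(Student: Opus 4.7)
The plan is to reduce the estimate to bounding a single summand $b \cdot \slashed{\nabla}^j u$ of $E_2(u)$ at a time, the case $j = 2$ being the worst; write $b = b_{2,2}$. I expand $\partial_t^{k_0} \slashed{\nabla}^{\ell_0}(b \cdot \slashed{\nabla}^2 u)$ via the Leibniz and chain rules into a finite sum of monomials of the form
\begin{equation*}
b^{(m_0, m_1, m_2)} \cdot \bigl(\partial_t^{k_1}\slashed{\nabla}^{\ell_1} u \ast \dots \ast \partial_t^{k_N}\slashed{\nabla}^{\ell_N} u\bigr),
\end{equation*}
where $N = m_1 + m_2 + 1$, $\sum_i k_i = k_0$, $\sum_i \ell_i = \ell_0 + m_2 + 2$, and $m_0, m_1, m_2$ count the derivatives landing in the $\omega$-, $u$-, and $\slashed{\nabla}u$-slots of $b$ respectively. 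The target is to show that each such monomial contributes $o(1)$ times a term already present on the right-hand side.

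I first handle the degenerate case $m_1 = m_2 = 0$, where $N = 1$ and only $\omega$-derivatives of $b$ appear. Since $b(\omega, 0, 0) \equiv 0$ in $\omega$, every pure $\omega$-derivative $b^{(m_0, 0, 0)}$ still vanishes at $(u,\slashed{\nabla}u) = (0, 0)$, so the regularity assumption yields $\|b^{(m_0, 0, 0)}\|_{L^\infty} \leq C(\|u\|_{L^\infty} + \|\slashed{\nabla}u\|_{L^\infty}) = o(1)$. This supplies the $o(1)$ factor, while the remaining factor $\partial_t^{k_0}\slashed{\nabla}^{\ell_0 - m_0 + 2} u$ is absorbed into the right-hand side in $L^2$ when its parabolic weight $2k_0 + \ell_0 - m_0 + 2$ is $\leq 2s$, or in $L^\infty$ (using $\|u\|_{C^{s'}}(t) = o(1)$ from parabolic regularity) paired with the $L^2$ bound $\|b^{(m_0, 0, 0)}\|_{L^2} \leq C\|u\|_{H^1}$ in the remaining top-order cases; note that $s \geq 2$ ensures $\|u\|_{H^1}$ sits on the right-hand side.

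The essential case is $m_1 + m_2 \geq 1$, so $N \geq 2$. Here no vanishing of $b^{(m_0, m_1, m_2)}$ is available, and the $o(1)$ must instead be extracted from the product of derivatives of $u$: the idea is to place one factor $\partial_t^{k_{i_0}}\slashed{\nabla}^{\ell_{i_0}} u$ in $L^2$ and the remaining $N - 1 \geq 1$ factors in $L^\infty$, each of which is $o(1)$ by parabolic regularity. The main obstacle, and the heart of the proof, is verifying that at least one index $i_0$ satisfies the parabolic weight bound $2k_{i_0} + \ell_{i_0} \leq 2s$, so that this $L^2$ factor genuinely appears on the right-hand side. Supposing on the contrary that $2k_i + \ell_i \geq 2s + 1$ for every $i$, summation against the identity $\sum_i(2k_i + \ell_i) = 2k_0 + \ell_0 + m_2 + 2 \leq 2s + m_2 + 2$ collapses to $m_1(2s+1) + 2s\, m_2 \leq 1$, which fails whenever $s \geq 2$ and $m_1 + m_2 \geq 1$. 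This counting step is precisely where the hypothesis $s \geq 2$ enters. The $j = 0$ and $j = 1$ summands $b_{2,0}\cdot u$ and $b_{2,1}\cdot\slashed{\nabla}u$ are handled by the same argument with routine adjustments to the derivative bookkeeping.
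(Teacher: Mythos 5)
Your proposal is correct and follows essentially the same route as the paper: the same Leibniz/chain-rule expansion into monomials $b^{(m_0,m_1,m_2)}\cdot(\partial_t^{k_1}\slashed{\nabla}^{\ell_1}u \ast\dots\ast \partial_t^{k_N}\slashed{\nabla}^{\ell_N}u)$, the same strategy of putting one low-weight factor in $L^2$ and the rest in $L^\infty$ via parabolic regularity, and the same counting contradiction from $\sum_i(2k_i+\ell_i)=2k_0+\ell_0+m_2+2$ when $N\geq 2$. The only (immaterial) difference is in the case $m_1=m_2=0$, where you extract the $o(1)$ from $|b^{(m_0,0,0)}|\leq C(|u|+|\slashed{\nabla}u|)$ and split into subcases, while the paper takes the $o(1)$ from the uniform smallness of $\partial_t^{k_0}\slashed{\nabla}^{\ell_0-m_0+2}u$ and keeps $|u|+|\slashed{\nabla}u|$ as the right-hand-side term.
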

Now we project \eqref{equ:linear_p} onto the eigensections $\varphi_i$. Let
\begin{align*}
\xi^{(k,\ell)}_i(t):=\int_\Sigma  \left\langle \mathcal{L}_{\Sigma}^{\ell}\partial^{k}_t u,\varphi_{i}\right\rangle\, d\mu,\ \mathcal{E}^{(k,\ell)}_i(t):=\int_\Sigma  \left\langle \mathcal{L}_{\Sigma}^{\ell}\partial^{k}_t E_2(u),\varphi_{i}\right\rangle\, d\mu
\end{align*}
Then \eqref{equ:linear_p} becomes
\begin{align}\label{equ:ODE_sys_p}
\frac{d}{dt}\xi^{(k,\ell)}_i(t)-\lambda_i \xi^{(k,\ell)}_i(t)=\mathcal{E}^{(k,\ell)}_i(t).
\end{align}
For the neutral mode, we denote 
\begin{align*}
x^{(k,\ell)}_{j}(t):= \xi^{(k,\ell)}_{\iota+j}(t),\ \mathcal{W}^{(k,\ell)}_{j}(t):=\mathcal{E}^{(k,\ell)}_{\iota+j}(t)
\end{align*}
\begin{lemma}\label{lem:ODE_p}
For any $s\geq 2$,
\begin{align*}
\sum_{k+\ell\leq s}\sum_{i=1}^\infty |\xi^{(k,\ell)}_i(t)|^2=(1+o(1))|x(t)|^2. 
\end{align*}
\begin{proof} Fix $s\geq 2 $. Define three non-negative functions $X_+(t)$, $X_0(t)$ and $X_-(t)$ by
\begin{align*}
X^2_0(t)=&\sum_{k+\ell\leq s}\sum_{1\leq j\leq J}|x^{(k,\ell)}_j(t)|^2,\ X^2_\pm(t)=\sum_{k+\ell\leq s}\sum_{i: \pm\lambda_i>0 }|\xi^{(k,\ell)}_i(t)|^2. 
\end{align*}
We note that these coefficients are grouped together according to the sign of the eigenvalues. From \eqref{equ:ODE_sys_p}, we compute
\begin{align*}
X_+X_+'= &\sum_{k+\ell\leq s}\sum_{i:\lambda_i>0} \left(\lambda_i  |\xi^{(k,\ell)}_i(t)|^2+\xi^{(k,\ell)}_i(t)\mathcal{E}^{(k,\ell)}_i(t)\right)\\
= &\sum_{k+\ell\leq s}\sum_{i:\lambda_i>0} \left(\lambda_i  |\xi^{(k,\ell)}_i(t)|^2 \right)+X_+Y_+,
\end{align*}
here $X_+(t)Y_+(t)$ is defined by the last equality. Let $b=\min\{|\lambda_i|\, :\, \lambda_i\neq 0\}$. We have
\begin{align*}
X_+'-bX_+\geq Y_+. 
\end{align*}
Similarly, define $Y_0(t)$ and $Y_-(t)$ by
\begin{align*}
X_0(t)Y_0(t)= \sum_{k+\ell\leq s}\sum_{1\leq j\leq J} x^{(k,\ell)}_j(t) \mathcal{W}^{(k,\ell)}_j(t),
\end{align*}
and
\begin{align*}
X_-(t)Y_-(t)= \sum_{k+\ell\leq s}\sum_{i:\lambda_i<0 } \xi^{(k,\ell)}_i(t)\mathcal{E}^{(k,\ell)}_i(t).
\end{align*}
It holds that
\begin{align*}
X_0'= Y_0,\quad X_-'+bX_-\leq Y_-.  
\end{align*}
We now compare $|X_+(t)|^2+|X_0(t)|^2+|X_-(t)|^2$ and $|Y_+(t)|^2+|Y_0(t)|^2+|Y_-(t)|^2$.
\begin{align*}
&|X_+(t)|^2+|X_0(t)|^2+|X_-(t)|^2=\sum_{k+\ell\leq s} \| \mathcal{L}_{\Sigma}^{\ell}\partial^{k}_t u  \|_{L^2}^2(t). 
\end{align*}
From the Cauchy-Schwarz inequality, 
\begin{align*}
 |Y_+(t)|^2+|Y_0(t)|^2+|Y_-(t)|^2 &\leq  \sum_{k+\ell\leq s}\sum_{i=1}^\infty |\mathcal{E}^{k,\ell}_i(t)|^2   = \sum_{k+\ell\leq s} \| \mathcal{L}_{\Sigma}^{\ell}\partial^{k}_t E_2(u)  \|_{L^2}^2(t).
\end{align*} 
From Lemma~\ref{lem:error_p},
\begin{align*}
\sum_{k+\ell\leq s} \| \mathcal{L}_{\Sigma}^{\ell}\partial^{k}_t E_2(u)  \|_{L^2}^2(t) =o(1)\sum_{k+\ell\leq s} \| \mathcal{L}_{\Sigma}^{\ell}\partial^{k}_t u  \|_{L^2}^2(t).
\end{align*}
Therefore, $|Y_+(t)|^2+|Y_0(t)|^2+|Y_-(t)|^2=  o(1)\big( |X_+(t)|^2+|X_0(t)|^2+|X_-(t)|^2 \big).$ We can then apply the ODE lemma, Lemma~\ref{lem-MZODE}. The slow decay assumption \eqref{equ:nonexpdecay_p1} rules out the possibility that $X_-(t)$ dominates. Hence \begin{equation}\label{equ:ODE_pmiddle1}
|X_+(t)|^2+|X_0(t)|^2+|X_-(t)|^2=(1+o(1))|X_0(t)|^2. 
\end{equation}
It remains to show that 
\begin{equation}\label{equ:ODE_pmiddle2}
 |X_0(t)|^2=(1+o(1))|x(t)|^2. 
\end{equation}
For $\ell\geq 1$, 
\begin{align*}
x^{(k,\ell)}_j = \int_\Sigma \left\langle \mathcal{L}_{\Sigma}^{\ell}\partial^{k}_t u,\varphi_{\iota+j}\right\rangle\, d\mu=\int_\Sigma \left\langle \partial^{k}_t u, \mathcal{L}_{\Sigma}^{\ell}\varphi_{\iota+j}\right\rangle\, d\mu=0.
\end{align*} 
For $k\geq 1$,
\begin{align*}
x^{(k,\ell)}_j =\frac{d}{dt} z^{(k-1,\ell)}_j=\mathcal{W}^{(k-1,\ell)}_j.
\end{align*}
These imply
\begin{align*}
 |X_0(t)|^2&\leq |x(t)|^2+\sum_{k \leq s-1}  |\mathcal{W}^{(k,0)}(t) |^2 \\
 &\leq |x(t)|^2+o(1)\left(|X_+(t)|^2+|X_0(t)|^2+|X_-(t)|^2\right)\\
 &\leq |x(t)|^2+o(1)|X_0(t)|^2.
\end{align*}
We used \eqref{equ:ODE_pmiddle1} in the last inequality. Therefore \eqref{equ:ODE_pmiddle2} holds and the proof is finished.
\end{proof}
\end{lemma}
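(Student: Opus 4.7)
The plan is to mimic the strategy used for Lemma~\ref{lem:ODE} in the elliptic case, replacing the $G$-decomposition with the much simpler eigensection projection and using Lemma~\ref{lem:error_p} in place of Corollary~\ref{cor:qE}. Fix $s\ge 2$. First I would partition the indices according to the sign of $\lambda_i$ and set
\begin{align*}
X_+^2(t) &:= \sum_{k+\ell\le s}\sum_{i:\lambda_i>0} |\xi_i^{(k,\ell)}(t)|^2, \\
X_0^2(t) &:= \sum_{k+\ell\le s}\sum_{j=1}^J |x_j^{(k,\ell)}(t)|^2, \\
X_-^2(t) &:= \sum_{k+\ell\le s}\sum_{i:\lambda_i<0} |\xi_i^{(k,\ell)}(t)|^2.
\end{align*}
Differentiating and using \eqref{equ:ODE_sys_p} gives, with $b:=\min\{|\lambda_i|:\lambda_i\ne 0\}>0$, differential inequalities of the schematic form $X_+'-bX_+\ge Y_+$, $X_0'=Y_0$, $X_-'+bX_-\le Y_-$, where the $Y$-terms are the obvious error expressions. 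By Cauchy--Schwarz each $|Y_\bullet(t)|^2$ is dominated by $\sum_{k+\ell\le s}\|\mathcal{L}_\Sigma^\ell \partial_t^k E_2(u)\|_{L^2}^2(t)$.

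Next I would invoke Lemma~\ref{lem:error_p} (whose hypothesis $s\ge 2$ is precisely why we require it here) to conclude
\begin{equation*}
|Y_+(t)|^2+|Y_0(t)|^2+|Y_-(t)|^2 = o(1)\,\bigl(|X_+(t)|^2+|X_0(t)|^2+|X_-(t)|^2\bigr).
\end{equation*}
This puts us in the setting of the Merle--Zaag ODE lemma (Lemma~\ref{lem-MZODE}), whose conclusion is a trichotomy: either $X_+$, $X_0$ or $X_-$ dominates asymptotically. The case where $X_-$ dominates would force exponential decay of $u$, contradicting the slow--decay hypothesis \eqref{equ:nonexpdecay_p1}. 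The $X_+$--dominated case is likewise ruled out by the ODE lemma itself (as in the elliptic proof), leaving
\begin{equation*}
|X_+(t)|^2+|X_0(t)|^2+|X_-(t)|^2 = (1+o(1))|X_0(t)|^2.
\end{equation*}

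Finally I would reduce $|X_0(t)|^2$ to $|x(t)|^2$. For $\ell\ge 1$ the factor $\mathcal{L}_\Sigma^\ell$ annihilates $\varphi_{\iota+j}$, so $x_j^{(k,\ell)}(t)\equiv 0$; for $k\ge 1$ and $\ell=0$ one has $x_j^{(k,0)}=\mathcal{W}_j^{(k-1,0)}$, which by the previous paragraph is an $o(1)$ fraction of $|X_0(t)|$. Assembling these observations yields $|X_0(t)|^2 \le |x(t)|^2 + o(1)|X_0(t)|^2$, hence $|X_0(t)|^2=(1+o(1))|x(t)|^2$, completing the proof. I expect the only genuinely delicate point to be verifying the $o(1)$ smallness of the error in the form required by the Merle--Zaag lemma; this is exactly where Lemma~\ref{lem:error_p} (and the hypothesis $s\ge 2$) is used decisively, since the quadratic structure of $E_2$ is what lets every higher derivative of the error be absorbed into the corresponding higher Sobolev norm of $u$.
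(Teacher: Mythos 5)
Your proposal follows the paper's proof essentially verbatim: the same sign-of-eigenvalue decomposition into $X_\pm$, $X_0$, the same differential inequalities fed into the Merle--Zaag lemma with the error controlled by Lemma~\ref{lem:error_p}, and the same final reduction of $|X_0|^2$ to $|x|^2$ via $x_j^{(k,\ell)}=0$ for $\ell\ge 1$ and $x_j^{(k,0)}=\mathcal{W}_j^{(k-1,0)}$ for $k\ge 1$. The argument is correct as stated.
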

\begin{corollary}\label{cor:parabolic}
For any $s\in \mathbb{N}$, there exists a positive constant $C=C(u,s)$ such that $\Vert u \Vert_{C^{s}}(t) \leq C   |x(t) |. $ 
\end{corollary}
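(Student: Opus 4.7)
The plan is to mimic the strategy for Corollary~\ref{cor:sobolev} in the elliptic setting, the only wrinkle being that Lemma~\ref{lem:ODE_p} gives control of $\mathcal{L}_\Sigma^\ell \partial_t^k u$ rather than $\slashed{\nabla}^\ell \partial_t^k u$, so I need an extra elliptic regularity step to convert between the two before applying Sobolev embedding.

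Fix the target $s\in\mathbb{N}$. First I would choose an auxiliary integer $S=S(s,n)$ large enough that any $(k,\ell)$ with $k+\ell\le s$ satisfies $2(S-k)-\lceil n/2\rceil -1\ge \ell$; a crude choice such as $S=s+n+2$ works. Applying Lemma~\ref{lem:ODE_p} at the level $S$ (note $S\ge s\ge 2$), together with the assumption $\|u\|_{H^{n+4}}(t)=o(1)$, gives
\[
\sum_{k+\ell\le S}\sum_{i=1}^{\infty}|\xi_i^{(k,\ell)}(t)|^2 \;=\;(1+o(1))|x(t)|^2,
\]
which rewrites as $\sum_{k+\ell\le S}\|\mathcal{L}_\Sigma^\ell \partial_t^k u\|_{L^2}^2(t)\le C|x(t)|^2$ for all large $t$ and hence (by smoothness on any bounded time interval) for all $t\ge 0$ with a constant $C=C(u,S)$.

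Next I would invoke the self-adjoint elliptic regularity estimate for $\mathcal{L}_\Sigma$: for each $\ell\ge 0$,
\[
\|v\|_{H^{2\ell}}(\Sigma)\;\le\; C_\ell\bigl(\|\mathcal{L}_\Sigma^\ell v\|_{L^2}(\Sigma)+\|v\|_{L^2}(\Sigma)\bigr),\qquad v\in C^\infty(\Sigma;\mathbf{V}).
\]
Applying this with $v=\partial_t^k u(t)$ and summing over $k+\ell\le S$ yields $\sum_{k+\ell\le S}\|\partial_t^k u\|_{H^{2\ell}}^2(t)\le C|x(t)|^2$. In particular, for every pair $(k,\ell)$ with $k+\ell\le s$, we have $\|\partial_t^k u\|_{H^{2(S-k)}}(t)\le C|x(t)|$.

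Finally, the Sobolev embedding $H^{r}(\Sigma)\hookrightarrow C^{r-\lceil n/2\rceil-1}(\Sigma)$ applied with $r=2(S-k)$ controls $\slashed{\nabla}^\ell \partial_t^k u$ pointwise, giving $\sup_{\omega\in\Sigma}|\partial_t^k \slashed{\nabla}^\ell u(\omega,t)|\le C|x(t)|$ for all $k+\ell\le s$, which is exactly $\|u\|_{C^s}(t)\le C|x(t)|$. I do not expect any real obstacle: the whole argument is a routine combination of Lemma~\ref{lem:ODE_p} with elliptic regularity and Sobolev embedding, and the only point requiring care is keeping $S$ large enough relative to $s$ and $n$ so that the embedding consumes all the needed derivatives.
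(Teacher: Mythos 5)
Your argument is correct and is essentially the paper's intended route: the paper states this corollary without proof as an immediate consequence of Lemma~\ref{lem:ODE_p}, and the steps you supply --- reading $\sum_i|\xi_i^{(k,\ell)}|^2$ as $\|\mathcal{L}_\Sigma^\ell\partial_t^k u\|_{L^2}^2$, converting to $H^{2\ell}$ control by iterated elliptic regularity for the Legendre--Hadamard elliptic operator $\mathcal{L}_\Sigma$, and then applying Sobolev embedding with $S$ chosen large relative to $s$ and $n$ --- are exactly the parabolic analogue of how Corollary~\ref{cor:sobolev} follows from Corollary~\ref{cor:qE} and Lemma~\ref{lem:ODE} in the elliptic case. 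The only caveat is that the bound should be understood for $t$ large (where Lemma~\ref{lem:ODE_p} guarantees $|x(t)|>0$), which is how the paper uses it.
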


The rest of the arguments are simpler than ones in the elliptic case. We omit the details and only provide the main steps. Their elliptic counterparts can be found in Subsection~\ref{sec:5.2}. Let 
$$Q(t)=|x(t)|^p+|x(t)|\Vert u' \Vert_{C^2}(t)+|x(t)|\Vert \tilde{u}^\perp  \Vert_{C^3}(t).$$
\begin{lemma}[c.f. Lemma~\ref{lem:5.5}]\label{lem:3.5}There exists a positive constant $C=C(u,\mathcal{M}_\Sigma,N_{2} )$ such that 
$$|\mathcal{M}_\Sigma(u)+\nabla f(x(t))-\mathcal{L}_\Sigma\tilde{u}^\perp|+|N_2(u)| \leq C  Q(t).$$
\end{lemma}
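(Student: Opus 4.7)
The plan is to follow the same strategy as the proof of Lemma~\ref{lem:5.5}, with a simplification coming from the first-order nature of the parabolic equation. In the parabolic setting the neutral projection is simply $u^T = \Pi^T u = \sum_{j=1}^J x_j(t)\varphi_{\iota+j}$, with no subtracted correction analogous to the elliptic $z-\bar z$. Consequently, applying Lemma~\ref{lem:MSigma} with $\alpha=1/2$ directly yields
\begin{equation*}
\left|\mathcal{M}_\Sigma(u) + \nabla f(x(t)) - \mathcal{L}_\Sigma \tilde{u}^\perp \right| \leq C\,\Vert u \Vert_{C^{2,\alpha}(\Sigma)}\,\Vert \tilde{u}^\perp \Vert_{C^{2,\alpha}(\Sigma)}
\end{equation*}
at each time $t$. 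Combining the embedding $\Vert\cdot\Vert_{C^{2,\alpha}(\Sigma)} \leq C\Vert\cdot\Vert_{C^3}$ with Corollary~\ref{cor:parabolic}, which gives $\Vert u \Vert_{C^3}(t) \leq C|x(t)|$, the product is bounded by $C\,|x(t)|\,\Vert \tilde{u}^\perp\Vert_{C^3}(t) \leq C\,Q(t)$. This establishes the first half of the desired inequality.

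For the estimate on $|N_2(u)|$, I would use the structural form $N_2(u)=b_1 \cdot \mathcal{M}_\Sigma(u)$ with $b_1=b_1(\omega,u,\slashed{\nabla}u)$ smooth and $b_1(\omega,0,0)=0$. The Lipschitz bound $|b_1|\leq C(|u|+|\slashed{\nabla}u|)\leq C\Vert u\Vert_{C^1}(t)$ together with Corollary~\ref{cor:parabolic} gives $|b_1|\leq C|x(t)|$. On the other hand, from the first half of the lemma combined with $|\nabla f(x)|\leq C|x|^{p-1}$ (since $f-f(0)$ vanishes at the origin to order $p$) and $|\mathcal{L}_\Sigma \tilde{u}^\perp|\leq C\Vert \tilde{u}^\perp\Vert_{C^2}(t)$, one obtains
\begin{equation*}
|\mathcal{M}_\Sigma(u)| \leq C|x(t)|^{p-1} + C\Vert \tilde{u}^\perp\Vert_{C^2}(t) + C\,Q(t).
\end{equation*}
Multiplying by $|b_1|\leq C|x(t)|$ and using the definition of $Q(t)$ yields $|N_2(u)|\leq C\,Q(t) + C\,|x(t)|\,Q(t)$. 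Since $|x(t)|=o(1)$ as $t\to\infty$, the last term can be absorbed into $C\,Q(t)$ (adjusting $C$ depending on $u$).

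There is essentially no serious obstacle: this is a direct parabolic transcription of Lemma~\ref{lem:5.5}, and in fact simpler, because the absence of a $\bar{z}$ component in the parabolic case removes the need for the Taylor expansion step $|\nabla f(z-\bar z)-\nabla f(z)|$ that appeared in the elliptic proof. The only point requiring a little care is the factor $|x(t)|$ on the error term from $N_2$, which is harmless precisely because the slow-decay hypothesis forces $|x(t)|\to 0$.
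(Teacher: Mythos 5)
Your proof is correct and is exactly the argument the paper intends: the paper omits the proof of this lemma and refers to its elliptic counterpart (Lemma~\ref{lem:5.5}), and your transcription — Lemma~\ref{lem:MSigma} with the coefficients of $u^T$ now being precisely $x(t)$ (so the Taylor-expansion step for $\nabla f(z-\bar z)$ drops out), Corollary~\ref{cor:parabolic} to control $\Vert u\Vert_{C^3}$ by $|x(t)|$, and the quadratic structure of $N_2$ — is the faithful parabolic analogue. The handling of the extra $|x(t)|Q(t)$ term by absorbing it via $|x(t)|=o(1)$ is also how the elliptic proof implicitly treats the $a_3\cdot\mathcal{M}_\Sigma(u)$ term.
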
 
\begin{lemma}[c.f. Lemmas~\ref{lem:WbarW} and \ref{lem:tildeE}]\label{lem:3.6}There exists a positive constant $C=C(u,\mathcal{M}_\Sigma,N_2 )$ such that 
$$|x'(t) +\nabla f(x(t))|+\left \Vert\left(\tilde{u}^\perp\right)'-\mathcal{L}_\Sigma\tilde{u}^\perp \right\Vert_{L^2}(t) \leq C Q(t).$$
\end{lemma}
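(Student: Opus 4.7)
The strategy parallels that of Lemmas~\ref{lem:WbarW} and \ref{lem:tildeE} in the elliptic setting, but is considerably simpler because there are no second-order time derivatives and the reduction $H(u^T)$ enters only through first-order chain rule.

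First, for the bound on $x'(t)+\nabla f(x(t))$, the plan is to project the equation $u'=\mathcal{M}_\Sigma(u)+N_2(u)$ against the eigensections $\varphi_{\iota+j}$, $1\le j\le J$. Since $\varphi_{\iota+j}\in\ker\mathcal{L}_\Sigma$ is $L^2$-orthogonal to $H(u^T)$ and $\tilde u^\perp$, one has $x_j(t)=\int_\Sigma\langle u,\varphi_{\iota+j}\rangle\,d\mu$, and differentiating,
\begin{align*}
x_j'(t)=\int_\Sigma\langle \mathcal{M}_\Sigma(u),\varphi_{\iota+j}\rangle\,d\mu+\int_\Sigma\langle N_2(u),\varphi_{\iota+j}\rangle\,d\mu.
\end{align*}
I would then rewrite the first integral as $\int_\Sigma\langle \mathcal{M}_\Sigma(u)+\nabla f(x)-\mathcal{L}_\Sigma\tilde u^\perp,\varphi_{\iota+j}\rangle\,d\mu-\frac{\partial f}{\partial x^j}(x)+\int_\Sigma\langle\mathcal{L}_\Sigma\tilde u^\perp,\varphi_{\iota+j}\rangle\,d\mu$, where the last integral vanishes by self-adjointness of $\mathcal{L}_\Sigma$ and $\mathcal{L}_\Sigma\varphi_{\iota+j}=0$. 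The remaining two terms together with the $N_2$-integral are bounded by $CQ(t)$ thanks to Lemma~\ref{lem:3.5}, yielding the first inequality.

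Second, for $(\tilde u^\perp)'-\mathcal{L}_\Sigma\tilde u^\perp$, I would use the decomposition $\tilde u^\perp=\Pi^\perp u-H(u^T)$ and the fact that $\mathcal{L}_\Sigma$ maps $(\ker\mathcal{L}_\Sigma)^\perp$ to itself, so $\mathcal{L}_\Sigma\tilde u^\perp=\Pi^\perp\mathcal{L}_\Sigma\tilde u^\perp$. Differentiating in $t$ gives
\begin{align*}
(\tilde u^\perp)'-\mathcal{L}_\Sigma\tilde u^\perp=\Pi^\perp\bigl[\mathcal{M}_\Sigma(u)+N_2(u)-\mathcal{L}_\Sigma\tilde u^\perp\bigr]-DH(u^T)[(u^T)'].
\end{align*}
Since $\nabla f(x)\in\ker\mathcal{L}_\Sigma$, $\Pi^\perp\nabla f(x)=0$; hence the bracketed term has $L^2$-norm at most $CQ(t)$ by Lemma~\ref{lem:3.5}. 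For the correction $DH(u^T)[(u^T)']$, I would use that $H$ is analytic with $DH(0)=0$ so $\|DH(u^T)\|\le C|u^T|\le C|x(t)|$, while $(u^T)'=\sum_j x_j'(t)\varphi_{\iota+j}$ is controlled by $|x'(t)|$.

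The only mildly subtle point — and the most natural place for an obstacle — is bootstrapping the factor $|x(t)||x'(t)|$ coming from $DH(u^T)[(u^T)']$ into $Q(t)$. This is handled by feeding in the already-established first inequality: $|x'(t)|\le CQ(t)+|\nabla f(x(t))|\le CQ(t)+C|x(t)|^{p-1}$, so
\begin{align*}
|x(t)|\,|x'(t)|\le C|x(t)|Q(t)+C|x(t)|^{p}\le CQ(t),
\end{align*}
since $|x(t)|=o(1)$ and $|x(t)|^p\le Q(t)$ by the definition of $Q$. Combining all estimates gives the desired bound on $\Vert(\tilde u^\perp)'-\mathcal{L}_\Sigma\tilde u^\perp\Vert_{L^2}(t)$.
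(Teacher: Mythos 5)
Your proof is correct and follows essentially the same route as the paper's (omitted) argument, namely the parabolic adaptation of Lemmas~\ref{lem:WbarW} and \ref{lem:tildeE}: project the equation onto $\ker\mathcal{L}_\Sigma$ and its orthogonal complement, use self-adjointness of $\mathcal{L}_\Sigma$ and $\Pi^\perp\nabla f=0$, and invoke Lemma~\ref{lem:3.5}. The only remark is that your bootstrap for $|x(t)|\,|x'(t)|$ is unnecessary: since $|x'_j(t)|=\left|\int_\Sigma\langle u',\varphi_{\iota+j}\rangle\,d\mu\right|\le \Vert u'\Vert_{L^2}(t)\le \Vert u'\Vert_{C^2}(t)$, the product $|x(t)|\,|x'(t)|$ is directly controlled by the term $|x(t)|\Vert u'\Vert_{C^2}(t)$ in the definition of $Q(t)$.
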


\begin{corollary}[c.f. Corollaries~\ref{cor:source1} and \ref{cor:source}]\label{cor:3.7}
Suppose $Q(t)\leq M |x(t)|^q$ for some $M,q>0$. Then there exists a positive constant $C=C(u ,\mathcal{M}_\Sigma,N_2,M,q)$ such that $|x'(t)| \leq C  |x(t)|^{\min(q,p-1)}$ and $\Vert\tilde{u}^\perp  \Vert_{L^2}(t)  \leq C  |x(t)|^{q}$. 
\end{corollary}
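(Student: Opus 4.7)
The plan is to dispatch the two estimates separately, both reducing to Lemma~\ref{lem:3.6}. The bound on $|x'(t)|$ will come immediately from that lemma: one has $|x'(t)| \leq |\nabla f(x(t))| + CQ(t) \leq C|x(t)|^{p-1} + CM|x(t)|^q$, using that $f-f(0)$ vanishes to order $p$ at the origin. Since $|x(t)|$ stays bounded (indeed $|x(t)| \to 0$), the two terms combine into $C|x(t)|^{\min(q,p-1)}$.

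For the bound on $\Vert\tilde{u}^\perp\Vert_{L^2}(t)$, I will mirror the argument of Corollary~\ref{cor:source}. Expand $\tilde{u}^\perp = \sum_{i:\lambda_i\neq 0}\tilde{\xi}_i(t)\varphi_i$ and set $\tilde{\mathcal{E}}_i(t):=\int_\Sigma \langle (\tilde{u}^\perp)' - \mathcal{L}_\Sigma\tilde{u}^\perp, \varphi_i\rangle\,d\mu$. Projecting Lemma~\ref{lem:3.6} onto $\varphi_i$ yields the scalar ODEs $\tilde{\xi}_i'-\lambda_i\tilde{\xi}_i = \tilde{\mathcal{E}}_i$ with source bound $\sum_i|\tilde{\mathcal{E}}_i(t)|^2 \leq CM^2|x(t)|^{2q}$. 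I will solve these subject to $\tilde{\xi}_i(t)\to 0$: for $\lambda_i>0$ by integrating from $t$ to $\infty$, and for $\lambda_i<0$ via the forward Duhamel formula starting at $t=0$. Setting $b:=\min\{|\lambda_i|:\lambda_i\neq 0\}>0$, a Cauchy--Schwarz/duality argument as in Corollary~\ref{cor:source} then reduces matters to bounding
\begin{equation*}
\int_0^t e^{-b(t-\tau)}|x(\tau)|^q\,d\tau + \int_t^\infty e^{-b(\tau-t)}|x(\tau)|^q\,d\tau + e^{-bt}\Vert\tilde{u}^\perp(0)\Vert_{L^2}
\end{equation*}
by $C|x(t)|^q$.

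The main obstacle is this last step, which needs the parabolic analog of Lemma~\ref{lemma-Xi^+_0}: the algebraic lower bound $|x(t)|\geq C^{-1}t^{-1/(p-2)}$ (so that $e^{-bt}$, being subpolynomial, is absorbed by $|x(t)|^q$), together with the $\pm\varepsilon$-monotonicity of $e^{\pm\varepsilon t}|x(t)|$ for $t$ large. Both follow from the just-established bound $|x'|\leq C|x|^{p-1}$ and $|x(t)|\to 0$ by the same elementary ODE argument as in the elliptic case (note $p\geq 3$ so $|x'|/|x| \leq C|x|^{p-2}\to 0$). Once monotonicity is in hand, I will pick $\varepsilon < b/q$, split $\int_0^t = \int_0^{t_0}+\int_{t_0}^t$, apply $|x(\tau)|\leq e^{\varepsilon|t-\tau|}|x(t)|$ on each relevant sub-interval, and evaluate the resulting geometric-type integrals; the $\int_0^{t_0}$ piece is exponentially small in $t$ and absorbed via the algebraic lower bound on $|x(t)|$.
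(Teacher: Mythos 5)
Your proof follows the route the paper intends: Corollary~\ref{cor:3.7} is stated as the parabolic transcription of Corollaries~\ref{cor:source1} and \ref{cor:source}, and your two steps (the $|x'|$ bound from Lemma~\ref{lem:3.6} together with $|\nabla f(x)|\le C|x|^{p-1}$, and the mode-by-mode Duhamel/duality estimate for $\tilde u^\perp$ using the $\pm\varepsilon$-monotonicity of $e^{\pm\varepsilon t}|x(t)|$) are exactly those proofs with $z$ replaced by $x$. One small repair: the algebraic lower bound and the monotonicity of $e^{\pm\varepsilon t}|x(t)|$ should not be derived from the conditional bound $|x'|\le C|x|^{\min(q,p-1)}$ you have just established, since for $q\le 1$ that does not give $|x'|/|x|\to 0$; instead use the unconditional quadratic bound $|x_j'|=|\mathcal{W}_j|\le C\Vert u\Vert_{C^2}^2\le C|x|^2$ coming from Corollary~\ref{cor:parabolic} and the quadratic structure of $E_2(u)$, which is precisely how the paper proves the elliptic Lemma~\ref{lemma-Xi^+_0} that you are transposing.
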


\begin{lemma}[c.f. Lemma~\ref{lem-iterate2}]\label{lem:3.10} For $1\le r \le p-1 $, suppose the following holds.
\begin{align*} 
\left| x' (t)\right| +\Vert \tilde{u}^\perp  \Vert_{L^2}(t) \leq M |x(t)|^{r}.
\end{align*}
Then  for any $\varepsilon\in (0,1)$, there exists a positive constant $C=C(u ,\mathcal{M}_\Sigma,N_2,M,r,\varepsilon)$ such that    
\begin{align*} 
\left| x' (t)\right| +\Vert\tilde{u}^\perp  \Vert_{L^2}(t)  \leq C  |x(t)|^{\min\{ r+1-\varepsilon/2,  p-1\}}.
\end{align*}
Moreover,
\begin{align*} 
\left|  x' (t)+ \nabla f(x(t)) \right|  \leq C  |x(t)|^{ r+1-\varepsilon/2}.
\end{align*}
\end{lemma}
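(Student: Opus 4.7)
The plan is to follow the template of the elliptic proof of Lemma~\ref{lem-iterate2}, substituting each ingredient with its parabolic counterpart. Starting from the hypothesis $|x'(t)|+\|\tilde{u}^\perp\|_{L^2}(t)\le M|x(t)|^r$, the goal is to promote this decay first to all higher derivative norms, then to bound the auxiliary quantity $Q(t)=|x(t)|^p+|x(t)|\|u'\|_{C^2}(t)+|x(t)|\|\tilde{u}^\perp\|_{C^3}(t)$ by $C|x(t)|^{r+1-\varepsilon/2}$, and finally to feed this bound into Corollary~\ref{cor:3.7} and Lemma~\ref{lem:3.6} to produce the improved estimates.

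The first step is an interpolation: using the hypothesis $\|\tilde{u}^\perp\|_{L^2}(t)\le M|x(t)|^r$, the universal bound $\|\tilde{u}^\perp\|_{C^s}(t)\le C_s|x(t)|$ coming from Lemma~\ref{lem:utup} combined with Corollary~\ref{cor:parabolic}, and Lemma~\ref{lemma-interpolation}, I would deduce that for any $s\in\mathbb{N}$,
\begin{equation*}
\|\tilde{u}^\perp\|_{C^s}(t)\le C_s\,|x(t)|^{r-\varepsilon/2}.
\end{equation*}
Next, I would bound $u'$ through the equation. Lemma~\ref{lem:3.5} lets us write $\mathcal{M}_\Sigma(u)=-\nabla f(x(t))+\mathcal{L}_\Sigma\tilde{u}^\perp+O(Q(t))$, with higher-derivative versions obtained by differentiating the Lyapunov--Schmidt identity as in the elliptic Lemma~\ref{lem:MSigma}. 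Substituting into $u'=\mathcal{M}_\Sigma(u)+N_2(u)$ and using $|\nabla f(x)|\le C|x|^{p-1}\le C|x|^{r-\varepsilon/2}$ (since $r\le p-1$), $|N_2(u)|\le CQ(t)$, and the $C^s$ bound on $\tilde{u}^\perp$ just established yields
\begin{equation*}
\|u'\|_{C^2}(t)\le C|x(t)|^{r-\varepsilon/2}+CQ(t).
\end{equation*}

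Substituting this back into $Q(t)$, the contribution $|x|\|u'\|_{C^2}$ carries an extra $|x(t)|$ prefactor which is $o(1)$, so it may be absorbed, yielding $Q(t)\le C|x(t)|^{r+1-\varepsilon/2}$. From here the conclusions are immediate: Corollary~\ref{cor:3.7} applied with $q=r+1-\varepsilon/2$ produces $|x'(t)|\le C|x(t)|^{\min(r+1-\varepsilon/2,\,p-1)}$ and $\|\tilde{u}^\perp\|_{L^2}(t)\le C|x(t)|^{r+1-\varepsilon/2}$, which together form the first displayed inequality, and Lemma~\ref{lem:3.6} gives $|x'(t)+\nabla f(x(t))|\le CQ(t)\le C|x(t)|^{r+1-\varepsilon/2}$, which is the second. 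The main subtlety compared to the elliptic case is the bound on $\|u'\|_{C^2}$: in the elliptic setting $u'$ appears directly in the vectorization $q(u)$ and is controlled via coefficient interpolation, whereas here $u'$ must be extracted from the equation itself, using Lemma~\ref{lem:3.5} to isolate the $\mathcal{L}_\Sigma\tilde{u}^\perp$ contribution, after which the recursive absorption of $Q(t)$ closes the argument.
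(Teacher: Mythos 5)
Your proposal is correct and is essentially the argument the paper intends: the authors explicitly omit the proof of this lemma, referring to the elliptic template of Lemma~\ref{lem-iterate2}, and your reconstruction follows exactly that template — interpolate the hypothesis against the universal bound $\|\tilde{u}^\perp\|_{C^s}\le C|x|$ to upgrade to $C^s$ control with exponent $r-\varepsilon/2$, extract $\|u'\|_{C^2}$ from the equation via Lemma~\ref{lem:3.5}, absorb the resulting $|x|\,Q(t)$ term to get $Q(t)\le C|x|^{r+1-\varepsilon/2}$, and conclude with Corollary~\ref{cor:3.7} and Lemma~\ref{lem:3.6}. The only point handled loosely (as in the paper's own elliptic proof) is that the $C^s$ norms here include time derivatives, so the interpolation step must be supplemented either by differentiating the equation in $t$ or by the coefficient-wise argument of Subsection~\ref{sec:5.2}; this is routine and does not affect the validity of the outline.
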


With Lemma~\ref{lem:3.10}, Proposition~\ref{prop-neutral-dynamics-p1} holds through a simple iteration argument. 

\section{Gradient flow}\label{sec:gradient}
In this section, we study the gradient flow on Euclidean space with a perturbative vector field: let $z(t)$ be a curve on $\mathbb{R}^J$ which satisfies \be\label{eq-gf} z'(t)+ \nabla f(z(t))=G(t), \ee 
where $f:\mathbb{R}^J\to \mathbb{R}$ is an analytic potential function and $G(t)$ is a smooth vector field. Let $f$ satisfy the assumptions:
\be f(0)=0, \quad \nabla f(0)=0,\quad  \nabla^2 f(0)=0.\ee 
For some positive integer $p\ge 3$, $f$ has an expansion at zero 
\[f= \sum_{j\ge p} f_j,\]
where $f_j$ is homogeneous polynomial of degree $j$ and $f_p\not\equiv 0$. The restriction of $f_p$ on $\mathbb{S}^{J-1}$ is denoted by $\hat{f}_p$. We write $\slashed{\nabla} $ for the standard connection on $\mathbb{S}^{J-1}$. Consider the critical points and critical values of $\hat{f}_p$ as 
\begin{align*}
\mathbf{C}:=\left\{ \theta\in\mathbb{S}^{J-1}\, :\, \slashed{\nabla}\hat{f}_p(\theta)=0 \right\},\ \mathbf{D}:=\left\{ \hat{f}_p(\theta) \, :\, \theta\in\mathbf{C}  \right\}.
\end{align*} We further assume the perturbative vector field $G$ has a bound  
\begin{equation}\label{eq:Gboundz}
|G(t)| \le C |z(t)|^{p-\varepsilon},
\end{equation}
for some uniform constants $\varepsilon\in (0,2^{-1})$ and $C<\infty$. The main theorem of this section concerns the secant direction when the solution converges to the origin.

\begin{theorem} \label{thm-gradflow}
Suppose $\lim_{t\to\infty} z(t) =0$. Then $|z(t)|^{-p}f(z(t))$ converges to a non-negative critical value $\alpha_0\in\mathbf{D} $. Moreover, one of the following alternatives holds: 
\begin{enumerate}  
\item Suppose $\alpha_0>0$. Then the secant $z(t)/|z(t)|$ converges to a critical point $\theta^*\in\mathbf{C}$ with $\hat{f}_p(\theta^*)=\alpha_0$. Moreover, $\lim_{t\to \infty}t^{ {1}/(p-2)}|z(t)|= ( \alpha_0 p(p-2))^{-1/(p-2)}.$ 
\item Suppose $\alpha_0=0 $. Then 
$\lim_{t\to \infty} \mathrm{dist}(  z(t)/|z(t)|, \mathbf{C}_0)=0$. Here $\mathbf{C}_0$ is some connected component of $\mathbf{C}\cap \left\{\theta : \hat f_p(\theta)= 0\right\}.$ Moreover, 
$\lim_{t\to \infty} t^{ 1/(p-2)}|z(t)|=\infty$.
\end{enumerate}

\end{theorem}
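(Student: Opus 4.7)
The plan is to reduce the perturbed gradient flow to an approximate gradient flow of $\hat f_p$ on $\mathbb{S}^{J-1}$ via polar coordinates and then invoke a {\L}ojasiewicz argument in the spirit of \cite{KMP}. Write $z(t)=r(t)\theta(t)$ with $r=|z|$ and $\theta\in\mathbb{S}^{J-1}$. Using Euler's identity for the homogeneous pieces of $f=\sum_{j\ge p}f_j$, one computes from \eqref{eq-gf} that
\[
r' = -p\,r^{p-1}\hat f_p(\theta) + O(r^{p-\varepsilon}),\qquad \theta' = -r^{p-2}\slashed\nabla\hat f_p(\theta) + O(r^{p-1-\varepsilon}).
\]
Setting $u:=r^{-(p-2)}$, the radial equation becomes $u'=p(p-2)\hat f_p(\theta)+O(r^{1-\varepsilon})$, whose right-hand side is uniformly bounded; integrating yields $u(t)\leq C(1+t)$, i.e., $r(t)\geq c(1+t)^{-1/(p-2)}$. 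Consequently the slow-time parameter $\tau(t):=\int_0^t r(s)^{p-2}\,ds$ tends to $\infty$, and in $\tau$-time the angular equation reads $\dot\theta=-\slashed\nabla\hat f_p(\theta)+\eta(\tau)$ with $|\eta(\tau)|\leq Cr^{1-\varepsilon}\to 0$.

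To identify $\alpha_0$, I would first show the $\omega$-limit set $\Omega\subset\mathbb{S}^{J-1}$ of $\theta(t)$ is contained in $\mathbf{C}$. Suppose for contradiction $\theta_*\in\Omega\setminus\mathbf{C}$, so $|\slashed\nabla\hat f_p(\theta_*)|\geq 2c>0$, and fix a ball $U=B_\delta(\theta_*)$ on which $|\slashed\nabla\hat f_p|\geq c$. Since $\theta_*\in\Omega$, $\theta(\tau)$ visits the sub-ball $B_{\delta/2}(\theta_*)$ at arbitrarily large $\tau$; during each such visit the angular speed is uniformly bounded on $U$, so the visit has $\tau$-duration at least $\delta/(2M)$, while the identity $\dot{\hat f_p}(\theta)=-|\slashed\nabla\hat f_p|^2+\slashed\nabla\hat f_p\cdot\eta$ gives $\dot{\hat f_p}\leq -c^2/2$ once $\tau$ is large enough that $|\eta|\leq c/2$. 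Summing a uniform positive decrease over infinitely many visits forces $\hat f_p(\theta)\to-\infty$, contradicting boundedness. Therefore $\Omega\subset\mathbf{C}$. Since $\Omega$ is the decreasing intersection of the closed connected sets $\overline{\theta([t,\infty))}$ in compact $\mathbb{S}^{J-1}$, it is connected, so $\hat f_p(\Omega)\subset\mathbf{D}$ is a connected subset of the discrete set $\mathbf{D}$, hence a single value $\alpha_0\in\mathbf{D}$. It follows that $\hat f_p(\theta(t))\to\alpha_0$, and the expansion $|z|^{-p}f(z)=\hat f_p(\theta)+O(r)$ gives $|z(t)|^{-p}f(z(t))\to\alpha_0$. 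The sign $\alpha_0\geq 0$ follows from $u(t)>0$ combined with $u(t)/t\to p(p-2)\alpha_0$ obtained by integrating the $u$-equation.

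The rate statements are now immediate from $u(t)/t\to p(p-2)\alpha_0$. For the secant convergence in case (1), the positivity $\alpha_0>0$ upgrades the lower bound to the two-sided bound $r(t)\sim t^{-1/(p-2)}$, hence $\tau\sim\log t$ and the perturbation $\eta(\tau)$ decays exponentially in $\tau$. A standard {\L}ojasiewicz--Simon computation then applies: using the analytic {\L}ojasiewicz inequality $|\hat f_p(\theta)-\alpha_0|^\mu\leq C|\slashed\nabla\hat f_p(\theta)|$ near the level-$\alpha_0$ critical set, one shows that $\frac{d}{d\tau}|\hat f_p(\theta)-\alpha_0|^{1-\mu}$ dominates $|\dot\theta|$ up to an exponentially small error, giving $\int_0^\infty|\dot\theta|\,d\tau<\infty$ and hence convergence of $\theta(\tau)$ to a single $\theta^*\in\mathbf{C}$ with $\hat f_p(\theta^*)=\alpha_0$.

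The most delicate step is case (2), where $r$ may decay arbitrarily slowly, $\eta(\tau)$ need not be integrable, and the direct arc-length argument breaks down, so $\theta$ may fail to converge to a single point. The containment $\Omega\subset\mathbf{C}\cap\{\hat f_p=0\}$ from the previous step persists. To refine accumulation to a single connected component, I would follow the control-curve scheme of \cite{KMP}: the analytic set $\mathbf{C}\cap\{\hat f_p=0\}$ has only finitely many connected components $\mathbf{C}^{(1)},\ldots,\mathbf{C}^{(N)}$, so it suffices to rule out infinitely many transits of $\theta(\tau)$ between two distinct components. Each transit crosses a region where $\hat f_p$ is bounded away from $0$, so $|\slashed\nabla\hat f_p|\geq c>0$ there by {\L}ojasiewicz at regular values; a careful energy accounting along transits in the style of \cite{KMP} then rules out infinitely many of them, yielding the claimed accumulation on a single component $\mathbf{C}_0$. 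This o-minimal/Pfaffian-style finiteness is precisely where the KMP machinery is needed, and is the main obstacle beyond the classical {\L}ojasiewicz--Simon framework.
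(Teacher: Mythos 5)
Your overall strategy (polar coordinates, reduction to a perturbed gradient flow of $\hat f_p$ on $\mathbb{S}^{J-1}$, {\L}ojasiewicz) matches the paper's, and your treatment of case (1) — reparametrizing by $\tau=\int r^{p-2}$ so that the two-sided bound $r\sim t^{-1/(p-2)}$ makes the perturbation exponentially small in $\tau$, then running the standard {\L}ojasiewicz--Simon length estimate — is a legitimate alternative to the paper's argument, which instead works in $t$-time with the control function $g(t)=r(t)+\bigl(\hat f_p(\theta(t))-\hat f_p(\theta^*)\bigr)$ and uses the radial decrease $-p\alpha_0 r^{p-1}$ to absorb the error (Proposition~\ref{prop-gradflowlimit}).

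However, there is a genuine gap in your first and load-bearing step, the claim that the $\omega$-limit set $\Omega$ of $\theta$ is contained in $\mathbf{C}$ (equivalently, that $\hat f_p(\theta(t))$ converges). Your argument sums a definite decrease of $\hat f_p$ over infinitely many visits to a ball around a putative non-critical limit point, but it ignores that $\hat f_p$ may \emph{increase} between visits: from $\dot{\hat f}_p=-|\slashed\nabla\hat f_p|^2+\slashed\nabla\hat f_p\cdot\eta\le \tfrac14|\eta(\tau)|^2$, the total possible increase is $\tfrac14\int|\eta|^2\,d\tau\le C\int r^{2-2\varepsilon}\,d\tau$, and since in the $\alpha_0=0$ regime $r$ may decay arbitrarily slowly, this integral need not be finite ($|\eta(\tau)|\to0$ does not make it summable). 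So you cannot conclude $\hat f_p\to-\infty$. The paper avoids integration altogether: Lemma~\ref{lem:limitinf} only extracts $\liminf|\slashed\nabla\hat f_p(\theta(t))|=0$ (where the gradient is assumed bounded below at \emph{all} large times, so there is no "in between"), and Lemma~\ref{lem_hatfp} then proves convergence of $\hat f_p(\theta(t))$ by a pointwise argument at crossing times: if $\hat f_p(\theta(t))$ oscillated, there would be times $\tilde t_i$ with $|\hat f_p(\theta(\tilde t_i))-\alpha_0|=\varepsilon$ and $\tfrac{d}{dt}\hat f_p(\theta(\tilde t_i))\ge0$, yet at such times the gradient is bounded below (one is away from every critical value, $\mathbf{D}$ being finite) while $R^T$ is small, forcing $\tfrac{d}{dt}\hat f_p<0$ — a contradiction. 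You should replace your $\omega$-limit argument with something of this kind. Secondly, once convergence of $\hat f_p(\theta(t))$ is in hand, your case (2) does not require the "o-minimal/Pfaffian-style" machinery you defer to: each transit of a fixed annulus around one component of $\mathbf{C}$ (where $|\slashed\nabla\hat f_p|\ge c$ by compactness, not because $\hat f_p$ is bounded away from $0$ there) satisfies $-\tfrac{d}{dt}\hat f_p\gtrsim c\,|\theta'|$ and hence costs a definite drop of $\hat f_p$, so only finitely many transits occur; this is the paper's elementary Lemma~\ref{lem-distgradflow}, and you should carry it out rather than cite it as an obstacle.
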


\begin{remark}
The existence of non-negative critical value of $\mathbb{S}^{J-1}$ is a necessary condition for the flow $z(t)$ to converge the the origin. Moreover, if the points in $\mathbf{C}_0$ are isolated, then (2) implies the unique secant limit direction as well. 
\end{remark}
We are ready to prove Theorems~\ref{thm:general_e} and \ref{thm:general_p}.
\begin{proof}[Proof of Theorem~\ref{thm:general_e}]
From Proposition~\ref{prop-neutral-dynamics}, Theorem~\ref{thm-gradflow} applies with the potential function being $m^{-1}f$. Suppose case (1) in Theorem~\ref{thm-gradflow} occurs. This ensures $z(t)/ |z(t)|$ converges to $\theta^*$, a critical point of $\hat{f}_p$ with $m^{-1}\hat{f}_p(\theta^*)=\alpha_0>0$. Moreover, $$\lim_{t\to\infty} t^{1/(p-2)} |z(t)|=\big( \alpha_0  p(p-2) \big)^{-1/(p-2)}.$$
This implies $$\lim_{t\to\infty} t^{1/(p-2)}\Vert u(t) \Vert_{L^2}  =\big( \alpha_0  p(p-2) \big)^{-1/(p-2)}.$$ 
Let $w\in\ker \mathcal{L}_{\Sigma}$ be the section corresponding to $\theta^*$ through \eqref{equ:identification}. Clearly $ u(t)/\Vert u(t) \Vert_{L^2}$ converges to $w$ in $L^2(\Sigma;\mathbf{V})$. From Corollary~\ref{cor:sobolev}, for any $k\in\mathbb{N}$, $ u(t)/\Vert u(t) \Vert_{L^2}$ is uniformly bounded in $C^k(\Sigma;\mathbf{V})$. Therefore $ u(t)/\Vert u(t) \Vert_{L^2} $ converges to $w$ in $C^\infty(\Sigma;\mathbf{V})$. We then obtain case (1) in Theorem~\ref{thm:general_e}. The other possibility, case (2) in Theorem~\ref{thm-gradflow}, leads to case (2) in Theorem~\ref{thm:general_exponential_e}.
\end{proof}
\begin{proof}[Proof of Theorem~\ref{thm:general_p}]
Theorem~\ref{thm:general_p} can be obtained through replacing Proposition~\ref{prop-neutral-dynamics} in the above proof by Proposition~\ref{prop-neutral-dynamics-p1}.
\end{proof}

The rest of the section is devoted to proving Theorem~\ref{thm-gradflow}. Let us write the problem in terms of the polar coordinates $z = r \theta $ where 
$r= |z|$ and $\theta =   z / |z|\in \mathbb{S}^{J-1}$. Because $f_p$ is homogeneous of degree $p$, $f_p(z)=r^p\hat{f}_p(\theta)$. We compute the gradient $\nabla f_p=p r^{p-1}\hat f _p\tfrac{\partial }{\partial r} + r^{p-2} \slashed{\nabla} \hat f_p$.
Let $G^{\perp}(t)$ and $G^T(t)$ be the radial and tangential parts of $G(t)$. Then equation~\eqref{eq-gf} can be decomposed into the radial and tangential parts: 
\bea \label{eq-maingradabsorbed}\begin{cases} \ba 
 &r'(t) = -  r^{p-1}(t)\left(p \hat f_p(\theta(t))+R^{\perp}(t)\right),\\
&\theta'(t) = -r^{p-2}(t)\left( \slashed{\nabla}\hat f_p(\theta(t))+R^T(t) \right).
 \ea \end{cases}\eea 
Here
\begin{align*}
R^{\perp}(t)=&r^{-p+1}(t) G^{\perp}(t)+\sum_{j\geq p+1} j r^{j-p}(t) \hat f_j ,\\ 
R^{T}(t)= &r^{-p+1}(t) G^{T}(t)+ \sum_{j\geq p+1} r^{j-p}(t) \slashed{\nabla}\hat f_j .
\end{align*}
From \eqref{eq:Gboundz}, there exists a constant $A>0$ such that
\begin{align}\label{eq:Rbound}
|R^{\perp}(t)|+|R^{T}(t)|\leq  A r^{1-\varepsilon}(t).
\end{align}
\begin{lemma}\label{lem:limitinf}
$\liminf_{t\to \infty} |\slashed{\nabla} \hat f_p(\theta(t)) |=0. $
\begin{proof} 
It is convenient to work with $\sigma(t)=r^{2-p}(t)$. The equation \eqref{eq-maingradabsorbed} becomes \bea \begin{cases}\label{eq-qtheta} \ba 
 &\sigma'(t) = (p-2) \left(p\hat f_p(\theta(t))-   R^\perp(t)    \right) \\
&\theta'(t) = -\sigma^{-1}(t)  \left(  \slashed{\nabla}\hat f_p(\theta(t))+  R^T(t)   \right).  
 \ea \end{cases}\eea 
The assumption $\lim_{t\to \infty} r(t)=0$ becomes $\lim_{t\to \infty} \sigma(t) = \infty$. 
Suppose $\liminf_{t\to \infty} |\slashed{\nabla} \hat f_p(\theta(t)) |=c_0>0. $ From \eqref{eq:Rbound}, there exists $t_0>0$ such that $|R^T(t)|\leq 4^{-1}c_0$ and $|\slashed{\nabla} \hat f_p(\theta(t)) |\geq 2^{-1}c_0$ for all $t\geq t_0$. This implies for all $t\geq t_0$,
\begin{align*}
\frac{d}{dt} \hat f_p(\theta(t)) =  \slashed{\nabla} \hat f_p(\theta(t)) \cdot \theta'(t) \le  -\frac{c_0^2}{4\sigma(t)} .
\end{align*}
From the first equation in \eqref{eq-qtheta}, we infer that $\sigma$ grows at most linearly. As a result,
\[\hat f_p(\theta(t)) \le \hat f_p(\theta(t_0))+\int _{t_0}^t -\frac{c_0^2}{4\sigma(\tau)}d\tau  \to -\infty \text{ as }t\to \infty, \] which is a contradiction.  
\end{proof}
\end{lemma}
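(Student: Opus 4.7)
The plan is to argue by contradiction: assume $|\slashed{\nabla}\hat{f}_p(\theta(t))|$ stays bounded below by some $c_0>0$ for all large $t$, and show that $\hat{f}_p(\theta(t))$ must then decrease to $-\infty$. This contradicts the fact that $\hat{f}_p$ is bounded on the compact sphere $\mathbb{S}^{J-1}$. Morally, the angular component of the flow is a gradient descent of $\hat{f}_p$ reparametrized by a slow time scale tied to $r(t)$, and we need to check the slow time scale is not so slow that dissipation becomes integrable.

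First I would reparametrize the radial variable by setting $\sigma(t):=r(t)^{2-p}$, so that $r(t)\to 0$ is the same as $\sigma(t)\to\infty$, and the system \eqref{eq-maingradabsorbed} becomes
\[
\sigma'(t)=(p-2)\bigl(p\hat{f}_p(\theta(t))+\tilde R^\perp(t)\bigr),\qquad \theta'(t)=-\sigma(t)^{-1}\bigl(\slashed{\nabla}\hat{f}_p(\theta(t))+R^T(t)\bigr),
\]
where $\tilde R^\perp$ is a harmless rewriting of $R^\perp$. This is useful because $\sigma^{-1}$ now plays the role of the effective time-step for the angular flow, and the perturbation terms $\tilde R^\perp, R^T$ are still $O(r^{1-\varepsilon})\to 0$ by \eqref{eq:Rbound}.

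Next I would differentiate $\hat{f}_p\circ\theta$ along the flow:
\[
\frac{d}{dt}\hat{f}_p(\theta(t))=-\sigma(t)^{-1}\bigl(|\slashed{\nabla}\hat{f}_p(\theta(t))|^2+\slashed{\nabla}\hat{f}_p(\theta(t))\cdot R^T(t)\bigr).
\]
Under the contradiction hypothesis the square term is bounded below by $c_0^2$, while $|R^T(t)|\to 0$, so for $t$ large enough the right-hand side is dominated by $-c_0^2/(4\sigma(t))$. Meanwhile the radial equation and boundedness of $\hat{f}_p$ on the sphere give $|\sigma'(t)|\le K$ uniformly, hence $\sigma(t)\le K t$ for large $t$. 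Integrating the differential inequality for $\hat{f}_p\circ\theta$ then yields
\[
\hat{f}_p(\theta(t))\le \hat{f}_p(\theta(t_0))-\int_{t_0}^{t}\frac{c_0^2}{4K\tau}\,d\tau\longrightarrow -\infty,
\]
contradicting $\sup_{\mathbb{S}^{J-1}}|\hat{f}_p|<\infty$ and proving the lemma.

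The only real subtlety is making sure the perturbations $\tilde R^\perp, R^T$ are absorbable in both the differential inequality for $\hat{f}_p(\theta(t))$ and the at-most-linear growth of $\sigma(t)$. Since \eqref{eq:Rbound} gives uniform smallness $O(r^{1-\varepsilon})$, both absorptions are automatic once one picks $t$ large enough. Conceptually this is a \L{}ojasiewicz-type dissipation argument executed purely on the angular factor of the flow, with the radial reparametrization ensuring the dissipation time-integral still diverges.
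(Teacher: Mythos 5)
Your proposal is correct and follows essentially the same route as the paper: the substitution $\sigma=r^{2-p}$, the differential inequality $\frac{d}{dt}\hat f_p(\theta(t))\le -c\,\sigma(t)^{-1}$ under the contradiction hypothesis (absorbing $R^T$ via \eqref{eq:Rbound}), the at-most-linear growth of $\sigma$ from the radial equation, and the divergent integral contradicting boundedness of $\hat f_p$ on the sphere. No substantive differences.
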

By Lemma~\ref{lem:limitinf}, there exists a sequence $t_i$ goes to infinity such that $|\slashed{\nabla} \hat f_p(\theta(t_i)) |$ goes to zero. Since $\mathbb{S}^{J-1}$ is compact, there exists a subsequence (still denoted by $t_i$) such that $\theta(t_i)$ converges to a critical point $\theta^*\in\mathbf{C}.$ Let $\alpha_0=\hat{f}_p(\theta^*)$. 
\begin{lemma} \label{lem_hatfp} We have $\alpha_0\ge 0$ and $\lim_{t\to\infty} \hat f_p(\theta(t))=\alpha_0$.
\begin{proof}Suppose $\alpha_0<0$. From \eqref{eq:Rbound} and the first equation of \eqref{eq-maingradabsorbed}, $r(t)$ is strictly increasing for $t$ large. This contradicts the assumption $\lim_{t\to\infty} z(t)=0$.

To show the second assertion, let us assume $\hat{f}_p$ is not a constant as otherwise the assertion is trivial. By the {\L}ojasiewicz gradient inequality \cite{Loj}, for each critical point $\theta_0\in\mathbf{C}$, there exists a neighborhood $U$ such that for all $\theta\in \mathbf{C}\cap U$, $ \hat{f}_p(\theta)=\hat{f}_p(\theta_0)$. Therefore, $\mathbf{D}$ is a finite set. Because $\hat{f}_p$ is not a constant, $\mathbf{D}$ has at least two distinct elements. Fix small $\e_0>0$ such that the critical values are separated at least by $6\eps_0$. That is,
$$ \inf _{\alpha_1,\alpha_2\in\mathbf{D},\ \alpha_1\neq \alpha_2 }|\alpha_1-\alpha_2 |\ge 6\e_0 .$$
For every $\e \in (0,\e_0)$, let
\begin{align*}
\delta(\varepsilon):= \inf\left\{  |\slashed{\nabla}\hat{f}_p(\theta)|\, :\, |f(\theta)-\alpha |\geq \varepsilon\ \textup{for all}\ \alpha \in\mathbf{D} \right\}.
\end{align*} 
It is clear that $\delta(\varepsilon)>0$. 
Now assume $\lim_{t\to\infty}\hat{f}_p(\theta(t))\neq \alpha_0$. There exist $\e \in (0,\e_0)$ and $t_i \to \infty$ such that $|\hat f_p(\theta(t_i)) - \alpha_0 | \ge 2\eps .$ Since $\hat f_p(\theta(t))$ is a continuous and differentiable function of $t$, there exist $\tilde t_i \to \infty$ such that 
\[ |\hat f_p (\theta(\tilde t_i))- \alpha_0|=\eps\quad   \text{ and }\quad  \frac{d}{dt} \hat f_p(\theta(\tilde t_i)) \ge 0.\]
From \eqref{eq:Rbound}, for $i$ large enough, $|R^T(\tilde{t}_i)|\leq 2^{-1}\delta(\varepsilon)$. Then we compute from \eqref{eq-qtheta} 
\bea \frac{d}{dt} \hat f_p(\theta(\tilde t_i)) & =  -\sigma^{-1}(\tilde{t}_i) \slashed{\nabla} \hat f_p (\theta(\tilde t_i))\cdot \left(\slashed{\nabla}  \hat{f}_p (\theta(\tilde t_i))+R^T(\tilde{t}_i) \right) \\ 
& \le -2^{-1} \sigma^{-1}(\tilde t_i)\delta^2(\varepsilon)<0. \eea 
This is a contradiction. 
\end{proof}
\end{lemma}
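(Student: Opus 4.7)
My plan is to prove the convergence $\hat f_p(\theta(t)) \to \alpha_0$ first and then deduce $\alpha_0 \ge 0$ as an immediate consequence. The deduction is direct: once convergence is established, if $\alpha_0 < 0$, then for all sufficiently large $t$ one has $p\hat f_p(\theta(t)) + R^\perp(t) \le p\alpha_0/2 < 0$ (using $R^\perp(t) \to 0$, which follows from \eqref{eq:Rbound} and $r(t)\to 0$). The first equation of \eqref{eq-maingradabsorbed} then forces $r'(t) > 0$ for $t$ large, contradicting $r(t)\to 0$.

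For the convergence itself, the main tool is the {\L}ojasiewicz gradient inequality applied to the analytic function $\hat f_p$ on $\mathbb{S}^{J-1}$: near any critical point $\theta_0$, $\hat f_p$ is locally constant on $\mathbf{C}$, which combined with compactness forces the set of critical values $\mathbf{D}$ to be finite. (The case where $\hat f_p$ is constant is trivial.) Fix $\varepsilon_0 > 0$ so that distinct elements of $\mathbf{D}$ are separated by at least $6\varepsilon_0$, and for $\varepsilon \in (0,\varepsilon_0)$ set
\[
\delta(\varepsilon) := \inf\left\{|\slashed{\nabla}\hat f_p(\theta)| : |\hat f_p(\theta) - \alpha| \ge \varepsilon\ \textup{for all}\ \alpha \in \mathbf{D}\right\} > 0,
\]
positivity holding because any minimizing sequence would converge to a point avoiding $\mathbf{D}$ by $\varepsilon$, hence non-critical.

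Now suppose for contradiction $\hat f_p(\theta(t)) \not\to \alpha_0$. Combining the subsequence $t_i \to \infty$ from Lemma \ref{lem:limitinf} with $\hat f_p(\theta(t_i)) \to \alpha_0$ and some other sequence $s_i \to \infty$ with $|\hat f_p(\theta(s_i)) - \alpha_0| \ge 2\varepsilon$ for a fixed $\varepsilon \in (0, \varepsilon_0)$, by continuity and the intermediate value theorem I extract $\tilde t_i \to \infty$ at which $|\hat f_p(\theta(\tilde t_i)) - \alpha_0| = \varepsilon$ and $\tfrac{d}{dt}\hat f_p(\theta(\tilde t_i)) \ge 0$, by choosing $\tilde t_i$ as an exit time from a small neighborhood of the level $\alpha_0$ (i.e., where $\hat f_p \circ \theta$ is moving away from $\alpha_0$). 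At such a time, our choice of $\varepsilon_0$ gives $|\hat f_p(\theta(\tilde t_i)) - \alpha| \ge \varepsilon$ for every $\alpha \in \mathbf{D}$, hence $|\slashed{\nabla}\hat f_p(\theta(\tilde t_i))| \ge \delta(\varepsilon)$. The second equation of \eqref{eq-qtheta} then gives
\[
\tfrac{d}{dt}\hat f_p(\theta(\tilde t_i)) = -\sigma^{-1}(\tilde t_i)\,\slashed{\nabla}\hat f_p(\theta(\tilde t_i)) \cdot \left(\slashed{\nabla}\hat f_p(\theta(\tilde t_i)) + R^T(\tilde t_i)\right),
\]
which, using $|R^T(\tilde t_i)| \le \tfrac12\delta(\varepsilon)$ for $i$ large (from \eqref{eq:Rbound} and $r(t)\to 0$), is bounded above by $-\tfrac12\sigma^{-1}(\tilde t_i)\delta^2(\varepsilon) < 0$, contradicting $\tfrac{d}{dt}\hat f_p(\theta(\tilde t_i)) \ge 0$.

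The main obstacle is the selection of $\tilde t_i$ with the correct sign of the time derivative: since $\hat f_p \circ \theta$ can cross any level in either direction, $\tilde t_i$ must be picked just as $\hat f_p \circ \theta$ departs the level $\alpha_0 \pm \varepsilon$ away from $\alpha_0$, not as it returns. A clean way is to define $\tilde t_i$ as the supremum of the maximal interval containing a chosen $t_i$ on which $|\hat f_p(\theta(t)) - \alpha_0| \le \varepsilon$; the existence of $s_i$ with $|\hat f_p(\theta(s_i)) - \alpha_0| \ge 2\varepsilon$ ensures such a supremum is finite, and the definition immediately yields the required sign of the derivative there.
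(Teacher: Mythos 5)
Your argument follows the same route as the paper's: the {\L}ojasiewicz inequality gives finiteness of $\mathbf{D}$, the quantity $\delta(\varepsilon)$ is defined identically, and the contradiction comes from evaluating $\frac{d}{dt}\hat f_p(\theta(t))$ via \eqref{eq-qtheta} at large times where the value sits at distance exactly $\varepsilon$ from $\alpha_0$ and the derivative is nonnegative. Reversing the order of the two assertions (convergence first, then $\alpha_0\ge 0$) is harmless and in fact tightens the sign argument: once $\hat f_p(\theta(t))\to\alpha_0<0$ is known, $r'(t)>0$ holds for \emph{all} large $t$, which is exactly what the monotonicity contradiction requires, whereas with only the subsequential information $\hat f_p(\theta(t_i))\to\alpha_0$ this step needs an extra word.

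The one step that does not work as stated is your construction of $\tilde t_i$. If $\tilde t_i$ is the right endpoint of the maximal interval around $t_i$ on which $|\hat f_p(\theta(t))-\alpha_0|\le\varepsilon$, the trajectory may exit the band through the \emph{lower} level $\alpha_0-\varepsilon$; then $\hat f_p(\theta(s))<\alpha_0-\varepsilon$ for $s$ slightly larger than $\tilde t_i$, so $\frac{d}{dt}\hat f_p(\theta(\tilde t_i))\le 0$ --- the opposite of the required sign, and entirely consistent with the strict negativity you derive from \eqref{eq-qtheta}. So ``the definition immediately yields the required sign'' fails in the bottom-exit case. The repair: the level-set estimate really shows that, for $t$ large, $\hat f_p(\theta(t))$ can cross each of the two levels $\alpha_0\pm\varepsilon$ only downward, since an \emph{upward} crossing of either level is precisely a time with $|\hat f_p(\theta)-\alpha_0|=\varepsilon$ and $\frac{d}{dt}\hat f_p\ge 0$. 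Hence once the trajectory drops below $\alpha_0-\varepsilon$ at a large time it stays there forever, contradicting $\hat f_p(\theta(t_i))\to\alpha_0$; and once it is below $\alpha_0+\varepsilon$ at a large time it stays below $\alpha_0+\varepsilon$. Together these force $|\hat f_p(\theta(t))-\alpha_0|\le\varepsilon$ for all large $t$, contradicting the existence of the $s_i$. (The paper simply asserts the existence of the good times $\tilde t_i$; the rigorous construction goes through upward crossings of the levels of $\hat f_p\circ\theta$ itself, not exit times of $|\hat f_p\circ\theta-\alpha_0|$.)
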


\begin{lemma}\label{lem-distgradflow}
Let $\mathbf{C}_0$ be the component of $\mathbf{C}$ containing $\theta^*$. Then 
\[\lim_{t\to \infty} \mathrm{dist} (\theta(t),\mathbf{C}_0)=0.\]
\begin{proof}
Let $N_\delta$ denote the (open) $\delta $-neighborhood of $\mathbf{C}_0$. If the assertion is false, there is a small $\delta>0$ such that for any $t_0\geq 0$, there exists $t_1\geq t_0$ such that $\theta(t_1)\notin N_{2\delta}$. By taking $\delta>0$ small enough, we may assume $N_{3\delta}\cap\mathbf{C}=\mathbf{C}_0$. Set $\mathbf{A}=\mathrm{cl}(N_{2\delta}\setminus N_\delta)$. Because $\theta^*$ is a limit point of $\theta(t)$, there exist a sequence of strictly increasing times $t_1<t_2<\ldots $ such that $\theta(t)\in \mathbf{A}$ for $t\in [t_{2k-1},t_{2k}]$, $\mathrm{dist}(\theta(t_{2k-1}), \mathbf{C}_0)=2\delta$, and $\mathrm{dist}(\theta(t_{2k}), \mathbf{C}_0)=\delta$ for all positive integers $k$. Since $\mathbf{A}$ is a closed set with $\mathbf{A}\cap\mathbf{C}=\varnothing$, there exists $c>0$ such that $|\slashed{\nabla} \hat f_p(\theta)|\ge c  $ for all $\theta\in \mathbf{A} $. From \eqref{eq:Rbound}, $|R^T(t)|\leq 2^{-1}c $ for $t\ge \bar   t$ some large $\bar t$. This implies for any $t\in [t_{2k-1},t_{2k}]$ with $t_{2k-1}\ge \bar t$, 
\begin{align*}
-\frac{d}{dt}\hat{f}_p(\theta(t))=&\sigma^{-1}(t)\slashed{\nabla} \hat f_p(\theta(t))\cdot\left( \slashed{\nabla} \hat f_p(\theta(t))+R^T(t) \right) \geq  2^{-1}\sigma^{-1}(t)| \slashed{\nabla} \hat f_p(\theta(t))|^2\\
 \geq & 3^{-1}| \slashed{\nabla} \hat f_p(\theta(t))||\theta'(t)|\geq 3^{-1}c  |\theta'(t)|.
\end{align*}  
Therefore,
\begin{align*}
 \hat f_p (\theta( t_{2k}))-\hat f_p (\theta(t_{2k-1})) \leq - 3^{-1}c  \int_{t_{2k-1}}^{t_{2k}}|\theta'(t)|\, dt\leq -3^{-1}c  \delta .
\end{align*}
In particular, $\hat f_p(\theta(t))$ does not converge. This contradicts to Lemma \ref{lem_hatfp}.

\end{proof}

\end{lemma}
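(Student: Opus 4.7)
The plan is to argue by contradiction. Suppose $\limsup_{t\to\infty}\mathrm{dist}(\theta(t),\mathbf{C}_0) > 0$, so there exists some $\delta > 0$ and a sequence $t_k \to \infty$ with $\mathrm{dist}(\theta(t_k),\mathbf{C}_0) \ge 2\delta$. I would shrink $\delta$ if necessary so that the $3\delta$-neighborhood of $\mathbf{C}_0$ meets $\mathbf{C}$ only in $\mathbf{C}_0$; then the closed annular shell $\mathbf{A}:=\{\theta:\delta\le\mathrm{dist}(\theta,\mathbf{C}_0)\le 2\delta\}$ is a compact set disjoint from every critical point of $\hat f_p$.

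Since Lemma~\ref{lem:limitinf} (combined with compactness of $\mathbb{S}^{J-1}$) produces a subsequence with $\theta(t_i)\to\theta^*\in\mathbf{C}_0$, the curve $\theta(t)$ enters every neighborhood of $\mathbf{C}_0$ infinitely often, while by the contradiction hypothesis it also leaves the $2\delta$-neighborhood infinitely often. Continuity then yields disjoint time intervals $[t_{2k-1},t_{2k}]$ on which $\theta(t)\in\mathbf{A}$ throughout, $\mathrm{dist}(\theta(t_{2k-1}),\mathbf{C}_0)=2\delta$, and $\mathrm{dist}(\theta(t_{2k}),\mathbf{C}_0)=\delta$.

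Next I would extract a uniform energy drop per crossing. On $\mathbf{A}$, by compactness and the absence of critical points, $|\slashed{\nabla}\hat{f}_p(\theta)|\ge c$ for some $c>0$. From the perturbation bound $|R^T(t)|\le A r^{1-\varepsilon}(t)\to 0$, for $t$ large enough we have $|R^T(t)|\le c/2$ so that $|R^T(t)|\le \tfrac12|\slashed{\nabla}\hat f_p(\theta(t))|$ whenever $\theta(t)\in\mathbf{A}$. Applying the chain rule together with the tangential equation in \eqref{eq-maingradabsorbed}:
\[
-\tfrac{d}{dt}\hat{f}_p(\theta(t)) \;=\; r^{p-2}(t)\,\slashed{\nabla}\hat{f}_p(\theta(t))\cdot\bigl(\slashed{\nabla}\hat{f}_p(\theta(t)) + R^T(t)\bigr) \;\ge\; \tfrac12 r^{p-2}(t)\,|\slashed{\nabla}\hat{f}_p(\theta(t))|^2.
\]
Since $|\theta'(t)|\le r^{p-2}(t)|\slashed{\nabla}\hat{f}_p + R^T|\le \tfrac{3}{2}r^{p-2}(t)|\slashed{\nabla}\hat{f}_p|$, I can factor out one copy of $|\slashed{\nabla}\hat f_p|\ge c$ on $\mathbf{A}$ to convert this to $-\tfrac{d}{dt}\hat{f}_p(\theta(t))\ge \tfrac{c}{3}|\theta'(t)|$. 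Integrating over $[t_{2k-1},t_{2k}]$ and using that $\theta$ traverses a geodesic distance of at least $\delta$ while crossing $\mathbf{A}$ yields $\hat f_p(\theta(t_{2k-1}))-\hat f_p(\theta(t_{2k}))\ge c\delta/3$. Summing over $k$ forces $\hat f_p(\theta(t))\to -\infty$, contradicting Lemma~\ref{lem_hatfp}.

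The main obstacle is packaging the argument so that the energy drop per crossing is genuinely bounded from below without appealing to any Lojasiewicz inequality adapted to the possibly positive-dimensional critical set $\mathbf{C}_0$. The key idea is to choose $\delta$ small enough to separate $\mathbf{C}_0$ from the remaining components of $\mathbf{C}$; this reduces matters to a uniform gradient lower bound on the compact shell $\mathbf{A}$, after which the fact that $r(t)\to 0$ makes $R^T$ subordinate to $\slashed{\nabla}\hat f_p$, and the standard arc-length/energy computation closes the argument.
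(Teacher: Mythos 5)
Your proposal is correct and follows essentially the same route as the paper's proof: the same annular shell $\mathbf{A}$ separating $\mathbf{C}_0$ from the rest of $\mathbf{C}$, the same uniform gradient lower bound $c$ on $\mathbf{A}$ with $R^T$ absorbed once $r(t)$ is small, and the same arc-length/energy-drop estimate $-\frac{d}{dt}\hat f_p \ge \frac{c}{3}|\theta'|$ per crossing, contradicting the convergence of $\hat f_p(\theta(t))$ from Lemma~\ref{lem_hatfp}. No substantive differences.
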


\begin{proposition} \label{prop-gradflowlimit}
If $\alpha_0>0$,  then $\lim_{t\to \infty} \theta(t)=\theta^*$. Moreover, the length of curve $\theta(t)$ on $\mathbb{S}^{J-1}$ is finite. 
\begin{proof}
Recall $\alpha_0=\hat f_p(\theta^*).$ Consider the control function 
\[g(t):= r(t) +(\hat f_p(\theta(t)) -\hat f_p(\theta^*)).\]
Our goal is to show the following. There exist $t_0\geq 0$, $c_0>0$, $\rho_0 \in (0,1)$ and a neighborhood $U_0$ of $\theta^*$ such that $g'(t)
\le 0 $, for $t\ge t_0$, and moreover, 
\begin{equation}\label{eq-controlg}
 g'(t)\le  - c_0 |g(t)|^{\rho_0} |\theta'(t)|  \quad \text{ if } \ t\ge t_0 \text{ and } \theta(t)\in U_0. \end{equation} 

Let us first show this implies the proposition. First, since $g(t)$  decreases  monotonically to zero, $g(t)\geq 0$ for $t\geq t_0$. Suppose $\theta(t)\in U_0$ for $t\in [t_1,t_2]$ with $t_1\ge t_0$. By integrating \eqref{eq-controlg}, 
\[  \int_{t_1}^{t_2} |\theta'(t)|dt   \le  \frac{|g(t_1)|^{1-\rho_0}-|g(t_2)|^{1-\rho_0}}{c_0(1-\rho_0 )}  \le  \frac{|g(t_1)|^{1-\rho_0}}{c_0(1-\rho_0 )}  .\] 
 This shows that if $t_1\ge t_0$, $g(t_1)$ is sufficiently small and $\theta(t_1)$ is sufficiently close to $\theta^*$ (we may find such $t_1$ by Lemma \ref{lem_hatfp} and $\lim_{i\to \infty} \theta(t_i) = \theta^*$), then $\theta(t)$ remains inside of $U_0$ for all $t\ge t_1$ and the length of the curve $\theta(t)$ for $t\in [ t_1,\infty)$ is finite. In particular, $\theta(t)$ converges as $t\to \infty$ and this proves the proposition.

It remains to prove $g'(t)\le0$ and \eqref{eq-controlg}. Let $A$ be the constant in \eqref{eq:Rbound}. Fix $t_0\geq 0$ large so that for  $t\geq t_0$, 
\begin{align}
&p\hat{f}_p(\theta(t))+R^{\perp}(t)\geq  2^{-1}p\hat{f}_p(\theta^*), \label{eq:t0_1} \\
&r^{1-2\varepsilon}(t)\leq   (160)^{-1} A^{-2} p\hat{f}_p(\theta^*). \label{eq:t0_2} 
\end{align}
This is possible because of Lemma~\ref{lem_hatfp} and $\hat{f}_p(\theta^*)>0$. Observe from \eqref{eq-maingradabsorbed}, 
\be\label{eq-g'evol} g'(t)=-r^{p-1}(t) \left(p\hat{f}_p(\theta^*) +R^{\perp} \right) -r^{p-2}(t) \slashed{\nabla}\hat{f}_p(\theta(t))\cdot \left(\slashed{\nabla}\hat{f}_p(\theta(t))+ R^T(t)\right) .\ee
For $t\geq t_0$, 
\begin{align*}
g'(t)
\leq & -2^{-1}pr^{p-1}(t) \hat{f}_p(\theta^*) -r^{p-2}(t) \slashed{\nabla}\hat{f}_p(\theta(t))\cdot \left(\slashed{\nabla}\hat{f}_p(\theta(t))+ R^T(t)\right) \qquad \text{by \eqref{eq:t0_1}}\\
\leq & -2^{-1}pr^{p-1}(t) \hat{f}_p(\theta^*)+4^{-1}r^{p-2}(t)|R^T(t)|^2\qquad \text{by  Cauchy-Schwarz}\\
\leq & -2^{-1}pr^{p-1}(t) \hat{f}_p(\theta^*)+4^{-1}A^2r^{p-2\varepsilon}(t)\leq 0 \qquad  \text{by \eqref{eq:Rbound}}.
\end{align*}
This shows $g'(t)\le0$. 

Next, applying the {\L}ojasiewicz  gradient inequality \cite{Loj} to $\hat f_p$ by viewing it as an analytic function $|x|^{-p} f_p(x)$ on $\mathbb{R}^J\setminus \{0\}$, we obtain a neighborhood $U_0$ of $\theta^*$ in $\mathbb{S}^{J-1}$, $\rho_1\in(0,1)$ and $c_1>0$ such that if $\theta\in U_0$ then 
\begin{equation}\label{eq:Loj}
|\slashed{\nabla} \hat f_p (\theta)|\ge c_1 |\hat f_p(\theta)-\hat f_p(\theta^*)|^{\rho_1}.
\end{equation}
Now we suppose $\theta(t)\in U_0$ for some $t\ge t_0$ and show that \eqref{eq-controlg} holds. We divide the discussion into two cases depending on the term we utilize from the right hand side of \eqref{eq-g'evol}. 

\noindent \textbf{Case 1}. Suppose $|\slashed{\nabla } \hat f_p(\theta(t)) +R^T(t)|\geq 4Ar^{1- \e }(t)$. This implies
\begin{equation}\label{equ:f3A}
|\slashed{\nabla } \hat f_p(\theta(t))| \geq 3Ar^{1- \e }(t) 
\end{equation}
and
\begin{align}\label{equ:f-RT}
|\slashed{\nabla } \hat f_p(\theta(t))| -|R^T(t)|\geq 2^{-1}|\slashed{\nabla } \hat f_p(\theta(t)) +R^T(t)|=2^{-1}r^{2-p}(t)|\theta'(t)|. 
\end{align}
From \eqref{eq:Loj} and \eqref{equ:f3A},
\begin{align}\label{equ:f3AA}
|\slashed{\nabla} \hat f_p (\theta(t))|\ge 2^{-1}\left( c_1 |\hat f_p(\theta(t))-\hat f_p(\theta^*)|^{\rho_1}+  3Ar^{1- \e }(t)\right).
\end{align}
Combining \eqref{equ:f-RT} and \eqref{equ:f3AA}, we derive
\begin{align*}
g'(t)\leq & -r^{p-2}(t) |\slashed{\nabla} \hat f_p (\theta(t))|\left(|\slashed{\nabla } \hat f_p(\theta(t))| -|R^T(t)|\right) \\
 \leq &-4^{-1}\left( c_1 |\hat f_p(\theta(t))-\hat f_p(\theta^*)|^{\rho_1}+  3Ar^{1- \e }(t)\right)|\theta'(t)|.
\end{align*}
As a result, for $\rho_2= \max (\rho_1, 1-  \eps )\in(0,1)$ and some $c_2>0$, 
\[g'(t) \le-c_2 |g(t)|^{\rho_2}|\theta'(t)| .\]

\noindent \textbf{Case 2}. Suppose $|\slashed{\nabla } \hat f_p(\theta(t)) +R^T(\theta(t))|< 4Ar^{1-\varepsilon}(t)$. Then $|\theta'(t)| \leq 4 Ar^{p-1-\varepsilon}(t) .$ This implies
\begin{align*}
-2^{-1}  pr^{p-1}(t) \hat{f}_p(\theta^*)\leq -8^{-1}A^{-1}  p \hat{f}_p(\theta^*) r^{\varepsilon}(t)|\theta'(t)|=-4c_3r^{ \varepsilon}(t)|\theta'(t)|.
\end{align*}
Here $c_3=(32)^{-1}A^{-1}p\hat{f}_p(\theta^*) $ is defined by the last equality. From \eqref{eq:t0_2},
\begin{equation}\label{eq:case2}
 |\slashed{\nabla}\hat f_p(\theta(t)) | \le 5Ar^{1- {\e} }(t) \le c_3 r^{{\e} }(t).
\end{equation}
Hence
\begin{align*}
g'(t)\leq -2^{-1}  pr^{p-1}(t) \hat{f}_p(\theta^*) +\slashed{\nabla}\hat{f}_p(\theta(t))\cdot\theta'(t) \leq   -3c_3r^{\varepsilon}(t)|\theta'(t)|.
\end{align*}
Moreover, from \eqref{eq:case2} and \eqref{eq:Loj}, 
\begin{align*}
3c_3r^{ \varepsilon}(t)\geq 2c_3r^{ \varepsilon}(t)+ |\slashed{\nabla}\hat f_p(\theta(t)) |\geq  2c_3r^{ \varepsilon}(t)+c_1 |\hat f_p(\theta(t))-\hat f_p(\theta^*)|^{\rho_1}\geq c_4 |g(t)|^{\rho_3}
\end{align*}
for some $c_4>0$ and $\rho_3= \max (  \varepsilon  , \rho_1)\in (0,1)$. The inequality \eqref{eq-controlg} is obtained and this finishes the proof. 
\end{proof}

\end{proposition}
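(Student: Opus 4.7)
The plan is to prove convergence and finite length via a \L{}ojasiewicz-type control function. Since $\theta^*$ is already known to be a subsequential limit of $\theta(t)$, it suffices to show that the arc length $\int_{t_0}^\infty |\theta'(t)|\,dt$ is finite for some $t_0$. The natural object to consider is
\[
g(t) := r(t) + \bigl(\hat{f}_p(\theta(t)) - \alpha_0\bigr),
\]
which combines the two quantities known to tend to zero by the hypothesis $z(t)\to0$ and by Lemma~\ref{lem_hatfp}. Using the polar system \eqref{eq-maingradabsorbed}, one computes
\[
g'(t) = -r^{p-1}(t)\bigl(p\hat{f}_p(\theta^*) + R^{\perp}(t)\bigr) - r^{p-2}(t)\,\slashed{\nabla}\hat{f}_p(\theta(t))\cdot\bigl(\slashed{\nabla}\hat{f}_p(\theta(t)) + R^T(t)\bigr).
\]
Since $\alpha_0 > 0$, $\hat{f}_p(\theta(t)) \to \alpha_0$, and the perturbative terms satisfy $|R^\perp|+|R^T|\lesssim r^{1-\varepsilon}$, the radial term $-r^{p-1}\cdot p\alpha_0$ is dominant and forces $g'(t)\le 0$ for $t$ large (absorbing the worst-case angular contribution via Cauchy--Schwarz and a smallness bound on $r^{1-2\varepsilon}$).

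The central step is to upgrade monotonicity to the differential inequality
\[
g'(t) \le -c_0\,|g(t)|^{\rho_0}\,|\theta'(t)|
\]
for some $\rho_0\in(0,1)$, valid once $\theta(t)$ lies in a neighborhood $U_0$ of $\theta^*$. Here I would apply the \L{}ojasiewicz gradient inequality to the analytic function $|x|^{-p}f_p(x)$ on $\mathbb{R}^J\setminus\{0\}$ to obtain $|\slashed{\nabla}\hat{f}_p(\theta)| \ge c_1|\hat{f}_p(\theta)-\alpha_0|^{\rho_1}$ on $U_0$. The proof of the inequality splits into two cases. When $|\slashed{\nabla}\hat{f}_p(\theta(t)) + R^T(t)| \gtrsim r^{1-\varepsilon}(t)$, then $|\slashed{\nabla}\hat{f}_p|$ dominates $|R^T|$, and the angular contribution in $g'$ gives a term proportional to $-|\slashed{\nabla}\hat{f}_p|\,|\theta'|$, which by \L{}ojasiewicz controls $-|g|^{\rho_1}|\theta'|$ (modulo an $r^{1-\varepsilon}|\theta'|$ piece handled separately). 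Otherwise, $|\theta'(t)| \lesssim r^{p-1-\varepsilon}(t)$, and the purely radial term $-\tfrac12 p r^{p-1}\alpha_0$ dominates; rewriting it as $-c_3 r^{\varepsilon}|\theta'|$ and combining with the \L{}ojasiewicz inequality produces the required estimate with $\rho_0 = \max(\varepsilon,\rho_1)$.

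Once the differential inequality holds, integrating from $t_1$ to $t_2$ gives
\[
\int_{t_1}^{t_2}|\theta'(t)|\,dt \le \frac{|g(t_1)|^{1-\rho_0}}{c_0(1-\rho_0)}.
\]
Choosing $t_1$ large enough (using Lemma~\ref{lem_hatfp} and the subsequential convergence $\theta(t_i)\to\theta^*$) so that $g(t_1)$ is small and $\theta(t_1)$ is close to $\theta^*$, a continuation/trapping argument keeps $\theta(t)\in U_0$ for all $t\ge t_1$, giving $\int_{t_1}^\infty|\theta'|\,dt<\infty$. This proves finite length and forces $\theta(t)\to\theta^*$.

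The main obstacle is balancing the perturbation $R^T$ against the \L{}ojasiewicz gradient lower bound: the size $Ar^{1-\varepsilon}$ of $R^T$ is precisely the scale at which the sign of the angular contribution to $g'$ becomes unclear, so the case split must be designed so that whichever term of $g'$ survives is strong enough to bound $|\theta'|$ via the \L{}ojasiewicz exponent. Correctly choosing the thresholds (encoded in the constants $c_3$ and $A$ above) and the combined exponent $\rho_0$ is the delicate part, but the positivity $\alpha_0>0$ provides the crucial coercive radial term that is unavailable when $\alpha_0=0$.
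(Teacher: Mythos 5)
Your proposal is correct and follows essentially the same route as the paper: the same control function $g(t)=r(t)+(\hat f_p(\theta(t))-\alpha_0)$, the same {\L}ojasiewicz inequality applied to $|x|^{-p}f_p(x)$, the same monotonicity step, the same two-case split at the threshold $\sim Ar^{1-\varepsilon}$, and the same integration plus trapping argument. No substantive differences to report.
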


\begin{proof}[Proof of Theorem \ref{thm-gradflow}]   By Lemma \ref{lem_hatfp}, there exists a critical point $\theta^*\in\mathbf{C}$ such that
\begin{align*}
\lim_{t\to\infty} |z|^{-p} f(z)=\hat f_p(\theta^*)\ge 0.
\end{align*}
As previously, we denote $\alpha_0=\hat f_p(\theta^*)$. If $\alpha_0>0$, then the convergence of $z(t)/|z(t)|$ is an immediate consequence of Proposition \ref{prop-gradflowlimit}. Moreover, by solving the first line in \eqref{eq-maingradabsorbed} with known asymptotics, we obtain
$$\lim_{t\to\infty} t^{1/(p-2)}|z(t)|=(\alpha_0 p(p-2))^{1/(p-2)}.$$ 
Suppose $\alpha_0=0$. Let $\mathbf{C}_0$ be the connected component of $\mathbf{C}$ which contains $\theta^*$. Since there are finitely many critical values (see the proof of Lemma~\ref{lem_hatfp}) and $\hat{f}_p(\mathbf{C}_0)$ is connected, $\hat{f}_p(\mathbf{C}_0)=\{0\}$. In particular, $\mathbf{C}_0$ is a connected component of $\mathbf{C}\cap\{ \theta\, :\, \hat{f}_p(\theta)=0 \}$. The convergence of $\mathrm{dist}(z(t)/|z(t)|,\mathbf{C}_0)$ follows by Lemma~\ref{lem-distgradflow} and the limit of $t^{1/(p-2)}|z(t)|$ follows by a similar argument.

\end{proof}

\appendix \section{Tools}\label{sec:A}
The following ODE lemma was proved in \cite{MZ}.
\begin{lemma}[Merle-Zaag ODE lemma]\label{lem-MZODE}
Let $X_+$, $X_0$, $X_- : [0,\infty) \to [0, \infty)$ be absolutely continuous functions such that $X_+(t) + X_0(t) + X_-(t) > 0$ for all $t\geq  0$ and $		\liminf_{t \to \infty} X_+(t) = 0.$ Suppose there exist $b>0$ and functions $Y_+$, $Y_0$, $Y_-$ with 
\begin{align*}
|Y_+(t)|^2 + |Y_0 (t)|^2 + |Y_-(t)|^2 = o(1)(|X_+(t)|^2 + |X_0 (t)|^2 + |X_-(t)|^2)
\end{align*} such that
\begin{equation}  \label{eq:mz.ode.cor}
		\begin{array}{ccc}
			X_+'(t) - bX_+(t) & \geq & Y_+(t), \\
			|X_0'(t)| & \leq & Y_0(t), \\
			X_-'(t) + bX_-(t) & \leq &  Y_-(t) .
		\end{array}
	\end{equation}
Then one of the following holds: either
\begin{align}\label{eq:mz.ode.cor.A}
X_+(t)+X_-(t)=o(1)X_0(t),
\end{align}	
or
\begin{align}\label{eq:mz.ode.cor.B}
X_+(t)+X_0(t)=o(1)X_-(t).
\end{align}
Moreover, suppose \eqref{eq:mz.ode.cor.B} holds true. Then for all $\varepsilon>0$,
\begin{align}\label{eq:mz.ode.cor.B1}
\limsup_{t\to\infty} e^{(b-\varepsilon)t}(X_+(t) + X_0(t) + X_-(t))=0.
\end{align}
\end{lemma}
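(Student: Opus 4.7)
The argument follows the classical three-mode trapping-dynamics analysis. Set $N(t) := X_+(t) + X_0(t) + X_-(t)$ and $\delta(t) := \sup_{s \geq t}(|Y_+(s)| + |Y_0(s)| + |Y_-(s)|)/N(s)$; the hypothesis on the $Y$'s gives $\delta(t)\to 0$ as $t\to\infty$.

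\textbf{Step 1 (eliminating the unstable mode).} For each $\alpha > 0$, set $W_\alpha := X_+ - \alpha(X_0 + X_-)$. On the region $\{W_\alpha \geq 0\}$ one has $N \leq (1 + 1/\alpha) X_+$, so the three inequalities in \eqref{eq:mz.ode.cor} combine to give
\begin{align*}
W_\alpha' \geq b(X_+ + \alpha X_-) - (1+\alpha)\delta(t) N \geq (b - C_\alpha \delta(t)) X_+ \geq \tfrac{b}{2} W_\alpha
\end{align*}
for all $t$ beyond some threshold $t_\alpha$. Hence if $W_\alpha(t_0) \geq 0$ for some $t_0 \geq t_\alpha$, Gronwall yields $X_+(t) \geq W_\alpha(t_0) e^{b(t-t_0)/2}$, contradicting $\liminf X_+ = 0$. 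So for every $\alpha > 0$, $X_+ < \alpha(X_0+X_-)$ eventually, which is $X_+ = o(X_0+X_-)$.

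\textbf{Step 2 (dichotomy for the remaining modes).} For each $\alpha > 0$, define $V_\alpha := X_- - \alpha X_0$. On $\{V_\alpha \geq 0\}$, Step 1 together with $X_0 \leq X_-/\alpha$ gives $N \leq C_\alpha X_-$, whence
\begin{align*}
V_\alpha' \leq -bX_- + |Y_-| + \alpha|Y_0| \leq -(b - C_\alpha \delta(t))X_- \leq -\tfrac{b}{2} V_\alpha
\end{align*}
for $t$ large. The right-hand side is \emph{strictly} negative when $V_\alpha = 0$ and $X_- > 0$, so $V_\alpha$ cannot cross from negative to non-negative values; consequently, for $t$ large, $V_\alpha$ is either eventually $\leq 0$ or eventually $\geq 0$. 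Set
\begin{align*}
\alpha^* := \sup\{\alpha > 0 : V_\alpha(t) \geq 0 \text{ for all sufficiently large } t\} \in [0,\infty].
\end{align*}
If $\alpha^* = 0$, every $V_\alpha$ is eventually negative, so $X_- = o(X_0)$ and (A) holds; if $\alpha^* = \infty$, every $V_\alpha$ is eventually non-negative, so $X_0 = o(X_-)$ and, by Step 1, (B) holds.

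\textbf{Main obstacle and the moreover statement.} The delicate step is excluding $\alpha^* \in (0, \infty)$. If $\alpha^*$ were finite and positive, applying the above dichotomy for $\alpha > \alpha^*$ and $\alpha < \alpha^*$ would force $X_-(t)/X_0(t) \to \alpha^*$. Then $|X_0'|/X_0 \leq \delta(t) N/X_0 \leq (1 + \alpha^* + o(1))\delta(t) \to 0$ shows $X_0$ decays sub-exponentially, whereas $X_-' \leq -bX_- + \delta(t) N \leq -(b - C\delta(t))X_-$, using $N \leq (1 + 1/\alpha^* + o(1))X_-$, forces $X_-$ to decay at rate at least $b - \varepsilon$ for every $\varepsilon > 0$; since $X_- \sim \alpha^* X_0 \neq 0$, this is contradictory. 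Finally, the very same estimate applied in case (B), where $N = (1+o(1))X_-$, yields $X_-(t) \leq C_\varepsilon e^{-(b - \varepsilon/2)t}$ for every $\varepsilon > 0$, and hence $e^{(b-\varepsilon) t} N(t) \to 0$, which is \eqref{eq:mz.ode.cor.B1}. Boundary cases where $X_0$ vanishes on a cofinal set make $V_\alpha \geq 0$ there and align with (B).
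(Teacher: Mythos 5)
The paper does not prove this lemma at all: it simply states it and cites the original Merle--Zaag reference \cite{MZ}. So there is nothing in the paper to compare against line by line; your proposal has to stand on its own, and it does. The structure (eliminating the unstable mode via $W_\alpha = X_+-\alpha(X_0+X_-)$, then running the $\alpha$-dichotomy on $V_\alpha = X_--\alpha X_0$ and excluding a finite positive threshold $\alpha^*$ by playing the sub-exponential lower bound on $X_0$ against the near-rate-$b$ upper bound on $X_-$) is a correct and complete reconstruction in the spirit of the original argument, and it also delivers the "moreover" estimate \eqref{eq:mz.ode.cor.B1} correctly from $N=(1+o(1))X_-$. One small imprecision worth fixing in Step 1: if $W_\alpha(t_0)=0$ the Gronwall bound $X_+(t)\ge W_\alpha(t_0)e^{b(t-t_0)/2}$ is vacuous and does not contradict $\liminf X_+=0$; you should either derive the contradiction only from $W_\alpha(t_0)>0$ (which shows $W_\alpha\le 0$ for all $t\ge t_\alpha$, and the weak inequality $X_+\le\alpha(X_0+X_-)$ still yields $X_+=o(X_0+X_-)$ after noting $X_0+X_->0$ for large $t$ since $N>0$), or argue that equality $W_\alpha=0$ forces $W_\alpha'>0$ there and hence strict positivity immediately after. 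Likewise, the monotonicity in $\alpha$ of the two alternatives in Step 2 (which makes $\alpha^*$ a genuine threshold) is used implicitly and deserves a sentence. These are presentational gaps, not mathematical ones.
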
 

\begin{lemma}[interpolation]\label{lemma-interpolation} Suppose a function of single variable $g(t)$ has bounds\[\sup_{t\in[-1,1]} \left |\frac{d^j }{dt^j}g(t) \right | \le C_k \] for $j=0,1,\ldots,k$ and \[\sup_{t\in [-1,1]} |g(t)| \le C_0.\] Then, for every $0\leq \ell < k$, there holds a bound 
\[\left |\frac{d^\ell}{dt ^\ell } g (0)\right | \le  \beta \cdot C_0^{\alpha} C_k^{1-\alpha} \]
for some $\alpha=\alpha(\ell,k)>0$ and $\beta= \beta(\ell,k)>0$. Moreover, $\alpha(\ell,k)$ converges to $1$ as $k \to \infty $ while $\ell$ is fixed.

\begin{proof} We prove the inequality with $\beta\equiv 1$ for functions which are compactly supported in the interior of $(-1,1)$. This is sufficient as one may multiply a general function by a cut-off and then apply the result. 

By an argument which uses an induction and the integration by parts, we obtain for every $\ell< k$
\[ \Vert \partial^\ell _t g \Vert _{L^2} \le \Vert  g \Vert _{L^2}^{1-\frac \ell k} \Vert \partial ^k _t g\Vert_{L^2} ^{\frac{\ell}{k}}   .\]
\[\sup_{t\in[ -1,1]} |\partial ^{\ell}_t g| \le \Vert \partial^{\ell+1} _t g\Vert_{L^1} \le\Vert \partial^{\ell+1} _t g\Vert_{L^2}\le \Vert  g \Vert _{L^2}^{1-\frac {\ell+1}k} \Vert \partial ^k _t g\Vert_{L^2} ^{\frac{\ell+1}{k}}  \le C_0^\alpha C_{k}^{1-\alpha} ,\]
where $\alpha = 1- \frac{\ell+1}k$. 
\end{proof}

\end{lemma}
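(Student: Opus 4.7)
The plan is to reduce to the compactly supported setting and then invoke an $L^2$ interpolation that iterates from the single identity $\|g^{(\ell)}\|_{L^2}^2 = -\int g^{(\ell-1)}\,g^{(\ell+1)}\,dt$. Concretely, fix a smooth cutoff $\chi : \mathbb{R}\to [0,1]$ with $\operatorname{supp}\chi \subset (-1,1)$ and $\chi \equiv 1$ on $[-1/2,1/2]$, and set $\tilde g := \chi \, g$. By the Leibniz rule, $\|\tilde g^{(j)}\|_{L^\infty(\mathbb{R})} \le \beta_j(\chi)\, \max_{0\le i\le j}\|g^{(i)}\|_{L^\infty([-1,1])}$ for a constant depending only on $j$ and $\chi$. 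Since $\tilde g^{(\ell)}(0) = g^{(\ell)}(0)$, it suffices to bound $|\tilde g^{(\ell)}(0)|$ in terms of $\|\tilde g\|_{L^\infty}$ and $\|\tilde g^{(k)}\|_{L^\infty}$. The cost of this reduction is absorbed into the constant $\beta = \beta(\ell,k)$, which the statement allows.

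For compactly supported $\tilde g$ the key step is the $L^2$ interpolation
\[
 \|\tilde g^{(j)}\|_{L^2}^{2} \;=\; -\int \tilde g^{(j-1)}\,\tilde g^{(j+1)}\,dt \;\le\; \|\tilde g^{(j-1)}\|_{L^2}\,\|\tilde g^{(j+1)}\|_{L^2},
\]
valid for $1\le j\le k-1$ because there are no boundary contributions. Treating $a_j := \log \|\tilde g^{(j)}\|_{L^2}$, this reads $a_j \le \tfrac{1}{2}(a_{j-1}+a_{j+1})$, i.e.\ $(a_j)_{0\le j\le k}$ is a midpoint-convex sequence, and hence
\[
 \|\tilde g^{(j)}\|_{L^2} \;\le\; \|\tilde g\|_{L^2}^{\,1-j/k}\,\|\tilde g^{(k)}\|_{L^2}^{\,j/k}, \qquad 0\le j\le k.
\]
I would then upgrade to an $L^\infty$ bound at the single point $t=0$ by the one-dimensional Sobolev embedding $|\tilde g^{(\ell)}(0)| \le \|\tilde g^{(\ell+1)}\|_{L^1} \le C\,\|\tilde g^{(\ell+1)}\|_{L^2}$ (since $\tilde g$ is supported in a bounded interval, $L^2 \hookrightarrow L^1$). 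Applying the interpolation inequality with $j = \ell+1$ gives
\[
 |g^{(\ell)}(0)| \;=\; |\tilde g^{(\ell)}(0)| \;\le\; C\,\|\tilde g\|_{L^2}^{\,1-(\ell+1)/k}\,\|\tilde g^{(k)}\|_{L^2}^{\,(\ell+1)/k} \;\le\; \beta(\ell,k)\,C_0^{\alpha}\,C_k^{1-\alpha},
\]
with $\alpha = \alpha(\ell,k) = 1-(\ell+1)/k$, which converges to $1$ as $k\to\infty$ with $\ell$ fixed, as required.

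The only genuine technical point is the passage through the cutoff: the Leibniz rule produces terms of the form $\binom{j}{i}\chi^{(j-i)}g^{(i)}$ for $i<j$, which must be estimated by $C_k$ (using the hypothesis $\sup_{[-1,1]}|g^{(j)}| \le C_k$ for all $j\le k$) rather than by $C_0$, so the cutoff step only controls $\|\tilde g^{(k)}\|_{L^\infty}$ by a multiple of $C_k$ and $\|\tilde g\|_{L^\infty}$ by $C_0$, which is precisely what we need. I expect no further obstacle; the argument is a standard Landau–Kolmogorov style interpolation tailored to the finite interval.
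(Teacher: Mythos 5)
Your proof is correct and follows essentially the same route as the paper's: reduce to the compactly supported case via a cutoff, establish the $L^2$ interpolation $\|\tilde g^{(j)}\|_{L^2}\le\|\tilde g\|_{L^2}^{1-j/k}\|\tilde g^{(k)}\|_{L^2}^{j/k}$ by integration by parts, and conclude through $|\tilde g^{(\ell)}(0)|\le\|\tilde g^{(\ell+1)}\|_{L^1}\le\|\tilde g^{(\ell+1)}\|_{L^2}$ with the same exponent $\alpha=1-(\ell+1)/k$. Your write-up is in fact slightly more complete, since you spell out the midpoint-convexity argument and the Leibniz-rule bookkeeping for the cutoff that the paper leaves implicit.
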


In the next lemma, we record the elliptic regularity in \cite[(1.13)]{S0}.

\begin{lemma}[elliptic regularity]\label{lem:regularity_e}
There exists $\rho_0>0$ small depending on $\mathcal{M}_\Sigma$ such that the following holds. Let $u$ be a solution to \eqref{equ:main} for $t\in [T-2,T+2]$. Assume $\Vert u \Vert_{C^1}(t)\leq \rho_0$ for $t\in [T-2,T+2]$. Then for any $s\in\mathbb{N}$, there exists a positive constant $C=C(m,\mathcal{M}_\Sigma,N_1,s)$ such that
\begin{align*}
\sup_{t\in [T-1,T+1]}\Vert u \Vert^2_{C^s}(t) \leq C  \int_{T-2}^{T+2} \Vert u \Vert_{L^2}^2(t)\, dt.
\end{align*} 
\end{lemma}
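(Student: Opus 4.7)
The plan is to view \eqref{equ:main} as a quasilinear elliptic equation on the space-time cylinder $Q := \Sigma\times[T-2,T+2]$ and bootstrap via interior elliptic regularity. The key structural observation is that the linear principal operator $\mathbf{P} := \partial_t^2 - m\partial_t + \mathcal{L}_{\Sigma}$ is elliptic on $\Sigma\times\mathbb{R}$: the coercivity \eqref{equ:H1} of $-\mathcal{L}_\Sigma$ combined with the obvious coercivity of $-\partial_t^2$ gives an estimate of the form $-\int\int\langle \mathbf{P}u,u\rangle + C\|u\|_{L^2(Q)}^2 \gtrsim \|u\|_{H^1(Q)}^2$. Using \eqref{equ:quasilinear} and \eqref{equ:N}, equation \eqref{equ:main} recasts as
\begin{equation*}
\mathbf{P}u \;=\; \sum_{k+\ell \le 2} \tilde a_{k,\ell}(\omega,u,\slashed{\nabla}u,\partial_t u)\cdot \partial_t^k\slashed{\nabla}^\ell u,
\end{equation*}
where each smooth coefficient $\tilde a_{k,\ell}$ vanishes at the origin. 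Taking $\rho_0$ small enough relative to the ellipticity constant of $\mathbf{P}$ then makes the right-hand side a small perturbation of the principal part, preserving uniform ellipticity throughout $Q$.

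For the base step I would establish an $H^2$-to-$L^2$ Caccioppoli-type estimate by testing the equation against $\chi(t)^2 u$ for a smooth cutoff $\chi$ supported in $(T-2,T+2)$ with $\chi\equiv 1$ on $[T-\tfrac{3}{2},T+\tfrac{3}{2}]$, integrating on $Q$, and applying \eqref{equ:H1} slice-by-slice; the quadratic right-hand side absorbs into the left thanks to the $\rho_0$-smallness, yielding
\begin{equation*}
\int_{T-3/2}^{T+3/2}\|u\|_{H^2}^2(t)\,dt \;\le\; C\int_{T-2}^{T+2}\|u\|_{L^2}^2(t)\,dt.
\end{equation*}
Sobolev embedding on the $(n{+}1)$-dimensional cylinder $Q$ upgrades this to $L^p$ control on $u,\slashed{\nabla}u,\partial_t u$ for some $p>n+1$, hence to $C^{0,\alpha}$ bounds. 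At this point the coefficients $\tilde a_{k,\ell}$ are Hölder continuous with controlled norms, and classical interior Schauder applied to the equation, now viewed as a linear elliptic PDE with Hölder coefficients, produces a $C^{2,\alpha}$ bound on a slightly smaller cylinder.

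The subsequent bootstrap is routine: differentiating \eqref{equ:main} in either $\slashed{\nabla}$ or $\partial_t$ gives an equation for $Du$ with the same principal part $\mathbf{P}$ and an inhomogeneity that, via the smoothness of the coefficients in \eqref{equ:quasilinear}--\eqref{equ:N}, is controlled by derivatives of $u$ of one order higher than what has already been estimated. Iterating Schauder on a telescoping sequence of nested sub-cylinders shrinking down to $[T-1,T+1]$ gains one derivative per step and yields the stated $C^s$ bound, with constant depending only on $m,\mathcal{M}_\Sigma,N_1,s$. The main obstacle is precisely the base step: one must verify that the quasilinear correction $\mathcal{M}_\Sigma u - \mathcal{L}_\Sigma u$ together with $N_1(u)$ can be absorbed into the ellipticity of $\mathbf{P}$. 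Since both terms are of the form ``factor of size $O(\|u\|_{C^1})$'' times derivatives of $u$ of order at most $2$, choosing $\rho_0$ smaller than a definite multiple of the coercivity constant in \eqref{equ:H1} makes the absorption valid. Once the $C^{2,\alpha}$ estimate is in hand, no further smallness is needed, as all subsequent steps treat the PDE as linear with smooth bounded coefficients determined by the (now a priori controlled) solution.
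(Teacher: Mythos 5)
The paper does not actually prove this lemma: it is recorded verbatim from Simon \cite[(1.13)]{S0}, so there is no internal proof to compare against. Your reconstruction is the standard argument behind Simon's estimate --- view $\partial_t^2-m\partial_t+\mathcal{L}_\Sigma$ as a Legendre--Hadamard elliptic operator on the cylinder $\Sigma\times[T-2,T+2]$, use the $C^1$-smallness to absorb the quasilinear corrections from \eqref{equ:quasilinear} and \eqref{equ:N} (including the second-order terms $a_1\cdot Du'$ and $a_3\cdot\mathcal{M}_\Sigma(u)$, which only perturb the principal symbol by $O(\rho_0)$), and then bootstrap with interior Schauder estimates on nested sub-cylinders --- and it is sound. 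One small imprecision: testing against $\chi^2 u$ and invoking \eqref{equ:H1} slice-by-slice yields only the $H^1(Q)$ Caccioppoli bound, not the $H^2$ bound you display; passing to $H^2$ requires an extra step (difference quotients, testing against second derivatives, or slice-wise $L^2$ elliptic regularity for $\mathcal{L}_\Sigma$ using the equation). This is routine for Legendre--Hadamard systems and does not affect the rest of the bootstrap, but as written the base step overstates what the single integration by parts delivers.
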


The parabolic regularity below is a minor extension of the one presented in \cite{S0}. We include the proof for readers' convenience. 

\begin{lemma}[parabolic regularity]\label{lem:regularity}
Let $u\in C^\infty(Q_{0,\infty},\widetilde{\mathbf{V}})$ be a solution to \eqref{equ:parabolic}. Assume $\lim_{t\to \infty} \Vert u \Vert_{H^{n+4}}(t)=0$. Then for all $s\in \mathbb{N}$, $\lim_{t\to \infty} \Vert u \Vert_{C^s}(t)=0$.
\begin{proof}
We claim that that for $\ell\geq n+4$,
\begin{equation}\label{equ:regularity}
 \lim_{t\to \infty} \Vert u \Vert_{H^{\ell}}(t)=0\ \textup{ implies}\  \lim_{t\to \infty} \Vert u \Vert_{H^{\ell+1}}(t)=0. 
\end{equation}
Suppose for a moment \eqref{equ:regularity} holds true. Applying the Sobolev embedding on $\Sigma$,  $u(t)$, as functions on $\Sigma$, converges smoothly to zero. In view of \eqref{equ:linear_p}, $u'(t)$ and higher order time derivatives also converge smoothly to zero.

We then turn to proving \eqref{equ:regularity}.  From \eqref{equ:linear_p} and \eqref{equ:H1}, we have the following energy estimate. For any $\ell\in\mathbb{N}$, there exists $\delta_\ell>0$ such that for $t_2>t_1>0$ with $t_2-t_1\leq \delta_\ell$, there holds that
\begin{align}\label{equ:regularity3}
\sup_{t\in [t_1,t_2]} \Vert u \Vert_{H^\ell}(t)+\int_{t_1}^{t_2} \Vert u \Vert_{H^{\ell+1}}(t)\, dt\leq C_\ell  \Vert u \Vert_{H^\ell}(t_1)+C_\ell\int_{t_1}^{t_2} \Vert E_2(u) \Vert_{H^{\ell-1}}(t)\, dt.
\end{align}
We omit the derivation and instead refer readers to equation (4.3) in \cite{S0}. Assume $\lim_{t\to \infty} \Vert u \Vert_{H^{\ell}}(t)=0 $. We claim that 
\begin{equation}\label{equ:regularity2}
\Vert E_2(u) \Vert_{H^{\ell-1}}(t)=o(1)\Vert u \Vert_{H^{\ell+1}}(t)\ \textup{and}\ \Vert E_2(u) \Vert_{H^{\ell}}(t)=o(1)\Vert u \Vert_{H^{\ell+2}}(t). 
\end{equation}
Suppose for a moment \eqref{equ:regularity2} holds true. Fix $\delta=\min(\delta_{\ell},2^{-1}\delta_{\ell+1})$. Applying \eqref{equ:regularity3} and absorbing the $E_2(u)$ terms using \eqref{equ:regularity2}, for $t_1$ large enough, 
\begin{align*}
\int_{t_1}^{t_1+\delta } \Vert u \Vert_{H^{\ell+1}}(t)\, dt \leq C_\ell  \Vert u \Vert_{H^\ell}(t_1),\ \sup_{t\in [t_1,t_1+2\delta]} \Vert u \Vert_{H^{\ell+1}}(t)\leq C_{\ell+1} \Vert u \Vert_{H^{\ell+1}}(t_1). 
\end{align*}
Then \eqref{equ:regularity} follows.

It remains to prove \eqref{equ:regularity2}. Recall that \eqref{equ:E_2str}  $E_2(u)=\sum_{j=0}^2 b_{2,j}\cdot \slashed{\nabla}^j u$, where $b_{2,j}=b_{2,j}(\omega, u,\slashed{\nabla}u)$ are smooth with $b_{2,j}(\omega,0,0)=0$. We present the proof for
\begin{align}\label{equ:regularity4}
 \left\Vert   b_{2,2}\cdot\slashed{\nabla}^2 u    \right\Vert_{H^{\ell}}(t)=o(1)\Vert u \Vert_{H^{\ell+2}}(t) .
\end{align} 
The other terms can be treated similarly. For simplicity, we write $b(\omega, u,\slashed{\nabla}u)$ for $b_{2,2}(\omega,u,\slashed{\nabla}u)$. For $m_0, m_1,m_2\in\mathbb{N}_0$, we write $b^{(m_0,m_1,m_2)}$ for the partial derivative of $b$ of order $(m_0, m_1,m_2)$. Fix $\ell_0 \leq \ell$. Then the terms in the expansion of $ \slashed{\nabla}^{\ell_0} \left(   b\cdot\slashed{\nabla}^2 u  \right)$ are of the form
\begin{align*}
b^{(m_0, m_1,m_2)}\cdot\left( \slashed{\nabla}^{\ell_1} u \ast  \slashed{\nabla}^{\ell_2} u \ast\dots  \slashed{\nabla}^{\ell_N} u \right),
\end{align*} 
where $N=m_1+m_2+1$ and $m_0+ \sum_{i=1}^N \ell_i=\ell_0+m_2+2$. Furthermore, $\ell_i\geq 1$ for all $1\leq i\leq N$, and there exist at least $m_2+1$ values of $i$ for which $\ell_i\geq 2$. We first discuss the case $m_1=m_2=0$. Then the above becomes $b^{(m_0,0,0)}\cdot  \slashed{\nabla}^{\ell_0-m_0+2} u $. From the Sobolev embedding,
\begin{align*}
|b^{(m_0,0,0)}|\leq C( |u|+|\slashed{\nabla}u|) \leq C\Vert u \Vert_{H^{n+4}}(t)=o(1) .
\end{align*}
Hence 
$$ \left\Vert b^{(m_0,0,0)}\cdot  \slashed{\nabla}^{\ell_0-m_0+2} u \right\Vert_{L^2}(t) =o(1) \left\Vert u \right\Vert_{H^{\ell+2}}(t).$$

Next, we consider the case $m_1+m_2\geq 1$. It is then straightforward to check that the maximum of $\{\ell_i\}_{1\leq i\leq N}$ is at most $\ell+2$ and other elements are at most $\lfloor \frac{\ell+3}{2}\rfloor =\ell_*$. From $\ell\geq n+4$ and the Sobolev embedding,    
$\left\Vert \slashed{\nabla}^{\ell_*} u(t)  \right\Vert_{L^\infty(\Sigma)}\leq C \left\Vert u \right\Vert_{H^\ell}(t)=o(1).$ This implies
\begin{align*}
\left\Vert b^{(m_0,m_1,m_2)}\cdot\left( \slashed{\nabla}^{\ell_1} u \ast  \slashed{\nabla}^{\ell_2} u \ast\dots  \slashed{\nabla}^{\ell_N} u \right)\right\Vert_{L^2}(t) =o(1) \left\Vert u \right\Vert_{H^{\ell+2}}(t),
\end{align*} 
and finishes the proof of \eqref{equ:regularity4}.
\end{proof} 
\end{lemma}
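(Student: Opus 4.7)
The plan is a bootstrap in the Sobolev scale: I claim the implication
\begin{equation*}
\lim_{t\to\infty}\|u\|_{H^\ell}(t)=0 \quad \Longrightarrow \quad \lim_{t\to\infty}\|u\|_{H^{\ell+1}}(t)=0 \qquad \text{for every }\ell\ge n+4.
\end{equation*}
Iterating from the hypothesis at $\ell=n+4$ propagates the decay to every $H^\ell$, and Sobolev embedding on the $n$-dimensional manifold $\Sigma$ then yields decay of every spatial $C^s$ norm. Time derivatives come for free: differentiating the equation $\partial_t u=\mathcal{L}_\Sigma u+E_2(u)$ (see \eqref{equ:linear_p}) repeatedly in $t$ trades each $\partial_t$ for two spatial derivatives of an already-controlled object, giving the full $C^s$ decay in the sense of \eqref{def:Cs}.

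The inductive step runs through a short-time energy estimate for the linear parabolic equation $u'-\mathcal{L}_\Sigma u=E_2(u)$. Pairing with $(-\mathcal{L}_\Sigma+C)^\ell u$ and invoking the coercivity \eqref{equ:H1} gives, for $t_2-t_1$ small enough,
\begin{equation*}
\sup_{t\in[t_1,t_2]}\|u\|_{H^\ell}^2(t)+\int_{t_1}^{t_2}\|u\|_{H^{\ell+1}}^2(t)\,dt\le C_\ell\|u\|_{H^\ell}^2(t_1)+C_\ell\int_{t_1}^{t_2}\|E_2(u)\|_{H^{\ell-1}}^2(t)\,dt.
\end{equation*}
The crux is to verify $\|E_2(u)\|_{H^{\ell-1}}(t)=o(1)\,\|u\|_{H^{\ell+1}}(t)$, after which the forcing term is absorbed into the left-hand dissipative term and the estimate chained over subintervals of $[T,\infty)$ for $T$ large enough that $\|u\|_{H^\ell}(T)$ is as small as we wish. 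This propagates smallness at time $T$ forward and yields $\sup_{t\ge T'}\|u\|_{H^{\ell+1}}(t)\to 0$, closing the induction.

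The main obstacle is the nonlinear product estimate. Using the structure $E_2(u)=\sum_{j=0}^2 b_{2,j}(\omega,u,\slashed{\nabla}u)\cdot\slashed{\nabla}^j u$ with $b_{2,j}(\omega,0,0)=0$ recorded in \eqref{equ:E_2str}, Leibniz expansion of $\slashed{\nabla}^{\ell-1}E_2(u)$ produces a finite sum of multilinear expressions of the schematic form $b_{2,j}^{(m_0,m_1,m_2)}\cdot(\slashed{\nabla}^{\ell_1}u\ast\cdots\ast\slashed{\nabla}^{\ell_N}u)$ with $\sum_i\ell_i\le\ell+1$. The top contribution $b_{2,2}\cdot\slashed{\nabla}^{\ell+1}u$ is dominated by $\|b_{2,2}\|_{L^\infty}\,\|u\|_{H^{\ell+1}}$, and the vanishing of $b_{2,2}$ at $(u,\slashed{\nabla}u)=(0,0)$ combined with the hypothesis $\|u\|_{H^{n+4}}\to 0$ and Sobolev embedding on the $n$-dimensional $\Sigma$ forces $\|b_{2,2}\|_{L^\infty}=o(1)$. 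In every remaining product, at most one factor can have derivative order exceeding $\lfloor(\ell+3)/2\rfloor$; since $\ell\ge n+4$, this threshold is safely below $\ell+1-n/2$, so the low-order factors are controlled in $L^\infty$ by the induction hypothesis via Sobolev embedding and contribute $o(1)$, while the single high-order factor sits in $L^2$ and contributes $\|u\|_{H^{\ell+1}}$. Carrying out this bookkeeping uniformly in $j,m_0,m_1,m_2,\ell_1,\dots,\ell_N$ is the technical heart of the argument, but it is a standard Moser-type estimate and does not require any idea beyond the analyticity of $b_{2,j}$ in $(u,\slashed{\nabla}u)$ and the Sobolev embedding on $\Sigma$.
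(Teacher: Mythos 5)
Your proposal is correct and follows essentially the same route as the paper: the same bootstrap $H^{\ell}\Rightarrow H^{\ell+1}$ for $\ell\ge n+4$, the same short-time parabolic energy estimate with the forcing $E_2(u)$ absorbed, and the same Leibniz/Moser product estimate splitting the top-order term (controlled by $\|b_{2,j}\|_{L^\infty}=o(1)$ from the vanishing of $b_{2,j}$ at the origin) from the lower-order products (at most one factor above $\lfloor(\ell+3)/2\rfloor$, the rest in $L^\infty$ by Sobolev since $\ell\ge n+4$). The only points the paper makes explicit that you gloss over are that the energy estimate must be applied at both levels $\ell$ and $\ell+1$ (hence the second product bound $\|E_2(u)\|_{H^{\ell}}=o(1)\|u\|_{H^{\ell+2}}$ is also needed), which is implicit in your chaining argument.
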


\section{Mean curvature flow}\label{sec:B}

In this section, we show that (rescaled) mean curvature flows close to a stationary solution can be described by \eqref{equ:parabolic}.

\vspace*{0.3cm}

We begin with the mean curvature flow (MCF). Let $\Sigma$ be an embedded closed $n$-dimensional submanifold in an $n+k$-dimensional analytic ambient space $(M,\bar{g})$. Let $d\mu$ be the volume form induced from the metric on $\Sigma$. Let $\mathbf{V}$ be the normal bundle of $\Sigma$ equipped with the inner product and the connection induced from $M$. For $\varepsilon>0$, let $\mathcal{U}_\varepsilon=\{u\in\mathbf{V}: |u|<\varepsilon\}$ and let $N_\varepsilon (\Sigma)$ be the $\varepsilon$-tubular neighborhood of $\Sigma$ in $M$. For $\varepsilon$ small enough, $\mathcal{U}_\varepsilon$ and $N_\varepsilon(\Sigma)$ can be identified through the exponential map:
\begin{align}\label{equ:exp}
\exp :\mathcal{U}_\varepsilon\to N_\varepsilon(\Sigma).
\end{align}

Consider a local coordinate chart of $\mathbf{V}$ as $\{x^i, y^A\}$, $ 1\leq i\leq n$, $1\leq A\leq k$, where $x^i$ are coordinates of $\Sigma$ and $y^A$ are fiber coordinates. We write $(g_0)_{ij}$ and $(h_0)_{AB}$ for the metric on $\Sigma$ and the inner product on $\mathbf{V}$, respectively.  Under the identification \eqref{equ:exp}, the ambient metric $\bar{g}$ is of the form
\begin{align*}
\bar{g}=g_{ij}dx^idx^j+h_{AB}dy^Ady^B+ c_{iA} (dx^idy^A+dy^Adx^i).
\end{align*}
When $y^A= 0$, $g_{ij}=(g_0)_{ij}$, $h_{AB}=(h_0)_{AB}$ and $c_{iA}=0$. In particular, the induced volume form of $\Sigma$ is given by
\begin{align*}
d\mu=\sqrt{\det g_0} dx^1\wedge\dots \wedge dx^n.
\end{align*}

Let $u$ be a section of $\mathbf{V}$ which is contained in $\mathcal{U}_\varepsilon$. From \eqref{equ:exp}, $u$ can be identified as a submanifold in $N_\varepsilon (\Sigma)$ and we denote this submanifold by $\textup{graph}(u)$. The induced metric on $\textup{graph}(u)$ is
\begin{align*}
 (g_u)_{ij}=g_{ij}+\frac{\partial u^A}{\partial x^i}\frac{\partial u^B}{\partial x^j}h_{AB}+\frac{\partial u^A}{\partial x^i} c_{jA}+\frac{\partial u^B}{\partial x^j} c_{iB}.
\end{align*}
Let $ (g_u)^{ij}$ be the inverse metric of $(g_u)_{ij}$. It is straightforward to check that
\begin{align*}
e_A:= \frac{\partial}{\partial y^A}-(g_u)^{ij}\left( c_{iA}+\frac{\partial u^B}{\partial x^i}h_{BA} \right)\left( \frac{\partial}{\partial x^j}+\frac{\partial u^C}{\partial x^j}\frac{\partial}{\partial y^C} \right),\ 1\leq A\leq k
\end{align*}
form a basis for the normal space of graph$(u)$. Let $ (h_u)_{AB}=\bar{g}(e_A,e_B)$ and $(h_u)^{AB}$ be its inverse metric.  
 
Now we compare the mean curvature vector and the Euler-Lagrange operator of the area functional. The (local) area functional $\mathcal{F}_\Sigma(u)$ is given by
\begin{align*}
\mathcal{F}_\Sigma (u)=\int \sqrt{\det g_u }\,dx^1\wedge\dots\wedge dx^n.
\end{align*} 
Denote by $\vec{H}$ the mean curvature vector of $\textup{graph}(u)$. Let $\xi$ be a smooth section of $\mathbf{V}$. From the first variational formula, \begin{align*}
\frac{d}{ds}\mathcal{F}_\Sigma (u+s\xi)\big|_{s=0}=-\int \bar{g}\left(\vec{H}, \xi^A\frac{\partial}{\partial y^A}  \right) \sqrt{\det g_u }\,dx^1\wedge\dots\wedge dx^n.
\end{align*}
Comparing the above with 
\begin{align*}
\frac{d}{ds}\mathcal{F}_\Sigma (u+s\xi)\big|_{s=0}=-\int h_0\left( \mathcal{M}_\Sigma (u),\xi \right) \sqrt{\det g_0}\,dx^1\wedge\dots\wedge dx^n,	
\end{align*}
we have 
\begin{equation}\label{equ:MH}
\mathcal{M}_\Sigma (u)^A=\bar{g}\left(\vec{H},\frac{\partial} {\partial y^B}\right)(h_0)^{BA} \sqrt{\frac{\det g_u }{\det g_0}}. 
\end{equation} 
In particular, $\mathcal{M}_\Sigma(0)=0$ if $\Sigma$ is a minimal submanifold.  

Next, we compute the non-parametric form of the MCF. In this case, the speed $\eta^A=\frac{\partial u^A}{\partial t}$ is determined by $\eta^A \bar{g}(\frac{\partial}{\partial y^A},e_B)=\bar{g}(\vec{H},e_B)$ for all $1\leq B\leq k$. From $\bar{g}(\frac{\partial}{\partial y^A},e_B)=(h_u)_{AB}$ and $\bar{g}(\vec{H},e_B)=\bar{g}(\vec{H},\frac{\partial}{\partial y^B} )$, we have $\eta^A=  \bar{g}\left( \vec{H},\frac{\partial}{\partial y^B} \right) (h_u)^{BA}$. Hence the non-parametric form of the MCF is given by
\begin{align}\label{equ:MCF}
\frac{\partial u^A}{\partial t}=  \bar{g}\left( \vec{H},\frac{\partial}{\partial y^B} \right) (h_u)^{BA}.
\end{align}
Using \eqref{equ:MH}, \eqref{equ:MCF} is equivalent to
\begin{align}\label{equ:MCF2}
\frac{\partial u^A}{\partial t}-\mathcal{M}_\Sigma(u)^A=\left( \sqrt{\frac{\det  g_0}{\det   g_u}}(h_u)^{AB}(h_0)_{BC}-\delta_C^A  \right)\cdot \mathcal{M}_\Sigma(u)^C.
\end{align}
In view of \eqref{equ:N}, \eqref{equ:MCF2} is of the form \eqref{equ:parabolic}.

\vspace*{0.3cm}
Next, we consider the rescaled MCF close to a shrinker. Let $\Sigma$ be an $n$-dimensional embedded closed submanifold in $\mathbb{R}^{n+k}$. We continue to use notation introduced above. For a section $u$ of the normal bundle $\mathbf{V}$, we write $X(u)$ for the position vector. In particular, $X(0)$ stands for the position vector of $\Sigma$. Let $d\mu$ be the volume form with a Gaussian weight. Namely,
\begin{align*}
d\mu= e^{-|X(0)|^2/4}\sqrt{\det g_0} dx^1\wedge\dots\wedge dx^n.
\end{align*}
For a general section $u$, the (local) Gaussian area functional is  
\begin{align*}
\mathcal{F}_\Sigma(u)=\int  e^{-|X(u)|^2/4}\sqrt{\det g_u} dx^1\wedge\dots\wedge dx^n.
\end{align*}
Let $\xi$ be a smooth section of $\mathbf{V}$. From the first variational formula, 
\begin{align*}
\frac{d}{ds}\mathcal{F}_\Sigma (u+s\xi)\big|_{s=0}=-\int  \left\langle \vec{H}+\frac{X^\perp(u)}{2}, \xi^A\frac{\partial}{\partial y^A}  \right\rangle e^{-|X(u)|^2/4}  \sqrt{\det g_u }\,dx^1\wedge\dots\wedge dx^n,
\end{align*}
where $X^\perp(u)$ stands for the normal part of $X(u)$. A similar argument as above yields
\begin{align*}
\mathcal{M}_\Sigma (u)^A= \left\langle \vec{H}+\frac{X^\perp(u)}{2},\frac{\partial} {\partial y^B}\right\rangle(h_0)^{BA} e^{-|X(u)|^2/4+|X(0)|^2/4}  \sqrt{\frac{\det g_u }{\det g_0}}. 
\end{align*}
Moreover, the non-parametric form the the rescaled MCF is given by
\begin{align*} 
\frac{\partial u^A}{\partial t}-\mathcal{M}_\Sigma(u)^A=\left( e^{|X(u)|^2/4-|X(0)|^2/4} \sqrt{\frac{\det  g_0}{\det   g_u}}(h_u)^{AB}(h_0)_{BC}-\delta_C^A  \right)\cdot \mathcal{M}_\Sigma(u)^C.
\end{align*}
It is then clear that the above is of the form \eqref{equ:parabolic}.

\section*{Acknowledgement}
The first author has been partially supported by National Research Foundation of Korea grant No. 2022R1C1C1013511 and POSTECH Basic Science Research Institute grant No. 2021R1A6A1A10042944.

\bibliography{ChoiHung}

\bibliographystyle{alpha}

\end{document}